\newtheorem{prop}{Proposition}
\newtheorem{thm}[prop]{Theorem}
\newtheorem{cor}[prop]{Corollary}
\newtheorem{lem}[prop]{Lemma}
\newtheorem{claim}[prop]{Claim}
\newtheorem{quest}{Question}
\theoremstyle{definition}
\newtheorem*{defi}{Definition}
\newtheorem{rem}[prop]{Remark}
\newtheorem{example}{Example}
\numberwithin{prop}{section}
\numberwithin{example}{section}
\numberwithin{equation}{section}
\DeclareMathOperator{\VZ}{VZ}
 \DeclareMathOperator{\Ht}{Onc}
 \DeclareMathOperator{\Comm}{Comm}
 \DeclareMathOperator{\Inn}{Inn}
 \DeclareMathOperator{\chrc}{char}
 \DeclareMathOperator{\Aut}{Aut}
 \DeclareMathOperator{\Out}{Out}
 \DeclareMathOperator{\Zen}{Z}
 \DeclareMathOperator{\TAut}{TAut}
 \DeclareMathOperator{\image}{im}
 \DeclareMathOperator{\kernel} {ker}
 \DeclareMathOperator{\iid} {id}
 \DeclareMathOperator{\Cent} {Cent}
 \DeclareMathOperator{\ab} {ab}
 \DeclareMathOperator{\GL} {GL}
 \DeclareMathOperator{\SL} {SL}
 \DeclareMathOperator{\PGL} {PGL}
 \DeclareMathOperator{\PSL} {PSL}
 \DeclareMathOperator{\rk} {rk}
 \DeclareMathOperator{\sep} {sep}
 \DeclareMathOperator{\VAut} {VAut}
 \DeclareMathOperator{\Iso} {Iso}
 \DeclareMathOperator{\scd} {scd}
 \DeclareMathOperator{\cd} {cd}
 \DeclareMathOperator{\vcd} {vcd}
 \DeclareMathOperator{\Gal} {Gal}
\newcommand{\ON}{\mathcal{ON}}
\newcommand{\ca}[1]{\mathcal{#1}}
\newcommand{\eu}[1]{\mathfrak{#1}}
\newcommand{\bo}[1]{\mathbf{#1}}
\newcommand{\rat}[2]{{#1}(X_{1},\ldots,X_{#2})}
\newcommand{\boA}{\mathbf{A}}
\newcommand{\boF}{\mathbf{F}}
\newcommand{\Com}{\mathbf{Com}}
\newcommand{\interm}{\mathbf{Int}}
\newcommand{\bE}{\bo{FGSep}}
\newcommand{\Z}{\mathbb{Z}}
\newcommand{\Q}{\mathbb{Q}}
\newcommand{\F}{\mathbb{F}}
\newcommand{\RR}{\mathbb{R}}
\newcommand{\N}{\mathbb{N}}
\newcommand{\Fp}{{\mathbb F_p}}
\newcommand{\bg}{\bar{g}}
\newcommand{\hGamma}{\hat{\Gamma}}
\newcommand{\nott}{\ca{N}}
\newcommand{\Fpt}{\F_p[\![t]\!]}
\newcommand{\RFpt}{\F_p(\!(t)\!)}
\newcommand{\deq}{\colon\!=}
\newcommand{\argu}{\hbox to 7truept{\hrulefill}}
\newcommand{\la}{\langle}
\newcommand{\ra}{\rangle}
\newcommand{\Gamhat}{{\widehat{\Gamma}}}
\newcommand{\dbQ}{{\mathbb Q}}
\newcommand{\grL}{{\mathfrak L}}
\newcommand{\dbG}{\mathbb{G}}
\newcommand{\IC}{{\rm ICom}}
\newcommand{\dbA}{\mathbb{A}}
\newcommand{\dbZ}{\mathbb{Z}}
\newcommand{\Autbar}{\underline{\rm Aut\,}}
\begin{document}

\title[]
{Abstract commensurators of profinite groups}
\author{Yiftach Barnea}
\address{
Department of Mathematics\\
   Royal Holloway, University of London\\
   Egham, Surrey TW20 0EX\\
   United Kingdom}
\email{y.barnea@rhul.ac.uk}
\author{Mikhail Ershov}
\address{
University Of Virginia\\
   Department of Mathematics\\
   P.O. Box 400137\\
   Charlottesville, VA 22904-4137\\
   United States of America}
\email{ershov@virginia.edu}
\author{Thomas Weigel}
\address{
Universit\`a degli Studi di Milano-Bicocca\\
Dipartimento di Matematica e Applicazioni\\
Via R. Cozzi, 53\\
I-20125 Milan, Italy}
\email{thomas.weigel@unimib.it}

\thanks{2000 Mathematics subject classification. Primary 20E18; Secondary 22D05, 22D45}

\thanks{First published in Trans. Amer. Math. Soc. 363 (2011), 5381--5417, published by the American Mathematical Society}
\thanks{\copyright American Mathematical Society}

\maketitle
\begin{abstract}
In this paper we initiate a systematic study of the abstract commensurators of profinite groups.
The abstract commensurator of a profinite group $G$ is a group $\Comm(G)$ which depends only on the commensurability class of $G$.
We study various properties of $\Comm(G)$; in particular, we find two natural ways to turn it into a topological group.
We also use $\Comm(G)$ to study topological groups which contain $G$ as an open subgroup
(all such groups are totally disconnected and locally compact).
For instance, we construct a topologically simple group which contains the
pro-$2$ completion of the Grigorchuk group as an open subgroup. On the other hand, we show that some profinite groups
cannot be embedded as open subgroups of compactly generated topologically simple groups.
Several celebrated rigidity theorems, like Pink's analogue of Mostow's strong rigidity theorem for simple algebraic groups defined
over local fields and the Neukirch-Uchida theorem, can be reformulated
as structure theorems for the commensurators of certain profinite groups.
\end{abstract}

\section{Introduction}

Let $G$ be a group and let $H$ be a subgroup $G$. The (relative) commensurator of $H$ in $G$,
denoted $\Comm_G(H)$, is defined as the set of all $g\in G$ such that the group $gHg^{-1}\cap H$
has finite index in both $H$ and $gHg^{-1}$. This notion proved to be fundamental
in the study of lattices in algebraic groups over local fields and automorphism
groups of trees (see \cite{Margulis:book}, \cite{BassLub} and references therein).

The concept of an abstract commensurator is a more recent one. A virtual automorphism
of a group $G$ is defined to be an isomorphism between two finite index subgroups of $G$;
two virtual automorphisms are said to be equivalent if they coincide on some finite index subgroup of $G$.
Equivalence classes of virtual automorphisms are easily seen to form a group, called the abstract commensurator
(or just {\it the commensurator} of $G$) and denoted $\Comm(G)$. If $G$ is a subgroup of a larger group $L$,
there is a natural map $\Comm_L(G)\to \Comm(G)$ which is injective under some natural conditions, so
$\Comm(G)$ often contains information about all relative commensurators.

In this paper we study commensurators of profinite groups. If $G$ is a profinite group,
the commensurator $\Comm(G)$ is defined similarly to the case of abstract groups,
except that finite index subgroups are replaced by open subgroups, and virtual automorphisms
are assumed to be continuous. Our main goal in this paper is to develop the general theory of commensurators of
profinite groups and to apply this theory to the study of totally disconnected locally compact groups.

\subsection{Totally disconnected locally compact groups and the universal property of $\mathbf{Comm(G)}$}
Recall that profinite groups can be characterized as totally disconnected compact groups;
on the other hand, by van Dantzig's theorem \cite{dan:stu} every totally disconnected locally compact (t.d.l.c.) group
contains an open compact subgroup (which must be profinite). If $G$ is a profinite group,
by an {\it envelope } of $G$ we mean any topological group $L$ containing $G$
as an open subgroup. Thus, t.d.l.c. groups can be thought of as envelopes of
profinite groups.

Given a profinite group $G$, can one describe all envelopes of $G$?
This very interesting question naturally leads to the problem of computing $\Comm(G)$.
Indeed, if $L$ is an envelope of $G$, then for every $g\in L$ there exists an open
subgroup $U$ of $G$ such that $gUg^{-1}\subseteq G$; note that $gUg^{-1}$ is also
an open subgroup of $G$. Thus, conjugation by $g$ determines a virtual automorphism of $G$,
and we obtain a canonical homomorphism $L\to \Comm(G)$. The kernel of this map
is equal to $\VZ(L)$, the virtual center of $L$ (see \S\ref{ss:vircen}).
Under additional assumptions on $L$, e.g. if $L$ is topologically simple and compactly generated,
one has $\VZ(L)=\{1\}$. Thus, if $\Comm(G)$ is known, it becomes easier to describe envelopes of $G$.

\subsection{Commensurators of algebraic groups and rigid envelopes} Let $L$ be a t.d.l.c. group, and let
$G$ be an open compact subgroup of $L$. Generalizing the argument in the previous
paragraph, we obtain a canonical homomorphism $\kappa_{L}\colon\Aut(L)\to\Comm(G)$, and one might ask when
$\kappa_L$ is an isomorphism (this is entirely determined by $L$, not by $G$,
since if $G'$ is another open compact subgroup of $L$, then $\Comm(G')$ is canonically isomorphic
to $\Comm(G)$).
We will say that $L$ is {\it rigid } if every isomorphism between open compact subgroups of $L$
extends uniquely to an automorphism of $L$. It is easy to see that $\kappa_L$
is an isomorphism whenever $L$ is rigid, and the converse is true provided
$\VZ(L)=\{1\}$.

A large class of rigid groups is provided by the celebrated paper of Pink~\cite{pink}.
According to \cite[Cor.~0.3]{pink}, if $F$ is a non-archimedean local field
and $\dbG$ is an absolutely simple simply-connected algebraic group over $F$,
then the group of rational points $\dbG(F)$ is rigid. Thus, if $G$ is an open compact
subgroup of $\dbG(F)$, then $\Comm(G)$ is canonically isomorphic to $\Aut(\dbG(F))$.
For instance, if $\dbG=\SL_n$, we can take $G=\SL_n(O)$ where $O$ is the ring of integers
in $F$, so $\Comm(\SL_n(O))$ is isomorphic to $\Aut(\SL_n(F))$. It is well-known
that $\Aut(\SL_2(F))\cong \PGL_2(F)\rtimes \Aut(F)$, and
$\Aut(\SL_n(F))\cong \PGL_n(F)\rtimes (\Aut(F)\times \la d\ra)$
for $n\geq 3$ where $d$ is the Dynkin involution.

Rigidity has an interesting consequence in the case of topologically simple groups.
In Section~\ref{s:comm}, we will show that every topologically simple
rigid t.d.l.c. group $L$ can be canonically recovered from any of its open compact subgroups.
By Pink's theorem, this result applies to $L=\dbG(F)/\Zen(\dbG(F))$,
where $\dbG$ and $F$ are as in the previous paragraph, and $\Zen(\dbG(F))$ is the finite
center of $\dbG(F)$. It would be interesting to know which of the currently known topologically
simple t.d.l.c. groups are rigid. For instance, we believe that topological Kac-Moody groups
over finite fields are rigid; at the same time, we will show that there exists
a non-rigid topologically simple t.d.l.c. group (see Corollary~\ref{cor:nonrigid}).

\subsection{Topologically simple envelopes}
The following fundamental problem was formulated in a recent
paper of Willis~\cite{george:sim}:

\begin{quest}
\label{quest:Wil1}
Let $L_1$ and $L_2$ be topologically simple t.d.l.c. groups.
Suppose that there exist open compact subgroups $G_1$ of $L_1$ and $G_2$ of $L_2$
such that $G_1$ is isomorphic to $G_2$. Is $L_1$ necessarily isomorphic to $L_2$?
\end{quest}
For our purposes, it is convenient to reformulate this problem as follows:
\begin{quest}
\label{quest:Wil2}
Let $L$ be a topologically simple t.d.l.c. group, and let
$G$ be an open compact subgroup of $L$. Is it true that any topologically simple envelope
of $G$ is isomorphic to $L$?
\end{quest}

We do not have the answer to this question in general, but it is already interesting
to know what happens for a specific group $L$. Using Pink's theorem,
we give a positive answer to Question~\ref{quest:Wil2} when $L=\dbG(F)/\Zen(\dbG(F))$
for some absolutely simple simply-connected algebraic group $\dbG$ and a local field $F$
(see Proposition~\ref{prop:unique}).

Now let $G$ be a profinite group which does not have a ``natural'' topologically
simple envelope. In this case, the basic question is not the uniqueness,
but the existence of a topologically simple envelope. There are two groups for which
this question is particularly interesting: the Nottingham group and the profinite
completion of the first Grigorchuk group.

Recall that the Nottingham group $\mathcal N(F)$ over a finite field $F$ is the group
of wild automorphisms of the local field $F((t))$. It is well-known that $\mathcal N(F)$
enjoys many similarities with Chevalley groups over $F[[t]]$. Furthermore, in \cite{mikh:new},
it was shown that $\mathcal N(F)$ is a product of finitely many subgroups each of which
can be thought of as a non-linear deformation of $\SL_2^1(F[[t]])$, the first congruence
subgroup of $\SL_2(F[[t]])$ (assuming $\chrc F>2$). Since the group
$\SL_2^1(F[[t]])$ has the natural topologically simple envelope $\PSL_2(F((t)))$,
it was very interesting to know if there is an analogous
envelope $L$ for the Nottingham group. If such $L$ existed, one would expect it to be topologically simple.
In \cite{Klopsch:aut}, Klopsch proved that $\Aut (\mathcal N(F))$ is a finite extension of $\mathcal N(F)$,
and in \cite{ersh:nott} it is shown that $\Comm (\mathcal N(F))\cong \Aut(\mathcal N(F))$
for $F=\Fp$ where $p>3$ is prime. Thus, $\Comm (\mathcal N(\Fp))$ is a profinite group for $p>3$.
This easily implies that $\mathcal N(\Fp)$ does not have any ``interesting'' envelopes;
in particular, it does not have any topologically simple envelopes.

Let $\Gamma$ be the first Grigorchuk group. In \cite{claas:comm}, R\"over proved
that $\Comm(\Gamma)$ is an (abstractly) simple group. This result suggests
that $\hGamma$, the profinite completion of $\Gamma$, may have a topologically simple
envelope. In this paper, we confirm this conjecture (see Theorem~\ref{thm:grig});
more precisely, we show that the subgroup of $\Comm(\hGamma)$ generated by $\Comm(\Gamma)$ and $\hGamma$
is a topologically simple envelope of $\hGamma$. We believe that this construction
yields a new example of a topologically simple t.d.l.c. group; furthermore, we will show
that this group is not rigid (as defined earlier in the introduction).

So far we discussed the problems of existence and uniqueness of topologically simple envelopes
for specific profinite groups. Are there general obstructions for the existence of a topologically
simple envelope, that is, can one prove that some profinite group $G$ does not have a topologically
simple envelope without computing $\Comm(G)$? This question becomes easier to answer if we
restrict our attention to compactly generated envelopes. In \cite{george:sim}, Willis
has shown that a solvable profinite group cannot have a compactly generated topologically
simple envelope. In this paper, we use commensurators to obtain several results of a similar flavour
(see Theorem~\ref{thm:rest} and Corollary~\ref{cor:stupid}). However, there are many interesting
cases where our criteria do not apply. For instance, we do not know if a finitely generated non-abelian free pro-$p$ group
has any topologically simple envelopes.

\subsection{The commensurator as a topological group} The structure of the commensurator of a profinite group
is easier to understand if we consider the commensurator as a topological group. In this paper
we introduce two topologies on $\Comm(G)$
-- the strong topology and the $\Aut$-topology -- which will serve different purposes.

The strong topology on $\Comm(G)$ is a convenient technical tool in the study of envelopes of $G$;
in particular, we show that $\Comm(G)$ with the strong topology plays the role of a universal
envelope of $G$, provided $\VZ(G)=\{1\}$. However, the corresponding topological structure on $\Comm(G)$
tells us little about the complexity of $\Comm(G)$ as a group.
From this point of view, a more adequate topology on $\Comm(G)$ is the $\Aut$-topology,
which is a natural generalization of the standard topology on the automorphism group
of a profinite group. In many examples where $\Comm(G)$ turns out to be isomorphic to a
familiar group, the $\Aut$-topology on $\Comm(G)$  coincides with the ``natural'' topology,
and in all these examples $\Comm(G)$ with the $\Aut$-topology is locally compact.
In general, local compactness of $\Comm(G)$ turns out to be equivalent to ``virtual stabilization'' of
the automorphism system of $G$. We show that some ``large'' profinite groups such as free
pro-$p$ groups and some branch groups do not satisfy this condition, and thus their commensurators
with the $\Aut$-topology are not locally compact. In all examples where $\Comm(G)$ with the $\Aut$-topology
is not locally compact, it seems very hard to describe $\Comm(G)$ itself and
the possible topologically
simple envelopes of $G$; however, non-local compactness of $\Comm(G)$ does impose an interesting
restriction on envelopes of $G$: it implies that $G$ does not have a second countable
topologically simple rigid envelope (see Proposition~\ref{prop:rigid_rest}).

\subsection{Commensurators of absolute Galois groups}
Let $F$ be a field, and let $F^{\sep}$ be a
{\it separable closure} of $F$. Then $F^{\sep}/F$ is a Galois extension, and the group
\begin{equation}
\label{eq:absgr}
G_F=\Gal(F^{\sep}/F)=\Aut_F(F^{\sep})
\end{equation}
is called the {\it absolute Galois group of $F$}. It carries canonically the structure of a profinite group.

In Section \ref{s:galois} we show that the Neukirch-Uchida theorem and its generalization by Pop
-- two important theorems in algebraic number theory -- can be interpreted as deep structure theorems
about the commensurators of certain absolute Galois groups.
The Neukirch-Uchida theorem is equivalent to the fact that the canonical map
$\iota_{G_{\Q}}\colon G_{\Q}\to\Comm(G_{\Q})_S$ is an isomorphism
(see Theorem~\ref{thm:neuc2}), where $\Comm(G_{\Q})_S$ denotes $\Comm(G_{\Q})$ with strong topology.

In order to give a reinterpretation of Pop's generalization of the Neukirch-Uchida theorem we introduce
certain totally disconnected locally compact groups $\{G_F(n)\}_{n\geq 0}$,
which generalize the absolute Galois group of $F$ in a natural way; in particular, $G_F(0)=G_F$.
We believe that these groups are of independent interest.
They satisfy a weak form of the Fundamental Theorem in Galois theory
(see Theorem~\ref{thm:main}), and as t.d.l.c. groups they have a very
complicated and rich structure which we do not discuss here any further.
Using Pop's theorem we show that for a field $F$ which is finitely generated
over $\Q$ of transcendence degree $n$ there is a canonical isomorphism between
$G_{\Q}(n)$ and $\Comm(G_F)_S$ (see Theorem~\ref{thm:popup}).

It is somehow surprising that the situation for $p$-adic fields seems to be much more
complicated. Mochizuki's version of the Neukirch-Uchida theorem for finite extensions of $\Q_p$
can be reinterpreted as a characterization of elements in  $\Comm(G_{\Q_p})_S$
which are contained in $\image(\iota_{G_F})$ for some finite extension $F/\Q_p$.
This suggests that the structure of $\Comm(G_{\Q_p})_S$ should be related
to the ramification filtrations on $G_{\Q_p}$. However, apart from some properties
which are related to the Galois cohomology of $p$-adic number fields, the structure
of $\Comm(G_{\Q_p})_S$ remains a mystery to the authors.

\vskip .1cm
{\bf Acknowledgements.} The first and last authors
would like to thank Claas R\"over whose explanation of his work on commensurators
initiated their interest in the subject.
We are grateful to Andrei Jaikin-Zapirain and Andrei Rapinchuk
for helpful discussions and suggestions which resulted in improvement of several
results in this paper.

\section{Preliminaries}
\label{s:norm}

\subsection{The virtual center}
\label{ss:vircen} Let $L$ be a topological group.
The subgroup \begin{equation}
\label{eq:defvircen}
\VZ(L)=\{\,g\in L\mid \Cent_L(g)\ \text{is open in $L$}\,\}.
\end{equation}
will be called the {\it virtual center} of $L$.\footnote{To the best of our knowledge,
the group $\VZ(L)$ was first introduced by Burger and Mozes~\cite{bumo:ihes1}
in the case of groups $L$ acting on a locally finite graph. This group
is denoted by ${\mathrm QZ}(L)$ in ~\cite{bumo:ihes1}.}
The following properties of $\VZ(L)$ are straightforward:
\begin{prop}
\label{prop:vircenbas}
Let $L$ be a topological group.
\begin{itemize}
\item[(a)] $\VZ(L)$ is a (topologically) characteristic subgroup of $L$.
\item[(b)] If $U$ is an open subgroup of $L$, then $\VZ(U)=\VZ(L)\cap U$.
\end{itemize}
\end{prop}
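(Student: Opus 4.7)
The statement is flagged as straightforward, so my plan is essentially to verify the definition directly, being careful with the interplay between the topologies on $L$ and $U$.

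For part (a), I would first argue that $\VZ(L)$ is a subgroup. Closure under inversion is immediate from $\Cent_L(g^{-1})=\Cent_L(g)$. Closure under multiplication follows because $\Cent_L(g)\cap\Cent_L(h)\subseteq\Cent_L(gh)$, and a finite intersection of open subgroups of $L$ is again open, so $\Cent_L(gh)$ contains an open subgroup of $L$ and is therefore open itself (a subgroup containing an open subgroup is open). To show the subgroup is topologically characteristic, let $\varphi\in\Aut(L)$ be a topological automorphism and $g\in\VZ(L)$; since $\varphi$ is a homeomorphism, $\Cent_L(\varphi(g))=\varphi(\Cent_L(g))$ is open in $L$, so $\varphi(g)\in\VZ(L)$.

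For part (b), I would prove the two inclusions separately. If $g\in\VZ(L)\cap U$, then $\Cent_L(g)$ is open in $L$, so $\Cent_U(g)=\Cent_L(g)\cap U$ is open in $U$, giving $g\in\VZ(U)$. Conversely, if $g\in\VZ(U)$ then $g\in U$ and $\Cent_U(g)$ is open in $U$; since $U$ is open in $L$ by hypothesis, $\Cent_U(g)$ is open in $L$ as well, and it is contained in $\Cent_L(g)$, so by the same ``a subgroup containing an open subgroup is open'' remark, $\Cent_L(g)$ is open in $L$, i.e.\ $g\in\VZ(L)\cap U$.

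There is no real obstacle here; the only subtlety worth noting is that in the converse direction of (b) it is essential that $U$ be open in $L$ (so that openness in $U$ transfers to openness in $L$), which is exactly the hypothesis. If one only assumed $U$ closed of finite index, for example, the same argument would still work, but if $U$ were an arbitrary subgroup the inclusion $\VZ(U)\subseteq\VZ(L)\cap U$ could fail.
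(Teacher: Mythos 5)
Your proof is correct and is exactly the direct verification the paper has in mind when it calls these properties ``straightforward'' (the paper gives no proof). Both inclusions in (b) are handled correctly, and you correctly isolate where openness of $U$ in $L$ is used. One minor remark on your closing aside: a closed subgroup of finite index in a topological group is automatically open (its complement is a finite union of closed cosets), so that hypothesis is not genuinely weaker than openness — the point stands, but it reduces to the stated case rather than extending it.
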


While the center of a Hausdorff topological group $G$ is always closed,
the virtual center $\VZ(G)$ need not be closed even if $G$ is a finitely
generated profinite group. For instance, let $\{S_n\}_{n\geq 1}$ be
pairwise non-isomorphic non-abelian finite simple groups, and let
$G=\prod_{n\geq 1} S_n$.
Then $G$ is a $2$-generated profinite group (see \cite{wilson:prof}),
and $\VZ(G)=\bigoplus_{n\geq 1} S_n$ is the direct sum of the subgroups $\{S_n\}$.
Hence $\VZ(G)$ is dense in $G$ and not closed.
The following proposition characterizes countably based profinite groups whose
virtual center is closed.

\begin{prop}
\label{prop:vircenprof}
Let $G$ be a countably based profinite group. Then $\VZ(G)$ is closed if and only if
for some open subgroup $U$ of $G$ one has $\VZ(U)=\Zen(U)$.
\end{prop}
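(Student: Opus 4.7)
For the easy direction, if an open subgroup $U\le G$ satisfies $\VZ(U)=\Zen(U)$, I would observe that $\Zen(U)$ is closed in the Hausdorff group $U$ and that by Proposition~\ref{prop:vircenbas}(b) one has $\VZ(U)=\VZ(G)\cap U$. Since $U$ is open in the profinite group $G$, it has finite index, so $\VZ(G)$ is a finite union of cosets of the closed set $\VZ(G)\cap U$, and hence is itself closed.

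For the converse, set $V:=\VZ(G)$ and assume it is closed, so that $V$ is itself a compact (hence Baire) profinite group. The strategy is to exhibit an open subgroup $U\le G$ with $[U, U\cap V]=1$; once this is done, $U\cap V\subseteq\Zen(U)$, and combining this with the reverse inclusion $\Zen(U)\subseteq U\cap V$ (central elements of an open subgroup have open centralizer in $G$, hence lie in $V$) and the identity $\VZ(U)=U\cap V$ from Proposition~\ref{prop:vircenbas}(b) delivers $\VZ(U)=\Zen(U)$.

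To construct $U$, fix a descending base $\{U_n\}_{n\ge 1}$ of open neighborhoods of $e$ in $G$, available because $G$ is countably based, and set $W_n:=V\cap\Cent_G(U_n)$. Each $W_n$ is a closed subgroup of $V$, the sequence is ascending, and $V=\bigcup_n W_n$ because every $g\in V$ has $\Cent_G(g)$ open, hence containing some $U_n$. The Baire category theorem applied to the compact Hausdorff space $V$ yields some $W_n$ with nonempty interior, and a subgroup of a topological group having nonempty interior is open, so $W_n$ is open in $V$ and $V\setminus W_n$ is a compact set (a finite union of $W_n$-cosets) not containing $e$. A further compactness argument, using that $\bigcap_m U_m=\{e\}$ forces the decreasing family of compact sets $\{U_m\cap(V\setminus W_n)\}_m$ to have empty intersection, produces $m$ with $U_m\cap V\subseteq W_n$; then $U:=U_m\cap U_n$ works, since $U\cap V\subseteq W_n\subseteq\Cent_G(U_n)\subseteq\Cent_G(U)$.

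The main subtlety is arranging $U$ to be simultaneously small enough that $U\cap V\subseteq W_n$ and contained in $U_n$ (so that centralizing $U_n$ upgrades to centralizing $U$); the single intersection $U_m\cap U_n$ accomplishes both. The countable base hypothesis enters twice: first to supply a countable ascending cover of $V$ so that Baire's theorem yields a $W_n$ with interior, and then to separate the compact set $V\setminus W_n$ from the identity by some $U_m$.
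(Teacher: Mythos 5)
Your proof is correct and takes essentially the same approach as the paper: the easy direction is identical, and in the converse both arguments apply Baire's category theorem to the ascending cover of the closed profinite group $\VZ(G)$ by the closed subgroups $\VZ(G)\cap\Cent_G(U_n)$. The final extraction of a suitable open $U$ differs only cosmetically --- you use compactness of $V\setminus W_n$ to find $U_m$ with $U_m\cap V\subseteq W_n$, whereas the paper observes that the ascending chain of finite-index centralizers must stabilize at $\VZ(G)=\Cent_G(U)$, giving $\Zen(U)=\VZ(U)$ directly.
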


\begin{proof}
Assume that there exists an open subgroup $U$ of $G$ such that $\VZ(U)=\Zen(U)$.
Then $\VZ(U)=\VZ(G)\cap U$ is closed and has also finite index in
$\VZ(G)$. This shows the `if' part of the proposition.

Assume that $\VZ(G)$ is closed, and
let $\ca{C}$ be a countable base for $G$.
Then $\VZ(G)=\bigcup_{W\in\,\ca{C}} \Cent_G(W)$. By  Baire's category
theorem, there exists $V\in\ca{C}$ such that $\Cent_G(V)$ is open in $\VZ(G)$ and thus has finite index in $\VZ(G)$.
Since $\Cent_G(U)\supseteq \Cent_G(V)$ whenever $U\subseteq V$, we conclude that
$\VZ(G)=\Cent_G(U)$ for some open subgroup $U$ of $G$. Then we have
$\VZ(U)= \VZ(G)\cap U=\Cent_G(U)\cap U=\Zen(U)$.
\end{proof}

\subsection{Continuous automorphisms of topological groups}
\label{ss:autotop}
Let $L$ be a topological group. By $\Aut(L)$ we denote the group of continuous automorphisms of $L$.
For $g\in L$ let $i_g\in\Aut(L)$ be the left conjugation by $g$,
that is,
\begin{equation}
\label{eq:aut2}
i_g(x) = gxg^{-1}\ \ \text{for all $g,x\in L$.}
\end{equation}
Let $i=i_L\colon L\to\Aut(L)$ be the canonical morphism given by $g\mapsto i_g$,
and let $\Inn(L)=\image(i)$, the subgroup of inner automorphisms of $L$.

In order to turn $\Aut(L)$ into a topological group, we need to make
additional assumptions on $L$.
First assume that $L$ has a base of neighborhoods of $1_L$ consisting of open subgroups.
In this case we can define the {\it strong topology} on $\Aut(L)$ using the following well-known
principle (see \cite{bou:top2}).

\begin{prop}
\label{prop:topbou}
Let $X$ be a group and let $\ca{F}$ be a set of subgroups of $X$.
Suppose that
\begin{itemize}
\item[(i)] for every $A,B\in\ca{F}$ there exists $C\in\ca{F}$ such that
$C\subseteq A\cap B$.
\item[(ii)]  for every $A\in\ca{F}$ and $g\in X$ there exists
$B\in\ca{F}$ such that $B\subseteq g^{-1}Ag$.
\end{itemize}
Then there exists a unique topology $\ca{T}_{\ca{F}}$ on $X$ with the property
that $(X,\ca{T}_{\ca{F}})$ is a topological group, and
$\ca{F}$ is a base of neighborhoods of $1_X$ in $\ca{T}_{\ca{F}}$.
\end{prop}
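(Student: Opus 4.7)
The plan is to define $\ca{T}_{\ca{F}}$ explicitly by declaring $U\subseteq X$ to be open exactly when, for every $x\in U$, there exists some $A\in\ca{F}$ with $xA\subseteq U$. Uniqueness is immediate from this characterization: in any topological group with $\ca{F}$ as a neighborhood base of $1_X$, left translation by $x$ turns $\ca{F}$ into a neighborhood base at $x$, which pins down the collection of open sets.

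Next I would verify that $\ca{T}_{\ca{F}}$ is indeed a topology and that $\ca{F}$ is a neighborhood base of $1_X$. Arbitrary unions pose no problem. For a finite intersection $U\cap V$ and a point $x$ in it, pick $A,B\in\ca{F}$ with $xA\subseteq U$ and $xB\subseteq V$, and use (i) to obtain $C\in\ca{F}$ with $C\subseteq A\cap B$; then $xC\subseteq U\cap V$. Each $A\in\ca{F}$ is itself open, since for any $a\in A$ one has $aA=A\subseteq A$ (as $A$ is a subgroup containing $1_X$). Consequently $A$ is an open neighborhood of $1_X$, and any open neighborhood of $1_X$ contains some set of the form $1_X\cdot A'=A'$ with $A'\in\ca{F}$, so $\ca{F}$ really is a neighborhood base at $1_X$.

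The core of the argument is the continuity of multiplication $m\colon X\times X\to X$ and inversion $\iota\colon X\to X$. For $\iota$ at a point $x$, given $A'\in\ca{F}$ one needs $U\in\ca{F}$ with $(xU)^{-1}\subseteq x^{-1}A'$, that is, $U\subseteq x^{-1}A'x$; this is exactly hypothesis (ii) applied with $g=x$ (using $U^{-1}=U$ since $U$ is a subgroup). For $m$ at a point $(x,y)$, given $A\in\ca{F}$ one needs $U\in\ca{F}$ with $(xU)(yU)\subseteq xyA$, equivalently $y^{-1}Uy\cdot U\subseteq A$. Applying (ii) with $g=y^{-1}$ produces $B\in\ca{F}$ with $B\subseteq yAy^{-1}$, equivalently $y^{-1}By\subseteq A$; then (i) supplies $U\in\ca{F}$ with $U\subseteq A\cap B$, and since $A$ is a subgroup,
\[
y^{-1}Uy\cdot U\ \subseteq\ y^{-1}By\cdot A\ \subseteq\ A\cdot A\ =\ A,
\]
as required.

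The one place where any real care is needed — and the step I expect to be the main potential pitfall — is keeping the direction of conjugation in hypothesis (ii) straight, since the natural reflex is to apply (ii) with $g=y$ rather than $g=y^{-1}$. Apart from this orientation issue, the argument is the standard Bourbaki construction of a group topology from a filter base of subgroups.
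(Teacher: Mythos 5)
Your proof is correct, and it is the standard Bourbaki argument for generating a group topology from a filter base of subgroups; the paper itself offers no proof of this proposition, merely citing Bourbaki's \emph{General Topology}, so there is nothing to compare against. The one small point worth making explicit (you touch on it in the existence half) is that in any candidate topology the members of $\ca{F}$, being subgroups and neighborhoods of the identity, are automatically open, which is what makes the left-coset characterization of open sets pin the topology down uniquely.
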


Let $\ca{F}$ be a base of neighborhoods of $1_L$
consisting of open subgroups of $L$.
By Proposition~\ref{prop:topbou},
$i(\ca{F}):=\{i(U) \mid\, U\in F\}$ is a base for unique topology $\ca{T}_S$ on $\Aut(L)$
which we call the {\it strong topology}. We will denote
the topological group $(\Aut(L),\ca{T}_S)$ by $\Aut(L)_S$.
Note that the induced topology on $\Out(L)=\Aut(L)/\Inn(L)$ is the discrete topology.
If $L$ is Hausdorff, then $\Zen(L)$ is closed, and thus
$\Aut(L)_S$ is also Hausdorff.

If $G$ is a profinite group, there is another natural topology on $\Aut(G)$,
which makes $\Aut(G)$ a profinite group, provided $G$ is finitely generated.
This topology (referred to as {\it standard topology } below)
will be discussed in Section \ref{ss:topaut}.

\subsection{The group of virtually trivial automorphisms}
\label{ss:atriv}
A continuous automorphism $\phi$ of a topological group $L$
will be called {\it virtually trivial }
if $\phi$ fixes pointwise some open subgroup of $L$. The set of all
virtually trivial automorphisms of $L$ will be denoted by $\TAut(L)$,
and it is clear that $\TAut(L)$ is a subgroup of $\Aut(L)$.
The following properties of $\TAut(L)$ are also straightforward:
\begin{prop}
\label{prop:taut}
Let $L$ be a topological group.
\begin{itemize}
\item[(a)] $\TAut(L)$ is a normal subgroup of $\Aut(L)$.
\item[(b)] $\TAut(L)\cap \Inn(L)= i(\VZ(L))$.
\end{itemize}
\end{prop}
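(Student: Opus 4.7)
My plan is to prove both parts by directly unpacking the definitions, using only the facts that continuous automorphisms of topological groups are homeomorphisms (so they send open subgroups to open subgroups) and that an inner automorphism $i_g$ fixes an element $u$ pointwise if and only if $g$ centralizes $u$.

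For part (a), I would start with $\phi\in\TAut(L)$, so that $\phi$ fixes some open subgroup $U\leq L$ pointwise, and an arbitrary $\psi\in\Aut(L)$. Since $\psi$ is a homeomorphism, $\psi(U)$ is open. For any $x\in\psi(U)$ I would write $x=\psi(u)$ with $u\in U$ and compute
\[
(\psi\phi\psi^{-1})(x)=\psi(\phi(u))=\psi(u)=x,
\]
which shows that $\psi\phi\psi^{-1}$ fixes the open subgroup $\psi(U)$ pointwise. Hence $\psi\TAut(L)\psi^{-1}\subseteq\TAut(L)$, establishing normality.

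For part (b), I would prove both containments. If $\alpha\in\TAut(L)\cap\Inn(L)$, write $\alpha=i_g$ and let $U$ be an open subgroup fixed pointwise by $i_g$. Then for every $u\in U$ we have $gug^{-1}=u$, i.e.\ $U\subseteq\Cent_L(g)$; since $U$ is open, so is $\Cent_L(g)$, hence $g\in\VZ(L)$ by the definition~\eqref{eq:defvircen}, and $\alpha\in i(\VZ(L))$. Conversely, if $g\in\VZ(L)$ then $\Cent_L(g)$ is itself an open subgroup on which $i_g$ restricts to the identity, so $i_g\in\TAut(L)\cap\Inn(L)$.

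There is really no main obstacle here: both assertions are essentially reformulations of definitions, and the entire argument rests on the trivial observations that continuous automorphisms map open subgroups to open subgroups and that $i_g|_U=\iid_U$ is equivalent to $U\subseteq\Cent_L(g)$. The proof should take only a few lines.
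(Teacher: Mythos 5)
Your proof is correct and complete; since the paper simply asserts these facts as ``straightforward'' without giving a proof, there is nothing to compare against, and your argument is exactly the routine unpacking of the definitions that the authors presumably had in mind. The one small point worth making explicit is that $\Cent_L(g)$ being open follows because it \emph{contains} the open subgroup $U$ (a subgroup of a topological group that contains an open subgroup is itself open), which you are implicitly using; otherwise everything is spelled out cleanly.
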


It follows from Proposition \ref{prop:taut}(b) that for
a Hausdorff topological group $L$, the subgroup $\TAut(L)$ is closed in
$\Aut(L)_S$ if and only if $\VZ(L)$ is closed in $L$.
Furthermore, if $\VZ(L)$ is trivial, then so is $\TAut(L)$:

\begin{prop}
\label{prop:jiatriv}
Let $L$ be a topological group with trivial virtual center.
Then $\TAut(L)=\{1\}$.
\end{prop}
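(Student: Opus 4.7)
The plan is to fix $\phi\in\TAut(L)$ and show that $\phi$ fixes every element of $L$. By definition of $\TAut(L)$, there is an open subgroup $U\le L$ with $\phi(u)=u$ for all $u\in U$. Let $g\in L$ be arbitrary and set $h=g^{-1}\phi(g)$; the goal is to prove $h=1$, since this means $\phi(g)=g$.

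The key computation is to identify an open subgroup of $L$ centralized by $h$. Since $L$ is a topological group, conjugation by $g$ is a homeomorphism, so $V\deq g^{-1}Ug\cap U$ is an open subgroup of $L$. For any $v\in V$, both $v\in U$ and $gvg^{-1}\in U$ are fixed by $\phi$. Applying $\phi$ to the identity $gvg^{-1}=gvg^{-1}$ and using that $\phi$ is a homomorphism yields
\[
gvg^{-1}=\phi(gvg^{-1})=\phi(g)\,v\,\phi(g)^{-1}.
\]
Rearranging gives $g^{-1}\phi(g)\,v=v\,g^{-1}\phi(g)$, i.e.\ $hv=vh$ for all $v\in V$. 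Thus $V\subseteq\Cent_L(h)$, and in particular $\Cent_L(h)$ is open in $L$.

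By the definition \eqref{eq:defvircen} of $\VZ(L)$, this means $h\in\VZ(L)$. The hypothesis $\VZ(L)=\{1\}$ forces $h=1$, hence $\phi(g)=g$. As $g\in L$ was arbitrary, $\phi=\iid$, proving $\TAut(L)=\{1\}$.

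There is no real obstacle here: the argument is essentially a one-line manipulation showing that the ``defect'' $g^{-1}\phi(g)$ commutes with the open subgroup $g^{-1}Ug\cap U$, which is exactly the virtual-center condition. The only point worth verifying carefully is that $g^{-1}Ug\cap U$ is open, which is immediate from continuity of conjugation.
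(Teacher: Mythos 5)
Your proof is correct and uses exactly the same computation as the paper: showing that the defect $g^{-1}\phi(g)$ commutes with the open subgroup $g^{-1}Ug\cap U$, hence lies in $\VZ(L)=\{1\}$. The only difference is that the paper runs this argument once at the slightly greater generality of Proposition~\ref{prop:isot} (for a partial isomorphism $\phi\colon U\to V$ between open subgroups agreeing with the identity on an open $W$) and then cites Proposition~\ref{prop:jiatriv} as the special case $U=V=L$.
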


Proposition~\ref{prop:jiatriv} is a special case of a more general result:
\begin{prop}
\label{prop:isot}
Let $L$ be a topological group with trivial virtual center, and let
$\phi\colon U\to V$ be a topological isomorphism between open subgroups of $L$
such that $\phi\vert_W=\iid_W$ for some open subgroup
$W\subseteq U\cap V$.
Then $U=V$ and $\phi=\iid_U$.
\end{prop}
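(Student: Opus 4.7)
The plan is to reduce the conclusion to the fact that $\VZ(L)=\{1\}$: for each $u\in U$ I would like to produce an element $c_u\in L$ that centralizes an open subgroup of $L$ and equals $u^{-1}\phi(u)$, forcing $\phi(u)=u$. Since this then holds for all $u\in U$, we get $V=\phi(U)=U$ and $\phi=\iid_U$ at once.

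First I would note that $W$, being open in the open subgroup $U$, is itself open in $L$. Fix $u\in U$. The set
\[
W_u \;:=\; W\cap u^{-1}Wu
\]
is an intersection of two open subgroups of $L$, hence an open subgroup of $L$ containing $1$; in particular $W_u\subseteq W\subseteq U\cap V$, and $uW_u u^{-1}\subseteq W$.

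Next, for every $w\in W_u$ both $w$ and $uwu^{-1}$ lie in $W$, so the hypothesis $\phi|_W=\iid_W$ gives
\[
\phi(u)\,w\,\phi(u)^{-1} \;=\; \phi(u)\,\phi(w)\,\phi(u)^{-1} \;=\; \phi(uwu^{-1}) \;=\; uwu^{-1}.
\]
Rearranging, the element $c:=u^{-1}\phi(u)\in L$ satisfies $cwc^{-1}=w$ for every $w\in W_u$. Thus $\Cent_L(c)\supseteq W_u$ is open in $L$, so $c\in\VZ(L)$. By hypothesis $\VZ(L)=\{1\}$, so $\phi(u)=u$.

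Since $u\in U$ was arbitrary, $\phi$ is the identity on $U$; in particular $V=\phi(U)=U$, completing the proof. There is essentially no obstacle here beyond the small check that $W_u$ is open in $L$ (not merely in $U$), which is automatic because $U$ is open in $L$; the whole argument is just a straight unwinding of the definition of $\VZ(L)$.
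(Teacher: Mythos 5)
Your proof is correct and is essentially the same argument as the paper's: fix $u\in U$, observe that for $w$ in the open subgroup $W\cap u^{-1}Wu$ both $w$ and $uwu^{-1}$ are fixed by $\phi$, deduce that $u^{-1}\phi(u)$ centralizes that open subgroup, and invoke $\VZ(L)=\{1\}$.
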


\begin{proof} Let $g\in U$. For every $x\in W\cap g^{-1}Wg$
we have $\phi(x)=x$ and $\phi(gxg^{-1})=gxg^{-1}$, and therefore
$[x,g^{-1}\phi(g)]=1$. Since $W\cap g^{-1}Wg$ is open in $L$,
we conclude that $g^{-1}\phi(g)\in \VZ(L)=\{1\}$.
Thus, we showed that $\phi(g)=g$ for every $g\in U$.
\end{proof}

\section{The commensurator of a profinite group}
\label{s:comm}

Let $G$ be a profinite group. A topological isomorphism from an open subgroup of $G$
to another open subgroup of $G$ will be called a {\it virtual automorphism }of $G$.
The set of all virtual automorphisms of $G$ will be denoted by $\VAut(G)$.
Two elements of $\VAut(G)$ are said to be equivalent, if they coincide
on some open subgroup of $G$. Equivalence classes of elements of $\VAut(G)$
form a group $\Comm(G)$ which we will call the {\it commensurator}  of the profinite group $G$.
More precisely, if $\phi\colon U\to V$ and $\psi\colon U'\to V'$
are two virtual automorphisms, and $[\phi], [\psi]\in \Comm(G)$ are the corresponding
equivalence classes, then $[\phi]\cdot[\psi]=[\theta]$ where
$\theta=\phi\vert_{U\cap V'}\circ\psi\vert_{\psi^{-1}(U\cap V')}$.

\begin{rem} If $G$ is a finitely generated profinite group, then all finite index
subgroups of $G$ are open by the remarkable recent theorem of Nikolov and Segal~\cite{nikseg1, nikseg2},
formerly known as Serre conjecture. Thus, in this case $\Comm(G)\cong \Comm(G_{abs})$
where $G_{abs}$ is $G$ considered as an abstract group.
\end{rem}

For every open subgroup $U$ of $G$ one has two canonical homomorphisms
\begin{equation}
\label{eq:canmap}
\begin{aligned}
\iota_U\colon& U&\longrightarrow&& \Comm(G),\\
\rho_U\colon& \Aut(U)& \longrightarrow&&\Comm(G).
\end{aligned}
\end{equation}
We put $\Autbar(U)=\image(\rho_U)$
and will usually write $\iota(U)$ instead of $\iota_U(U)$.
Note that $\kernel(\iota_U)=\VZ(U)$ and $\kernel(\rho_U)=\TAut(U)$.

Every virtual automorphism $\phi\in \VAut(U)$ can also be considered
as a virtual automorphism of $G$.
This correspondence yields a canonical mapping
\begin{equation}
\label{eq:ij}
j_{U,G}\colon\Comm(U)\longrightarrow\Comm(G).
\end{equation}
which is easily seen to be an isomorphism. Henceforth, we will usually identify
$\Comm(U)$ with $\Comm(G)$,
without explicitly referring to the isomorphism $j_{U,G}$.

\subsection{The commensurator of a profinite group as a topological group}
\label{ss:commtop}
There are two useful ways of topologizing
the commensurator of a profinite group.
The two topologies will be called the {\it strong topology} and the {\it Aut-topology}.
In this section we will define the strong topology and show how to use it
as a tool in studying relationship between totally disconnected locally compact (t.d.l.c.)
groups and their open compact subgroups. The Aut-topology, which is a natural
generalization of the standard topology on the automorphism group of a finitely generated
profinite group, will be defined in Section~\ref{ss:topaut}.

Let $G$ be a profinite group. The {\it strong topology} on $\Comm(G)$ can be defined as
the direct limit topology associated to the family of maps
$\{\,\iota_U \colon U\to\Comm(G)\mid \text{$U$ open in $G$}\,\}$, that is, the strongest topology on $\Comm(G)$ such that
all the maps $\iota_U$ are continuous. For our purposes, it will be more
convenient to give a more explicit definition. This definition
is unambiguous by Proposition~\ref{prop:topbou}.

\begin{defi} The {\it strong topology } $\ca{T}_S$ on $\Comm(G)$ is the unique topology
such that $(\Comm(G),\ca{T}_S)$  is a topological group and
the set $\{\,\image(\iota_U)\mid \text{$U$ open in $G$}\,\}$ is a base of neighborhoods
of $1_{\Comm(G)}$. We denote the topological group $(\Comm(G),\ca{T}_S)$ by $\Comm(G)_S$.
\end{defi}

\begin{prop}
\label{prop:strong}
Let $G$ be a profinite group.
\begin{itemize}
\item[(a)] If $U$ is an open subgroup of $G$, the canonical map
$j_{U,G}\colon\Comm(U)_S\to\Comm(G)_S$
is a homeomorphism.
\item[(b)] The group $\Comm(G)_S$ is Hausdorff if and only if $\VZ(G)$ is closed.
If these conditions hold, $\Comm(G)_S$ is a t.d.l.c. group.
\item[(c)] If $\VZ(G)=\{1\}$, then $\VZ(\Comm(G)_S)=\{1\}$ as well.
\item[(d)] Assume that $\VZ(G)=\{1\}$. Then $\Comm(G)_S$ is unimodular
if and only if any two isomorphic open subgroups of $G$
are of the same index.
\item[(e)]
Assume that $G$ is finitely generated and $\VZ(G)=\{1\}$.
Then $\Comm(G)_S$ is uniscalar
if and only if for every virtual automorphism $\phi\colon U\to V$
of $G$ there is an open subgroup $W\subseteq U$ such that $\phi(W)=W$.
\end{itemize}
\end{prop}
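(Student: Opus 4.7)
The argument rests on two basic identities that I would verify at the start: (i) for any open subgroup $U$ of $G$ and $u\in U$, $\iota_U(u)=\iota_G(u)$ in $\Comm(G)$, so $\iota_U(U)=\iota_G(U)$; and (ii) for any representative $\phi\colon U_0\to V_0$ of a class $[\phi]\in\Comm(G)$ and any $u\in U_0$, $[\phi]\,\iota_G(u)\,[\phi]^{-1}=\iota_G(\phi(u))$, by a short computation using that $\phi$ is a homomorphism. Granted these, part (a) reduces to a cofinality check: the bases $\{\iota_V(V):V\subseteq U\text{ open}\}$ and $\{\iota_W(W):W\subseteq G\text{ open}\}$ of neighborhoods of the identity are mutually cofinal, via $V\mapsto V$ and $W\mapsto W\cap U$. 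For part (b), I would compute the closure of the identity in $\Comm(G)_S$ as $\bigcap_U\iota_G(U)=\iota_G\bigl(\bigcap_U U\cdot\VZ(G)\bigr)=\iota_G(\overline{\VZ(G)})$, so Hausdorffness is equivalent to $\overline{\VZ(G)}\subseteq\kernel(\iota_G)=\VZ(G)$, i.e.\ to $\VZ(G)$ being closed; in that case $\iota_G(G)$ is a compact open subgroup of $\Comm(G)_S$ by construction, giving the t.d.l.c.\ conclusion. Part (c) is then immediate: if $\iota_G(U')\subseteq\Cent_{\Comm(G)_S}([\phi])$ for some open $U'$ in the domain of $\phi$, identity (ii) forces $\iota_G(\phi(u))=\iota_G(u)$ for $u\in U'$, so injectivity of $\iota_G$ together with Proposition~\ref{prop:isot} yields $[\phi]=1$.

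Part (d) is a direct modular-function calculation. The standard formula $\Delta(g)=[gKg^{-1}:K\cap gKg^{-1}]/[K:K\cap gKg^{-1}]$, valid for any compact open subgroup $K$ of a t.d.l.c.\ group, applied to $K=\iota_G(U)$ and using identity (ii) to compute $[\phi]K[\phi]^{-1}=\iota_G(V)$, gives $\Delta([\phi])=[V:U\cap V]/[U:U\cap V]=[G:U]/[G:V]$. Thus unimodularity is equivalent to $[G:U]=[G:V]$ whenever $U,V$ are isomorphic open subgroups of $G$.

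Part (e) is the most involved; I would invoke Willis's characterization that a t.d.l.c.\ group is uniscalar iff every element normalizes some compact open subgroup. The direction $(\Leftarrow)$ is immediate from identity (ii): if $\phi(W)=W$ with $W$ open in $U$, then $[\phi]$ normalizes $\iota_G(W)$. For $(\Rightarrow)$, suppose $[\phi]$ normalizes a compact open $K\subseteq\Comm(G)_S$; the task is to convert $K$ into one visibly of the form $\iota_G(W_0)$. Since $G$ is finitely generated and $\VZ(G)=\{1\}$, $\iota_G(G)\cong G$ is finitely generated profinite, and by commensurability with $\iota_G(G)$ so is $K$; hence $K$ has only finitely many open subgroups of each finite index. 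Apply this to $K_n:=K\cap[\phi]^n\iota_G(G)[\phi]^{-n}$: all have equal finite index in $K$ and are cyclically permuted by $[\phi]$-conjugation, so only finitely many values occur. Consequently $K^{*}:=\bigcap_n K_n$ is a finite intersection, hence compact open, sits inside $\iota_G(G)$, and is normalized by $[\phi]$; write $K^{*}=\iota_G(W_0)$. Conjugation by $[\phi]$ then restricts to a continuous automorphism $\tilde\phi$ of $W_0$, which by identities (i) and (ii) together with injectivity of $\iota_G$ satisfies $\tilde\phi=\phi$ on $W_0\cap U$. Finally, $W_0$ is itself finitely generated profinite, so its characteristic (and hence $\tilde\phi$-invariant) open subgroups form a base of neighborhoods of the identity; a characteristic $W\subseteq W_0\cap U$ yields $\phi(W)=\tilde\phi(W)=W$ as required.

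The main obstacle is the $(\Rightarrow)$ direction of part (e): the challenge is bridging from an abstract compact open subgroup normalized by $[\phi]$ to one visibly built from the virtual automorphism $\phi$. Finite generation of $G$ is used essentially twice in this step --- once to bound the orbit of $K\cap\iota_G(G)$ under $[\phi]$-conjugation (via the finiteness of open subgroups of a given index in $K$), and once to furnish arbitrarily small characteristic open subgroups of $W_0$ --- and it is not obvious either use can be removed.
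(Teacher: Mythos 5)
Your proof is correct throughout. Parts (a)--(d) are essentially the paper's own argument: the paper dismisses (a) and (b) as straightforward, and your treatment of (c) and (d) matches the paper's (the modular-function computation in (d) and the centralizer computation in (c) are exactly as in the paper, with convention-dependent signs that do not affect the conclusion).

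The one place where you take a genuinely different and longer route is the $(\Rightarrow)$ direction of (e). You first run a finiteness argument on the $[\phi]$-conjugates $K_n=K\cap[\phi]^n\iota_G(G)[\phi]^{-n}$ to produce a $[\phi]$-invariant open $K^*\subseteq\iota_G(G)$, and only then invoke small characteristic subgroups --- of $W_0=\iota_G^{-1}(K^*)$. The paper skips the intersection step entirely: since $Y=K$ is itself finitely generated (being commensurable with $G$), its characteristic open subgroups already form a base of neighborhoods of the identity, so one may directly choose a characteristic open $Y'\leq Y$ with $Y'\subseteq\image(\iota_U)$. Being characteristic in a group that $[\phi]$ normalizes, $Y'$ is automatically $[\phi]$-conjugation-invariant, and $W=\iota_U^{-1}(Y')$ does the job. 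This answers the question you raise at the end: finite generation is needed only once, to produce the characteristic base, and your first use of it (bounding the orbit of $K\cap\iota_G(G)$) is dispensable.
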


\begin{proof}
Parts (a) and (b) are straightforward, so we only prove (c), (d) and (e).

(c) Let $g\in \VZ(\Comm(G)_S)$, and let $V$ be an open subgroup
of $G$, such that $[g,\iota(V)]=\{1\}$. Let $\phi\in\VAut(G)$ be a virtual
automorphism representing $g$, and let $U$ be an open subgroup on which
$\phi$ is defined. The equality $[g,\iota(V)]=\{1\}$ implies that
$x^{-1}\phi(x)\in \VZ(G)$ for every $x\in U\cap V$. Since $\VZ(G)=\{1\}$,
we conclude that $\phi$ acts trivially on $U\cap V$,
whence $g=[\phi]=1$.

(d) Take any $g=[\phi]\in\Comm(G)$.
Let $U$ be an open subgroup of $G$ on which $\phi$ is defined, and let $V=\phi(U)$.
Let $\mu$ be a fixed Haar measure on $\Comm(G)_S$,
and let $\Delta\colon\Comm(G)\to\RR$
denote the modular function of $\Comm(G)_S$.
Then
\begin{equation}
\label{eq:mod}
\Delta(g)=\mu(V)/\mu(U)=|G:U|/|G:V|,
\end{equation}
which immediately implies the assertion of part (d).

(e) A t.d.l.c. group is uniscalar if and only if every element normalizes some
open compact group. Thus the `if' part is obvious. Now assume that $\Comm(G)_S$
is uniscalar. Given $\phi\in \VAut(G)$, let $Y$ be an open compact subgroup
of $\Comm(G)_S$ normalized by $[\phi]$. Since $G$ is finitely generated,
so is $Y$, and thus there exists a characteristic subgroup $Y^\prime$ of $Y$
which is contained in $\image(\iota_U)$.
Hence $\phi(W)=W$ for $W=\iota_U^{-1}(Y^\prime)$.
\end{proof}

In addition to having a transparent structure, the strong topology does have
practical applications. In the next subsection we will see that the group
$\Comm(G)_S$ can be thought of as the universal envelope of $G$, provided $\VZ(G)=\{1\}$.
However, as the following examples show, the strong
topology does not have to coincide with the ``natural'' topology on $\Comm(G)$.

\begin{example}
\label{ex:strong_defect}
\rm (a) Let $G=\Z_p$. Then $\Comm(\Z_p)$ is clearly isomorphic to
$\Q_p^{*}$ as an abstract group, but $\ca{T}_S$ is the discrete topology.

(b) Let $G=\SL_n(\Fpt)$. Then $\Comm(G)$ is isomorphic to a finite extension
of $\PGL_n(\RFpt)\rtimes \Aut(\Fpt)$ and carries a natural topology induced from the local field $\RFpt$.
The subgroup $\PGL_n(\RFpt)$ of $\PGL_n(\RFpt)\rtimes \Aut(\Fpt)$
is open with respect to the strong topology, but not with respect to the local field
topology.
\end{example}

The deficiencies of the strong topology on $\Comm(G)$ illustrated by this example
are due to the fact that
the maps  $\rho_U\colon\Aut(U)\to \Comm(G)_S$, with $U$ open in $G$, are not necessarily continuous
with respect to the standard topology on $\Aut(U)$.
The strongest topology on $\Comm(G)$ which makes all these maps continuous and turns
$\Comm(G)$ into a topological group will be introduced in Section~\ref{ss:topaut}.
This topology will be called the $\Aut$-topology.

\subsection{Commensurators as universal envelopes}
\label{ss:unienv}
Let $G$ be a profinite group. In the introduction we defined an envelope of $G$
to be any group $L$ which contains $G$ as an open subgroup. For various purposes it
will be convenient to think of envelopes in a more categorical way:
\begin{defi}
Let $G$ be a profinite group. An {\it envelope} of $G$ is a pair $(L,\eta)$
consisting of a topological group $L$ and an injective homomorphism $\eta\colon G\to L$
such that $\eta(G)$ is open in $L$, and $\eta$ maps $G$ homeomorphically onto $\eta(G)$.
The group $L$ itself will also be referred to as an envelope of $G$ whenever the
reference to the map $\eta$ is inessential.
\end{defi}
The next proposition shows
that $\Comm(G)_S$ can be considered as a universal open envelope of $G$,
provided $G$ has trivial virtual center.

\begin{prop}
\label{prop:unienv}
Let $G$ be a profinite group, and let
$(L,\eta)$ be an envelope for $G$.
Then there exists a canonical continuous open
homomorphism $\eta_\ast\colon L\to\Comm(G)_S$ making the following diagram commutative:
\begin{equation}
\label{cd:unienv}
\xymatrix{
G\ar[r]^{\eta}\ar[rd]^{\iota_G}&L\ar@{-->}[d]^{\eta_\ast}\\
&\Comm(G)_S
}
\end{equation}
The kernel of $\eta_{\ast}$ is equal to $\VZ(L)$. Furthermore,
if $\VZ(G)=\{1\}$, then $\eta_\ast$ is the unique map making
\eqref{cd:unienv} commutative.
\end{prop}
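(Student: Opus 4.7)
The map $\eta_\ast$ is constructed as follows. For $g\in L$, the open subgroup $\eta(G)\cap g^{-1}\eta(G)g$ of $L$ pulls back under $\eta$ to an open subgroup $U_g\subseteq G$, and conjugation by $g$ transported through $\eta$ defines a virtual automorphism
\[
\phi_g\colon U_g\longrightarrow \eta^{-1}(g\eta(U_g)g^{-1}),\qquad \phi_g(u)=\eta^{-1}(g\eta(u)g^{-1}).
\]
Set $\eta_\ast(g)=[\phi_g]\in\Comm(G)$. The equivalence class is independent of the particular domain used (restricting to a smaller open subgroup gives an equivalent virtual automorphism), and checking compatibility with products shows that $\eta_\ast$ is a homomorphism. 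Commutativity of \eqref{cd:unienv} is immediate: for $v\in G$ the virtual automorphism $\phi_{\eta(v)}$ is just conjugation by $v$ on $G$, so $\eta_\ast(\eta(v))=[i_v]=\iota_G(v)$.

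For continuity and openness, I would use that the subgroups $\iota_G(V)$ with $V$ running over open subgroups of $G$ form a base at $1$ in $\Comm(G)_S$. Since $\eta$ is an open embedding, each $\eta(V)$ is open in $L$, and a direct computation gives the key identity $\eta_\ast(\eta(V))=\iota_G(V)$: the inclusion ``$\subseteq$'' is clear from the previous paragraph, and ``$\supseteq$'' is the definition of $\iota_G$. This simultaneously yields that $\eta(V)\subseteq \eta_\ast^{-1}(\iota_G(V))$, proving continuity, and that $\eta_\ast$ sends a neighborhood base of $1$ in $L$ onto a neighborhood base of $1$ in $\Comm(G)_S$, proving that $\eta_\ast$ is open.

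The kernel computation is a reformulation of the definition of $\VZ(L)$: we have $g\in\kernel(\eta_\ast)$ iff $\phi_g$ is equivalent to the identity, iff there is an open subgroup $W\subseteq U_g$ such that $g\eta(w)g^{-1}=\eta(w)$ for all $w\in W$, iff $g$ centralizes the open subgroup $\eta(W)$ of $L$, iff $g\in\VZ(L)$.

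For uniqueness under the assumption $\VZ(G)=\{1\}$, suppose $\eta'_\ast\colon L\to\Comm(G)_S$ is a second continuous homomorphism with $\eta'_\ast\circ\eta=\iota_G$. Then $\eta_\ast$ and $\eta'_\ast$ coincide on the open subgroup $\eta(G)$. Fix $g\in L$ and let $N=\eta(G)\cap g^{-1}\eta(G)g$, an open subgroup of $L$. For $h\in N$ both $h$ and $ghg^{-1}$ lie in $\eta(G)$, so applying the relation $\eta_\ast(ghg^{-1})=\eta_\ast(g)\eta_\ast(h)\eta_\ast(g)^{-1}$ (and its analogue for $\eta'_\ast$) yields that $\eta'_\ast(g)^{-1}\eta_\ast(g)$ commutes with $\eta_\ast(N)$. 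By the openness of $\eta_\ast$ established above, $\eta_\ast(N)$ is an open subgroup of $\Comm(G)_S$, so $\eta'_\ast(g)^{-1}\eta_\ast(g)\in\VZ(\Comm(G)_S)$. By Proposition~\ref{prop:strong}(c) the hypothesis $\VZ(G)=\{1\}$ forces $\VZ(\Comm(G)_S)=\{1\}$, whence $\eta_\ast=\eta'_\ast$. The main obstacle is bookkeeping the openness/continuity of $\eta_\ast$ carefully, since both the kernel and the uniqueness arguments rely on transporting the ``open subgroup'' structure faithfully between $L$ and $\Comm(G)_S$.
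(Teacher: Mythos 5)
Your proposal is correct and follows essentially the same route as the paper: construct $\eta_\ast$ by transporting conjugation through $\eta$, verify continuity and openness via the identity $\eta_\ast(\eta(V))=\iota_G(V)$, identify the kernel with $\VZ(L)$ by unwinding definitions, and prove uniqueness by showing that the difference of two candidate maps centralizes the open subgroup $\iota_G(\eta^{-1}(N))$ and then invoking $\VZ(\Comm(G)_S)=\{1\}$ from Proposition~\ref{prop:strong}(c). You merely spell out the routine verifications that the paper compresses into the phrase ``it is straightforward to check.''
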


\begin{proof}
For simplicity of notation, we shall identify $G$ with $\eta(G)$.
Let $l\in L$, and let $U = G\, \cap\, l^{-1}G l$.
Then $U$ and $l U l^{-1}$ are open subgroups of $G$, and left conjugation by $l$ induces a virtual isomorphism
$i_l^\prime\colon U\to l U l^{-1}$ of $G$.
It is straightforward to check that the induced map $\eta_\ast\colon L\to\Comm(G)_S$
given by $\eta_\ast(l)=[i_l^\prime]$ has the desired properties.

Assume that $\VZ(G)=\{1\}$. To prove uniqueness, assume that there is another map
$j\colon L\to \Comm(G)$ making
the above diagram commutative. Then $j(x)=\eta_*(x)$ for every $x\in G$.
Now take any $l\in L$, and let $V$ be an open subgroup
of $G$ such that $l V l^{-1}\subset G$. Then for every $x\in V$ we have
$j(l xl^{-1})=\eta_*(lxl^{-1})$, whence
$j(l) j(x) j(l)^{-1}=\eta_*(l)j(x)\eta_*(l)^{-1}$. Thus, if $h=j(l)^{-1}\eta_*(l)$,
then $h$ centralizes $j(V)$, which is an open subgroup of $\Comm(G)_S$. Since
$\Comm(G)_S$ has trivial virtual center by Proposition~\ref{prop:strong}(c),
we conclude that $h=1$, whence $j(l)=\eta_*(l)$.
\end{proof}

Next we turn to the following question: which t.d.l.c. groups
arise as commensurators of profinite groups with trivial virtual center.
First, observe that if $G$ is a profinite group, the canonical map
$\iota\colon G\to\Comm(G)_S$
is an isomorphism if and only if
\begin{itemize}
\item[(i)]$\VZ(G)=\{1\}$ and
\item[(ii)] Any virtual automorphism of $G$ is given by conjugation by some $g\in G$.
\end{itemize}

A group $G$ (not necessarily profinite) satisfying (i) and (ii) will be called hyperrigid.
It will be convenient to reformulate the definition of hyperrigidity as follows:

\begin{defi} A topological group $L$ is called {\it hyperrigid} if for every
topological isomorphism $\phi:U\to V$ between open compact subgroups $U$ and $V$ of $L$
there exists a unique element $g_{\phi}\in L$ such that $\phi(x)=g_{\phi}\,x\,{g_{\phi}}^{-1}$
for every $x\in U$.
\end{defi}

The following proposition shows that hyperrigidity is a built-in and defining property of
commensurators of profinite groups with trivial virtual center.

\begin{prop}
\label{prop:rigid}
Let $G$ be a profinite group with trivial virtual center. Then $\Comm(G)_S$ is a hyperrigid t.d.l.c. group.
Moreover, $(\Comm(G)_S,\iota_G)$ is the unique (up to isomorphism) hyperrigid envelope of $G$.
\end{prop}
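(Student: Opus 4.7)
The plan is to first verify the structural properties of $\Comm(G)_S$, then establish hyperrigidity, and finally derive uniqueness of the hyperrigid envelope. By Proposition~\ref{prop:strong}(b), the assumption $\VZ(G)=\{1\}$ guarantees that $\Comm(G)_S$ is a t.d.l.c.\ group, and by Proposition~\ref{prop:strong}(c) it has trivial virtual center. In particular, Proposition~\ref{prop:isot} applies to $\Comm(G)_S$, which will be the main tool for upgrading local agreement of two maps to global agreement.

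To prove hyperrigidity, I start with a topological isomorphism $\phi\colon U\to V$ between open compact subgroups of $\Comm(G)_S$. Since $\iota_G(G)$ is open in $\Comm(G)_S$, the intersection $U\cap \iota_G(G)\cap \phi^{-1}(\iota_G(G))$ is an open neighborhood of $1$, hence contains $\iota_G(W_1)$ for some open subgroup $W_1\le G$. Because $\iota_G\colon G\to\iota_G(G)$ is a topological isomorphism (injectivity from $\VZ(G)=\{1\}$, and $\iota_G$ is a continuous bijection from a compact space onto a Hausdorff space), the image $\phi(\iota_G(W_1))$ has the form $\iota_G(W_2)$ for some open subgroup $W_2\le G$. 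Setting $\psi=\iota_G^{-1}\circ\phi|_{\iota_G(W_1)}\circ\iota_G|_{W_1}\in\VAut(G)$ and $g=[\psi]\in\Comm(G)$, a direct computation in $\Comm(G)$ shows that $[\psi\, i_w\, \psi^{-1}]=[i_{\psi(w)}]$, and hence $g\iota_G(w)g^{-1}=\iota_G(\psi(w))=\phi(\iota_G(w))$ for every $w\in W_1$. Applying Proposition~\ref{prop:isot} to the topological isomorphism $i_{g^{-1}}\circ\phi\colon U\to g^{-1}Vg$, which is the identity on the open subgroup $\iota_G(W_1)\subseteq U\cap g^{-1}Vg$, yields $U=g^{-1}Vg$ and $\phi(x)=gxg^{-1}$ for all $x\in U$. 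Uniqueness of $g$ is automatic: any other choice $g'$ would force $g^{-1}g'$ to centralize the open subgroup $U$, hence to lie in $\VZ(\Comm(G)_S)=\{1\}$.

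For the uniqueness of the hyperrigid envelope, let $(L,\eta)$ be any hyperrigid envelope of $G$. By Proposition~\ref{prop:unienv} there is a canonical continuous open homomorphism $\eta_\ast\colon L\to\Comm(G)_S$ with $\eta_\ast\circ\eta=\iota_G$ and $\kernel(\eta_\ast)=\VZ(L)$. Hyperrigidity of $L$ forces $\VZ(L)=\{1\}$: any $z\in\VZ(L)$ centralizes some open subgroup, which we may shrink to an open compact subgroup $V\subseteq\eta(G)$, and then both $1_L$ and $z$ implement $\iid_V$ by conjugation, so uniqueness in hyperrigidity gives $z=1_L$. Hence $\eta_\ast$ is injective. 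For surjectivity, given a virtual automorphism $\phi\colon W_1\to W_2$ of $G$, the transported map $\tilde\phi=\eta\circ\phi\circ\eta^{-1}\colon\eta(W_1)\to\eta(W_2)$ is a topological isomorphism between open compact subgroups of $L$, so by hyperrigidity of $L$ there is $l\in L$ with $\tilde\phi(x)=lxl^{-1}$ on $\eta(W_1)$; unwinding the definition of $\eta_\ast$ in Proposition~\ref{prop:unienv} gives $\eta_\ast(l)=[\phi]$. A continuous open bijective homomorphism is a topological isomorphism, so $\eta_\ast$ realizes the desired isomorphism of envelopes.

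The main technical subtlety lies in the second paragraph: one must arrange that $\phi$ both sends some $\iota_G(W_1)\subseteq U$ into $\iota_G(G)$ and that the corestricted map can be identified with a virtual automorphism of $G$, which hinges on $\iota_G$ being a topological isomorphism onto its image. Once this element $g=[\psi]$ is produced, the triviality of $\VZ(\Comm(G)_S)$ combined with Proposition~\ref{prop:isot} promotes the resulting local identification of $\phi$ with $i_g$ to a global one without additional work, and the uniqueness part becomes essentially formal from the universal property in Proposition~\ref{prop:unienv}.
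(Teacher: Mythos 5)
Your proof is correct and follows essentially the same route as the paper: you produce an element $g\in\Comm(G)$ representing $\phi$ near the identity and invoke Proposition~\ref{prop:isot} together with $\VZ(\Comm(G)_S)=\{1\}$ to globalize and get uniqueness, and for the second part you use the canonical map $\eta_\ast$ of Proposition~\ref{prop:unienv} and hyperrigidity of $L$ to see it is an isomorphism. The only (cosmetic) differences are that the paper views $\phi$ directly as a virtual automorphism via the identification $\Comm(U)_S\cong\Comm(G)_S$ rather than pulling back through $\iota_G$, and it exhibits an explicit inverse $\beta_\ast$ of $\eta_\ast$ instead of checking injectivity and surjectivity separately.
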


\begin{proof}
Since $\VZ(G)=\{1\}$, we can identify $G$ with $\iota_G(G)$.
Let $\phi\colon U\to V$ be an isomorphism of open compact subgroups of
$\Comm(G)_S$. Note that the groups $U$ and $G$ are commensurable, so
$\Comm(G)_S$ can be canonically identified with $\Comm(U)_S$.
Let $\phi^\prime=\phi\vert_{U^\prime}\colon U^\prime\to V^\prime$, where
$U^\prime=\phi^{-1}(U\cap V)$ and $V^\prime=U\cap V$. Then $\phi'$
is a virtual automorphism of $U$, and let $g=[\phi^\prime]\in\Comm(U)$.
The isomorphism $\phi\circ i_{g^{-1}}\colon gUg^{-1}\to V$ restricted to
$W=gU' g^{-1}$ is equal to $\iid_W$.
By Proposition~\ref{prop:isot}, $V=gUg^{-1}$ and $\phi=i_g\vert_U$, so
$\phi(x)=gxg^{-1}$ for every $x\in U$.
The uniqueness of $g$ with this property is clear since $\VZ(\Comm(U)_S)=\{1\}$.
This shows that $\Comm(G)_S=\Comm(U)_S$ is hyperrigid,
and $(\Comm(G)_S,\iota_G)$ is a hyperrigid envelope of $G$.
It remains to show that it is unique up to isomorphism.
Suppose that $(L,\eta)$ is a hyperrigid envelope of $G$.
Hyperrigidity of $L$ yields a map $\beta\colon \VAut(G)\to L$ given by
$\beta(\phi)=g_{\phi}$, which defines a group homomorphism $\beta_\ast\colon\Comm(G)\to L$.
A straightforward computation shows that $\beta_\ast$ is the inverse of
$\eta_\ast$ where $\eta_{\ast}\colon L\to\Comm(G)_S$ is the canonical map
defined in Proposition \ref{prop:unienv}. Since $\eta_{\ast}$ is continuous
and open, we conclude that $L\cong \Comm(G)_S$.
\end{proof}
\begin{cor}
A t.d.l.c. group $L$ is hyperrigid if and only if $L\cong \Comm(G)_S$
for some profinite group $G$ with trivial virtual center.
\end{cor}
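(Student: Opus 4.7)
The ``if'' direction is immediate from Proposition~\ref{prop:rigid}, which already asserts that $\Comm(G)_S$ is hyperrigid whenever $\VZ(G)=\{1\}$. So the plan concentrates on the ``only if'' direction.

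Assume $L$ is a hyperrigid t.d.l.c. group. By van Dantzig's theorem, $L$ admits an open compact subgroup $G$, which is automatically profinite. The first step is to verify that $\VZ(L)=\{1\}$, and therefore $\VZ(G)=\{1\}$ by Proposition~\ref{prop:vircenbas}(b). Indeed, if $z\in\VZ(L)$ then $\Cent_L(z)$ is open, so it contains some open compact subgroup $U$ of $L$. Both $\iid_U$ and $i_z\vert_U$ then realize the identity automorphism of $U$, and the uniqueness clause in the definition of hyperrigidity forces $z=1$.

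Now $G$ is a profinite group with trivial virtual center, so Proposition~\ref{prop:unienv} applied to the tautological envelope $(L,\iid_G)$ yields a canonical continuous open homomorphism $\eta_{\ast}\colon L\to\Comm(G)_S$ with $\kernel(\eta_\ast)=\VZ(L)=\{1\}$; in particular $\eta_\ast$ is injective. To finish it suffices to show that $\eta_\ast$ is surjective, since a continuous, open, bijective homomorphism between topological groups is an isomorphism. Given any $[\phi]\in\Comm(G)$ with representative $\phi\colon U\to V$, the subgroups $U$ and $V$ are open compact in $L$, so hyperrigidity of $L$ provides $g_\phi\in L$ with $\phi(x)=g_\phi\, x\, g_\phi^{-1}$ for all $x\in U$. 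Unwinding the definition of $\eta_\ast$ from Proposition~\ref{prop:unienv} gives $\eta_\ast(g_\phi)=[\phi]$, which establishes surjectivity.

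The only conceptual point that is not purely formal is the deduction $\VZ(L)=\{1\}$ from hyperrigidity; the rest is bookkeeping, pulling together Propositions~\ref{prop:vircenbas}, \ref{prop:unienv}, and \ref{prop:rigid}. Note that this argument also identifies the isomorphism: every hyperrigid t.d.l.c. group $L$ is canonically isomorphic to $\Comm(G)_S$ via $\eta_\ast$, for \emph{any} open compact $G\leq L$, which is consistent with the invariance of $\Comm(G)_S$ under commensurability of $G$ recorded in Proposition~\ref{prop:strong}(a).
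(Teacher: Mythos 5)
Your proof is correct and takes essentially the same route as the paper: the ``if'' direction is immediate from Proposition~\ref{prop:rigid}, and the ``only if'' direction first deduces $\VZ(L)=\{1\}$ from the uniqueness clause in the definition of hyperrigidity and then shows the canonical map $\eta_*\colon L\to\Comm(G)_S$ of Proposition~\ref{prop:unienv} is a topological isomorphism. You re-derive the bijectivity of $\eta_*$ by hand rather than simply invoking the uniqueness statement of Proposition~\ref{prop:rigid} (which already says $(\Comm(G)_S,\iota_G)$ is the \emph{unique} hyperrigid envelope of $G$), but the underlying argument is the same.
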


\subsection{Rigid envelopes and inner commensurators}
\label{ss:inner}
Let $G$ be a profinite group with trivial virtual center.
Given any envelope $(L,\eta)$ of $G$, one can always
consider the ``larger'' envelope $(\Aut(L)_S, i_L\circ\eta)$
where $i_L\colon L\to\Aut(L)_S$ is the canonical map defined in \S\ref{ss:autotop}.
By Proposition~\ref{prop:unienv}, we have a canonical map
\begin{equation}
\label{eq:kappa}
\kappa_{L,G}=(i_L\circ\eta)_*\colon \Aut(L)_S\to\Comm(G)_S
\end{equation}
which makes the following diagram commutative:
\begin{equation}
\xymatrix{
L\ar[d]_{i_L}&G\ar[l]_\eta \ar[d]^{\iota_G}\\
\Aut(L)\ar[r]^{\kappa_{L,G}}&\Comm(G)
}
\end{equation}

It is clear that $\kernel(\kappa_{L,G})=\TAut(L)$.
The question of when $\kappa_{L,G}$ is an isomorphism naturally leads to the
notion of a rigid group.

\begin{defi} A t.d.l.c. group $L$ will be called {\it rigid}, if for every topological isomorphism
$\phi\colon U\to V$ of open compact subgroups $U$ and $V$ of $L$ there exists a unique automorphism
$\phi_\circ\in \Aut(L)$ such that the following diagram commutes.
\begin{equation}
\label{dia:rigid2}
\xymatrix{
U\ar[r]^\phi\ar[d]_{}&V\ar[d]^{}\\
L\ar[r]^{\phi_\circ}&L
}
\end{equation}
\end{defi}
\begin{rem} One can think of rigid groups as the groups satisfying the analogue of
Mostow's strong rigidity theorem with open compact subgroups
playing the role of lattices.
\end{rem}

It is easy to see that hyperrigidity implies rigidity. Indeed, If $L$ is hyperrigid, there exists an (inner)
automorphism $\phi_\circ$ that makes \eqref{dia:rigid2} commutative. Furthermore, $\VZ(L)=\{1\}$, and
hence $\TAut(L)=\{1\}$ by Proposition~\ref{prop:jiatriv}. This yields the uniqueness of $\phi_{\circ}$
in \eqref{dia:rigid2}.

The following proposition shows the importance of rigid envelopes
for the computation of commensurators.

\begin{prop}
\label{prop:autcomm}
Let $L$ be a t.d.l.c. group with $\VZ(L)=\{1\}$, let $G$ be an open compact subgroup of $L$,
and let $\eta: G\to L$ be the inclusion map. The following conditions are equivalent:
\begin{itemize}
\item[(a)] $L$ is rigid.
\item[(b)] $\kappa_{L,G}:\Aut(L)_S\to\Comm(G)_S$ is an isomorphism
\item[(c)] $\eta_*(L)$ is a normal subgroup of $\Comm(G)$.
\end{itemize}
\end{prop}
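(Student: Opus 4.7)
The plan is to first record two observations and then prove, in order, (a)$\Rightarrow$(b), (b)$\Rightarrow$(a), (b)$\Rightarrow$(c) and (c)$\Rightarrow$(b). Throughout we use that $\VZ(L)=\{1\}$ gives $\TAut(L)=\{1\}$ by Proposition~\ref{prop:jiatriv}, so $\kappa_{L,G}$ is always injective (its kernel is $\TAut(L)$), and that the defining diagram of $\kappa_{L,G}$ yields the factorization $\eta_\ast=\kappa_{L,G}\circ i_L$, so $\eta_\ast(L)=\kappa_{L,G}(\Inn(L))$.

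For (a)$\Rightarrow$(b), given $g=[\phi]\in\Comm(G)$ represented by a virtual automorphism $\phi\colon U\to V$ of $G$, rigidity produces an extension $\phi_\circ\in\Aut(L)$ with $\kappa_{L,G}(\phi_\circ)=g$, so $\kappa_{L,G}$ is bijective; the topological statement follows because $\{i_L(W)\mid W\text{ open in }G\}$ is a neighbourhood base of $1$ in $\Aut(L)_S$, while $\kappa_{L,G}$ sends this base exactly onto the defining base $\{\iota_G(W)\}$ of $\Comm(G)_S$. Conversely, for (b)$\Rightarrow$(a), starting from a topological isomorphism $\phi\colon U\to V$ between open compact subgroups of $L$, I would restrict $\phi$ to an open subgroup $U'\subseteq U\cap G\cap \phi^{-1}(G)$ to produce a virtual automorphism of $G$, lift the associated class via $\kappa_{L,G}^{-1}$ to some $\phi_\circ\in\Aut(L)$ which agrees with $\phi$ on an open subgroup $W\subseteq U'$, and then apply Proposition~\ref{prop:isot} to $\phi_\circ^{-1}\circ\phi\colon U\to\phi_\circ^{-1}(V)$ (identity on $W$) to conclude $\phi_\circ(U)=V$ and $\phi_\circ|_U=\phi$. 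Uniqueness of such $\phi_\circ$ is immediate from $\TAut(L)=\{1\}$.

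The direction (b)$\Rightarrow$(c) follows at once from $\eta_\ast(L)=\kappa_{L,G}(\Inn(L))$ and the normality of $\Inn(L)$ in $\Aut(L)$. For (c)$\Rightarrow$(b), each $g\in\Comm(G)$ acts by conjugation on the normal subgroup $\eta_\ast(L)$, which via the injection $\eta_\ast$ yields an abstract group automorphism $\tilde\alpha_g$ of $L$ characterised by
\begin{equation*}
\eta_\ast(\tilde\alpha_g(l))=g\cdot\eta_\ast(l)\cdot g^{-1}.
\end{equation*}
Writing $g=[\phi]$ with $\phi\colon U\to V$, the key calculation inside $\Comm(G)$ gives $g\cdot\iota_G(u)\cdot g^{-1}=[\phi\circ i_u\circ\phi^{-1}]=\iota_G(\phi(u))$ for all $u\in U$, so $\tilde\alpha_g$ restricts to the continuous map $\phi$ on the open subgroup $U$ of $L$. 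This promotes $\tilde\alpha_g$ to a continuous homomorphism, with continuous inverse $\tilde\alpha_{g^{-1}}$, hence $\tilde\alpha_g\in\Aut(L)$; by construction $\kappa_{L,G}(\tilde\alpha_g)=g$. Surjectivity of $\kappa_{L,G}$ combined with the topological base argument from (a)$\Rightarrow$(b) yields the topological isomorphism.

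The main obstacle I anticipate is the implication (c)$\Rightarrow$(b): everything else is essentially formal, but extracting a genuine continuous automorphism of $L$ from abstract-group normality in $\Comm(G)$ hinges on the explicit identity $g\,\iota_G(u)\,g^{-1}=\iota_G(\phi(u))$, which locates $\tilde\alpha_g$ on the open subgroup $U$ and thereby forces continuity; once this is in hand, Proposition~\ref{prop:isot} and the triviality of $\TAut(L)$ dispose of all remaining uniqueness and rigidity statements.
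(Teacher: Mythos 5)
Your proof is correct; the structure and the final implication differ from the paper's. The paper proves the cycle (a)$\Rightarrow$(b)$\Rightarrow$(c)$\Rightarrow$(a), and its (c)$\Rightarrow$(a) step is a short black-box application of Proposition~\ref{prop:rigid}: since $\Comm(G)_S$ is hyperrigid and $L'=\eta_*(L)$ is an open subgroup, any isomorphism $\phi\colon U\to V$ between open compact subgroups of $L'$ is inner conjugation $i_g$ by a unique $g\in\Comm(G)$, and normality of $L'$ lets $i_g$ descend to the required automorphism $\phi_\circ$ of $L'$. You instead establish (b)$\Leftrightarrow$(a) and (c)$\Leftrightarrow$(b) directly. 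Your (b)$\Rightarrow$(a) (restrict, lift through $\kappa_{L,G}^{-1}$, then apply Proposition~\ref{prop:isot} to $\phi_\circ^{-1}\circ\phi$) and your (c)$\Rightarrow$(b) (pull conjugation by $g$ through the injection $\eta_\ast$ to get $\tilde\alpha_g$, then use the computation $g\,\iota_G(u)\,g^{-1}=\iota_G(\phi(u))$ to identify $\tilde\alpha_g|_U$ with $\phi$ and thereby extract continuity and $\kappa_{L,G}(\tilde\alpha_g)=g$) are complementary in direction to the paper's (c)$\Rightarrow$(a): they fix $g$ and construct the automorphism, whereas the paper fixes the isomorphism $\phi$ and hunts for $g$. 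What the paper's route buys is brevity — hyperrigidity instantly gives the conjugating element. What your route buys is that it bypasses Proposition~\ref{prop:rigid} entirely, relying only on the definition of $\eta_\ast$, $\kappa_{L,G}$, and Propositions~\ref{prop:jiatriv} and~\ref{prop:isot}; this also makes the topological side of (b) more explicit, since you track how $\kappa_{L,G}$ carries the defining base $\{i_L(W)\}$ of $\Aut(L)_S$ onto the defining base $\{\iota_G(W)\}$ of $\Comm(G)_S$. Both proofs invoke the same hypothesis $\VZ(L)=\{1\}$ in the same two places — to kill $\TAut(L)$ and, via $\VZ(G)=\VZ(L)\cap G$, to guarantee injectivity of $\eta_\ast$ — so neither is more general.
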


\begin{proof}
(a) $\Rightarrow$ (b) If $L$ is rigid, the map $\beta\colon\VAut(G)\to\Aut(L)$ given by $\beta(\phi)=\phi_\circ$
(where $\phi_{\circ}$ is as in \eqref{dia:rigid2}),
induces a homomorphism  $\beta_*\colon\Comm(G)\to \Aut(L)$.
A straightforward computation shows that $\beta_{\ast}$ is the inverse of $\kappa_{L,G}$,
so $\kappa_{L,G}$ is an isomorphism.

(b) $\Rightarrow$ (c) This is clear since $\eta_*(L)=\kappa_{L,G}(i_L(L))$
and $i_L(L)=\Inn(L)$ is normal in $\Aut(L)$.

(c) $\Rightarrow$ (a) Let $L'=\eta_*(L)$. Since $L'$ is isomorphic to $L$, it is enough
to prove that $L'$ is rigid. Let $\phi:U\to V$ be an isomorphism between open compact
subgroups of $L'$. Since $L'$ is open in $\Comm(G)$ and $\Comm(G)$ is hyperrigid,
there exists $g\in \Comm(G)$ such that $\phi(x)=gxg^{-1}$ for every $x\in U$.
But $L'$ is normal in $\Comm(G)$, and thus $\phi$ extends to the automorphism $\phi_{\circ}=i_g$
of $L'$. Furthermore, this extension is unique since $\TAut(L')=\{1\}$. The latter
follows from Proposition~\ref{prop:jiatriv} since $L'\cong L$ and hence $\VZ(L')=\{1\}$.
Therefore, $L'$ is rigid.
\end{proof}

Let $G$ be a profinite group with trivial virtual center. The equivalence of (a) and (b) in
Proposition~\ref{prop:autcomm} shows that whenever we find a rigid envelope $L$ of $G$,
the commensurator $\Comm(G)$ can be recovered from $L$. On the other hand,
it is natural to ask whether $L$ can be
recovered from $G$. If $L$ is topologically simple, this question is answered in the positive
using the notion of inner commensurator (see Corollary~\ref{cor:autcomm}). By Theorem~\ref{Chevalley}
below, this result applies for instance when $L=\PSL_n(F)$, $G=\PSL_n(O)$, where
$F$ is a local field and $O$ is its ring of integers.

\begin{defi}
{\rm Let $G$ be a profinite group. The {\it inner commensurator} of $G$
is the normal subgroup of $\Comm(G)$ generated by $\iota(G)$.
It will be denoted by $\IC (G)$.}
\end{defi}
\begin{rem}It is easy to see that the inner commensurator $\IC (G)$ is not a function
of the commensurability class of $G$. In fact, it may happen that $G$ is a finite
index subgroup of $H$, but the index $[\IC(H):\IC(G)]$ is uncountable.
For instance, let $G=\SL_n(\Fpt)$ and
$H=\SL_n(\Fpt)\rtimes \la f\ra$, where $f$ is an automorphism of $\Fpt$ of finite order.
Then by Theorem~\ref{Chevalley} we have $\IC(G)\cong \PSL_n(\Fpt)$ while
$\IC(H)\cong \PSL_n(\Fpt)\rtimes U$ where $U$ is an open subgroup of $\Aut(\Fpt)$.
\end{rem}

Once again, assume that $\VZ(G)=\{1\}$. By Proposition~\ref{prop:autcomm}, $\IC(G)$
is a rigid envelope of $G$. In fact, $\IC(G)$ is the smallest rigid envelope of $G$,
that is, if $(L,\eta)$ is any rigid envelope of $G$, then $\IC(G)\subseteq \eta_*(L)$.
On the other hand, if $(L,\eta)$ is any topologically simple envelope of $G$, then
the reverse inclusion holds: $\eta_*(L)\subseteq \IC(G)$ (see Proposition~\ref{tdlc:IC}(b)).
Combining these observations, we obtain the following useful fact:
\begin{cor}
\label{cor:autcomm}
Let $G$ be a profinite group with $\VZ(G)=\{1\}$, and let $(L,\eta)$ be a topologically simple
envelope of $G$. Then $L$ is rigid if and only if $\eta_*(L)=\IC(G)$.
\end{cor}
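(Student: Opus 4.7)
The plan is to combine the two inclusions highlighted in the paragraph immediately preceding the corollary: rigidity of $L$ forces $\IC(G) \subseteq \eta_\ast(L)$, while topological simplicity of $L$ forces the reverse inclusion $\eta_\ast(L) \subseteq \IC(G)$ via Proposition~\ref{tdlc:IC}(b). The equivalence asserted in the corollary is then just the conjunction of these two facts.

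In more detail, for the ``only if'' direction I would assume $L$ is rigid and invoke the implication (a)$\Rightarrow$(c) of Proposition~\ref{prop:autcomm} to deduce that $\eta_\ast(L)$ is normal in $\Comm(G)$. The commutative triangle of Proposition~\ref{prop:unienv} gives $\iota_G = \eta_\ast\circ\eta$, so $\iota(G)\subseteq\eta_\ast(L)$; since $\IC(G)$ is by definition the normal closure of $\iota(G)$ in $\Comm(G)$, normality of $\eta_\ast(L)$ forces $\IC(G)\subseteq\eta_\ast(L)$, and combined with Proposition~\ref{tdlc:IC}(b) this yields $\eta_\ast(L) = \IC(G)$. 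Conversely, if $\eta_\ast(L) = \IC(G)$, then $\eta_\ast(L)$ is normal in $\Comm(G)$ by the definition of the inner commensurator, and the implication (c)$\Rightarrow$(a) of Proposition~\ref{prop:autcomm} returns the rigidity of $L$.

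The one point that warrants care is the standing hypothesis $\VZ(L) = \{1\}$ required to apply Proposition~\ref{prop:autcomm}; this is where I expect the only real friction. I would verify it upfront from the given assumptions: Proposition~\ref{prop:vircenbas}(b) gives $\VZ(L)\cap G = \VZ(G) = \{1\}$, and $\overline{\VZ(L)}$ is a closed characteristic subgroup of $L$, so topological simplicity forces it to be either trivial or all of $L$. The second alternative is ruled out because, for any $g\in G$ and any open subgroup $H\leq G$, the coset $gH$ is open in $L$ and must meet the dense subgroup $\VZ(L)$; any such intersection point lies in $G\cap\VZ(L)=\{1\}$, forcing $g^{-1}\in H$. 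Letting $H$ range over the filter of open subgroups of the profinite group $G$ gives $g=1$, hence $G=\{1\}$, which is the trivial case and may be discarded. Apart from this book-keeping, the proof is simply an assembly of the two propositions already in hand.
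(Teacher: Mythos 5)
Your proof is correct and follows the same route the paper sketches in the paragraph immediately preceding the corollary: rigidity together with $\iota(G)\subseteq\eta_\ast(L)$ gives $\IC(G)\subseteq\eta_\ast(L)$ via Proposition~\ref{prop:autcomm}, topological simplicity gives the reverse inclusion via Proposition~\ref{tdlc:IC}(b), and the converse again comes from Proposition~\ref{prop:autcomm}. The only cosmetic difference is that you re-derive $\VZ(L)=\{1\}$ from scratch, whereas the paper simply invokes Proposition~\ref{tdlc:IC}(a), which is precisely this statement.
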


\subsection{Commensurators of algebraic groups}
In this subsection we explicitly describe commensurators for two
important classes of profinite groups: open compact subgroups of
simple algebraic groups over local fields and compact $p$-adic analytic groups.

The following theorem appears as Corollary~0.3 in \cite{pink}:

\begin{thm}[Pink]
\label{thm:Pink} Let $F,F'$ be local fields, let $\dbG$ (resp. $\dbG'$)
be an absolutely simple simply connected algebraic group over $F$ (resp. $F'$),
and let $G$ (resp. $G'$) be an open compact subgroup of $\dbG(F)$ (resp. $\dbG'(F')$).
Let $\phi: G\to G'$ be an isomorphism of topological groups. Then there exists a unique isomorphism of algebraic groups
$\dbG\to \dbG'$ over a unique isomorphism of local fields $F\to F'$ such that
the induced isomorphism $\dbG(F)\to \dbG'(F')$ extends $\phi$.
\end{thm}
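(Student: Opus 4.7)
The plan is to reconstruct both the field $F$ and the algebraic group $\dbG$ from purely topological data of the profinite group $G$, and then to show that any topological isomorphism $\phi\colon G\to G'$ is forced to come from an algebraic isomorphism of the reconstructed objects. The strategy breaks into three parts: (i) attach a topological Lie algebra over $F$ to a suitable open subgroup of $G$; (ii) recover the topological field $F$ intrinsically from that Lie algebra; (iii) algebraize the resulting Lie algebra isomorphism back to an isomorphism $\dbG(F)\to\dbG'(F')$ extending $\phi$. Throughout I exploit that commensurate open compact subgroups give canonically isomorphic Lie algebras, so I may freely shrink $G$ and $G'$.

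First, shrink: by replacing $G$ and $G'$ by sufficiently deep open compact subgroups $U\subseteq G$ and $U'=\phi(U)\subseteq G'$, one arranges $U$ to sit inside a principal congruence subgroup of $\dbG(O)$ where $O$ is the ring of integers of $F$. In residue characteristic zero, $U$ can be taken uniformly powerful and Lazard theory provides an intrinsic topological Lie algebra $\mathfrak{g}_U$ which embeds as an open $O$-lattice in $\mathrm{Lie}(\dbG)_F$; in positive residue characteristic, the graded object attached to a suitable descending central filtration plays the same role as a restricted $F$-Lie algebra. Either way the isomorphism $\phi|_U\colon U\to U'$ induces a topological Lie algebra isomorphism $d\phi\colon\mathfrak{g}_U\to\mathfrak{g}_{U'}'$, a priori only over $\Z$ or $\F_p$.

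Next, the crucial step: recover $F$. Because $\dbG$ is absolutely simple, $\mathrm{Lie}(\dbG)_F$ is an absolutely simple Lie algebra over $F$, and $F$ itself can be extracted intrinsically --- say, as the appropriate completion of the ring of $\mathfrak{g}_U$-equivariant continuous endomorphisms of $\mathfrak{g}_U$, or equivalently from the centralizer of a maximal split torus. This yields a topological ring isomorphism $\sigma\colon F\to F'$ rendering $d\phi$ semilinear. A simply connected absolutely simple algebraic group is determined by its Lie algebra together with the ambient field, so the semilinear Lie algebra isomorphism integrates to a unique algebraic isomorphism $\dbG\to\dbG'$ over $\sigma$. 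Zariski-density of $U$ in $\dbG(F)$ ensures that the map induced on $F$-points agrees with $\phi$ on $U$ by construction, and then with $\phi$ on all of $G$ because two continuous homomorphisms out of $G$ agreeing on the open subgroup $U$ coincide (one may phrase this via Proposition~\ref{prop:isot} applied to $\dbG(F)$, which has trivial virtual center since it is essentially simple modulo a finite center). Uniqueness of the extension is automatic, since two candidate extensions agree on the Zariski-dense subgroup $G$.

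The hard part, by a wide margin, is the intrinsic recovery of the field $F$ in step (ii) together with the rigidity of that recovery under isomorphism. In residue characteristic zero the adjoint action of a maximal torus exhibits $F$ as a subring of $\mathrm{End}(\mathfrak{g}_U)$, but one must rule out that a stray $\sigma$-linear twist (e.g.\ a hidden Galois automorphism) contaminates the identification; this is precisely where absolute simplicity and simply connectedness are essential, as they remove inner and outer symmetry sources that could otherwise destroy uniqueness. In positive characteristic one also has to contend with Frobenius twists: the embedding $F\hookrightarrow\mathrm{End}(\mathfrak{g}_U)$ is only canonical up to Frobenius, and one must sharpen the construction --- Pink does this by comparing ramification filtrations --- so that the induced $\sigma\colon F\to F'$ is pinned down exactly. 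Once this rigidity of the $F$-structure is secured, the remaining assembly of the algebraic isomorphism is largely formal, using that for $\dbG$ simply connected absolutely simple the functor ``Lie algebra over $F$'' is fully faithful on the relevant class of objects.
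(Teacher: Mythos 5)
The paper does not prove this statement: it quotes it verbatim from Pink's paper, citing \cite[Cor.~0.3]{pink}, and uses it as a black box. So there is no ``paper's own proof'' to compare against; what you have attempted is to sketch a proof of Pink's theorem itself. Your high-level strategy --- reconstruct a Lie algebra, reconstruct the field, then algebraize --- is indeed the moral shape of such arguments, and your remark that the rigidity of the $F$-structure is ``the hard part, by a wide margin'' is exactly right. But as written the sketch has gaps that are not cosmetic.

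First, a terminological problem that signals a real confusion: you split into ``residue characteristic zero'' vs.\ positive residue characteristic, but every non-archimedean local field has positive residue characteristic. The dichotomy you want is $\chrc F=0$ (finite extensions of $\Q_p$) vs.\ $\chrc F=p$ (fields like $\F_q(\!(t)\!)$). Lazard theory and uniformly powerful subgroups are available only in the first case. In the equal-characteristic case $\dbG(F)$ is not $p$-adic analytic, there is no exp-log correspondence, and the graded Lie algebra of a central filtration is a Lie algebra over $\F_p$ (or $\F_p[t]$), not over $F$; it does not come equipped with the $F$-module structure your step (ii) requires, and recovering $F$ ``from the endomorphism ring'' of that graded object is precisely the content one would need to supply, not a consequence of general nonsense. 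Second, the claim that ``a simply connected absolutely simple algebraic group is determined by its Lie algebra together with the ambient field'' is false in small characteristic: special isogenies (e.g.\ between types $\mathrm{B}_n$ and $\mathrm{C}_n$ in characteristic $2$, and the exceptional one for $\mathrm{G}_2$ and $\mathrm{F}_4$) have trivial or very degenerate differentials, and Frobenius twists have zero differential, so the functor $\dbG\mapsto\mathrm{Lie}(\dbG)$ is neither full nor faithful in the sense you need. Pink has to confront exactly these phenomena; waving at ``ramification filtrations'' names the difficulty without resolving it. In short, your outline is a plausible roadmap for the mixed-characteristic case once the field-recovery step is made precise, but in equal characteristic the central mechanism (a canonical $F$-Lie algebra attached functorially to $U$) is absent, and the final ``fully faithful'' step is simply not true without the careful exclusions Pink builds in. The sketch therefore does not constitute a proof of the statement.
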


In the special case when $\dbG'=\dbG$ and $F'=F$ is non-archimedean,
Theorem~\ref{thm:Pink} can be considered as a combination
of two results. First, it implies that the t.d.l.c. group $\dbG(F)$ is rigid,
and thus $\Comm(G)$ is canonically isomorphic to $\Aut(\dbG(F))$. Second, Theorem~\ref{thm:Pink}
shows that $\Aut(\dbG(F))$ is naturally isomorphic to $(\Aut\dbG)(F)\rtimes \Aut(F)$
where $\Aut\dbG$ is the group of automorphisms of the algebraic group $\dbG$.
Note that when $\dbG$ is isotropic over $F$, the last result is a special case of Borel-Tits'
theorem~\cite{boti:aut} (which applies to simple algebraic groups over arbitrary fields).
Thus, we obtain the following theorem:

\begin{thm}
Let $F$ be a non-archimedean local field, let
$\dbG$ be an absolutely simple simply connected algebraic group over $F$,
and $G$ an open compact subgroup of $\dbG(F)$. Then
$\Comm(G)$ is canonically isomorphic to $(\Aut\dbG)(F)\rtimes \Aut(F)$.
In particular, if $\dbG$ is split over $F$, then
$\Comm(G)\cong \dbG_{ad}(F)\rtimes (X\times \Aut(F))$
where $\dbG_{ad}$ is the adjoint group of $\dbG$, $X$ is the finite group
of automorphisms of the Dynkin diagram of $\dbG$,
and $\Aut(F)$ is the group of field automorphisms of $F$.
\label{Chevalley}
\end{thm}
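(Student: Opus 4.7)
The plan is to prove the theorem in three stages, each a direct consequence of Pink's Theorem~\ref{thm:Pink}.

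First, I would construct a canonical isomorphism $\Psi\colon\Comm(G)\to\Aut(\dbG(F))$, where $\Aut(\dbG(F))$ denotes the group of topological group automorphisms of $\dbG(F)$. Given a virtual automorphism $\phi\colon U\to V$ of $G$, both $U$ and $V$ are open compact subgroups of $\dbG(F)$, so Theorem~\ref{thm:Pink}, applied in the special case $\dbG'=\dbG$, $F'=F$, extends $\phi$ uniquely to a topological group automorphism $\widetilde\phi$ of $\dbG(F)$. If two virtual automorphisms $\phi_1,\phi_2$ agree on an open subgroup $W\subseteq G$, then applying Pink's uniqueness clause to the common restriction $\phi_1\vert_W=\phi_2\vert_W$ forces $\widetilde{\phi_1}=\widetilde{\phi_2}$, so the assignment $[\phi]\mapsto\widetilde\phi$ descends to a well-defined group homomorphism $\Psi$. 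Injectivity is immediate (if $\widetilde\phi=\iid$ then $\phi$ is an identity map on its domain), and surjectivity follows because any $\alpha\in\Aut(\dbG(F))$ restricts to a virtual automorphism of $G$ on the open compact subgroup $\alpha^{-1}(G)\cap G$, whose $\Psi$-image is $\alpha$, again by Pink's uniqueness.

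Second, I would identify $\Aut(\dbG(F))$ with $(\Aut\dbG)(F)\rtimes\Aut(F)$. This is once more a direct reading of Theorem~\ref{thm:Pink}: it asserts that every element of $\Aut(\dbG(F))$ arises from a pair $(\alpha,\sigma)$, where $\alpha\colon\dbG\to\dbG$ is an isomorphism of algebraic groups over a field isomorphism $\sigma\colon F\to F$, and this pair is unique. Conversely, any such pair evidently induces a topological group automorphism of $\dbG(F)$. Under composition the pairs form the semidirect product $(\Aut\dbG)(F)\rtimes\Aut(F)$, where $\Aut(F)$ acts on $(\Aut\dbG)(F)$ by its natural functorial action on $F$-rational points.

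Third, for the split case I would invoke the standard structure theorem for $\Aut\dbG$ when $\dbG$ is split absolutely simple simply connected, namely $\Aut\dbG\cong\dbG_{ad}\rtimes X$, with $X$ the finite group of Dynkin diagram symmetries. Taking $F$-points yields $(\Aut\dbG)(F)=\dbG_{ad}(F)\rtimes X$. Since $X$ is a constant group scheme in the split case, its action on $\dbG_{ad}(F)$ commutes with that of $\Aut(F)$, and the iterated semidirect product rearranges into $\dbG_{ad}(F)\rtimes(X\times\Aut(F))$ as claimed.

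All the real difficulty is absorbed into Theorem~\ref{thm:Pink}; once this is granted, the remaining work is formal bookkeeping. The only mild subtlety appears in step one, where one must verify that Pink's uniqueness really forces equivalent virtual automorphisms to extend to the same global automorphism, but this follows cleanly from applying the theorem to their common open restriction.
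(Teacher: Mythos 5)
Your overall strategy---deduce the theorem directly from Pink's Theorem~\ref{thm:Pink}---is the same as the paper's; the paper reaches the conclusion via the intermediate notion of \emph{rigidity} of $\dbG(F)$ and the machinery of Proposition~\ref{prop:autcomm}, whereas you unpack that machinery by hand into an explicit isomorphism $\Psi$. That unpacking is mostly sound, but there is one genuine gap in your surjectivity step and, correspondingly, in your second stage.

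You define $\Psi$ to take values in $\Aut(\dbG(F))$, the group of \emph{all} topological group automorphisms, and then assert that surjectivity (and, in step two, the identification $\Aut(\dbG(F))\cong(\Aut\dbG)(F)\rtimes\Aut(F)$) follows ``again by Pink's uniqueness.'' But Theorem~\ref{thm:Pink} asserts uniqueness of the pair $(\alpha,\sigma)$ among algebro-geometric data; it does \emph{not}, as stated, say that an arbitrary topological automorphism of $\dbG(F)$ extending a given $\phi$ on an open compact must be induced from such a pair. So when you restrict $\alpha\in\Aut(\dbG(F))$ to $U=\alpha^{-1}(G)\cap G$ and produce the Pink extension $\widetilde{\alpha|_U}$, you only know that $\alpha$ and $\widetilde{\alpha|_U}$ agree on the open subgroup $U$; concluding $\alpha=\widetilde{\alpha|_U}$ requires knowing that $\TAut(\dbG(F))=\{1\}$, which is not immediate because $\VZ(\dbG(F))$ equals the finite center $\Zen(\dbG(F))$ and may be non-trivial. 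One has to argue, say, that any $\gamma\in\TAut(\dbG(F))$ descends to an element of $\TAut(\dbG(F)/\Zen(\dbG(F)))=\{1\}$ (by Proposition~\ref{prop:jiatriv}, since the adjoint quotient has trivial virtual center), and hence $x\mapsto \gamma(x)x^{-1}$ is a homomorphism $\dbG(F)\to\Zen(\dbG(F))$ vanishing on an open subgroup, which must be trivial because $\dbG(F)$ is perfect. Alternatively, you can sidestep the issue entirely by letting $\Psi$ take values directly in the group of pairs $(\Aut\dbG)(F)\rtimes\Aut(F)$: then surjectivity is immediate from Pink's uniqueness applied to the restriction of the induced automorphism of the given pair, and the intermediate group $\Aut(\dbG(F))$ never enters. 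The paper avoids the issue by citing Pink's Corollary~0.3 for the stronger statement that $\dbG(F)$ is rigid (which builds in exactly this uniqueness among all automorphisms). Your third step (split case) is correct, though the reason $X$ commutes with $\Aut(F)$ on $\dbG_{ad}(F)$ is that the Chevalley model and diagram automorphisms are defined over the prime field, not merely that $X$ is a constant group scheme.
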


Commensurators of $p$-adic analytic groups can be computed as follows:

\begin{thm}
\label{thm:padic}
Let $G$ be a compact $p$-adic analytic group. Then one has
a canonical isomorphism
\begin{equation}
\Comm(G)\simeq\Aut_{\Q_p}(\eu{L}(G)).
\end{equation}
where $\eu{L}(G)$ is the $\Q_p$-Lie algebra of $G$ as introduced by Lazard.
\end{thm}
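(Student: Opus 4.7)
The plan is to build mutually inverse group homomorphisms between $\Comm(G)$ and $\Aut_{\Q_p}(\euL(G))$ via the Lazard correspondence. The input I rely on: every compact $p$-adic analytic group contains an open uniform pro-$p$ subgroup, and the Lazard correspondence is a fully faithful functor $U\mapsto \euL(U)$ from saturable (in particular uniform) pro-$p$ groups to powerful $\Z_p$-Lie lattices, with $\euL(G)=\euL(U)\otimes_{\Z_p}\Q_p$ canonically independent of the choice of $U$. Concretely, every continuous isomorphism $\phi\colon U_1\to U_2$ between uniform pro-$p$ groups corresponds bijectively to a unique $\Z_p$-Lie algebra isomorphism $d\phi\colon \euL(U_1)\to \euL(U_2)$.

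First I would construct a map $\Phi\colon\Comm(G)\to\Aut_{\Q_p}(\euL(G))$ as follows. Given a virtual automorphism $\phi\colon V\to W$, shrink $V$ to an open uniform pro-$p$ subgroup $U$ small enough that $\phi(U)$ is also uniform pro-$p$ (possible because every open subgroup of a compact $p$-adic analytic group contains an open uniform pro-$p$ subgroup). The Lie lattice isomorphism $d\phi\colon \euL(U)\to \euL(\phi(U))$ then extends $\Q_p$-linearly to an automorphism $\Phi([\phi])$ of $\euL(G)$, via the canonical identifications $\euL(U)\otimes\Q_p=\euL(G)=\euL(\phi(U))\otimes\Q_p$. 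Well-definedness under passage to smaller $U$ and under the equivalence relation on $\VAut(G)$ follows because two $\Q_p$-linear endomorphisms of $\euL(G)$ agreeing on any full-rank $\Z_p$-sublattice must coincide; the homomorphism property is then a direct consequence of functoriality of the Lazard correspondence.

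Injectivity of $\Phi$ is immediate: if $\Phi([\phi])=\iid$, then $d\phi$ acts trivially on $\euL(U)$ for some uniform open $U\subseteq V$, whence $\phi|_U=\iid_U$ by faithfulness of Lazard's correspondence, so $[\phi]=1$ in $\Comm(G)$. For surjectivity, fix a uniform pro-$p$ open subgroup $U\subseteq G$ with lattice $L=\euL(U)\subseteq\euL(G)$, and let $\alpha\in\Aut_{\Q_p}(\euL(G))$. Since both $L$ and $\alpha^{-1}(L)$ are full-rank $\Z_p$-sublattices, there exists $n\geq 0$ with $p^n L\subseteq L\cap \alpha^{-1}(L)$, i.e.\ $\alpha(p^n L)\subseteq L$. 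The sublattice $p^n L$ is itself a powerful $\Z_p$-Lie lattice corresponding to a uniform open subgroup $U_n\leq U$ (a term of the lower $p$-series of $U$), so the $\Z_p$-Lie embedding $\alpha|_{p^n L}\colon p^n L\hookrightarrow L$ integrates via Lazard to a continuous injective homomorphism $\phi_\alpha\colon U_n\to U$ with open image. Then $[\phi_\alpha]\in\Comm(G)$ satisfies $\Phi([\phi_\alpha])=\alpha$ by construction.

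The main obstacle is purely bookkeeping across the various uniform open subgroups used in the construction: one has to check that shrinking $U$ further or replacing $\phi$ by an equivalent virtual automorphism produces the same element of $\Aut_{\Q_p}(\euL(G))$, and that the integration step in the surjectivity argument indeed yields a virtual automorphism of $G$ (not just of $U$). Both points reduce to the observation that any two uniform open subgroups $U,U'$ of $G$ have a common uniform open intersection whose Lie lattice has the same $\Q_p$-span $\euL(G)$, so the categorical equivalence provided by Lazard makes all identifications canonical.
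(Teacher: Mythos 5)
Your proposal is correct and follows essentially the same route as the paper's own proof in the Appendix: differentiate a virtual automorphism on a small torsion-free powerful (uniform) open pro-$p$ subgroup via the Lazard correspondence to get the map into $\Aut_{\Q_p}(\euL(G))$, use faithfulness for injectivity, and integrate a powerful full-rank sublattice mapped into $\ca{L}(P)$ for surjectivity. The only (harmless) cosmetic difference is that you use the scaled lattice $p^nL$ where the paper uses the intersection $\ca{L}(P)\cap\gamma^{-1}(\ca{L}(P))$, both of which serve as the powerful sublattice to which the automorphism restricts.
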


A statement very similar to Theorem~\ref{thm:padic} appears in Serre's book on Lie algebras and Lie groups~\cite{serre:lalg},
and one can easily deduce Theorem~\ref{thm:padic}
from that statement using elementary theory of $p$-adic analytic groups.
For completeness, we shall give a slightly different proof
of Theorem~\ref{thm:padic} in the Appendix.

\section{Topologically simple totally disconnected locally compact groups}
\label{s:tdlc}
Totally disconnected locally compact (t.d.l.c) groups have been
a subject of increasing interest in recent years, starting with
a seminal paper of Willis~\cite{george:scale}. In this section we use commensurators
to study the structure of open subgroups of topologically simple t.d.l.c. groups.
More specifically, we shall address the following three problems.

\begin{itemize}
\item[(1)] Given a profinite group $G$, describe all topologically simple envelopes
of $G$.

\item[(2)] Given a subclass $\grL$ of topologically simple t.d.l.c. groups,
find restrictions on the structure of profinite groups which have at least one
envelope in $\grL$.

\item[(3)] Construct new examples of topologically simple t.d.l.c. groups.
\end{itemize}

If $\grL$ consists of all topologically simple t.d.l.c. groups, Problem~(2) is unlikely
to have a satisfactory answer, as suggested by a recent paper of Willis \cite{george:sim}.
In \cite{george:sim}, Willis constructed a topologically simple
t.d.l.c. groups containing an open compact abelian subgroup.
Note that a group with such a property must coincide with its virtual center.
In the same paper, it was shown that a topologically simple t.d.l.c. group $L$
cannot have this or other similar ``pathological'' properties, provided $L$ is compactly generated.
We will address Problem~(2) for the class $\grL$
consisting of compactly generated topologically simple t.d.l.c. groups.

\subsection{The open-normal core}
For a t.d.l.c. group $L$, let $\ON(L)$ denote the set of all open normal subgroups of $L$.
As $L\in\ON(L)$, this set is non-empty. We define the {\it open-normal core} of $L$ by
\begin{equation}
\label{eq:defophea}
\Ht(L)=\bigcap_{N\in\ON(L)} N.
\end{equation}
Hence $\Ht(L)$ is a closed characteristic subgroup of $L$ which is contained in every open normal subgroup of $L$,
and $\Ht(L)$ is maximal with respect to this property.
Note that $\Ht(L)=\{1\}$ if and only if $L$ is pro-discrete, that is,
$L$ is an inverse limit of discrete groups.

If $L$ is a t.d.l.c. group, then so is $\Ht(L)$, and we can consider the ``core series''
$\{\Ht^k(L)\}_{k=0}^{\infty}$, defined by $\Ht^0(L)= L$ and $\Ht^{k+1}(L)\deq\Ht(\Ht^k(L))$ for $k\geq 1$.
This is a descending series consisting of closed characteristic subgroups of $L$, and
it may be even extended transfinitely.
\vskip .1cm
If $L=\Comm(G)_S$ for some profinite group $G$, the open-normal core $\Ht(L)$
has the following alternative description.
\begin{prop}
Let $G$ be a profinite group. Then $\Ht(\Comm(G)_S)$ is equal to the intersection
$\bigcap\limits_{U\in\mathcal U_G}\IC(U)$ where $\mathcal U_G$ is the set of all open subgroups of $G$.
\end{prop}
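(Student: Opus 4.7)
The plan is to prove the two inclusions separately, relying essentially on nothing more than the definition of the strong topology: by construction, the sets $\{\iota_V(V) : V \in \mathcal{U}_G\}$ form a base of neighborhoods of $1$ in $\Comm(G)_S$.

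For the inclusion $\Ht(\Comm(G)_S) \subseteq \bigcap_{U \in \mathcal{U}_G} \IC(U)$, I would first observe that each $\IC(U)$ is itself an open normal subgroup of $\Comm(G)_S$. It is normal by definition, using the canonical identification $j_{U,G}\colon \Comm(U) \cong \Comm(G)$ (from \eqref{eq:ij}) to view $\IC(U)$ as a subgroup of $\Comm(G)$. It is open because it contains $\iota_U(U)$, which is one of the basic neighborhoods of $1$ in the strong topology. Hence $\IC(U) \in \ON(\Comm(G)_S)$ for every $U \in \mathcal{U}_G$, and the inclusion follows immediately from the definition \eqref{eq:defophea} of $\Ht$.

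For the reverse inclusion, I would fix an arbitrary $N \in \ON(\Comm(G)_S)$. Since $N$ is an open neighborhood of $1$, there is some $V \in \mathcal{U}_G$ with $\iota_V(V) \subseteq N$. Normality of $N$ then forces $N$ to contain the normal closure of $\iota_V(V)$ in $\Comm(G)$; but under the identification $\Comm(V) \cong \Comm(G)$ this normal closure is precisely $\IC(V)$. Therefore
\begin{equation*}
\bigcap_{U \in \mathcal U_G} \IC(U) \subseteq \IC(V) \subseteq N,
\end{equation*}
and intersecting over all $N \in \ON(\Comm(G)_S)$ gives $\bigcap_U \IC(U) \subseteq \Ht(\Comm(G)_S)$.

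There is no real obstacle here: the argument is essentially a two-line unfolding of definitions. The only point that requires a moment of care is the compatibility of the identifications $j_{V,G}$, so that the normal closure of $\iota_V(V)$ taken inside $\Comm(G)$ agrees with $\IC(V)$ as defined inside $\Comm(V)$; but this is already built into the convention adopted immediately after \eqref{eq:ij} of identifying $\Comm(V)$ with $\Comm(G)$.
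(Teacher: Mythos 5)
Your proof is correct and follows exactly the same two-step strategy as the paper: the containment $\Ht(\Comm(G)_S) \subseteq \bigcap_U \IC(U)$ comes from noting each $\IC(U)$ is an open normal subgroup, and the reverse comes from observing that any open normal $N$ must absorb some $\iota(U)$ and hence its normal closure $\IC(U)$. You simply spell out the details that the paper dismisses as obvious (openness and normality of $\IC(U)$, the role of the identifications $j_{U,G}$), which is a reasonable elaboration but not a different argument.
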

\begin{proof}
Any open subgroup $N$ of $\Comm(G)_S$ contains $\iota(U)$ for some $U\in\mathcal U_G$.
If $N$ is also normal, then $N$ contains $\IC(U)$, so
$\bigcap\limits_{U\in\mathcal U_G}\IC(U)\subseteq\Ht(\Comm(G)_S)$.
The reverse inclusion is obvious.
\end{proof}

\subsection{Existence and uniqueness of topologically simple envelopes}
\label{ss:uniqueenv}
Let $G$ be a profinite group, and let $(L,\eta)$ be an envelope of $G$.
By Proposition~\ref{prop:unienv}, there exists a canonical (continuous) map
$\eta_\ast\colon L\to\Comm(G)_S$.
If $L$ is topologically simple, the following stronger statement holds:

\begin{prop}
\label{tdlc:IC}
Let $G$ be a profinite group, and let $(L,\eta)$ be a
topologically simple envelope of $G$. The following hold:
\begin{itemize}
\item[(a)] If $\VZ(G)=\{1\}$, then $\VZ(L)=\{1\}$ (and therefore, $\eta_{*}$ is injective).

\item[(b)] $\eta_\ast(L)\subseteq \Ht(\Comm(G)_S)$.
\end{itemize}
\end{prop}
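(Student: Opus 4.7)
For part (a), my plan is to apply topological simplicity of $L$ to $\overline{\VZ(L)}$. By Proposition~\ref{prop:vircenbas}(a), $\VZ(L)$ is characteristic in $L$, and hence so is its closure, which is then a closed normal subgroup. Topological simplicity thus forces $\overline{\VZ(L)}\in\{\{1\},L\}$, the first alternative giving $\VZ(L)=\{1\}$ directly. To rule out the second, I would combine Proposition~\ref{prop:vircenbas}(b), which gives $\VZ(L)\cap G=\VZ(G)=\{1\}$, with the openness of $G$ in $L$: if $\VZ(L)$ were dense in $L$, then $\VZ(L)\cap G=\{1\}$ would be dense in the open subgroup $G$, forcing $G=\{1\}$ (since $L$, and hence $G$, is Hausdorff) and therefore $L=\{1\}$, contradicting topological simplicity. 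The injectivity of $\eta_\ast$ then follows from $\kernel(\eta_\ast)=\VZ(L)$, recorded in Proposition~\ref{prop:unienv}.

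For part (b), it suffices to prove $\eta_\ast(L)\subseteq N$ for every $N\in\ON(\Comm(G)_S)$; intersecting over $N$ then yields the desired inclusion $\eta_\ast(L)\subseteq\Ht(\Comm(G)_S)$. Fix such an $N$ and set $M=\eta_\ast^{-1}(N)$. Since $\eta_\ast$ is a continuous homomorphism (Proposition~\ref{prop:unienv}) and every open subgroup of a topological group is closed, $M$ is automatically a closed normal subgroup of $L$. The key step is to show that $M$ is also open. By the definition of the strong topology on $\Comm(G)_S$, one has $\iota_V(V)\subseteq N$ for some open subgroup $V$ of $G$; chasing the commutative triangle \eqref{cd:unienv} pointwise on $V$, namely $\eta_\ast(\eta(v))=\iota_G(v)\in\iota_V(V)\subseteq N$ for every $v\in V$, gives $\eta(V)\subseteq M$. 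Since $\eta(V)$ is an open neighborhood of $1$ in $L$, the subgroup $M$ is open.

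Therefore $M$ is a closed normal subgroup of the topologically simple group $L$ that contains an open neighborhood of the identity, which leaves only $M\in\{\{1\},L\}$; the alternative $M=\{1\}$ can occur only when $L$ is discrete, equivalently when $G$ is finite, in which case $\Comm(G)$ is itself trivial and the claim is vacuous. Hence $M=L$, so $\eta_\ast(L)\subseteq N$, completing the argument. I expect the main obstacle to be the verification that $M$ is open: this is where the definition of the strong topology and the commutative diagram \eqref{cd:unienv} (linking $\eta_\ast\circ\eta$ with $\iota_G$, and thereby with each $\iota_V$) must be combined carefully. Once $M$ is known to be open and closed normal, the topological simplicity of $L$ closes out both parts uniformly, via the slogan that any nontrivial closed normal subgroup meeting an open neighborhood of $1$ must equal all of $L$.
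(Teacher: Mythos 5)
Your argument for part (b) is correct and is in essence a dualization of the paper's: where the paper notes that $N\cap\eta_\ast(L)$ is an open normal subgroup of the topologically simple group $\eta_\ast(L)$ and is therefore all of it, you pull back to $L$, observe that $\eta_\ast^{-1}(N)$ is a closed normal subgroup of $L$ containing the open neighbourhood $\eta(V)$, and invoke topological simplicity of $L$ itself. The two routes carry the same content, but yours is slightly cleaner, since it uses topological simplicity of $L$ directly (the paper's version silently transfers this property to $\eta_\ast(L)$, which is harmless here but requires a moment's thought when $\eta_\ast$ is not injective). You also handle the degenerate case $L$ discrete (equivalently $G$ finite) explicitly, which the paper leaves implicit.

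For part (a), your route through $\overline{\VZ(L)}$ being closed characteristic is a mild detour, but you arrive at the same crux as the paper: density of $\VZ(L)$ plus $\VZ(L)\cap\eta(G)=\eta(\VZ(G))=\{1\}$ forces $\{1\}$ to be dense in the Hausdorff group $\eta(G)$, hence $\eta(G)=\{1\}$. Your last step, however, is a non sequitur: $G=\{1\}$ does not imply $L=\{1\}$; it only implies $L$ is discrete, and a nontrivial discrete simple group is still topologically simple. (Indeed, with $G=\{1\}$ and $L$ a nontrivial discrete simple group, the statement of (a) literally fails, since then $\VZ(L)=L$.) The paper's own proof has the same implicit nondegeneracy assumption; the clean way to finish is simply to note that $G$ is being taken infinite, so $G=\{1\}$ is itself the contradiction, rather than trying to descend to $L$. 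With that one sentence repaired, your proof matches the paper's.
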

\begin{proof} (a) Suppose that $\VZ(L)\neq\{1\}$. Then $\VZ(L)$ is dense in $L$. Since $\eta(G)$
is open in $L$, we have $\VZ(\eta(G))=\VZ(L)\cap\eta(G)$. Thus,
$\eta(\VZ(G))=\VZ(\eta(G))$ is dense in $\eta(G)$, contrary to the assumption $\VZ(G)=\{1\}$.

(b) If $N$ is any open normal subgroup of $\Comm(G)_S$, then $N\cap \eta_*(L)$
is an open normal subgroup of $\eta_*(L)$. Since $\eta_*(L)$ is topologically simple,
$\eta_*(L)$ must be contained in $N$.
\end{proof}

Note that if $\VZ(G)=\{1\}$ and $G$ has at least one topologically simple envelope,
then $\Ht(\Comm(G)_S)$ is an envelope of $G$ by Proposition~\ref{tdlc:IC}(b).
If in addition $\Ht(\Comm(G)_S)$ happens to be a topologically simple group, it
is natural to expect that $\Ht(\Comm(G)_S)$ is the unique (up to isomorphism)
topologically simple envelope of $G$.
We do not know if such a statement is true in general, but we can prove it for the class
of algebraic groups covered by Theorem~\ref{Chevalley}:

\begin{prop}
\label{prop:unique}
Let $\dbG$ be a connected, simply connected simple algebraic group defined
and isotropic over a non-archimedean local field $F$.
Let $G$ be an open compact subgroup of $\dbG(F)$, with $\VZ(G)=\{1\}$. Then
$\Ht(\Comm(G)_S)$ is topologically simple. Furthermore, if $G$ has at least one
topologically simple envelope $(L,\eta)$, then $\eta_*(L)=\Ht(\Comm(G)_S)$.
\end{prop}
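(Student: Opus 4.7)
The plan is to identify $\Ht(\Comm(G)_S)$ with the ``inner part'' of $\Comm(G)$ coming from $\dbG(F)$ modulo its center, and then to use the structure of open subgroups of $\dbG(F)$ to show that any topologically simple envelope of $G$ must coincide with this inner part.

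First I would apply Theorem~\ref{Chevalley} to identify $\Comm(G)\cong(\Aut\dbG)(F)\rtimes\Aut(F)$. Put $L_0\deq\dbG(F)/\Zen(\dbG(F))$; this sits inside $(\Aut\dbG)(F)$ as a normal subgroup via inner automorphisms, and is normal in the whole semidirect product since $(\Aut\dbG)(F)$ and $\Aut(F)$ both preserve $\dbG(F)$ set-theoretically. The hypothesis $\VZ(G)=\{1\}$ forces $G\cap\Zen(\dbG(F))=\{1\}$ (since $\Zen(\dbG(F))$ is finite and centralizes $G$), so $G$ embeds as an open compact subgroup of $L_0$ in its local-field topology, making $L_0$ itself an envelope of $G$. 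The Kneser-Tits theorem, known in this setting, combined with Tits' simplicity theorem, shows that $L_0$ is abstractly simple, hence topologically simple, and a Zariski density argument gives $\VZ(L_0)=\{1\}$. Proposition~\ref{prop:unienv} then yields a continuous open injection $(\eta_0)_\ast\colon L_0\hookrightarrow\Comm(G)_S$ whose image is the copy of $L_0$ described above, and hence is open and normal in $\Comm(G)_S$.

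The equality $\Ht(\Comm(G)_S)=L_0$ then follows formally. One inclusion is immediate. For the other, if $N$ is any open normal subgroup of $\Comm(G)_S$, then $N\cap L_0$ is open (hence non-trivial) and normal in $L_0$; topological simplicity of $L_0$ forces $N\cap L_0=L_0$, so $L_0\subseteq N$. Intersecting over all such $N$ gives $L_0\subseteq\Ht(\Comm(G)_S)$, settling the first assertion.

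For the ``furthermore'' part, let $(L,\eta)$ be any topologically simple envelope of $G$. Proposition~\ref{tdlc:IC} gives $\VZ(L)=\{1\}$ and $\eta_\ast(L)\subseteq\Ht(\Comm(G)_S)=L_0$, while Proposition~\ref{prop:unienv} shows that $\eta_\ast$ is continuous and open. Hence $\eta_\ast(L)$ is an open subgroup of $L_0$, topologically isomorphic to $L$, containing the open compact subgroup $\iota(G)$. To conclude I would invoke the following consequence of Bruhat-Tits theory: every open subgroup of $L_0$ is either compact or of finite index in $L_0$; combined with topological simplicity this forces proper open subgroups of $L_0$ to be compact. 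A compact topologically simple t.d.l.c.\ group must be a finite simple group and so cannot contain the infinite group $\iota(G)$; thus $\eta_\ast(L)=L_0$. The main obstacle is this structural input: one must reduce an arbitrary open subgroup of $L_0$ to one containing an Iwahori subgroup and then invoke the Bruhat decomposition to classify it as a parahoric (compact) or all of $L_0$. The remaining steps are formal consequences of Theorem~\ref{Chevalley}, Propositions~\ref{prop:unienv} and \ref{tdlc:IC}, and the Kneser-Tits theorem.
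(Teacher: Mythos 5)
Your proposal is correct and follows essentially the same route as the paper: identify $\Ht(\Comm(G)_S)$ with $S=\dbG(F)/\Zen(\dbG(F))$ via Theorem~\ref{Chevalley}, invoke Tits' simplicity theorem for $S$, then use Proposition~\ref{tdlc:IC} together with the Tits--Prasad classification of open subgroups of $S$ (compact or the whole group) to force $\eta_*(L)=S$. The only cosmetic differences are that you spell out the normality of $L_0$ inside $(\Aut\dbG)(F)\rtimes\Aut(F)$ and replace the ``compact or all of $S$'' statement with the equivalent ``compact or finite index'' plus a core argument, both of which the paper leaves implicit by citing \cite{Margulis:book} and \cite{prasad:bull} directly.
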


\begin{proof}
It is an easy consequence of Theorem~\ref{Chevalley} that
$\Ht(\Comm(G)_S)$ is isomorphic to the group $S=\dbG(F)/\Zen(\dbG(F))$.
By Tits' theorem~\cite[I.2.3.2(a)]{Margulis:book}, $S$ is simple (even as an abstract group).

Now let $(L,\eta)$ be a topologically simple envelope of $G$. By Proposition~\ref{tdlc:IC},
we can identify $L$ with a subgroup of $S$. Since $\dbG$ is isotropic and simply connected,
$S$ is generated by unipotent elements (see \cite[I.2.3.1(a)]{Margulis:book}),
and therefore by a theorem of Tits \cite{prasad:bull}, every open subgroup of $S$
is either compact or equals the entire group $S$. Since $L$ is topologically simple
and infinite, it cannot be compact. Thus, $L=S$.
\end{proof}

Proposition~\ref{tdlc:IC} yields the first obstruction to the existence of
a topologically simple envelope:
\begin{prop}
\label{prop:prodisc}
Let $G$ be a profinite group with $\VZ(G)=\{1\}$, and suppose that
$\Ht(\Comm(G)_S)=\{1\}$. Then $G$ does not have a topologically simple envelope.
\end{prop}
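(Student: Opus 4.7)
The plan is to argue by contradiction, with the entire argument being a two-line assembly of facts already packaged in Proposition~\ref{prop:unienv} and Proposition~\ref{tdlc:IC}. Suppose, for contradiction, that $(L,\eta)$ is a topologically simple envelope of $G$; I will show $L=\{1\}$, contradicting the convention that a topologically simple group is by definition nontrivial.

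First I would invoke Proposition~\ref{prop:unienv} to obtain the canonical continuous homomorphism $\eta_{\ast}\colon L\to\Comm(G)_S$ whose kernel is $\VZ(L)$. Since $\VZ(G)=\{1\}$ by hypothesis, Proposition~\ref{tdlc:IC}(a) gives $\VZ(L)=\{1\}$, so that $\eta_{\ast}$ is injective. Next, Proposition~\ref{tdlc:IC}(b) yields the inclusion $\eta_{\ast}(L)\subseteq \Ht(\Comm(G)_S)$. By the standing assumption the right-hand side is trivial, hence $\eta_{\ast}(L)=\{1\}$; combined with injectivity this forces $L=\{1\}$, the desired contradiction.

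There is essentially no obstacle here: the real content sits in Proposition~\ref{tdlc:IC}, which (a) propagates triviality of the virtual center from $G$ to any envelope, and (b) uses topological simplicity of $L$ together with the fact that preimages of open normal subgroups of $\Comm(G)_S$ under $\eta_{\ast}$ are open normal in $L$. As a sanity check one can note that $\eta_{\ast}\circ\eta=\iota_G$ by the commutativity of diagram~\eqref{cd:unienv}, and $\iota_G$ is injective because $\VZ(G)=\{1\}$; so the vanishing $\eta_{\ast}(L)=\{1\}$ already contradicts $G$ being nontrivial directly, which is the case whenever $L$ is assumed nontrivial and $\eta$ is a continuous embedding of $G$ onto an open subgroup of $L$.
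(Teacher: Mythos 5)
Your proof is correct and is precisely the argument the paper intends: the authors state Proposition~\ref{prop:prodisc} immediately after Proposition~\ref{tdlc:IC} with the remark that the latter ``yields the first obstruction,'' leaving the two-line deduction (injectivity of $\eta_*$ from part~(a) plus $\eta_*(L)\subseteq\Ht(\Comm(G)_S)=\{1\}$ from part~(b)) implicit. Your sanity-check observation that $\eta_*\circ\eta=\iota_G$ is injective already gives the contradiction without invoking any convention on nontriviality of simple groups, which is a nice touch.
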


\begin{cor}
\label{cor:prodisc}
Let $G$ be a profinite group with $\VZ(G)=\{1\}$. Suppose that
$G$ has an open subgroup $U$ such that
\begin{itemize}
\item[(i)] $\Comm(G)=\Autbar(U)$
\item[(ii)] $U$ has a base $\ca{C}$ of neighborhoods of the identity consisting of characteristic subgroups.
\end{itemize}
Then $\Ht(\Comm(G)_S)=\{1\}$, and hence $G$ does not have a topologically simple envelope.
\end{cor}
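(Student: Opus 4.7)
The plan is to show that $\ca{C}$ yields a base of neighborhoods of $1$ in $\Comm(G)_S$ consisting of \emph{open normal} subgroups whose intersection is trivial; then $\Ht(\Comm(G)_S)=\{1\}$ and Proposition~\ref{prop:prodisc} finishes the job.

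First I would verify the following conjugation identity: for every $\phi\in\Aut(U)$ and every $u\in U$,
\[
\rho_U(\phi)\cdot\iota_U(u)\cdot\rho_U(\phi)^{-1}=\iota_U(\phi(u)).
\]
This is immediate from the definitions of $\rho_U$ and $\iota_U$, since the product on the left is represented by the virtual automorphism $\phi\circ i_u\circ\phi^{-1}=i_{\phi(u)}$. Consequently, for any subgroup $W\subseteq U$ and any $\phi\in\Aut(U)$ we have $\rho_U(\phi)\,\iota_U(W)\,\rho_U(\phi)^{-1}=\iota_U(\phi(W))$.

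Now fix $W\in\ca{C}$. Since $W$ is characteristic in $U$, the identity above gives $\rho_U(\phi)\,\iota_U(W)\,\rho_U(\phi)^{-1}=\iota_U(W)$ for every $\phi\in\Aut(U)$. By hypothesis (i), $\Comm(G)=\Autbar(U)=\image(\rho_U)$, so $\iota_U(W)$ is normal in $\Comm(G)$. It is also open in $\Comm(G)_S$ because $W$ is open in $U$ (which is open in $G$) and the family $\{\iota_V(V)\}$ for $V$ open in $G$ is a base of neighborhoods of $1$ in the strong topology. Hence $\iota_U(W)\in\ON(\Comm(G)_S)$, and therefore
\[
\Ht(\Comm(G)_S)\subseteq \bigcap_{W\in\ca{C}}\iota_U(W).
\]

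To conclude, I would use the hypothesis $\VZ(G)=\{1\}$: by Proposition~\ref{prop:vircenbas}(b), $\VZ(U)=\VZ(G)\cap U=\{1\}$, and since $\kernel(\iota_U)=\VZ(U)$, the map $\iota_U\colon U\to\Comm(G)$ is injective. As $\ca{C}$ is a base of neighborhoods of $1$ in the Hausdorff group $U$, one has $\bigcap_{W\in\ca{C}}W=\{1\}$. Injectivity of $\iota_U$ then gives $\bigcap_{W\in\ca{C}}\iota_U(W)=\{1\}$, so $\Ht(\Comm(G)_S)=\{1\}$, and Proposition~\ref{prop:prodisc} yields that $G$ has no topologically simple envelope.

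There is no real obstacle here beyond the bookkeeping; the content of the argument is entirely in the conjugation identity $\rho_U(\phi)\iota_U(u)\rho_U(\phi)^{-1}=\iota_U(\phi(u))$, which turns condition~(ii) into normality of a shrinking base of open subgroups of $\Comm(G)_S$, while condition~(i) ensures that all automorphisms of $U$ — and hence all elements of $\Comm(G)$ — respect these characteristic subgroups.
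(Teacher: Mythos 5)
Your proof is correct and follows the same route as the paper: for each characteristic $W\in\ca{C}$, the subgroup $\iota(W)$ is open and normal in $\Comm(G)_S=\Autbar(U)$, and the intersection over $\ca{C}$ is trivial, forcing $\Ht(\Comm(G)_S)=\{1\}$. You have merely spelled out the conjugation identity $\rho_U(\phi)\iota_U(u)\rho_U(\phi)^{-1}=\iota_U(\phi(u))$ and the injectivity of $\iota_U$, which the paper leaves implicit.
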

\begin{proof} For every $V\in\ca{C}$ the group $\iota(V)$ is open in $\Comm(G)_S$. Furthermore,
$\iota(V)$ is normal in $\Autbar(U)$ since $V$ is characteristic in $U$. Since
$\bigcap_{V\in\ca{C}}\iota(V)=\{1\}$, we conclude that $\Ht(\Comm(G)_S)=\Ht(\Autbar(U)_S)=\{1\}$,
so we are done by Proposition~\ref{prop:prodisc}.
\end{proof}

\begin{rem} A profinite group $G$ such that $\VZ(G)=\{1\}$ and $\Ht(\Comm(G)_S)=\{1\}$
will be said to have {\it pro-discrete type} -- this condition arises naturally in our
classification of hereditarily just-infinite profinite groups (see Section~\ref{s:hji}).
We do not know any examples of groups of pro-discrete type not satisfying the hypothesis
of Corollary~\ref{cor:prodisc}.
\end{rem}

Corollary~\ref{cor:prodisc} has interesting consequences (see Section~\ref{s:hji}),
but it can only be applied to groups whose commensurators are known. In order to obtain
deeper results on the structure of open compact subgroups of topologically simple
t.d.l.c. groups, we now restrict our attention to compactly generated groups.

\subsection{Compactly generated, topologically simple envelopes}
\label{ss:cgtsenv}
We begin with two general structural properties of
compactly generated topologically simple t.d.l.c. groups.

\begin{prop}
\label{prop:WilJA}
Let $L$ be a compactly generated topologically simple t.d.l.c. group.
Then $L$ is countably based.
\end{prop}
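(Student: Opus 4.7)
The plan is to exploit van Dantzig's theorem together with compact generation to reduce to a countable cofinal family of conjugates of a fixed open compact subgroup, and then use topological simplicity to show this family has trivial intersection.

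First I would fix an open compact subgroup $U$ of $L$, which exists by van Dantzig's theorem. Since $L$ is compactly generated, $L=\bigcup_{n\geq 1}K^n$ for some compact symmetric $K\supseteq U$, so $L$ is $\sigma$-compact. Each $K^n$ is compact and covered by the open cosets of $U$, so only finitely many cosets are needed. Taking the union over $n$, we obtain a countable set $R\subseteq L$ of left coset representatives with $L=\bigsqcup_{r\in R} rU$.

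Next I would consider the closed normal subgroup
\[
N=\bigcap_{g\in L} gUg^{-1}.
\]
Since $L$ is topologically simple and $N$ is closed and normal, either $N=L$ or $N=\{1\}$. If $N=L$ then $L\subseteq U$, so $L$ is compact, i.e.\ a topologically simple profinite group, which is necessarily finite and in particular second countable. So we may assume $N=\{1\}$. The crucial observation is now that $N=\bigcap_{r\in R} rUr^{-1}$: indeed, any $g\in L$ can be written $g=ru$ with $r\in R$, $u\in U$, and then $gUg^{-1}=rUr^{-1}$. Setting $V_{r}=U\cap rUr^{-1}$ we thus obtain a \emph{countable} family $\{V_r\}_{r\in R}$ of open subgroups of the profinite group $U$ with $\bigcap_{r\in R} V_r=\{1\}$.

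Finally I would show that finite intersections of the $V_r$ form a base of neighborhoods of $1_U$ in $U$. Given any open neighborhood $O$ of $1$ in $U$, the complement $U\setminus O$ is closed in the compact group $U$, hence compact. Since $U\setminus O\subseteq U\setminus\bigcap_r V_r=\bigcup_r(U\setminus V_r)$ is an open cover, compactness yields a finite sub-cover, so $V_{r_1}\cap\dots\cap V_{r_k}\subseteq O$ for some $r_1,\dots,r_k\in R$. Thus $U$ has a countable base of neighborhoods of the identity. As $U$ is open in $L$, this is also a base at $1_L$, and combined with $\sigma$-compactness of $L$ (translating the base at $1$ by a countable dense set of representatives) it produces a countable base for the whole topology of $L$.

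There is no serious obstacle here; the only point requiring a moment's thought is the use of topological simplicity to force $\bigcap_{g\in L}gUg^{-1}$ to be trivial (or all of $L$), which is what makes the reduction from the uncountable family $\{gUg^{-1}\}_{g\in L}$ to the countable family $\{rUr^{-1}\}_{r\in R}$ work.
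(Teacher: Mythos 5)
Your argument is correct, and it takes a different route from the paper: the paper simply invokes Willis's result that a $\sigma$-compact topologically simple t.d.l.c.\ group is metrizable (\cite[Prop.~2.1]{george:sim}), whereas you give a self-contained proof of that fact. Your key steps --- using compact generation to get a countable family of cosets $rU$, using topological simplicity to force $\bigcap_{g\in L}gUg^{-1}$ to be trivial, reducing that intersection to a countable one over coset representatives, and then extracting a countable neighborhood base at $1$ from compactness of $U$ --- are all sound, and this is most likely close to the argument behind the cited lemma, so what your version buys is self-containment rather than a new idea. One small point of precision at the end: $R$ is a set of coset representatives, not a dense set, so ``translating the base at $1$ by a countable dense set'' is not quite the right phrasing. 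The cleanest finish is either to note that the finite intersections $V_{r_1}\cap\dots\cap V_{r_k}$ are open subgroups of $U$, so $U$ is a first-countable profinite group and hence second countable, and then $L=\bigcup_{r\in R}rU$ is a countable union of open second countable subspaces; or to invoke Birkhoff--Kakutani (first countable topological group $\Rightarrow$ metrizable) together with $\sigma$-compact $\Rightarrow$ Lindel\"of, and metrizable $+$ Lindel\"of $\Rightarrow$ second countable. Either way the conclusion stands.
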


\begin{proof} Since $L$ is compactly generated, it is obviously $\sigma$-compact,
that is, a countable union of compact subsets. By \cite[Prop.~2.1]{george:sim},
a $\sigma$-compact topologically simple t.d.l.c. group is metrizable and
thus countably based.
\end{proof}

\begin{thm}
\label{cor:simvir}
Let $L$ be a non-discrete, compactly generated topologically simple t.d.l.c. group.
Then $\VZ(L)=\{1\}$, and therefore $\VZ(G)=\{1\}$ for every open compact subgroup $G$ of $L$.
\end{thm}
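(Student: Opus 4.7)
The statement splits into two conclusions, and the second ($\VZ(G) = \{1\}$ for every open compact subgroup $G \subseteq L$) is immediate from Proposition~\ref{prop:vircenbas}(b): one has $\VZ(G) = \VZ(L) \cap G$, so triviality of $\VZ(L)$ propagates down. Thus the entire content of the theorem lies in proving $\VZ(L) = \{1\}$, and I would attack this by contradiction, assuming $\VZ(L) \neq \{1\}$.

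The first step extracts density from topological simplicity. By Proposition~\ref{prop:vircenbas}(a), $\VZ(L)$ is characteristic in $L$, so its closure $\overline{\VZ(L)}$ is a closed normal subgroup of $L$. Since $\VZ(L)$ is nontrivial and $L$ is topologically simple, $\overline{\VZ(L)} = L$; equivalently, $\VZ(L)$ is dense in $L$. In addition, Proposition~\ref{prop:WilJA} applies, so $L$ is second countable, which provides a countable descending base of compact open subgroups at the identity and allows local arguments in any fixed open compact $G \subseteq L$.

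The remaining task, which carries the main difficulty, is to rule out density of $\VZ(L)$ under compact generation and non-discreteness. The key elementary observation is that every $g \in \VZ(L)$ normalizes some open compact subgroup of $L$: the centralizer $\Cent_L(g)$ is open, so it contains a compact open subgroup $U$, and then $gUg^{-1} = U$. In particular each such $g$ is uniscalar in the sense of Willis, and since the set of uniscalar elements is closed in $L$ (continuity of the scale function), density of $\VZ(L)$ upgrades to the statement that every element of $L$ is uniscalar. The main obstacle is now to invoke the appropriate structural theorem of Willis from \cite{george:sim}: a non-discrete, compactly generated, topologically simple t.d.l.c.\ group cannot have all its elements normalizing an open compact subgroup. (Indeed, in that same paper Willis constructs, in the non-compactly-generated regime, a topologically simple t.d.l.c.\ group that coincides with its virtual center, and the central thrust of his structural results is precisely that compact generation excludes this and related pathologies.) If such a theorem were to fail in our situation, Willis's machinery would eventually produce an open compact normal subgroup $K \trianglelefteq L$; by topological simplicity $K \in \{\{1\}, L\}$, and $K = \{1\}$ contradicts $K$ being open in the non-discrete $L$, while $K = L$ makes $L$ a non-discrete profinite topologically simple group, which is impossible since any profinite topologically simple group is finite. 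Either alternative contradicts non-discreteness, yielding $\VZ(L) = \{1\}$.
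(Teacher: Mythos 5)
Your reduction of the second conclusion to $\VZ(L)=\{1\}$ via Proposition~\ref{prop:vircenbas}(b), and your observations that $\VZ(L)$ is dense and that every element of $\VZ(L)$ normalizes a compact open subgroup, are all correct. However, the heart of the argument is missing: at the crucial step you invoke an unnamed result --- ``a non-discrete, compactly generated, topologically simple t.d.l.c.\ group cannot have all its elements normalizing an open compact subgroup,'' with the comment that ``Willis's machinery would eventually produce an open compact normal subgroup.'' This is a hand-wave, not a proof. The statement you need is essentially that a compactly generated uniscalar t.d.l.c.\ group has a compact open normal subgroup; that is a deep and delicate question in its own right, is not a lemma one can simply pull from \cite{george:sim}, and in fact its truth in this generality is far from clear. (Note also that if the result you have in mind is Willis's own theorem that compactly generated topologically simple groups have trivial quasi-center, then you are citing the theorem you are trying to prove.) A proof must actually produce the nontrivial open compact normal subgroup, and your argument never does this.

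The paper's proof closes this gap with a short elementary argument that you should compare against your approach. Having established $L=U\cdot\VZ(L)$ for a fixed open compact $U$, compact generation lets one choose finitely many generators $x_1,\dots,x_r$ of $L$ modulo $U$ and then, using $L=U\cdot\VZ(L)$, assume each $x_i\in\VZ(L)$. Now $V=\bigcap_{i=1}^r\Cent_L(x_i)$ is open, and $V\cap U$ contains a subgroup $N$ open in $L$ and normal in $U$. Each $x_i$ centralizes $V\supseteq N$, hence normalizes $N$, and $U$ normalizes $N$ by construction, so $N$ is normal in $L=\langle x_1,\dots,x_r,U\rangle$. Topological simplicity forces $N=L$, impossible since $N$ is compact and $L$ is not. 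The key difference is that the paper exploits the finiteness of the generating set to build the normal subgroup explicitly from the (open) centralizers of the $\VZ(L)$-generators, whereas you tried to pass through the global dichotomy for uniscalar groups, a much heavier tool whose applicability here is unsubstantiated. Your step 3 (closedness of the set of uniscalar elements) is correct but ultimately a detour: the paper never needs the scale function at all, only the centralizers of finitely many elements.
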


\begin{proof} Fix an open compact subgroup $U$ of $L$, and suppose that $\VZ(L)\neq \{1\}$.
Then $\VZ(L)$ is dense and normal in $L$, so $L=U\cdot\VZ(L)$.
Since $L$ is compactly generated, there exist elements
$x_1,\ldots x_r\in L$ such that
\begin{equation}
\label{eq:tsim1}
\langle x_1,\ldots, x_r, U\rangle=L.
\end{equation}
As $L=U\cdot\VZ(L)$, we may assume that
$x_1,\ldots x_r\in \VZ(L)$.

Let $V=\bigcap_{i=1}^r \Cent_L(x_i)$. Then $V$ is open in $L$, and thus
$V\cap U$ is compact and open in $L$. Hence $V\cap U$ contains a
subgroup $N$ which is normal in $U$ and open in $L$. By construction, $N$
is normalized by $U$ and each $x_i$. Thus (\ref{eq:tsim1}) implies
that $N$ is normal in $L$. Since $L$ is topologically simple, it follows
that $N=L$, which is impossible as $L$ is not compact.
\end{proof}

\begin{defi}
Let $G$ be a profinite group with trivial virtual center.
A closed subgroup $N$ of $G$ will be called {\it sticky}, if
$N\cap gNg^{-1}$ has finite index in $N$ and $gNg^{-1}$ for all $g\in\Comm(G)$.
\end{defi}

If $N$ is a sticky subgroup of $G$, we have a natural homomorphism of (abstract) groups
$s_N\colon \Comm(G)\to\Comm(N)$ given by
\begin{equation}
\label{eq:sticky1}
s_N(g)=[i_g\vert_{N\cap gNg^{-1}}],
\end{equation}
where $i_g\colon\Comm(G)\to\Comm(G)$ denotes left conjugation by $g\in\Comm(G)$.

The following theorem shows that if $G$ is a profinite group with a compactly
generated topologically simple envelope, then $G$ does not have
sticky subgroups of a certain kind.

\begin{thm}
\label{thm:rest}
Let $G$ be a profinite group containing a non-trivial closed normal subgroup $N$
with the following properties:
\begin{itemize}
\item[(i)] $N$ is sticky in $G$.
\item[(ii)] $N$ has non-trivial centralizer in $G$.
\end{itemize}
Then $G$ does not have a compactly generated, topologically simple envelope.
\end{thm}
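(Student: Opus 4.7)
The plan is to assume there is a compactly generated topologically simple envelope $(L,\eta)$ of $G$ and derive a contradiction by producing a proper non-trivial closed normal subgroup of $L$. First some preparatory reductions: the definition of sticky presupposes $\VZ(G)=\{1\}$, and this forces $N$ to be infinite, since a finite normal subgroup $N$ of a profinite group $G$ with $\VZ(G)=\{1\}$ must be trivial -- $G/\Cent_G(N)$ embeds in the finite group $\Aut(N)$, so $\Cent_G(N)$ is open in $G$ and every $n\in N$ lies in $\VZ(G)=\{1\}$. Similarly $L$ cannot be discrete (else $G$ would be finite), so Theorem~\ref{cor:simvir} gives $\VZ(L)=\{1\}$, and Proposition~\ref{prop:unienv} lets us identify $L$ with an open subgroup of $\Comm(G)_S$. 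By Proposition~\ref{prop:WilJA}, $L$ is countably based, and therefore $G$ and $N$ are too; in particular $N$ has only countably many open subgroups.

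The central construction is the subgroup
\[
K\deq\bigcup_{M}\Cent_L(M)\subseteq L,
\]
where $M$ ranges over the open subgroups of $N$. It is a subgroup, since if $l_1$ centralizes $M_1$ and $l_2$ centralizes $M_2$ then $l_1l_2$ centralizes the open subgroup $M_1\cap M_2$ of $N$. The crucial property is that $K$ is normal in $L$: given $l\in K$ centralizing an open subgroup $M$ of $N$, and given $h\in L$, stickiness of $N$ inside $\Comm(G)$ ensures $hNh^{-1}\cap N$ is open in $N$, and since $hMh^{-1}$ has finite index in $hNh^{-1}$, the subgroup $hMh^{-1}\cap N$ is open in $N$ and is centralized by $hlh^{-1}$. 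Finally $K\ne\{1\}$, because $K\supseteq\Cent_L(N)\supseteq\Cent_G(N)\ne\{1\}$.

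The closure $\overline{K}$ is therefore a closed, normal, non-trivial subgroup of $L$, so topological simplicity forces $\overline{K}=L$, i.e.\ $K$ is dense in $L$. The goal of the remaining work is to convert this density into the equality $K\cap G=G$, after which a Baire-category argument in the compact Hausdorff (hence Baire) space $G$ applies to the countable union $G=\bigcup_{M}\Cent_G(M)$: some $\Cent_G(M_0)$ must have non-empty interior, hence be open in $G$ (it is a subgroup). Since $N$ is infinite and $M_0$ is an open subgroup of $N$, $M_0$ is infinite; choose $m\in M_0\setminus\{1\}$. Then $\Cent_G(m)\supseteq\Cent_G(M_0)$ is open in $G$, so $m\in\VZ(G)=\{1\}$, the desired contradiction.

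The main obstacle is the ``density-to-equality'' step, where one has to pass from $K$ being a dense normal subgroup of $L$ to $K\cap G$ actually exhausting $G$; this is where the interplay among the stickiness of $N$, the realization of $L$ as an open subgroup of $\Comm(G)_S$, and compact generation is exploited most fully -- the finitely many coset representatives of $G$ in $L$ must be controlled in their action on open subgroups of $N$ so that the virtual centralizer $K$ captures all of $L$, not merely a dense subgroup.
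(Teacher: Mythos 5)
Your setup is essentially right, and in fact your subgroup $K=\bigcup_{M}\Cent_L(M)$ coincides with the paper's $K=\kernel(s_N\circ\eta_*)$ (an element of $L$ lies in this kernel precisely when conjugation by it fixes an open subgroup of $N$ pointwise). Your verifications that $K$ is a subgroup, is normal (via stickiness), and is non-trivial are all sound, and density of $K$ in $L$ follows from topological simplicity exactly as you say. But the ``density-to-equality'' step that you flag as the main obstacle is not merely an obstacle -- it is false as a goal. Density of $K$ in $L$ does \emph{not} give $K\cap G=G$; a dense subgroup of a topological group need not contain any open subgroup, and $K\cap G=G$ would mean every $g\in G$ centralizes some open subgroup of $N$, which is not a consequence of the hypotheses (indeed, if it were, the Baire argument in $G$ would prove the theorem with no reference to $L$ or compact generation at all).

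The paper takes a different turn at this point. From density of $K$ and openness of $G$ one gets $L=KG$ (every coset $lG$ meets $K$), not $G\subseteq K$. Now use compact generation: write $L=\langle x_1,\dots,x_n,G\rangle$; since $L=KG$, each $x_i$ can be replaced by an element of $K$ without changing the generated group. Choose a base $G_1\supset G_2\supset\dots$ of open \emph{normal} subgroups of $G$ (possible since $G$ is countably based) and set $N_i=N\cap G_i$, a base of open normal-in-$G$ subgroups of $N$. Each $x_i\in K$ centralizes some $N_{m_i}$; since the $N_i$ are nested, a single $N_m$ is centralized by all $x_1,\dots,x_n$. Then $N_m$ is normalized by every $x_i$ (it is centralized) and by $G$ (it is normal in $G$), hence normal in $L=\langle x_1,\dots,x_n,G\rangle$. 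Since $N$ is infinite, $N_m$ is a non-trivial compact (hence proper) closed normal subgroup of $L$, contradicting topological simplicity. So the contradiction arises directly in $L$, and the Baire-category step in $G$ that you envisage never occurs.
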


\begin{proof} If $N$ is a finite normal subgroup, then $\Cent_G(N)$ must be open.
Thus, $\VZ(G)$ is non-trivial, so $G$ does not have
a compactly generated topologically simple envelope by Theorem~\ref{cor:simvir}.

Now assume that $N$ is infinite, and
suppose that $G$ has a compactly generated topologically simple envelope
$(L,\eta)$.
By Proposition~\ref{prop:WilJA}, $L$ is countably based. Then $G$
is also countably based, and furthermore, $G$ has a base $G_1\supset G_2\supset\ldots$
of neighborhoods of the identity where each $G_i$ is normal in $G$. For each $i\in\mathbb N$ we set $N_i=N\cap G_i$.
Then each $N_i$ is normal in $G$, and $\{N_i\}$ is a base of neighborhoods of the identity for $N$.

Let $\eta_*:L\to \Comm(G)_S$ be the canonical map. Note that $\eta_*$ is injective by Theorem~\ref{cor:simvir}.
Condition (i) yields a homomorphism $s_N\colon\Comm(G) \to \Comm(N)$ (which is not necessarily continuous).
Let $\alpha= s_N\circ\eta_\ast$. Condition (ii) implies that $\alpha$ restricted to $G$ is not
injective. Let $K=\kernel(\alpha)$.
Then for every $x\in K$ there exists an open subgroup $U_x$ of $N$ such that $x$ centralizes $U_x$.
Therefore,
\begin{equation}
\label{eq:thmloc1}
K\subseteq \bigcup _{U \leq_o N} \Cent_L(U)= \bigcup_{i\geq 1}\Cent_L(N_i).
\end{equation}
Since $K\neq \{1\}$ and $L$ is topologically simple, $K$ must be dense in $L$.
Hence
\begin{equation}
\label{eq:thmloc2}
L=K G\subseteq \bigcup_{i\geq 1} \Cent_L(N_i) G.
\end{equation}
As $L$ is compactly generated, there exist finitely many elements $x_1,\ldots,x_n\in L$
such that $L=\langle\, x_1,\ldots, x_n,G\,\rangle$. By \eqref{eq:thmloc2}, we may assume
that $x_1,\ldots,x_n\in\Cent_L(N_m)$ for some $m\in \N$.
Hence $N_m$ is a non-trivial normal subgroup
of $L$, which contradicts topological simplicity of $L$.
\end{proof}

It is quite possible that the existence of a compactly generated topologically simple envelope for a profinite group $G$
yields much stronger restrictions on sticky subgroups of $G$ than the ones given by Theorem~\ref{thm:rest}.
We do not even know the answer to the following basic question:
\begin{quest}
\label{qu:sticky}
Let $G$ be a profinite group with a compactly generated
topologically simple envelope.
Is it true that every infinite sticky subgroup of $G$ is open?
\end{quest}
We shall now discuss various applications of Theorem~\ref{thm:rest}.
\begin{cor}
\label{cor:stupid}
Let $G$ be an infinite pro-(finite nilpotent) group, which has a compactly generated
topologically simple envelope. Then $G$ is a pro-$p$ group for some prime number $p$.
\end{cor}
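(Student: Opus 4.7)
The plan is to derive a contradiction from Theorem~\ref{thm:rest} by taking $N$ to be a Sylow pro-$p$ subgroup of $G$. I assume that $G$ is not pro-$p$ for any single prime $p$ and aim to exhibit a nontrivial closed normal subgroup of $G$ that is sticky and has nontrivial centralizer.

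The key structural input is that a pro-(finite nilpotent) group decomposes canonically as the topological direct product of its Sylow pro-$p$ subgroups. Concretely, passing to the inverse limit of the direct-product decomposition for finite nilpotent quotients yields $G=\prod_p G_p$, where each $G_p$ is closed and characteristic in $G$, and elements of $G_p$ commute with elements of $G_q$ for $p\neq q$. Moreover, every open subgroup $U$ of $G$ is itself pro-(finite nilpotent) and inherits the decomposition $U=\prod_p(U\cap G_p)$, with $U\cap G_p$ the unique Sylow pro-$p$ subgroup of $U$. Consequently any continuous isomorphism $\phi\colon U\to V$ between open subgroups of $G$ must send Sylow pro-$p$ to Sylow pro-$p$, i.e.\ $\phi(U\cap G_p)=V\cap G_p$ for every prime $p$.

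Since $G$ is infinite and not pro-$p$ for any prime, there exist distinct primes $p_1\neq p_2$ with both $G_{p_1}$ and $G_{p_2}$ nontrivial; set $N=G_{p_1}$. Hypothesis (ii) of Theorem~\ref{thm:rest} is immediate: $G_{p_2}\subseteq\Cent_G(N)$ is nontrivial. For hypothesis (i), let $g\in\Comm(G)$ be represented by $\phi\colon U\to V$. By the preceding paragraph, $\phi(N\cap U)=N\cap V$, which is contained in $N$ and is open (hence of finite index) in $N$. Interpreting $gNg^{-1}$ as $\phi(N\cap U)$, the intersection $N\cap gNg^{-1}=N\cap V$ equals $gNg^{-1}$ and has finite index in $N$, so $N$ is sticky. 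Theorem~\ref{thm:rest} then rules out the existence of a compactly generated topologically simple envelope of $G$, contradicting the hypothesis; hence $G$ is pro-$p$ for some prime $p$.

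The only delicate point is the sticky condition, and this reduces to the observation that Sylow pro-$p$ subgroups of pro-(finite nilpotent) groups are characteristic, which is a direct consequence of their uniqueness in finite nilpotent quotients. Everything else is a routine application of Theorem~\ref{thm:rest}.
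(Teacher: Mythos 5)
Your proof is correct and follows essentially the same route as the paper: take $N$ to be a nontrivial Sylow pro-$p$ subgroup of $G$ and apply Theorem~\ref{thm:rest}. The paper's justification of stickiness is the one-line observation that ``any closed subgroup of $N$ is pro-$p$, and conversely, any pro-$p$ subgroup of $G$ is contained in $N$,'' whereas you unpack the same fact via the Sylow decomposition $U=\prod_p(U\cap G_p)$ of open subgroups and the invariance of Sylow subgroups under isomorphisms; these are two phrasings of the same idea. One small imprecision: you write ``interpreting $gNg^{-1}$ as $\phi(N\cap U)$,'' but these are not equal sets, since $\phi(N\cap U)=g(N\cap U)g^{-1}$ is only a finite-index subgroup of $gNg^{-1}$. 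The correct statement is that $N\cap V=\phi(N\cap U)$ is contained in $N\cap gNg^{-1}$ and has finite index in both $N$ (because $V$ is open) and $gNg^{-1}$ (because $N\cap U$ has finite index in $N$); this gives stickiness. You might also record explicitly, as the paper does, that the envelope hypothesis together with Theorem~\ref{cor:simvir} forces $\VZ(G)=\{1\}$, which is needed for the notion of stickiness to apply.
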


\begin{proof} By hypothesis $G$ is the cartesian product of its pro-$p$ Sylow subgroups
$\{G_p\}$. By Theorem~\ref{cor:simvir}, $G$ has trivial virtual center, so each
$G_p$ is either trivial or infinite.

Assume that there exist two distinct prime numbers $p$ and $q$ such that
$G_p$ and $G_q$ are non-trivial, and thus infinite. Let $N=G_p$. Then
$N$ has non-trivial centralizer since $N$ commutes with $G_q$. Moreover,
$N$ is sticky in $G$ since every closed subgroup of $N$ is pro-$p$, and
conversely, every pro-$p$ subgroup of $G$ is contained in $N$. Thus,
$N$ satisfies all conditions of Theorem~\ref{thm:rest}, which
contradicts the existence of a compactly generated topologically simple envelope for $G$.
\end{proof}

Here is another important case where the hypotheses of Theorem~\ref{thm:rest} are satisfied.
Recall that the {\it Fitting subgroup} of a group $G$ is the subgroup generated by all normal nilpotent subgroups of $G$.

\begin{prop}
\label{prop:sticky}
Let $G$ be a profinite group with trivial virtual center,
and suppose that the Fitting subgroup $R_G$ of $G$ is nilpotent.
Then $R_G$ is sticky.
\end{prop}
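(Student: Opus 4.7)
The plan is to first establish that the Fitting subgroup behaves well under passage to open subgroups, specifically that $R_W = R_G \cap W$ for every open $W \leq G$, and then to read stickiness off from this stability property. Throughout I use that, since $G$ is profinite, open subgroups have finite index, and that a subgroup of the nilpotent group $R_G$ is again nilpotent (so in particular $R_G$ is closed, being its own nilpotent closure).

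For the key lemma, the inclusion $R_G \cap W \subseteq R_W$ is immediate, since $R_G \cap W$ is normal in $W$ and nilpotent as a subgroup of $R_G$. For the converse, I would take an arbitrary closed normal nilpotent subgroup $F$ of $W$ of class $c_F$ and show $F \subseteq R_G$. Because $W$ has finite index $n = [G:W]$ and normalizes $F$, the $G$-conjugates of $F$ form a finite set $\{F_1,\dots,F_k\}$ with $k \leq n$; each $F_i$ is normal in the normal closure $F^G = F_1 \cdots F_k$, which by iterated application of Fitting's theorem (the product of two normal nilpotent subgroups of classes $a,b$ is nilpotent of class at most $a+b$) is nilpotent of class at most $k\,c_F$. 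Being normal and nilpotent in $G$, $F^G \subseteq R_G$, whence $F \subseteq R_G \cap W$. Varying $F$ gives $R_W \subseteq R_G \cap W$.

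Now let $\phi: U \to V$ be any virtual automorphism of $G$, set $g = [\phi] \in \Comm(G)$, $N = R_G$. Since $\phi$ is a topological isomorphism it carries the Fitting subgroup $R_U$ onto $R_V$, and combined with the key lemma this yields $\phi(N \cap U) = N \cap V$. Using $\VZ(G) = \{1\}$ to identify $G$ with $\iota_G(G) \subseteq \Comm(G)$, a direct computation of $[\phi][i_h][\phi^{-1}] = [i_{\phi(h)}]$ shows that conjugation by $g$ acts on $\iota_G(U)$ exactly as $\phi$, so
\[
g(N \cap U)g^{-1} \;=\; \phi(N \cap U) \;=\; N \cap V \;\subseteq\; N \cap gNg^{-1}.
\]
Since $N$ is compact and $N \cap U$ is open in $N$, it has finite index in $N$, so its image $N \cap V = g(N \cap U)g^{-1}$ has finite index in $gNg^{-1}$ as well. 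Hence $N \cap gNg^{-1}$ contains a subgroup of finite index in each of $N$ and $gNg^{-1}$, which is precisely stickiness.

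The main obstacle is the key lemma; the transport under $\phi$ and the translation into the language of $\Comm(G)$ are routine once it is in hand. The lemma itself rests on two ingredients that must be used together: Fitting's theorem (which controls the class of a product of normal nilpotent subgroups) and the profinite fact that open subgroups have finite index (which bounds the number of factors, and hence the class, of $F^G$). Without this finiteness, the normal closure of a nilpotent subgroup of an open overgroup could easily fail to be nilpotent, and the whole argument would collapse.
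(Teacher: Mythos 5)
There is a genuine gap in your ``key lemma'' $R_W = R_G \cap W$, and it is located exactly where you flagged the difficulty. You assert that the $G$-conjugates $F_1,\dots,F_k$ of $F$ are each normal in the normal closure $F^G = F_1\cdots F_k$, and then apply Fitting's theorem to conclude $F^G$ is nilpotent. But $F_i = g_i F g_i^{-1}$ is only known to be normal in $g_i W g_i^{-1}$, and when $W$ is not normal in $G$ these conjugates of $W$ are distinct subgroups, so there is no common overgroup in which all the $F_i$ are normal; consequently $F^G$ need not equal the product $F_1\cdots F_k$, and Fitting's theorem does not apply. The finite example $G = A_5$, $W = A_4$, $F = V_4$ illustrates the failure of the mechanism: $F^G = A_5$ is nowhere near nilpotent, and indeed $R_W = V_4 \not\subseteq R_G = \{1\}$. (This $G$ violates $\VZ(G)=\{1\}$ since it is finite, so it is not a literal counterexample to your lemma, but it shows that the step you invoke is not a general group-theoretic fact and requires a further idea.)

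The paper's proof avoids the problem by never working with the full Fitting subgroup of a non-normal open subgroup. Writing $H = gGg^{-1}$ and $R_H = gR_Gg^{-1}$, it picks an open subgroup $X$ that is \emph{normal in $G$} with $X\subseteq G\cap H$, and considers $R_H\cap X$, which is a closed normal nilpotent subgroup of $X$. Because $X\triangleleft G$, every $G$-conjugate $r^{-1}(R_H\cap X)r$ again lies in $X$ and is normal in $X$, so Fitting's theorem genuinely applies to the product $Y=\prod_r r^{-1}(R_H\cap X)r$, which is therefore a nilpotent normal subgroup of $G$, hence contained in $R_G$. This gives $R_H\cap X\subseteq R_G$ and the index bound $|R_H : R_H\cap R_G| \le |HG:X| < \infty$, then symmetry finishes. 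Notice the paper only establishes that $R_H\cap R_G$ has finite index in each, not the exact equality $R_W = R_G\cap W$; your lemma is strictly stronger than what stickiness requires, and it is not clear it is even true under the stated hypotheses. Your argument can be repaired by descending to an open \emph{normal} subgroup $X\subseteq W$ of $G$ and replacing $F$ by $F\cap X$: this yields $R_W\cap X\subseteq R_G$, which (together with the trivial $R_G\cap X\subseteq R_W$) shows $R_W\cap R_G$ has finite index in both $R_W$ and $R_G$, and that weaker statement, combined with $\phi(R_U)=R_V$, is enough to run the rest of your translation into $\Comm(G)$.
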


\begin{rem}
Note that by Fitting's theorem, $R_G$ is nilpotent if and only if $G$
contains a maximal nilpotent normal subgroup (such subgroup is automatically
closed and unique). It is known that $R_G$ is nilpotent for every
linear group $G$ (see \cite[\S 8.2.ii]{wehr:book}).
\end{rem}

\begin{proof}
For simplicity we identify $G$ with the open compact subgroup
$\iota_G(G)$ of $\Comm(G)_S$. Let $g\in\Comm(G)_S$ and put
\begin{equation}
\label{eq:stpf1}
H = i_g(G)=gGg^{-1},\qquad R_H = i_g(R_G).
\end{equation}
Let $X$ be an open normal subgroup of $G$ contained in $G\cap H$. Note that
$|HG/X|<\infty$.
The subgroup $R_H\cap X$ is a closed normal nilpotent subgroup of $X$.
Let $\ca{R}$ be a set of coset representatives of
$G/X$. Hence by Fitting's theorem
\begin{equation}
\label{eq:stpf3}
Y\deq \textstyle{\prod_{r\in\ca{R}} r^{-1}(R_H\cap X)}r
\end{equation}
is a closed normal nilpotent subgroup of $G$, and thus contained in $R_G$.
Therefore, $R_H\cap X\subseteq R_H\cap Y\subseteq R_H\cap R_G$, and so
\begin{equation}
\label{eq:stpf4}
|R_H / R_H\cap R_G|\leq |R_H/R_H\cap X|=|R_H X/X|\leq |H G/X|<\infty,
\end{equation}
Thus, $R_H\cap R_G$ is of finite index in $R_H$. Changing the roles of $H$ and $G$
then yields the claim.
\end{proof}

The following result is a direct consequence of
Proposition \ref{prop:sticky} and Theorem~\ref{thm:rest}.

\begin{cor}
\label{cor:prestupid}
Let $G$ be a profinite group with trivial virtual center,
and suppose the Fitting subgroup of $G$ is nilpotent and non-trivial.
Then $G$ does not have a compactly generated topologically simple envelope.
\end{cor}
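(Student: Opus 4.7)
The plan is to take $N \deq R_G$ and verify that it satisfies both hypotheses of Theorem~\ref{thm:rest}; everything else is then automatic. By assumption $N$ is non-trivial, and since the Fitting subgroup of $G$ is nilpotent it coincides with the (unique closed) maximal nilpotent normal subgroup of $G$, so $N$ is a non-trivial closed normal subgroup of $G$. Condition~(i) of Theorem~\ref{thm:rest}, that $N$ be sticky, is delivered directly by Proposition~\ref{prop:sticky}, whose hypotheses are exactly that $\VZ(G)=\{1\}$ and $R_G$ is nilpotent.

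For condition~(ii) I need to exhibit a non-trivial element of $\Cent_G(N)$. Since $N$ is a closed normal subgroup of the profinite group $G$, $N$ is itself a profinite group; by hypothesis it is nilpotent and non-trivial. A non-trivial nilpotent profinite group is the Cartesian product of its (non-trivial) pro-$p$ Sylow subgroups, and any non-trivial pro-$p$ group has non-trivial center, so $\Zen(N)\neq\{1\}$. Since $\Zen(N)\subseteq \Cent_G(N)$, the centralizer $\Cent_G(N)$ is non-trivial, which is condition~(ii).

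Having verified both hypotheses, Theorem~\ref{thm:rest} applies to the pair $(G,N)$ and concludes that $G$ has no compactly generated topologically simple envelope. There is really no obstacle in this argument beyond the small observation in the second paragraph that a non-trivial nilpotent profinite group has non-trivial center; the corollary is otherwise a mechanical combination of Proposition~\ref{prop:sticky} and Theorem~\ref{thm:rest}.
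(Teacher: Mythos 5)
Your plan---take $N=R_G$, get stickiness from Proposition~\ref{prop:sticky}, and verify condition~(ii) of Theorem~\ref{thm:rest} via $\Zen(N)\subseteq\Cent_G(N)$---is precisely what the paper intends; it gives no proof and calls the corollary a direct consequence of those two results. One intermediate claim you make is false, however: it is \emph{not} true that ``any non-trivial pro-$p$ group has non-trivial center.'' Non-abelian free pro-$p$ groups, or the Nottingham group, are non-trivial pro-$p$ groups with trivial center. The claim does hold for finite $p$-groups and for nilpotent pro-$p$ groups, which happens to be the case at hand, so your conclusion is correct, but the statement as you wrote it would not survive scrutiny. The fix is also shorter than your detour through the Sylow decomposition: in this corollary $R_G$ is nilpotent in the abstract sense (finite nilpotency class, as in Fitting's theorem; see the remark following Proposition~\ref{prop:sticky}), and any non-trivial nilpotent group has non-trivial center, since the upper central series $1=Z_0\subsetneq Z_1=\Zen(N)\subseteq\cdots$ must start to grow at the first step. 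That single observation gives $\Cent_G(N)\supseteq\Zen(N)\neq\{1\}$ directly.
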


Some non-trivial examples where Corollary~\ref{cor:prestupid} is applicable
are collected in the following statement:

\begin{cor}
\label{cor:nilpotent} Assume that one of the following holds:
\begin{itemize}
\item[(a)] $G$ is an open compact subgroup of $\dbG(F)$, where $F$
is a non-archimedean local field and $\dbG$ is a connected non-semisimple algebraic group defined over $F$;
\item[(b)] $G$ is a parabolic subgroup of $SL_n(R)$ for some infinite profinite ring $R$.
\end{itemize}
Then $G$ does not have a compactly generated, topologically simple envelope.
\end{cor}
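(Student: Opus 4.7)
The plan is to verify the hypotheses of Corollary~\ref{cor:prestupid} in both situations, namely that $\VZ(G)=\{1\}$ and that the Fitting subgroup $R_G$ is non-trivial and nilpotent. In both cases $G$ is infinite: in (a) because $\dbG$ is connected of positive dimension over the infinite field $F$, and in (b) because $R$ is infinite. Hence any compactly generated topologically simple envelope of $G$ must be non-discrete, and Theorem~\ref{cor:simvir} then forces $\VZ(G)=\{1\}$; so I may freely assume this. What remains is to exhibit a non-trivial closed nilpotent normal subgroup of $G$ and to argue that $R_G$ is nilpotent.

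For case (a), the non-semisimplicity of the connected group $\dbG$ gives a non-trivial connected nilpotent closed normal algebraic subgroup $N\subseteq \dbG$: take $N=R_u(\dbG)$ if $\dbG$ is non-reductive, and $N=\Zen(\dbG)^{\circ}$ if $\dbG$ is reductive but non-semisimple. Since $\dim N>0$, the group $N(F)$ is infinite, so $N(F)\cap G$, being open in $N(F)$, is a non-trivial closed nilpotent normal subgroup of $G$, whence $R_G\neq\{1\}$. Fix any faithful linear embedding $\dbG\hookrightarrow \GL_m$; then $G$ is realized as a closed subgroup of $\GL_m(F)$, i.e., as a linear group, and the remark following Proposition~\ref{prop:sticky} (citing Wehrfritz) yields the nilpotency of $R_G$. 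Corollary~\ref{cor:prestupid} then settles (a).

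For case (b), write $P=L\ltimes U$ in a Levi decomposition; for a proper parabolic the unipotent radical $U$ is a non-trivial (since $R$ is infinite) closed nilpotent normal subgroup of $G$ of class at most $n-1$, so $R_G\supseteq U\neq\{1\}$. For the nilpotency of $R_G$, I would bound uniformly the nilpotency class of every closed nilpotent normal subgroup of $P$: writing $R$ as an inverse limit of finite quotient rings $R_{\alpha}$, the Fitting subgroups of the parabolics of $\SL_n(R_{\alpha})$ corresponding to $P$ are nilpotent of class bounded by a function of $n$ alone, and this bound transfers to $G$ in the inverse limit. By Fitting's theorem the join of all closed normal nilpotent subgroups of $G$ is then again nilpotent, so $R_G$ is nilpotent, and Corollary~\ref{cor:prestupid} concludes the proof.

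The principal technical obstacle is nilpotency of $R_G$ in (b): in (a), $G$ is literally a linear group over the local field $F$, so the Wehrfritz result applies as a black box, but in (b) the coefficient ring $R$ need not embed in any single field, and one has to control nilpotent normal subgroups of $P$ by a direct uniform class-bound argument rather than by invoking linearity.
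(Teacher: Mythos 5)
Your treatment of part (a) is correct and is essentially the paper's argument. The only cosmetic difference is the case split: the paper dispatches the positive-dimensional-centre case immediately via $\Zen(G)\neq\{1\}$ (so $\VZ(G)\neq\{1\}$ and Theorem~\ref{cor:simvir} applies directly), while you uniformly exhibit a nilpotent normal subgroup ($R_u(\dbG)$ or $\Zen(\dbG)^{\circ}$) and feed everything through Corollary~\ref{cor:prestupid}. Both routes rely on the same ingredients — Margulis for non-triviality of the open subgroup, Wehrfritz for nilpotency of the Fitting subgroup of a linear group — and both are complete.

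Part (b) has a genuine gap. The assertion that ``the Fitting subgroups of the parabolics of $\SL_n(R_{\alpha})$ are nilpotent of class bounded by a function of $n$ alone'' is false. Take $R_{\alpha}=\Z/p^k\Z$: the congruence kernel $K_{\alpha}=\kernel\bigl(P(\Z/p^k\Z)\to P(\F_p)\bigr)$ is a finite $p$-group, hence a normal nilpotent subgroup of $P(R_{\alpha})$, and its nilpotency class grows without bound in $k$ (already for the Borel of $\SL_2$ one checks that $\gamma_i(K_{\alpha})\neq\{1\}$ for $i<k$). So there is no bound depending only on $n$, and nothing useful transfers in the inverse limit. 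Worse, for a ring such as $R=\prod_{k\geq 1}\Z/p^k\Z$ one has $P(R)=\prod_k P(\Z/p^k\Z)$, and the Fitting subgroup of this group contains the copy of each $K_k$ sitting in the $k$-th coordinate; since these have unbounded class, $R_{P(R)}$ is \emph{not} nilpotent, so Corollary~\ref{cor:prestupid} cannot be invoked at all for such $R$. You correctly identified that the crux of (b) is the possible failure of linearity over a field, but the uniform-class-bound argument you propose does not close the gap — it actually contradicts what happens at finite levels. To repair (b) one needs either an additional hypothesis on $R$ that restores linearity over a field (reducing to the argument of (a)), or a direct verification that a suitable normal subgroup such as the unipotent radical $U(R)$ is sticky, so that Theorem~\ref{thm:rest} can be applied without going through the Fitting subgroup. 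The paper itself only says the proof of (b) is ``analogous'', which glosses over exactly this difficulty.
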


\begin{proof} (a) By \cite[I.2.5.2(i)]{Margulis:book}, if $\dbA$
is an arbitrary algebraic group defined over $F$, then $\mathbb A(F)$
is a pure analytic manifold over $F$ of dimension $\dim\mathbb A$.
If the center of the algebraic group $\dbG$ has positive dimension, then $\Zen(G)\neq\{1\}$,
and there is nothing to prove. If $\dbG$ has finite center, let
$\mathbb U$ be the unipotent radical of $\dbG$. Since $\dbG$ is non-semisimple,
$\dim\mathbb U>0$, and thus $\mathbb U(F)\cap G$ is a non-trivial
nilpotent normal subgroup of $G$. Since $G$ is linear, the Fitting
subgroup of $G$ is nilpotent (and non-trivial), and thus we are done
by Corollary~\ref{cor:prestupid}.

The proof of (b) is analogous.
\end{proof}

\subsection{New topologically simple t.d.l.c. groups.}
\label{ss:newsim}

The majority of known examples of topologically simple t.d.l.c. groups
have a natural action on buildings. These include isotropic simple algebraic groups
over non-archimedean local fields, Kac-Moody groups over finite fields \cite{remy:simp} and
certain groups acting on products of trees \cite{bumo:ihes1}.
In \cite{claas:comm}, R\"over has shown that the commensurator of the (first)
Grigorchuk group is simple and can be described explicitly using R. Thompson's group.
We will use this result to show that the pro-$2$ completion of the Grigorchuk group
has a compactly generated topologically simple envelope. We believe that
this construction yields a new example of a topologically simple t.d.l.c. group.
\vskip .1cm

We start with a simple lemma relating the commensurator of a discrete group
to the commensurator of its profinite completion.

\begin{lem}
\label{prof_discrete}
Let $\Gamma$ be a residually finite discrete group, and let $\widehat\Gamma$
be the profinite completion of $\Gamma$. Then there is
a natural injective homomorphism $\omega_{\Gamma}:\Comm(\Gamma)\to \Comm(\Gamhat)$.
\end{lem}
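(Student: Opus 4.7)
The plan is to construct $\omega_\Gamma$ by extending a virtual automorphism $\phi\colon H_1\to H_2$ of $\Gamma$ to a continuous virtual automorphism $\hat\phi\colon \overline{H_1}\to\overline{H_2}$ of $\Gamhat$, where the bar denotes closure inside $\Gamhat$. Since $\Gamma$ embeds densely in $\Gamhat$ (residual finiteness) and $H_i$ has finite index in $\Gamma$, each $\overline{H_i}$ is open in $\Gamhat$ with $\overline{H_i}\cap\Gamma=H_i$, so the extension $\hat\phi$ (once we have it) will be a bona-fide virtual automorphism of $\Gamhat$. The map $\omega_\Gamma$ will then send $[\phi]\in\Comm(\Gamma)$ to $[\hat\phi]\in\Comm(\Gamhat)$.

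First I would check that $\phi$ is continuous with respect to the topology induced from $\Gamhat$. A basis of neighborhoods of $1$ in $H_i$ with the induced topology consists of the sets $H_i\cap N$ where $N$ ranges over normal subgroups of finite index in $\Gamma$. Given such an $N$, the preimage $\phi^{-1}(H_2\cap N)$ has finite index in $H_1$, hence finite index in $\Gamma$; its normal core in $\Gamma$ is therefore a finite-index normal subgroup $N'\triangleleft\Gamma$ with $H_1\cap N'\subseteq\phi^{-1}(H_2\cap N)$. This gives continuity. Because $\overline{H_i}$ is the completion of $H_i$ with respect to its induced topology, $\phi$ extends uniquely to a topological isomorphism $\hat\phi\colon\overline{H_1}\to\overline{H_2}$.

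Next I would verify that $[\phi]\mapsto[\hat\phi]$ is well defined and a homomorphism. If $\phi$ and $\phi'$ agree on an open (equivalently finite-index) subgroup $K\subseteq H_1\cap H_1'$, then their continuous extensions agree on the dense subgroup $K$ of $\overline{K}$, hence on all of $\overline{K}$, which is open in $\Gamhat$; thus $[\hat\phi]=[\hat{\phi'}]$. The homomorphism property follows because compositions of virtual automorphisms of $\Gamma$ and their continuous extensions agree on a common dense subgroup, so extensions commute with composition.

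The remaining, and easiest, step is injectivity. Suppose $[\hat\phi]=1$ in $\Comm(\Gamhat)$. Then $\hat\phi$ fixes pointwise some open subgroup $W$ of $\Gamhat$ contained in $\overline{H_1}$. Setting $K=W\cap\Gamma\cap H_1$, $K$ is of finite index in $\Gamma$ and $\phi|_K=\hat\phi|_K=\iid_K$, so $[\phi]=1$ in $\Comm(\Gamma)$. The main (and only mildly subtle) obstacle is the continuity step above, which rests on the fact that every finite-index subgroup of a finite-index subgroup of $\Gamma$ contains a finite-index normal subgroup of $\Gamma$; everything else is essentially formal from density of $\Gamma$ in $\Gamhat$.
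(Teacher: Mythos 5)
Your proof is correct and takes essentially the same approach as the paper's: extend a virtual automorphism $\phi$ of $\Gamma$ to a continuous virtual automorphism $\hat\phi$ of $\Gamhat$ using the identification of $\widehat{H_i}$ with the closure of $H_i$ in $\Gamhat$, check well-definedness on equivalence classes and compatibility with composition by density, and deduce injectivity from the fact that open subgroups of $\Gamhat$ intersect $\Gamma$ in finite-index subgroups. Your write-up is actually more detailed than the paper's (which asserts the continuous extension exists without spelling out the normal-core argument for continuity), but the underlying argument is the same.
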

\begin{proof}
Let $\Gamma_1,\Gamma_2$ be finite index subgroups of $\Gamma$
and let $\phi:\Gamma_1\to \Gamma_2$ be an isomorphism. Then
$\phi$ canonically extends to an isomorphism $\widehat\phi:\Gamhat_1\to\Gamhat_2$,
so $\widehat\phi$ is a virtual automorphism of $\Gamhat$.

If $\phi$ and $\psi$ are equivalent virtual automorphisms of $\Gamma$,
then clearly $\widehat\phi$ and $\widehat\psi$ are equivalent as well,
so there is a natural homomorphism
$\omega_{\Gamma}:\Comm(\Gamma)\to \Comm(\Gamhat)$. Finally, $\omega_{\Gamma}$
is injective because every open subgroup of $\Gamhat$ is of the
form $\widehat\Lambda$ for some finite index subgroup $\Lambda$ of $\Gamma$.
\end{proof}

Once again, let $\Gamma$ be a discrete residually finite group.
Henceforth we identify $\Comm(\Gamma)$ with its image under the homomorphism $\omega_{\Gamma}$.
Now define
\begin{equation}
\label{eq:comhat}
\widehat{\Comm}(\Gamma) = \langle\, \Comm(\Gamma),\iota(\hGamma)\,
\rangle\subseteq\Comm(\hGamma).
\end{equation}
Note that if $\VZ(\hGamma)=\{1\}$, then $\widehat{\Comm}(\Gamma)$ is an
open subgroup of the t.d.l.c. group $\Comm(\hGamma)_S$, and thus itself
a t.d.l.c. group.
\begin{thm}
\label{thm:grig}
Let $\Gamma$ be the Grigorchuk group. Then the t.d.l.c. group $\widehat{\Comm}(\Gamma)$
is compactly generated and topologically simple.
\end{thm}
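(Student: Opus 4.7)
My plan is to treat compact generation and topological simplicity separately, using R\"over's theorem that $\Comm(\Gamma)$ is a finitely generated abstractly simple group (isomorphic to an explicit group built from $\Gamma$ and Thompson's $V$), together with the structural facts $\VZ(\widehat{\Gamma}) = \{1\}$ and topological just-infiniteness of $\widehat{\Gamma}$, both of which are inherited from the well-known properties of the Grigorchuk group $\Gamma$.

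For compact generation, a finite generating set $S$ for $\Comm(\Gamma)$ together with the compact open subgroup $\iota(\widehat{\Gamma})$ generates $\widehat{\Comm}(\Gamma)$ directly from the definition \eqref{eq:comhat}, so $\widehat{\Comm}(\Gamma)$ is compactly generated.

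For topological simplicity I take a nontrivial closed normal subgroup $N$ of $\widehat{\Comm}(\Gamma)$ and show $N = \widehat{\Comm}(\Gamma)$. The intersection $M := N \cap \Comm(\Gamma)$ is normal in $\Comm(\Gamma)$ and hence, by R\"over's simplicity, equals either $\{1\}$ or $\Comm(\Gamma)$. If $M = \Comm(\Gamma)$, then $N \supseteq \Comm(\Gamma) \supseteq \iota(\Gamma)$; since $\iota$ is a homeomorphism onto its open image and $\Gamma$ is dense in $\widehat{\Gamma}$, $\iota(\Gamma)$ is dense in $\iota(\widehat{\Gamma})$, so closedness of $N$ forces $\iota(\widehat{\Gamma}) \subseteq N$, whence $N = \widehat{\Comm}(\Gamma)$ by \eqref{eq:comhat}.

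The remaining case $M = \{1\}$ I handle by subdividing on $K := N \cap \iota(\widehat{\Gamma})$. If $K = \{1\}$, then $N$ meets an open subgroup trivially, so $N$ is discrete; for each $n \in N$ the conjugation orbit $\iota(\widehat{\Gamma}) \, n \, \iota(\widehat{\Gamma})^{-1} \subseteq N$ is compact and discrete, hence finite, so $n \in \VZ(\widehat{\Comm}(\Gamma))$. Propositions~\ref{prop:strong}(c) and~\ref{prop:vircenbas}(b), applied to the open subgroup $\widehat{\Comm}(\Gamma) \le \Comm(\widehat{\Gamma})_S$ together with $\VZ(\widehat{\Gamma}) = \{1\}$, give $\VZ(\widehat{\Comm}(\Gamma)) = \{1\}$, a contradiction. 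If $K \ne \{1\}$, then $K$ is a nontrivial closed normal subgroup of $\iota(\widehat{\Gamma}) \cong \widehat{\Gamma}$; topological just-infiniteness of $\widehat{\Gamma}$ forces $K$ to be open, so $K \supseteq \iota(U)$ for some open $U \le \widehat{\Gamma}$, and density of $\Gamma$ in $\widehat{\Gamma}$ produces a nontrivial element of $\iota(U \cap \Gamma) \subseteq N \cap \Comm(\Gamma) = \{1\}$, again a contradiction. The main obstacle is precisely this last subcase, which genuinely hinges on topological just-infiniteness of $\widehat{\Gamma}$; without it a substitute argument from inside R\"over's explicit model of $\Comm(\Gamma)$ would be required.
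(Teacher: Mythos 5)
Your proof is correct, and it rests on exactly the same three pillars the paper uses: trivial virtual center of $\hGamma$ (hence of $\widehat{\Comm}(\Gamma)$), just-infiniteness of $\hGamma$, and the simplicity plus finite generation of $\Comm(\Gamma)$. The difference is purely organizational. The paper argues linearly: starting from a non-trivial closed normal $N$, it invokes Proposition~\ref{prop:dis} to get $N\cap\hGamma\ne\{1\}$, uses just-infiniteness to conclude $N$ is open, then intersects the open $N$ with the dense (hence non-discrete) subgroup $\Comm(\Gamma)$ to obtain a non-trivial element, after which simplicity gives $\Comm(\Gamma)\subseteq N$ and the open piece $M\subseteq N$ together with $\Gamma\subseteq N$ gives $\hGamma\subseteq N$. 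You instead branch first on whether $N\cap\Comm(\Gamma)$ is trivial (simplicity settles the non-trivial case quickly via density of $\iota(\Gamma)$), and in the trivial case you branch again on $K=N\cap\iota(\hGamma)$; your $K=\{1\}$ subcase in effect re-derives Proposition~\ref{prop:dis}(a) by the orbit--stabilizer argument, which the paper simply cites, and your $K\ne\{1\}$ subcase again uses just-infiniteness and density of $\Gamma$. Both routes are sound; the paper's linear version is a bit more economical because it establishes openness of $N$ once and avoids the two-layer case split, while yours makes the use of each hypothesis more visible.
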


\begin{proof} For discussion of various properties of $\Gamma$
and branch groups in general, the reader is referred to \cite{slaw:branch}.
The only facts we will use in this proof are the following:

(a) $\hGamma$ has trivial virtual center;

(b) $\hGamma$ is just-infinite (see Section~\ref{s:hji} for the definition);

(c) $\Comm(\Gamma)$ is a finitely presented simple group \cite[Thm.~1.3]{claas:comm}
(actually we are only using finite generation rather than finite presentation).

By (a), we can identify $\hGamma$ with $\iota(\hGamma)$.
By Proposition~\ref{prop:strong}(c), the group $\Comm(\hGamma)_S$
has trivial virtual center, and therefore, $\widehat{\Comm}(\Gamma)$
has trivial virtual center as well. Since $\widehat{\Comm}(\Gamma)$
is generated by the finitely generated group $\Comm(\Gamma)$
and the compact group $\hGamma$, it is clear that $\widehat{\Comm}(\Gamma)$
is compactly generated. It remains to show that $\widehat{\Comm}(\Gamma)$
is topologically simple.

Let $N$ be a non-trivial closed normal subgroup of $\widehat{\Comm}(\Gamma)$.
As $\widehat{\Comm}(\Gamma)$ has trivial virtual center, the group
$M= N\cap\hGamma$ is non-trivial by Proposition~\ref{prop:dis} (see next section).
As $\hGamma$ is just-infinite, this implies that $M$ is open in $\hGamma$.
Thus $N$ is open in $\widehat{\Comm}(\Gamma)$.
As $\Comm(\Gamma)$ is non-discrete in $\widehat{\Comm}(\Gamma)$,
the intersection $N\cap\Comm(\Gamma)$ is non-trivial.
Since $\Comm(\Gamma)$ is a simple group,
$N$ must contain $\Comm(\Gamma)$. In particular, $\Gamma$ is a subgroup of $N$.
Since $N$ also contains the open subgroup $M$ of $\hGamma$, it follows
that $\hGamma$ is a subgroup of $N$. Thus,
$N\supseteq \la \hGamma, \Comm(\Gamma)\ra =\widehat{\Comm}(\Gamma)$.
\end{proof}
It is well known (see \cite[Prop. 10]{slaw:branch}) that the profinite completion of the Grigorchuk group
contains every countably based pro-$2$ group.
\begin{cor}
There exists a compactly generated, topologically simple totally disconnected locally compact group
that contains every countably based pro-$2$ group.
\label{cor:embedding}
\end{cor}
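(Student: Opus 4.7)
The plan is to invoke Theorem~\ref{thm:grig} together with the cited property of the profinite completion of the Grigorchuk group; essentially no further work is required beyond assembling these ingredients. Let $\Gamma$ denote the Grigorchuk group and let $\hGamma$ be its profinite completion (which is pro-$2$ since $\Gamma$ is a $2$-group).

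First I would take $L = \widehat{\Comm}(\Gamma)$, defined in \eqref{eq:comhat}. By Theorem~\ref{thm:grig}, this $L$ is a compactly generated, topologically simple t.d.l.c.\ group, so the only remaining task is to realize every countably based pro-$2$ group as a closed subgroup of $L$.

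By construction, $L$ contains $\iota(\hGamma)$ as a subgroup, and since $\hGamma$ has trivial virtual center the map $\iota\colon\hGamma\to\Comm(\hGamma)_S$ is a topological embedding, so $\hGamma$ sits inside $L$ as a (closed, in fact open) subgroup. Now apply the cited result of \cite[Prop.~10]{slaw:branch}: every countably based pro-$2$ group embeds as a closed subgroup of $\hGamma$. Composing this embedding with the inclusion $\hGamma\hookrightarrow L$ gives the desired embedding into $L$.

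There is essentially no hard step in this argument; the entire difficulty has been absorbed into Theorem~\ref{thm:grig} (which provided the topologically simple envelope) and into the universality property of $\hGamma$ among countably based pro-$2$ groups. The only point that deserves a brief mention is that the inclusion $\hGamma\hookrightarrow L$ is a topological embedding, which follows from Proposition~\ref{prop:strong}(b) together with the fact that $\VZ(\hGamma)=\{1\}$ (property (a) used in the proof of Theorem~\ref{thm:grig}).
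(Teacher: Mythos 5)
Your proposal is correct and takes essentially the same route as the paper: combine Theorem~\ref{thm:grig} (which gives that $\widehat{\Comm}(\Gamma)$ is a compactly generated, topologically simple t.d.l.c.\ group) with the cited universality of $\hGamma$ among countably based pro-$2$ groups, noting that $\iota(\hGamma)$ is an open, hence closed, subgroup of $\widehat{\Comm}(\Gamma)$. The paper leaves these last details implicit, but your spelled-out version is exactly what is intended.
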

In Section~\ref{s:auttop} we will prove another interesting result
about the group $\widehat{\Comm}(\Gamma)$ (where $\Gamma$ is the Grigorchuk group) --
we will show that $\widehat{\Comm}(\Gamma)$ is non-rigid in the sense of \eqref{dia:rigid2}.

\section{Commensurators of hereditarily just-infinite profinite groups}
\label{s:hji}
A profinite group $G$ is called {\it just-infinite} if it is infinite, but every
non-trivial closed normal subgroup of $G$ is of finite index.
A profinite group $G$ is called
{\it hereditarily just-infinite} (h.j.i.), if every open subgroup of $G$ is just-infinite.
Using Wilson's structure theory for the lattices of subnormal subgroups in just-infinite groups~\cite{wilson:newhor},
Grigorchuk~\cite{slaw:branch} showed that every just-infinite profinite group is either
a branch group or a finite extension of the direct product of finitely many h.j.i.
profinite groups. While the structure of branch groups appears to be very complicated, known
examples of h.j.i. profinite groups are relatively well-behaved. Furthermore, these examples
include some of the most interesting pro-$p$ groups, which makes hereditarily just-infinite
profinite groups an important class to study. Alas, very few general structure theorems about
h.j.i. profinite groups are known so far.

In this section we propose a new approach to studying h.j.i. profinite groups,
which uses the theory of commensurators. We show that all h.j.i. profinite groups
can be naturally divided into four types, based on the structure of their commensurator.
We then determine or conjecture the `commensurator type' for each of the known examples
of h.j.i. groups. This analysis leads to several interesting questions and conjectures
regarding the general structure of h.j.i. profinite groups.

\subsection{Examples of hereditarily just-infinite profinite groups.}
In this subsection we describe known examples of h.j.i. profinite groups.
At the present time all such examples happen to be virtually pro-$p$ groups, i.e.,
they contain a pro-$p$ subgroup of finite index for some prime $p$, and
it is not clear whether non-virtually pro-$p$ h.j.i. profinite groups exist.
\footnote{A large class of h.j.i. profinite groups which are not virtually pro-$p$ is
constructed in a recent paper of J. Wilson~\cite{wilson:large}, which was published
after the acceptance of the present paper.}

1. \bf{h.j.i. virtually cyclic groups. }\rm The additive group of $p$-adic integers $\dbZ_p$ is
hereditarily just-infinite, and so are some of the finite extensions of $\dbZ_p$.
It is easy to see that every h.j.i. profinite group which is virtually procyclic
(or more generally, virtually solvable) must be of this form.
We will show that a h.j.i. profinite group $G$ is virtually cyclic if and only if $\VZ(G)\not=\{1\}$.

2. \bf{h.j.i. groups of Lie type. }\rm Let $F$ be a non-archimedean local field, let
$\dbG$ be an absolutely simple simply connected algebraic group defined over $F$, and let
$L=\dbG(F)$. Then the center of $L$ is finite, and if $G$ is any open compact subgroup of $L$
such that $G\cap Z(L)=\{1\}$, then $G$ is h.j.i. If ${\rm char\, }F=0$,
this is a folklore result, and if ${\rm char\, }F=p$, it is a consequence of
\cite[Main~Theorem~7.2]{pink}.
A h.j.i. profinite group $G$ of this form will be said to have \bf{Lie type. }\rm
We will say that $G$ is of \bf{isotropic Lie type }\rm (resp. \bf{anisotropic Lie type}\rm)
if the corresponding algebraic group $\dbG$ is isotropic (resp. anisotropic) over $F$.

The groups $\SL_n(\dbZ_p)$ and $\SL_n(\Fp[[t]])$ are basic examples of h.j.i. profinite groups of isotropic Lie type.
By Tits' classification of algebraic groups over local fields, any h.j.i. profinite group of anisotropic Lie type
is isomorphic to a finite index subgroup of $\SL_1(D)$, where $D$ is a finite-dimensional central
division algebra over a local field, and $\SL_1(D)$ is the group of reduced norm one elements in $D$.

3. \bf{h.j.i. groups of Nottingham type. }\rm Recall that if $F$ is a finite field, the Nottingham group
$\ca{N}(F)$ is the group of normalized power series $\{t(1+a_1 t+\ldots) \mid a_i\in F\}$ under composition
or, equivalently, the group of wild automorphisms of the local field $F((t))$.
The following subgroups of $\ca{N}(F)$ are known to be hereditarily just-infinite:
the Nottingham group $\ca{N}(F)$ itself,
as well as three infinite families of subgroups of $\ca{N}(F)$ defined in the papers of
Fesenko \cite{fesenko}, Barnea and Klopsch \cite{yibe:nott} and Ershov \cite{mikh:new},
respectively\footnote{The groups in \cite{yibe:nott} and \cite{mikh:new} are only defined in the case when
$F$ is a prime field and, in the latter paper, under the assumption that $p>2$, but it is easy to
define analogous groups over arbitrary finite fields.}.
In addition, certain higher-dimensional analogues of the Nottingham group, called
the {\it groups of Cartan type,} are believed to be hereditarily just-infinite.
These h.j.i. groups will be said to have {\bf Nottingham type}.

\subsection{Some auxiliary results} \label{ss:normtdlc}
In this subsection we collect several results
that will be needed for our classification of h.j.i. profinite groups.

\begin{prop}
\label{prop:hjibasic}
Let $G$ be a h.j.i. profinite group. Then $G$ is virtually cyclic if and only if
$\VZ(G)\not=\{1\}$.
\end{prop}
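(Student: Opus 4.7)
The plan is to prove the two implications separately, with the harder direction $(\Rightarrow)$ reducing to the classification of abelian just-infinite profinite groups.

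For $(\Leftarrow)$, suppose $G$ is virtually cyclic and h.j.i. Then $G$ contains an open procyclic subgroup $U$. Since $U$ is abelian, the centralizer in $G$ of any $u\in U$ contains $U$ and is therefore open, so $u\in\VZ(G)$. Hence $\VZ(G)\supseteq U\neq\{1\}$.

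For $(\Rightarrow)$, pick $g\in\VZ(G)$ with $g\neq 1$ and set $C=\Cent_G(g)$, which is open in $G$ by definition of the virtual center. Since $G$ is h.j.i., the open subgroup $C$ is itself just-infinite. The center $\Zen(C)$ is a closed normal subgroup of $C$ containing $g$, hence non-trivial, and just-infiniteness of $C$ then forces $\Zen(C)$ to have finite index in $C$. Thus $\Zen(C)$ is an abelian open subgroup of $G$, and being open in $G$ it is again just-infinite by the hereditary hypothesis. The claim therefore reduces to the assertion that every abelian just-infinite profinite group $U$ is isomorphic to $\Z_p$ for some prime $p$.

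To establish this, write $U=\prod_{p}U_p$ as a product of pro-$p$ Sylow subgroups. If $U_p$ and $U_q$ were both non-trivial for distinct primes $p,q$, each would be a non-trivial closed subgroup of $U$ and hence of finite index, but their intersection is trivial, forcing $U$ to be finite --- a contradiction. So $U$ is pro-$p$ for a single prime $p$. Next $pU$ is non-trivial (otherwise $U$ would be an infinite $\F_p$-vector space, in which any non-trivial cyclic subgroup is finite and hence of finite index by just-infiniteness, making $U$ finite), so $pU$ has finite index and $U/pU$ is finite. Thus $U$ is a finitely generated $\Z_p$-module, and the usual classification gives $U\cong\Z_p^r\oplus T$ with $T$ finite. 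Just-infiniteness excludes $r\geq 2$ (a proper $\Z_p$-summand would be of infinite index) and $T\neq\{1\}$ (it would be a finite, hence infinite-index, subgroup), leaving $U\cong\Z_p$. The only delicate step is this final $\Z_p$-module classification; everything else follows formally from Proposition~\ref{prop:vircenbas}(b) and the hereditary hypothesis.
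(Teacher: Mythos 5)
Your proof is correct and follows the same route as the paper: for the nontrivial direction you reduce to the classification of abelian just-infinite profinite groups by producing an abelian open (hence just-infinite) subgroup as the center of the centralizer of a nontrivial element of $\VZ(G)$. The paper's version is slightly different only in detail -- it passes to an open \emph{normal} subgroup $U\subseteq\Cent_G(g)$ and uses $\Zen(U)=U\cap\Cent_G(U)$ instead of $\Zen(\Cent_G(g))$, it treats the classification of abelian just-infinite profinite groups as ``clear'' where you spell it out, and it omits the easy converse which you supply (note your labels $(\Rightarrow)$ and $(\Leftarrow)$ are swapped relative to the order of the biconditional in the statement).
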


\begin{proof}
The forward direction is obvious.
Suppose that $\VZ(G)\neq \{1\}$. Then there exists an open normal subgroup $U$ of $G$
whose centralizer in $G$ is non-trivial. Since $U$ is normal in $G$, its centralizer
$\Cent_G(U)$
is a closed normal subgroup of $G$, and thus must be open in $G$ (as $G$ is just-infinite).
Furthermore, $\Zen(U)=U\cap \Cent_G(U)$ is also open in $G$, so $\Zen(U)$ must be just-infinite.
It is clear that an abelian just-infinite profinite group must be isomorphic to $\dbZ_p$
for some prime $p$, and thus $G$ is a finite extension of $\dbZ_p$.
\end{proof}

\begin{prop}
\label{prop:dis}
Let $L$ be a t.d.l.c. group.
\begin{itemize}
\item[(a)] Let $N$ be a discrete normal subgroup of $L$. Then $N\subseteq\VZ(L)$.
In particular, if $\VZ(L)=\{1\}$, then every discrete normal subgroup of $L$ must be trivial.
\item[(b)] Assume that $\VZ(L)=\{1\}$.
Let $M$ be a non-trivial closed normal subgroup of $L$,
and let $U$ be an open subgroup of $L$. Then $M\cap U\not=\{1\}$.
\end{itemize}
\end{prop}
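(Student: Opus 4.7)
The plan is to handle part (a) by a direct continuity argument, and then deduce part (b) from part (a) by showing that the intersection hypothesis forces the normal subgroup to be discrete.

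For part (a), I would start by taking any $n\in N$ and considering the conjugation map $c_n\colon L\to N$ defined by $c_n(g)=gng^{-1}$. Since $N$ is normal, this map is well-defined, and it is continuous with respect to the subspace topology on $N$. Because $N$ is discrete, the singleton $\{n\}$ is open in $N$, so $c_n^{-1}(\{n\})$ is an open subset of $L$ containing $1_L$. But $c_n^{-1}(\{n\})$ is precisely $\Cent_L(n)$, hence $\Cent_L(n)$ is open in $L$, which by the definition \eqref{eq:defvircen} means $n\in \VZ(L)$. The second assertion of (a) is then immediate.

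For part (b), I would argue by contradiction. Assume that $M\cap U=\{1\}$ for some non-trivial closed normal subgroup $M$ and some open subgroup $U$ of $L$. Then $U$ is an open neighborhood of $1_L$ in $L$ whose intersection with $M$ is $\{1\}$, so $\{1\}$ is open in $M$ in its subspace topology; since $M$ is a topological group, this forces $M$ to be discrete. Now apply part (a) to the discrete normal subgroup $M$: we get $M\subseteq \VZ(L)=\{1\}$, contradicting the assumption that $M$ is non-trivial.

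I do not anticipate any real obstacle here; the only subtlety is noting that discreteness of $M$ as a topological group follows from the existence of a single open subgroup of $L$ meeting $M$ trivially, and that continuity of $g\mapsto gng^{-1}$ together with discreteness of $N$ gives openness of the centralizer of each individual element of $N$ (rather than merely of some finitely generated subgroup). Both points are routine.
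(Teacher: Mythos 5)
Your proposal is correct and takes essentially the same approach as the paper: continuity of conjugation into the discrete normal subgroup $N$ shows that $\Cent_L(n)$ is open for each $n\in N$, and part (b) follows by observing that $M\cap U=\{1\}$ would force $M$ to be discrete. The paper phrases the argument via the commutator map $x\mapsto[x,g]$ restricted to an open compact subgroup of $L$ and the preimage of $\{1\}$, whereas you use $g\mapsto gng^{-1}$ on all of $L$ and the preimage of $\{n\}$; these are the same argument up to a translation.
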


\begin{proof}
(a) Fix an open compact subgroup $G$ of $L$.
Take any $g\in N$.
The mapping $c_g\colon G\to N$ defined by $c_g(x)=[x,g]$
is continuous and therefore has finite image. Hence
$\Cent_G(g)=c_g^{-1}(\{1\})$ is open in $G$, and thus $g\in\VZ(L)$.

\noindent
(b) If $M\cap U=\{1\}$, then $M$ would be discrete, which is impossible by (a).
\end{proof}

\begin{prop}
\label{prop:nt}
Let $G$ be a just-infinite profinite group with $\VZ(G)=\{1\}$, and
let $N$ be a non-trivial closed normal subgroup of $\Comm(G)_S$.
Then $N$ is open in $\Comm(G)_S$.
\end{prop}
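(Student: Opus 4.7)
The plan is to combine the structural properties of $\Comm(G)_S$ established in Proposition~\ref{prop:strong} with the auxiliary intersection result Proposition~\ref{prop:dis}(b), and then use the just-infinite hypothesis on $G$.

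First, I would set up the ambient topological group. Since $\VZ(G)=\{1\}$, Proposition~\ref{prop:strong}(b) tells us that $\Comm(G)_S$ is a t.d.l.c.\ group, Proposition~\ref{prop:strong}(c) tells us that $\VZ(\Comm(G)_S)=\{1\}$, and the kernel of $\iota_G$ is $\VZ(G)=\{1\}$, so $\iota_G$ embeds $G$ as an open compact subgroup of $\Comm(G)_S$. I therefore identify $G$ with $\iota(G)$.

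Next, I would examine the intersection $M := N \cap G$. Since $N$ is a non-trivial closed normal subgroup of the t.d.l.c.\ group $\Comm(G)_S$ (which has trivial virtual center) and $G$ is open in $\Comm(G)_S$, Proposition~\ref{prop:dis}(b) applies directly: $M$ is non-trivial. Moreover, $M$ is closed in $G$ (as the intersection of the closed set $N$ with $G$), and $M$ is normal in $G$: for any $g \in G$ and $m \in M$, we have $gmg^{-1} \in N$ (since $N \trianglelefteq \Comm(G)_S$) and $gmg^{-1} \in G$ (since $G$ is a subgroup), so $gmg^{-1} \in M$.

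Finally, I would invoke the just-infinite hypothesis. Since $M$ is a non-trivial closed normal subgroup of the just-infinite profinite group $G$, it has finite index in $G$, hence is open in $G$, hence is open in $\Comm(G)_S$. Since $M \subseteq N$ and $M$ is an open subgroup, the subgroup $N$ contains an open neighbourhood of the identity and is therefore itself open. No step here looks like a substantial obstacle; the main content is simply lining up the right earlier results (Proposition~\ref{prop:strong}(b)--(c), Proposition~\ref{prop:dis}(b), and the defining property of just-infiniteness) in the correct order.
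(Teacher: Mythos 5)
Your argument is correct and is essentially the paper's own proof: set $M=N\cap G$, invoke Proposition~\ref{prop:dis}(b) to get $M\neq\{1\}$, use just-infiniteness to conclude $M$ is open in $G$, and hence $N$ is open in $\Comm(G)_S$. The only difference is that you spell out the preliminary identifications and the closedness/normality of $M$ which the paper leaves implicit.
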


\begin{proof} Let $M= G\cap N$. Proposition~\ref{prop:dis}(b) implies that $M$ is non-trivial.
As $M$ is normal in $G$ and $G$ is just-infinite, $M$ must be open in $G$ and thus open in $\Comm(G)_S$.
Hence, $N$ is also open in $\Comm(G)_S$.
\end{proof}

\subsection{Classification of h.j.i. profinite groups by the structure of their commensurators.}

The following theorem shows that the class of h.j.i. profinite groups
is divided naturally in four subclasses (where one of the subclasses
consists of virtually cyclic groups). Recall from \S\ref{s:tdlc}.1 that
for a t.d.l.c. group $L$ we set $\Ht^2(L)=\Ht(\Ht(L))$.

\begin{thm}
\label{thm:hji}
Let $G$ be a h.j.i. non-virtually cyclic profinite group.
Then precisely one of the following conditions holds:
\begin{itemize}
\item[(i)] $\Ht(\Comm(G)_S)=\{1\}$, so $\Comm(G)_S$ is pro-discrete;
\item[(ii)] $\Ht(\Comm(G)_S)$ is an open topologically simple subgroup of $\Comm(G)_S$;
\item[(iii)] $\Ht(\Comm(G)_S)\not=\{1\}$, but $\Ht^2(\Comm(G)_S)=\{1\}$.
\end{itemize}
\end{thm}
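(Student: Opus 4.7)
The plan is to pivot everything on a single structural fact, essentially Proposition~\ref{prop:nt}: in the t.d.l.c. group $L\deq\Comm(G)_S$, every non-trivial closed normal subgroup is open. Since $G$ is h.j.i. and not virtually cyclic, Proposition~\ref{prop:hjibasic} gives $\VZ(G)=\{1\}$, hence $\VZ(L)=\{1\}$ by Proposition~\ref{prop:strong}(c), and $G$ is just-infinite, so Proposition~\ref{prop:nt} applies. The same proof, which uses only Proposition~\ref{prop:dis}(b) and just-infiniteness of the ambient open compact subgroup, also works with $L$ replaced by any t.d.l.c. group $M$ with $\VZ(M)=\{1\}$ containing a just-infinite open compact subgroup; I will use this mild extension below.

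Mutual exclusivity is essentially automatic. A non-trivial non-discrete topologically simple t.d.l.c. group $S$ has $\Ht(S)=S$, since its only closed normal subgroups are $\{1\}$ and $S$ itself and $\{1\}$ is not open in $S$. Applied to $S=\Ht(L)$ (which is non-discrete because $L$ is non-discrete, as $\iota(G)\cong G$ is an infinite open subgroup), this rules out the simultaneous occurrence of (ii) and (iii): in case (ii) one has $\Ht^2(L)=\Ht(L)\neq\{1\}$. Conditions (i) and (ii) (resp.\ (i) and (iii)) are incompatible because (ii) and (iii) both force $\Ht(L)\neq\{1\}$.

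The substantive part is exhaustiveness. Assume we are not in case (i), so $\Ht(L)\neq\{1\}$; then $\Ht(L)$ is open (and characteristic) in $L$ by Proposition~\ref{prop:nt}. Consider the iterated core $\Ht^2(L)$: being characteristic in $\Ht(L)$ and $\Ht(L)$ characteristic in $L$, it is characteristic (hence normal) in $L$. If $\Ht^2(L)=\{1\}$ we are in case (iii). Suppose instead $\Ht^2(L)\neq\{1\}$. Then Proposition~\ref{prop:nt}, applied in $L$, forces $\Ht^2(L)$ to be open in $L$; by the very definition of the open-normal core, $\Ht(L)\subseteq\Ht^2(L)$, and the reverse inclusion is automatic, so $\Ht^2(L)=\Ht(L)$. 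This identity says that $\Ht(L)$ has no proper open normal subgroups. Now apply the extension of Proposition~\ref{prop:nt} mentioned in the first paragraph to $M=\Ht(L)$ (whose open compact subgroup $G\cap\Ht(L)$ is h.j.i., hence just-infinite, and which has trivial virtual center by Proposition~\ref{prop:vircenbas}(b)): every non-trivial closed normal subgroup of $\Ht(L)$ is open. Combined with the absence of proper open normal subgroups, this forces $\Ht(L)$ to be topologically simple, putting us in case (ii).

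The only point requiring genuine care --- and the main conceptual obstacle --- is the step where Proposition~\ref{prop:nt} is applied to the t.d.l.c. group $\Ht(L)$ rather than to $L$ itself: the commensurator of $\Ht(L)$ is canonically $L$, not $\Ht(L)$, so the statement of Proposition~\ref{prop:nt} as written does not apply verbatim. The required extension is however completely formal, as the proof of Proposition~\ref{prop:nt} nowhere uses the specific universal-envelope structure of $\Comm(G)_S$ beyond the two ingredients isolated above.
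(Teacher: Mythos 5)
Your proof is correct and follows essentially the same route as the paper's. The main thread is identical: apply Proposition~\ref{prop:nt} to conclude that a non-trivial $\Ht(\Comm(G)_S)$ is open, note that $\Ht^2(\Comm(G)_S)$ is characteristic in $\Comm(G)_S$ and hence either trivial (case (iii)) or open (forcing $\Ht^2=\Ht$), and then re-run the argument of Proposition~\ref{prop:nt} inside $C=\Ht(\Comm(G)_S)$ — using $\VZ(C)=\{1\}$ and just-infiniteness of $G\cap C$ — to show every non-trivial closed normal subgroup of $C$ is open, whence $C$ is topologically simple because $C=\Ht(C)$. The paper does exactly this "re-run" directly with $O=G\cap C$ and Proposition~\ref{prop:dis}(b), rather than packaging it as an extension of Proposition~\ref{prop:nt}, but the content is the same, and you correctly flag the need for the extension and why it is harmless. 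You also spell out mutual exclusivity explicitly, which the paper leaves implicit; that is a minor but legitimate addition.
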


\begin{proof} Suppose that $C\deq\Ht(\Comm(G)_S))$ is non-trivial.
Then by Proposition~\ref{prop:nt}, $C$ is also open in $\Comm(G)_S$.
Moreover, $\Ht(C)$ is a closed, characteristic subgroup of $C$, and thus also
normal in $\Comm(G)_S$. Thus, either $\Ht(C)=\{1\}$ and (iii) holds,
or $\Ht(C)$ is open in $\Comm(G)_S$. In the latter case $C$ must be equal to
$\Ht(C)$. It remains to show that the equality
$C=\Ht(C)$ implies that $C$ is topologically simple.

Since $\VZ(G)=\{1\}$ by Proposition~\ref{prop:hjibasic},
we identify $G$ with $\iota(G)\subseteq\Comm(G)_S$.
By Proposition~\ref{prop:nt}, the group $O\deq G\cap C$ is an open subgroup
of $G$, and thus in particular a h.j.i. profinite group.
Let $N$ be a non-trivial closed normal subgroup of $C$. As $\VZ(C)=\{1\}$,
the group $M\deq N\cap O$ is non-trivial by Proposition~\ref{prop:dis}(b).
Since $M$ is a closed normal subgroup of $O$
and $O$ is just-infinite, $M$ is open in $O$ and thus in $C$.
Hence $N$ is also open in $C$. However, as $C$ coincides with its open normal core,
the only open normal subgroup of $C$ is $C$ itself.
\end{proof}

In view of Theorem~\ref{thm:hji}, we introduce the following definition:
\begin{defi}
Let $G$ be a profinite group $G$ with $\VZ(G)=\{1\}$. We say that $G$ is of
\begin{itemize}
\item[] {\it pro-discrete type } if $\Comm(G)_S$ is pro-discrete,
\item[] {\it simple type } if $\Ht(\Comm(G)_S)$ is topologically simple
(so, in particular, $\Ht(\Comm(G)_S)=\Ht^2(\Comm(G)_S)$),
\item[] {\it mysterious type } if $\Ht(\Comm(G)_S)\not=\{1\}$
and $\Ht^2(\Comm(G)_S)=\{1\}$.
\end{itemize}
\end{defi}

According to Theorem \ref{thm:hji}, every non-virtually cyclic h.j.i. profinite group
has one of the three commensurator types defined above. We shall now state
or conjecture the commensurator type for each of the known examples of h.j.i.
profinite groups.

1. Let $G$ be a h.j.i. group of isotropic Lie type. Then $G$ is of simple type
by Theorem~\ref{Chevalley}.

2. Let $G$ be a h.j.i. group of anisotropic Lie type. Then $\Comm(G)$ is a finite extension of $G$
by Theorem~\ref{Chevalley}, and thus $G$ is of pro-discrete type.

3. In \cite{ersh:nott}, it is shown that for $p>3$,
the commensurator of the Nottingham group
$\nott(\F_p)$ coincides with $\Aut(\Fpt)$. In particular,
$\nott(\F_p)$ is a finite index subgroup of its commensurator,
and therefore $\nott(\F_p)$ is of pro-discrete type. We expect
that all h.j.i. groups of Nottingham type are of pro-discrete type.
\vskip .12cm
An example of a profinite group of mysterious type is $G=\dbZ_p^*\ltimes \dbZ_p$.
Indeed, it is easy to see that $\Comm(G)_S$ is isomorphic to
$\dbQ_p^*\ltimes \dbQ_p$ (with natural topology), so $\Ht(\Comm(G)_S)\cong \dbQ_p$
and $\Ht^2(\Comm(G)_S)=\{1\}$. However, we are not aware of any examples of
h.j.i. groups of mysterious type.

\begin{quest}
\label{qu:non}
Does there exist a h.j.i. profinite group of mysterious type?
\end{quest}

Another important question is whether there are any currently unknown h.j.i. profinite
groups of simple type:

\begin{quest}
\label{qu:clsim}
Let $G$ be a h.j.i. profinite group of simple type.
Is it true that $G$ is of isotropic Lie type?
\end{quest}

An affirmative answer to this question would provide
a purely group-theoretic characterization of h.j.i. groups of isotropic Lie type.
It might be easier to answer Question~\ref{qu:clsim} in the affirmative
if we assume that $G$ is a pro-$p$ group.
\vskip .12cm
Finally, we prove a peculiar result showing that one can prove
that a h.j.i. profinite group $G$ is of simple type without computing $\Comm(G)$:

\begin{prop}
\label{prop:hjisim}
Let $G$ be a non-virtually cyclic h.j.i. profinite group, and suppose that $G$
has a topologically simple envelope $L$.
Then $G$ is of simple type.
\end{prop}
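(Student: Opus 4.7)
The plan is to use the trichotomy from Theorem~\ref{thm:hji}: under the hypothesis that $G$ is h.j.i. and non-virtually cyclic, Proposition~\ref{prop:hjibasic} forces $\VZ(G)=\{1\}$, so $G$ falls into exactly one of the three types (pro-discrete, simple, mysterious). The strategy is to rule out the other two using the existence of the topologically simple envelope $(L,\eta)$.

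First I would record the consequences of Proposition~\ref{tdlc:IC}. Since $\VZ(G)=\{1\}$, part (a) gives $\VZ(L)=\{1\}$, so by Proposition~\ref{prop:unienv} the canonical map $\eta_\ast\colon L\to\Comm(G)_S$ has trivial kernel, and by part (b) its image lies in $C:=\Ht(\Comm(G)_S)$. In particular $C\ne\{1\}$, which immediately excludes the pro-discrete type. It is also worth noting that $L$ is non-discrete: being topologically simple it has no proper open normal subgroup, and it contains the infinite compact group $G$ as an open subgroup, so $\eta_\ast(L)$ is a non-discrete topologically simple subgroup of $\Comm(G)_S$ contained in $C$.

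The main step is ruling out the mysterious type. Suppose for contradiction that $\Ht(C)=\{1\}$, so that $C$ is pro-discrete. Pick any open normal subgroup $U$ of $C$. Since $\eta_\ast(L)\subseteq C$, the intersection $U\cap\eta_\ast(L)$ is an open (hence closed) normal subgroup of $\eta_\ast(L)$. Topological simplicity of $\eta_\ast(L)$ leaves two possibilities: either $U\cap\eta_\ast(L)=\{1\}$, which would force $\eta_\ast(L)$ to be discrete (contradicting the previous paragraph), or $\eta_\ast(L)\subseteq U$. Intersecting over all open normal $U\trianglelefteq C$ then gives $\eta_\ast(L)\subseteq\Ht(C)=\{1\}$, contradicting $\eta_\ast(L)\ne\{1\}$.

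The only potential obstacle I see is the verification that $\eta_\ast(L)$ is non-discrete and that $U\cap\eta_\ast(L)$ is genuinely normal in $\eta_\ast(L)$; both are direct, the first from $L$ being a non-trivial topologically simple envelope of the infinite profinite group $G$ (so $L$ cannot itself be discrete) and the second from $U$ being normal in $C$ and $\eta_\ast(L)\subseteq C$. Having excluded both the pro-discrete and mysterious types, Theorem~\ref{thm:hji} leaves only the simple type, completing the proof.
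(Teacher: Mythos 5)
Your proof is correct and follows essentially the same route as the paper: identify $\eta_*(L)$ with an open subgroup of $\Comm(G)_S$, show any open normal subgroup of $\Comm(G)_S$ must contain $\eta_*(L)$ (so $C=\Ht(\Comm(G)_S)\neq\{1\}$), then repeat the argument inside $C$ to get $\Ht(C)\neq\{1\}$, which by Theorem~\ref{thm:hji} forces simple type. The only cosmetic difference is that you phrase the second step as a contradiction to $\Ht(C)=\{1\}$ rather than directly observing $\eta_*(L)\subseteq\Ht(C)$.
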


\begin{proof} By hypothesis, $\VZ(G)=\{1\}$.
By Proposition \ref{prop:unienv}, we may assume that $L$ is an open subgroup of $\Comm(G)_S$.
Every open normal subgroup of $\Comm(G)_S$ has non-trivial intersection with $L$,
and thus must contain $L$. Hence $C\deq\Ht(\Comm(G)_S)$ is non-trivial.
Moreover, $L$ is an open subgroup of $C$, so we can repeat the above argument with
$\Comm(G)_S$ replaced by $C$. It follows that $\Ht(C)\not=\{1\}$, and therefore
$G$ is of simple type.
\end{proof}

Note that Proposition~\ref{prop:hjisim} implies that h.j.i. groups
of isotropic Lie type are also of simple type independently of Theorem~\ref{Chevalley}.
Indeed, if $\dbG$ is an isotropic absolutely simple simply connected algebraic
group over a local field $F$, and $G$ is an open compact subgroup of $\dbG(F)$,
with $\VZ(G)=\{1\}$, then $L=\dbG(F)/\Zen(\dbG(F))$ is a topologically simple envelope of $G$,
and thus $\Ht(\Comm(G)_S)$ is simple by Proposition~\ref{prop:hjisim}. However, Theorem~\ref{Chevalley}
tells us more, namely that $\Ht(\Comm(G)_S)$ is equal to $L$, and one might
ask if this is an indication of a general phenomenon:

\begin{quest}
\label{qu:ex}
Let $G$ be a h.j.i. profinite group of simple type. Is it true
that any topologically simple envelope $L$ of $G$ coincides with $\Ht(\Comm(G)_S)$?
\end{quest}

It might be easier to answer Question~\ref{qu:ex} under additional assumptions such as
`$G$ is pro-$p$', `$L$ is compactly generated' or `$\Ht(\Comm(G)_S)$ is compactly generated'.

\section{Commensurators of absolute Galois groups}
\label{s:galois}
In this section we assume that $F$ is a field and denote by
$G_F=\Gal(F^{\sep}/F)$ its absolute Galois group.

\subsection{Hyperrigid fields and the Neukirch-Uchida property}
\label{ss:neucprop}
It is a common convention to say that a field $F$ has property $\eu{X}$, if its absolute Galois group $G_F$ has property $\eu{X}$.
Thus, a field $F$ will be called
{\it hyperrigid} if the canonical map
$\iota_{G_F}\colon G_F\to\Comm(G_F)_S$
is an isomorphism (see \S~\ref{ss:unienv}).

Let $E_1$ and $E_2$ be fields, and let $E^{\sep}_1$ and $E^{\sep}_2$ be separable
closures of $E_1$ and $E_2$, respectively. We define
\begin{equation}
\label{eq:defiso1}
\Iso(E^{\sep}_1/E_1,E^{\sep}_2/E_2)=\{\,\alpha\colon E_1^{\sep}\overset{\sim}{\longrightarrow}
E^{\sep}_2\mid \alpha(E_1)=E_2\,\}
\end{equation}
to be the set of all isomorphism from $E_1^{\sep}$ to $E_2^{\sep}$
which map $E_1$ to $E_2$. If $E_1$ and $E_2$ are extensions of a field $F$,
we put
\begin{equation}
\label{eq:defiso2}
\Iso_F(E_1^{\sep}/E_1,E_2^{\sep}/E_2)=\{\,\alpha\colon E_1^{\sep}\overset{\sim}{\longrightarrow}
E^{\sep}_2\mid \alpha(E_1)=E_2\ \text{and}\ \alpha\vert_F=\iid_F\,\}.
\end{equation}
Note that if $F$ is a prime field, one has a canonical bijection between $\Iso(E^{\sep}_1/E_1,E^{\sep}_2/E_2)$ and
$\Iso_F(E^{\sep}_1/E_1,E^{\sep}_2/E_2)$.

Every isomorphism of fields $\alpha\in\Iso(E^{\sep}_1/E_1,E^{\sep}_2/E_2)$ induces
the corresponding isomorphism of Galois groups
\begin{equation}
\label{eq:isoind}
\alpha_\ast\colon\Gal(E_1^{\sep}/E_1)\longrightarrow\Gal(E^{\sep}_2/E_2)
\end{equation}
given by $\alpha_\ast(g)(x)=\alpha(g(\alpha^{-1}(x)))$
for $g\in\Gal(E^{\sep}_1/E_1)$ and $x\in E^{\sep}_2$.

\begin{defi}
Let $F$ be a field. We say that $F$ has the {\it Neukirch-Uchida property}
if the following holds:
Let $E_1/F$ and $E_2/F$ be finite separable extensions of $F$. Then for every
isomorphism
$\sigma\colon\Gal(E_1^{\sep}/E_1)\to\Gal(E^{\sep}_2/E_2)$ of profinite groups
there exists a unique
element $\alpha\in\Iso_F(E_1^{\sep}/E_1,E^{\sep}_2/E_2)$ such that
$\sigma=\alpha_\ast$.
\end{defi}

\begin{prop}
\label{prop:neuc}
Let $F$ be a field. Then $F$ has the Neukirch-Uchida property if and only if
$F$ is hyperrigid.
\end{prop}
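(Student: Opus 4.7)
The plan is to translate everything through the fundamental theorem of Galois theory, which identifies open subgroups of $G_F$ with Galois groups of finite separable extensions of $F$. Fix once and for all a separable closure $F^{\sep}$, so that for every finite separable $E/F$ contained in $F^{\sep}$ we may take $E^{\sep} = F^{\sep}$, and open subgroups of $G_F$ are exactly the subgroups $G_E = \Gal(F^{\sep}/E)$. Under this identification, $\Iso_F(E_1^{\sep}/E_1, E_2^{\sep}/E_2)$ becomes $\{\,\alpha \in G_F \mid \alpha(E_1) = E_2\,\}$, and for such $\alpha$ the induced map $\alpha_\ast \colon G_{E_1} \to G_{E_2}$ is nothing but conjugation $i_\alpha\vert_{G_{E_1}}$. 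In particular, every virtual automorphism of $G_F$ is, after this translation, an isomorphism $\sigma \colon G_{E_1} \to G_{E_2}$ of the kind appearing in the Neukirch--Uchida property.

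For the implication hyperrigid $\Rightarrow$ Neukirch--Uchida, take finite separable $E_1, E_2/F$ and an isomorphism $\sigma \colon G_{E_1} \to G_{E_2}$. View $\sigma$ as a virtual automorphism of $G_F$. By hyperrigidity there is a unique $g \in G_F$ with $\sigma = i_g\vert_{G_{E_1}}$; then $g G_{E_1} g^{-1} = G_{E_2}$, whence $g(E_1) = E_2$ by the Galois correspondence. Therefore $g \in \Iso_F(E_1^{\sep}/E_1, E_2^{\sep}/E_2)$ and $g_\ast = \sigma$. Uniqueness of $\alpha$ in $\Iso_F$ producing $\sigma$ reduces to uniqueness of $g$ in the hyperrigidity statement, since any two such $\alpha$ differ by an element of $G_{E_1}^{\text{cent}} \cap G_F$, which lies in $\VZ(G_F) = \{1\}$.

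For the converse Neukirch--Uchida $\Rightarrow$ hyperrigid, the existence assertion in the Neukirch--Uchida property rewrites every virtual automorphism $\sigma \colon G_{E_1} \to G_{E_2}$ as $\alpha_\ast = i_\alpha\vert_{G_{E_1}}$ for some $\alpha \in G_F$, which is condition (ii) in the definition of hyperrigidity. For condition (i), namely $\VZ(G_F) = \{1\}$, I would invoke the uniqueness half of Neukirch--Uchida: if $g \in \VZ(G_F)$ centralizes an open subgroup $U$, then $U = G_E$ for some finite separable $E/F$, and $g$ normalizes $G_E$, so $g(E) = E$ and $g \in \Iso_F(E^{\sep}/E, E^{\sep}/E)$. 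Both $g$ and $\iid_{F^{\sep}}$ then induce the identity automorphism of $G_E$ (for $g$ because it centralizes $G_E$), so uniqueness forces $g = \iid$.

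There is no genuine obstacle here; the only thing that demands care is bookkeeping the two conventions for $\alpha_\ast$ and making sure that the uniqueness clause in the Neukirch--Uchida property is genuinely strong enough to rule out a non-trivial virtual centre, which it is because it applies to all finite separable extensions, including $E = F$ itself.
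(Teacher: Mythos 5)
Your proof is correct and follows the same route as the paper's: translate everything through the Galois correspondence and match up virtual automorphisms of $G_F$ with elements of $\Iso_F$. The only cosmetic difference is that the paper carries out the transport from a fixed $F^{\sep}$ to arbitrary separable closures $E_i^{\sep}$ explicitly via isomorphisms $\rho_i\colon F^{\sep}\to E_i^{\sep}$ fixing $F$, whereas you fix $F^{\sep}$ once and for all and leave that routine reduction implicit (and the paper folds the $\VZ(G_F)=\{1\}$ check into the uniqueness clause rather than verifying it separately as you do).
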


\begin{proof}
We fix a separable closure $F^{\sep}$ of $F$.
Assume that $F$ has the Neukirch-Uchida property.
Let $\phi\colon U\to V$ be a virtual automorphism of $G_F=\Gal(F^{\sep}/F)$.
Let $E_1=(F^{\sep})^U$ and $E_2=(F^{\sep})^V$. Then $\phi$ is a continuous
isomorphism from $\Gal(F^{\sep}/E_1)=U$ to $\Gal(F^{\sep}/E_2)=V$.
Hence by definition, there exists a unique element
\begin{equation}
\label{eq:propneuc}
g\in\Iso_F(F^{\sep}/E_1,F^{\sep}/E_2)=
\{\,y\in G_F\mid yUy^{-1}=V\,\}
\end{equation}
such that $i_g\vert_U=\phi$. Hence $G_F$ is hyperrigid.

Suppose $G_F$ is hyperrigid.
Let $E_1$ and $E_2$ be finite separable extensions of $F$, and let $E_1^{\sep}$ and
$E_2^{\sep}$ be  separable closures of $E_1$ and $E_2$, respectively.
We also fix a separable closure $F^{\sep}$ of $F$ and two isomorphisms
$\rho_1\colon F^{\sep}\to E_1^{\sep}$ and
$\rho_2\colon F^{\sep}\to E_2^{\sep}$
which fix $F$ pointwise. Let $E_1'=\rho_1^{-1}(E_1)$ and $E_2'=\rho_2^{-1}(E_2)$.

Let $\alpha\colon\Gal(E_1^{\sep}/E_1)\to\Gal(E_2^{\sep}/E_2)$ be an isomorphism
of profinite groups.
As $\Gal(F^{\sep}/F)$ is hyperrigid, there exists a unique element $g\in\Gal(F^{\sep}/F)$
such that the diagram
\begin{equation}
\label{dia:neuc}
\xymatrix{
\Gal(F^{\sep}/E_1')\ar[r]^{i_g}\ar[d]_{(\rho_1)_\ast}&
\Gal(F^{\sep}/E_2')\ar[d]^{(\rho_2)_\ast}\\
\Gal(E_1^{\sep}/E_1)\ar[r]^\alpha&\Gal(E_2^{\sep}/E_2)
}
\end{equation}
commutes. Then $\sigma=\rho_2\circ g\circ\rho_1^{-1}\in
\Iso_F(E_1^{\sep}/E_1, E_2^{\sep}/E_2)$ and $\sigma_\ast=\alpha$.
The uniqueness of $g$ implies the uniqueness of $\sigma$.
Thus $F$ has the Neukirch-Uchida property.
\end{proof}

In \cite{neu:138}, \cite{neu:139} and \cite{uch:211},
it was proved that $\Q$ has the Neukirch-Uchida property.
In view of Proposition~\ref{prop:neuc}, this result can be reformulated as follows:

\begin{thm}[(Neukirch \& Uchida)]
\label{thm:neuc2}
Let $F$ be a number field, i.e., $F$ is a finite extension of $\Q$. Then
the canonical mapping $j_F\colon G_{\Q}\to\Comm(G_F)_S$ is an isomorphism.
\end{thm}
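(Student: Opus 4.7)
The plan is to reduce the theorem to the hyperrigidity of the prime field $\Q$. First, I would fix a separable closure $\overline{\Q}$ of $\Q$ and identify $F^{\sep}$ with $\overline{\Q}$, so that $G_F=\Gal(\overline{\Q}/F)$ is an open subgroup of $G_{\Q}=\Gal(\overline{\Q}/\Q)$ of index $[F:\Q]$. By Proposition~\ref{prop:strong}(a), the canonical map $j_{G_F,G_{\Q}}\colon\Comm(G_F)_S\to\Comm(G_{\Q})_S$ is an isomorphism of topological groups, and unwinding definitions the map $j_F$ in the statement is exactly $j_{G_F,G_{\Q}}^{-1}\circ\iota_{G_{\Q}}$: each $g\in G_{\Q}$ acts by conjugation on $G_F\cap gG_Fg^{-1}$, an open subgroup of $G_F$. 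Thus it suffices to prove that $\iota_{G_{\Q}}\colon G_{\Q}\to\Comm(G_{\Q})_S$ is an isomorphism of topological groups, i.e. that $\Q$ is hyperrigid.

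Second, I would invoke Proposition~\ref{prop:neuc}, which translates hyperrigidity of $\Q$ into the assertion that $\Q$ has the Neukirch--Uchida property. The entire content of the theorem thus reduces to the following classical statement: for every pair $E_1,E_2$ of finite separable extensions of $\Q$ and every continuous isomorphism of profinite groups $\sigma\colon\Gal(E_1^{\sep}/E_1)\to\Gal(E_2^{\sep}/E_2)$ there exists a unique $\alpha\in\Iso(E_1^{\sep}/E_1,E_2^{\sep}/E_2)$ with $\sigma=\alpha_\ast$ (the requirement $\alpha\vert_{\Q}=\iid_{\Q}$ is automatic since $\Q$ is the prime field, which is why, as remarked just before Proposition~\ref{prop:neuc}, the two variants of $\Iso$ coincide here).

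Finally, I would cite the original theorem of Neukirch~\cite{neu:138,neu:139} and Uchida~\cite{uch:211} as a black box, which is precisely the statement just displayed. The hard part is of course the classical Neukirch--Uchida theorem itself: its proof rests on a delicate group-theoretic reconstruction of the decomposition subgroups of $\Gal(E_i^{\sep}/E_i)$ at each nonarchimedean place from purely profinite-group-theoretic data, together with the use of local class field theory and the Chebotarev density theorem to recover the completions and the field isomorphism $\alpha$. The point of the present reformulation is not to reprove this, but to observe that once Proposition~\ref{prop:strong}(a) and Proposition~\ref{prop:neuc} are in place, the Neukirch--Uchida theorem becomes literally the statement that $j_F$ is an isomorphism, so no further argument is required beyond these two reductions.
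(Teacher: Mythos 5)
Your proof is correct and follows exactly the same route the paper intends: identify $\Comm(G_F)_S$ with $\Comm(G_{\Q})_S$ via Proposition~\ref{prop:strong}(a), reduce the statement to hyperrigidity of $\Q$, translate via Proposition~\ref{prop:neuc} into the Neukirch--Uchida property, and cite the Neukirch--Uchida theorem as a black box. The paper presents the theorem as a direct reformulation of the cited result, so no further argument is needed beyond the unwinding you give.
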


\subsection{Anabelian fields}
\label{ss:anab} Following \cite[Chap.~XII]{NSW:coh} we call a field $F$ {\it anabelian},
if $\VZ(G_F)=1$.
Thus, if $F$ is an anabelian field, $\Comm(G_F)_S$ is a t.d.l.c. group.
Obviously, finite fields as well as the real field are not anabelian.
The simplest examples of anabelian fields are the $p$-adic fields $\dbQ_p$.
This is a consequence of the following result:

\begin{prop}
\label{prop:scd}
Let $G$ be a profinite group satisfying $\cd(G)=\scd(G)=2$,
where $\cd(G)$ (resp. $\scd(G)$) denotes the cohomological dimension of $G$
(resp. the strict cohomological dimension of $G$).
Then $\VZ(G)=\{1\}$.
\end{prop}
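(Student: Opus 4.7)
The plan is to argue by contradiction. Suppose some $g \in \VZ(G)$ is non-trivial, and set $H = \Cent_G(g)$, which is open in $G$ by definition of the virtual center. Since both $\cd$ and $\scd$ of a profinite group are preserved on passage to an open subgroup (when finite), we have $\cd(H) = \scd(H) = 2$, and $g$ lies in $\Zen(H)$. It therefore suffices to rule out the two possibilities that $g$ has finite order and that $g$ has infinite order.

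If $g$ has finite order, choose a prime $p$ dividing its order. Then $H$ contains an element of order $p$, which contradicts the classical theorem that a profinite group of finite $p$-cohomological dimension must be $p$-torsion-free. Hence $g$ has infinite order, so the closure $\overline{\langle g\rangle}$ is an infinite procyclic group, isomorphic to a product of copies of $\Z_p$ indexed by a non-empty set of primes. Fixing any prime $p$ for which the pro-$p$ component is non-trivial yields a closed central subgroup $Z \subseteq H$ with $Z \cong \Z_p$.

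The main step is to derive a contradiction from the presence of this central $\Z_p$-subgroup in a profinite group with $\cd = \scd = 2$. I would analyse the Lyndon--Hochschild--Serre spectral sequence $E_2^{i,j} = H^i(H/Z, H^j(Z, M)) \Rightarrow H^{i+j}(H, M)$ attached to the central extension $1 \to Z \to H \to H/Z \to 1$. Since $\cd_p(Z) = 1$, only the rows $j = 0, 1$ are non-zero, and the entire spectral sequence is controlled by the transgressions $d_2^{i,1}\colon H^i(H/Z,-) \to H^{i+2}(H/Z,-)$. The hypothesis $\cd_p(H) = 2$, applied with finite $p$-primary coefficients, forces these transgressions to be surjective (resp. injective) in the appropriate range of degrees; upon substituting a non-torsion $p$-primary coefficient module such as $\Z_p$, one extracts a non-vanishing class in $H^3(H, -)$, contradicting $\scd(H) = 2$.

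The spectral-sequence bookkeeping in the last paragraph is the principal technical obstacle; the underlying phenomenon --- that a closed central copy of $\Z_p$ in a profinite group of finite $p$-cohomological dimension is always reflected in the discrepancy $\scd_p - \cd_p$ --- is a classical fact in the cohomology theory of profinite groups, available in standard references such as Neukirch--Schmidt--Wingberg.
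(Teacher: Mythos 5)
Your preliminary reductions are correct and mirror the paper's: passing to the open subgroup $H=\Cent_G(g)$ (which preserves $\cd$ and $\scd$), observing that $H$ is torsion-free because $\cd(H)<\infty$ (so $g$ cannot have finite order), and extracting a closed central subgroup $Z\cong\Z_p$. The paper additionally passes to a Sylow pro-$p$ subgroup $P$ containing the central copy of $\Z_p$, reducing to a genuinely $p$-local statement $\cd_p(P)=\scd_p(P)=2$; you work with $H$ directly, which is fine for the $\cd_p/\scd_p$ bookkeeping but slightly less clean.

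The gap is in the final step, and it is a real one rather than mere bookkeeping. Your spectral-sequence sketch has a coefficient mismatch: the vanishing of rows $j\geq 2$ in the Lyndon--Hochschild--Serre spectral sequence for $1\to Z\to H\to H/Z\to 1$ follows from $\cd_p(Z)=1$ \emph{only for torsion $p$-primary coefficient modules}. You then propose to substitute a non-torsion module such as $\Z_p$ (more precisely, a discrete module like $\Z$ or $\Z_{(p)}$), but for such coefficients $H^2(Z,-)$ can be nonzero --- indeed $\scd_p(\Z_p)=2$ is exactly the statement that $H^2(\Z_p,\Z)\neq 0$. So the claim that ``only the rows $j=0,1$ are non-zero'' is not available in the situation where you want to apply it, and one cannot simply transport conclusions about the transgressions $d_2^{i,1}$ from torsion coefficients to non-torsion ones. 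The paper circumvents all of this: it cites a result of Weigel--Zalesskii to conclude $\vcd_p(P/C)=1$, which forces $P/C$ to be non-torsion (a pro-$p$ group of virtual cohomological dimension one is virtually free pro-$p$ and hence, if infinite, has elements of infinite order), whence $P$ contains a closed subgroup isomorphic to $\Z_p\times\Z_p$; the contradiction is then the elementary fact $\scd_p(\Z_p\times\Z_p)=3>2=\scd_p(P)$. The ``classical fact'' you gesture at in your last paragraph is essentially this Weigel--Zalesskii result, but the route you propose for deriving it does not, as written, get around the coefficient issue, and you should either cite the result directly or redo the spectral-sequence analysis carefully enough to account for the $j=2$ row.
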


\begin{proof}
Let $g\in\VZ(G)$, with $g\not=1$. Then $U=\Cent_G(g)$ is open and thus of finite
index in $G$. In particular, $\scd(U)=\cd(U)=2$ and $g\in \Zen(U)$. Hence
we may replace $G$ by $U$ and thus assume that $\Zen(G)\neq \{1\}$.

As $\Zen(G)$ is closed, it is a profinite group. Note that
$\Zen(G)$ is torsion-free since $\cd(\Zen(G))\leq \cd(G)<\infty$.
Since $\Zen(G)$ is also abelian, we can find a subgroup $C\subseteq\Zen(G)$
such that $C\cong \dbZ_p$ for some prime $p$.

Let $P$ be a Sylow pro-$p$ subgroup of $G$. Then
$\scd_p(P)=\cd_p(P)=2$ (see \cite[\S I.3.3]{ser:gal}) and
$C\subseteq \Zen(P)$. From \cite{weza:coh} one concludes that
$\vcd_p(P/C)=1$. In particular, $P/C$ is not torsion, so
$P$ contains a closed subgroup isomorphic to $\Z_p\times\Z_p$.
However, $\scd_p(\Z_p\times\Z_p)=3>\scd_p(P)$, a contradiction.
\end{proof}

It is well-known that $\scd(G_{\Q_p})=\cd(G_{\Q_p})=2$ (see \cite[\S II.5.3]{ser:gal}).
Thus, Proposition~\ref{prop:scd} implies that $\VZ(G_{\Q_p})=1$, so
$\Comm(G_{\Q_p})_S$ is a t.d.l.c. group. Mochizuki's theorem (see \cite{mochi:qp})
suggests that its structure should be related to the ramification filtrations on the
group $G_{\Q_p}$ and its open subgroups, but the following questions remain open.

\begin{quest}
\label{qu:padic}
\begin{itemize}
\item[(I)] What is the structure of the t.d.l.c. group $\Comm(G_{\Q_p})_S$?
\item[(II)] Is $\Comm(G_{\Q_p})$ strictly larger than $\Aut(G_{\Q_p})$?
\item[(III)] Is $\Comm(G_{\Q_p})_S$ a t.d.l.c. group with a non-trivial scale function?
\end{itemize}
\end{quest}

\begin{rem} In \cite[Chap.~VII, \S 5]{NSW:coh}, it is shown that $G_{\Q_p}$ has non-trivial
outer automorphisms, and thus $\Q_p$ does not possess the Neukirch-Uchida
property. So part (II) of Question~\ref{qu:padic} asks whether one can construct elements in $\Comm(G_{\Q_p})_S$
outside the normalizer of $\image(\iota_{G_{\Q_p}})$.

If $U$ and $V$ are isomorphic open subgroups of $G_{\Q_p}$, then
$U$ and $V$ must be of the same index in $G_{\Q_p}$. This follows from the fact
that the (additive) Euler characteristic of $G_{\Q_p}$ at the prime $p$ is $-1$.
Hence $\Comm(G_{\Q_p})_S$ is unimodular by Proposition~\ref{prop:strong}(d).
However, it is not clear to us whether $\Comm(G_{\Q_p})_S$ is
uniscalar or not.
\end{rem}

\subsection{Totally disconnected locally compact groups arising from finitely generated
field extensions}
\label{ss:high}
For a field $F$ and a non-negative integer $n$ we define
\begin{equation}
\label{eq:defGFn}
G_F(n)=\Aut_F(\rat{F}{n}^{\sep}).
\end{equation}
Consider the field extension $\rat{F}{n}^{\sep}/F$. Let $\bE$ denote the set
of subfields $E$ of $\rat{F}{n}^{\sep}$ with the following properties
\begin{itemize}
\item[(i)] $F\leq E$ and $E$ is finitely generated over $F$,
\item[(ii)] $\rat{F}{n}^{\sep}/E$ is a separable extension.
\end{itemize}
Let $E$, $E^\prime\in\bE$. Then $E\vee E^\prime$
-- the subfield of $\rat{F}{n}^{\sep}$
generated by $E$ and $E^\prime$ -- is also contained in $\bE$. Moreover,
if $g\in G_F(n)$, then $g(E)$ is also contained in $\bE$.
Hence the set of subgroups
\begin{equation}
\label{eq:deftop}
\ca{F}(\bE)=\{\,G_E=\Aut_E(\rat{F}{n}^{\sep})\mid E\in\bE\,\}
\end{equation}
satisfies the hypothesis of Proposition \ref{prop:topbou}, and thus
defines a unique topology $\ca{T}$ making $G_F(n)$  a topological group
for which $\ca{F}(\bE)$ is a base of neighborhoods of the identity. In particular,
on every subgroup $G_E$, the induced topology coincides with the
Krull topology. Hence $G_F(n)$ is a t.d.l.c. group.

\subsection{Compact subgroups of $G_F(n)$}
\label{ss:compsep}

For the analysis of compact subgroups of $G_F(n)$
we shall use the following well-known result due to Artin
(see \cite[Chap.~6]{Lang:Algebra}):

\begin{prop}
\label{fact:comp}
Let $E$ be a field, and let $G$ be a finite subgroup of $\Aut(E)$.
Let $E_0=E^G$ be the fixed field of $G$. Then $E/E_0$ is a finite Galois extension
with Galois group $G$.
\end{prop}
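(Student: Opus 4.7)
This is the classical theorem of E.~Artin, and the plan is to follow its standard two-step proof: first show that $E/E_0$ is algebraic, normal and separable, then pin down the degree using Artin's linear-independence lemma.

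First I would fix $\alpha\in E$ and consider its $G$-orbit $G\cdot\alpha=\{\alpha_1,\dots,\alpha_d\}$ (a finite set since $G$ is finite). The polynomial
\begin{equation*}
f_\alpha(x)=\prod_{i=1}^d (x-\alpha_i)\in E[x]
\end{equation*}
has coefficients fixed by every element of $G$, because $G$ permutes its roots, hence $f_\alpha(x)\in E_0[x]$. Since $\alpha$ is a root of $f_\alpha$, the extension $E/E_0$ is algebraic; moreover the minimal polynomial of $\alpha$ over $E_0$ divides $f_\alpha$, which splits in $E$ and has only simple roots. Thus every element of $E$ is separable over $E_0$ and its minimal polynomial splits in $E$, so $E/E_0$ is separable and normal, with degree of each element bounded by $|G|$.

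Next I would use Artin's lemma to show $[E:E_0]\le |G|$. Writing $G=\{\sigma_1,\dots,\sigma_n\}$ with $n=|G|$, and taking any $\alpha_1,\dots,\alpha_{n+1}\in E$, the homogeneous system
\begin{equation*}
\sum_{j=1}^{n+1}\sigma_i(\alpha_j)\,x_j=0,\qquad i=1,\dots,n,
\end{equation*}
has more unknowns than equations and hence a nontrivial solution in $E^{n+1}$. Choosing such a solution $(c_1,\dots,c_{n+1})$ with the minimal number of nonzero entries, normalising so that one entry equals $1$, and then applying any $\sigma\in G$ to the relation, one shows that the vector $\sigma(c)-c$ is a solution with strictly fewer nonzero entries, forcing $\sigma(c)=c$. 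Hence $c\in E_0^{n+1}$, giving a nontrivial $E_0$-linear relation among the $\alpha_j$. Therefore $[E:E_0]\le n=|G|$.

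Finally, $G\subseteq\Aut_{E_0}(E)$ by the definition of $E_0$, and the $n$ elements of $G$ are certainly distinct as automorphisms, so by Dedekind's independence of characters $|\Aut_{E_0}(E)|\ge n$; combined with $[E:E_0]\le n$ and the general inequality $|\Aut_{E_0}(E)|\le [E:E_0]$, this forces $[E:E_0]=n=|G|$ and $G=\Aut_{E_0}(E)=\Gal(E/E_0)$. The main point where care is needed is the minimality argument in Artin's lemma, since the rest of the proof is essentially formal once one has the finite polynomial $f_\alpha$ and the degree bound in hand.
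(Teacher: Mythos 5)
Your proof is correct, and it is the standard textbook proof of Artin's theorem. The paper itself gives no proof of this proposition; it simply cites it as a well-known fact, deferring to Lang's \emph{Algebra}, Chapter~6 — which presents essentially the argument you outline: form the orbit polynomial $f_\alpha$ to get that $E/E_0$ is algebraic, separable and normal; bound $[E:E_0]\le |G|$ by the linear-algebra/minimality lemma; and then trap $[E:E_0]$ between $|G|$ and $|\Aut_{E_0}(E)|$. One small misattribution: the inequality $|\Aut_{E_0}(E)|\ge n$ needs no Dedekind at all — it follows immediately from $G\subseteq\Aut_{E_0}(E)$ having $n$ distinct elements. Dedekind's linear independence of characters is what underlies the \emph{other} inequality $|\Aut_{E_0}(E)|\le [E:E_0]$, which you invoke as ``the general inequality.'' With that attribution fixed, the chain $n=|G|\le|\Aut_{E_0}(E)|\le[E:E_0]\le n$ closes correctly and the argument is complete.
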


\begin{prop}
\label{prop:comp}
Let $C$ be a compact subgroup of $G_F(n)$, and define
\begin{equation}
\label{eq:corcom}
\bo{F}(C)=(\rat{F}{n}^{\sep})^C=\{\,y\in\rat{F}{n}^{\sep}\mid c(y)=y\ \text{for all
$c\in C$}\,\}
\end{equation}
Then $\bo{F}(C)$ is a subfield containing $F$, and the extension $\rat{F}{n}^{\sep}/\bo{F}(C)$ is separable.
\end{prop}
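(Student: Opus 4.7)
The plan is to dispatch the easy assertion first and then obtain separability via a compactness-orbit argument. That $\bo{F}(C)$ is a subfield of $\rat{F}{n}^{\sep}$ containing $F$ is automatic, since it is the joint fixed set of a collection of $F$-linear field automorphisms; such a set is closed under the field operations and contains $F$ pointwise.

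For the separability of $\rat{F}{n}^{\sep}/\bo{F}(C)$ I would show that every $x \in \rat{F}{n}^{\sep}$ is separable algebraic over $\bo{F}(C)$. Assuming the $C$-orbit $C \cdot x$ is finite, the polynomial
\[
f_x(T) = \prod_{y\,\in\, C\cdot x}(T-y)
\]
has $C$-invariant coefficients and therefore lies in $\bo{F}(C)[T]$; its roots are distinct, so it is separable and annihilates $x$, as required.

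It remains to verify that $C \cdot x$ is finite. I would set $E = F(X_1,\ldots,X_n,x)$, which is finitely generated over $F$. Since $\rat{F}{n}^{\sep}$ is the separable closure of $\rat{F}{n}$, every element of $\rat{F}{n}^{\sep}$ is separable over $\rat{F}{n}$ and hence over the intermediate field $E$, so $\rat{F}{n}^{\sep}/E$ is separable and $E \in \bE$. By the definition of the topology in \S\ref{ss:high}, the subgroup $G_E$ is open in $G_F(n)$, and by construction it fixes $x$ pointwise. Consequently the stabilizer of $x$ in $C$ contains the open subgroup $C \cap G_E$; since $C$ is compact, any open subgroup has finite index, and the orbit-stabilizer relation yields $|C\cdot x|<\infty$.

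I do not anticipate any real obstacle: the proof is essentially a packaging of the general principle that a compact group acting with open point-stabilizers has finite orbits, combined with the tautology that the fixed set of a group of field automorphisms is a subfield. The only mildly technical point is the verification of both defining conditions for $E \in \bE$, but condition (i) is clear from the construction of $E$ and condition (ii) reduces to the fact that $\rat{F}{n}^{\sep}$ is separable algebraic over any intermediate field above $\rat{F}{n}$. As an aside, one could alternatively apply Proposition~\ref{fact:comp} to the finite quotient of $C$ acting faithfully on a suitable $C$-stable compositum of finitely many translates of $E$, but the orbit-wise polynomial argument seems more direct.
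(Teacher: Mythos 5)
Your proof is correct, and it takes a genuinely different route from the paper's. The paper argues globally: it intersects $C$ with the open compact subgroup $O = \Aut_{\rat{F}{n}}(\rat{F}{n}^{\sep})$ after replacing $O$ by its $C$-core $O' = \bigcap_{c\in C}O^c$, obtains an open normal subgroup $C' = C\cap O'$ of $C$ which is closed in $O$, takes the fixed field $E'$ of $C'$ (automatically a separable extension of $\rat{F}{n}$), notes that the finite group $C/C'$ acts faithfully on $E'$, and then applies Artin's lemma (Proposition~\ref{fact:comp}) to get that $E'/\bo{F}(C) = E'/(E')^C$ is finite separable; separability of $\rat{F}{n}^{\sep}/\bo{F}(C)$ then follows from the tower $\bo{F}(C)\subset E'\subset\rat{F}{n}^{\sep}$. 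Your argument is element-wise: for a given $x\in\rat{F}{n}^{\sep}$ you produce directly a separable polynomial over $\bo{F}(C)$ annihilating $x$, namely the orbit polynomial $\prod_{y\in C\cdot x}(T-y)$, with finiteness of the orbit coming from compactness of $C$ plus openness of $C\cap G_E$ for $E=F(X_1,\ldots,X_n,x)\in\bE$. Your route is more elementary — it bypasses Artin's lemma entirely, in effect re-proving the relevant special case on the fly — and avoids the somewhat fiddly normalization step $O\mapsto O'$. What the paper's global argument buys is a single finite Galois extension $E'/\bo{F}(C)$ realizing $C/C'$ as a Galois group, a structural fact that is nice to have on record, though the subsequent Theorem~\ref{thm:main} invokes the Fundamental Theorem of Galois theory directly rather than this particular byproduct. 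Both proofs are valid.
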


\begin{proof} Let $E=\rat{F}{n}$, and let
$O=\Aut_{E}(\rat{F}{n}^{\sep})$. Then $O$ is an open compact
subgroup of $G_F(n)$. As $C\subseteq G_F(n)$ is compact, $C\cap O$ has finite index in $C$.
In particular, $O^\prime=\bigcap_{c\in C} O^c$ is of finite index in $O$.
By construction, $C^\prime=C\cap O^\prime$ is an open normal subgroup of $C$
and a closed subgroup of $O$.
Let $E^\prime$ denote the fixed field of $C^\prime$.
Then one has a canonical injection
$\iota\colon C/C^\prime\to\Aut_{F}(E^{\prime})$.
Let $E_0=(E^{\prime})^C$. By construction, $\bo{F}(C)=E_0$.
By Proposition~\ref{fact:comp}, $E^{\prime}/E_0$ is a finite separable extension.
Hence $\rat{F}{n}^{\sep}/E_0$ is separable.
\end{proof}

As a consequence of Proposition~\ref{prop:comp} we obtain the following variation
of the Fundamental theorem in Galois theory.

\begin{thm}
\label{thm:main}
Let $\Com$ denote the set of compact subgroups of $G_F(n)$, and let
$\interm$ denote the set of subfields $E$ of $\rat{F}{n}^{\sep}$ containing $F$
such that the field extension $\rat{F}{n}^{\sep}/E$ is separable. Then
the maps
\begin{equation}
\label{eq:main}
\begin{aligned}
\bo{A}(\argu)=&
\Aut_{\argu}(\rat{F}{n}^{\sep})&\colon&\interm&\longrightarrow&\quad\Com\\
\bo{F}(\argu)=&
(\rat{F}{n}^{\sep})^{\argu}&\colon&\Com&\longrightarrow&\quad\interm
\end{aligned}
\end{equation}
are mutually inverse, i.e., $\bo{A}\circ\bo{F}=\iid_{\Com}$ and
$\bo{F}\circ\boA=\iid_{\interm}$. Moreover, if $E\in\interm$, then
$\boA(E)$ is compact and open if and only if $E$ is finitely generated over $F$.
\end{thm}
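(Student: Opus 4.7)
The plan is to verify the claimed bijection in a few small steps, relying on Artin's theorem (Proposition~\ref{fact:comp}) and Proposition~\ref{prop:comp}, and then to handle the ``compact and open'' characterization separately.

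First I would check that $\boA$ lands in $\Com$. Given $E\in\interm$, the hypothesis that $\rat{F}{n}^{\sep}/E$ is separable forces $E$ to have the same transcendence degree $n$ over $F$ as $\rat{F}{n}^{\sep}$, and hence $\rat{F}{n}^{\sep}/E$ is in fact separable algebraic. Setting $O=\boA(\rat{F}{n})$, the basic open compact subgroup arising from $\S$\ref{ss:high}, the compositum $E\vee\rat{F}{n}$ is algebraic over $E$ and generated over $E$ by $X_1,\ldots,X_n$, hence a finite extension of $E$. Therefore $\boA(E)\cap O=\boA(E\vee\rat{F}{n})$ is closed in the compact group $O$ and has finite index in $\boA(E)$, so $\boA(E)$ is compact. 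That $\boF$ takes $\Com$ into $\interm$ is exactly Proposition~\ref{prop:comp}.

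Next I would establish the two composition identities. For $E\in\interm$ the separably closed field $\rat{F}{n}^{\sep}$ is a separable algebraic extension of $E$, hence a separable closure of $E$, so $\rat{F}{n}^{\sep}/E$ is Galois and the fundamental theorem of infinite Galois theory gives $(\rat{F}{n}^{\sep})^{\boA(E)}=E$, proving $\boF\circ\boA=\iid_{\interm}$. For $\boA\circ\boF=\iid_{\Com}$, fix $C\in\Com$ and set $E_0=\boF(C)$; the inclusion $C\subseteq\boA(E_0)$ is tautological. For the reverse inclusion I would re-use the construction from the proof of Proposition~\ref{prop:comp}: let $O^\prime=\bigcap_{c\in C}O^c$, $C^\prime=C\cap O^\prime$, and $E^\prime=(\rat{F}{n}^{\sep})^{C^\prime}$, so that $C^\prime$ is open normal in $C$ and closed in $O$. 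Proposition~\ref{fact:comp} applied to the finite group $C/C^\prime\hookrightarrow\Aut(E^\prime)$ with fixed field $E_0$ shows that $E^\prime/E_0$ is a finite Galois extension with group $C/C^\prime$. Any $g\in\boA(E_0)$ then preserves $E^\prime$ (normality), and $g\vert_{E^\prime}\in\Aut_{E_0}(E^\prime)=C/C^\prime$ equals $c\vert_{E^\prime}$ for some $c\in C$, so $gc^{-1}\in\boA(E^\prime)=C^\prime\subseteq C$ and therefore $g\in C$.

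For the ``moreover'' clause, if $E/F$ is finitely generated then $E\in\bE$ and $\boA(E)\in\ca{F}(\bE)$ is open by the very construction of the topology on $G_F(n)$, while compactness was established in the first step; conversely, if $\boA(E)$ is open then $\boA(E^\prime)\subseteq\boA(E)$ for some $E^\prime\in\bE$, and $\boF\circ\boA=\iid$ yields $E\subseteq E^\prime$, so the standard fact that every intermediate field of a finitely generated field extension is itself finitely generated shows $E/F$ is finitely generated. The main obstacle is the interplay between the (possibly infinite-degree) Galois theory and the topology on $G_F(n)$: one must recognize that ``separable'' in the definition of $\interm$ forces separable algebraic (hence Galois), and that compactness of $\boA(E)$ follows from finiteness of $E\vee\rat{F}{n}$ over $E$ rather than from any finiteness of $E/F$ itself. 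Once these two observations are in hand the remaining verifications are routine.
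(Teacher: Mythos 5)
Your proof follows the paper's Galois-theoretic approach but is considerably more detailed: the paper dispatches the bijectivity in one line (``The Fundamental Theorem in Galois theory implies\ldots''), while you fill in the well-definedness of the two maps, re-derive $\boA\circ\boF=\iid$ from Artin's finite theorem via the construction in the proof of Proposition~\ref{prop:comp}, and give a slightly cleaner argument for the converse half of the ``moreover'' clause (extracting a basic open $\boA(E')\subseteq\boA(E)$ with $E'\in\bE$ and concluding $E\subseteq E'$ directly, rather than passing through the compositum $E\vee E'$ as the paper does).

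There is, however, a genuine error in your first step. The claim that ``$\rat{F}{n}^{\sep}/E$ separable forces $E$ to have transcendence degree $n$ over $F$'' is false for the general (possibly transcendental) notion of a separable field extension. For instance, $\rat{F}{n}^{\sep}/\rat{F}{m}$ is separable for any $0\leq m<n$ (a purely transcendental extension followed by a separable algebraic one), and in characteristic $0$ even $\rat{F}{n}^{\sep}/F$ is separable. If ``separable'' in the definition of $\interm$ were read with that generality, the theorem itself would be false, since $\boA(F)=G_F(n)$ is not compact when $n\geq 1$. The intended reading --- consistent with the reference to Lang's Chapter~6 and with the proof of Proposition~\ref{prop:comp}, which exhibits $\rat{F}{n}^{\sep}/\boF(C)$ as a tower of \emph{algebraic} separable extensions --- is that ``separable'' means separable algebraic. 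Under that convention $\rat{F}{n}^{\sep}$ is a separable closure of $E$ and $\rat{F}{n}^{\sep}/E$ is Galois by definition; no transcendence-degree argument is needed or possible. You were right to flag this interplay as the crux, but the justification offered does not hold and should be replaced by the terminological observation.
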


\begin{proof} The Fundamental Theorem in Galois theory implies that the mappings $\boA$ and $\boF$ are mutually inverse.

If $E\in\bE$, then $\boA(E)$ is open by definition. Assume that $\boA(E)$ is open
in $G_F(n)$. Let $E^\prime\in\bE$. Then $\boA(E)\cap\boA(E^\prime)=
\boA(E\vee E^\prime)$ is open and thus of finite index in $\boA(E^\prime)$.
Hence $E\vee E^\prime$ is finitely generated over $F$.
Moreover, $E\vee E^\prime/E$ is a finite separable extension, and thus
$E$ is finitely generated over $F$ (see \cite{roman:field}).
Hence $E\in\bE$.
\end{proof}

\subsection{Finitely generated extensions of $\Q$}
\label{ss:pop}
In \cite{pop:157} and \cite{pop:158},
Pop extended the Neukirch-Uchida theorem to
fields which are finitely generated over $\Q$. His result can be reformulated
using the same ideas as in \S\ref{ss:neucprop}:

\begin{thm}[(Pop)]
\label{thm:popup}
The t.d.l.c. group $G_{\Q}(n)$ is hyperrigid for every $n\geq 1$.
In particular, if $E$ is a field which is finitely generated over $\Q$
of transcendence degree $n$, then $E$ is anabelian, and
the canonical map $j_E\colon G_{\Q}(n)\to\Comm(G_E)_S$ is an isomorphism.
\end{thm}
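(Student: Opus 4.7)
The plan is to follow the derivation of Theorem~\ref{thm:neuc2} from the Neukirch-Uchida theorem via Proposition~\ref{prop:neuc}, but now at the level of the t.d.l.c.\ group $L := G_{\Q}(n)$: first I would prove that $L$ is hyperrigid by using Pop's theorem as the black-box input and Theorem~\ref{thm:main} in place of the classical Galois correspondence. Once hyperrigidity of $L$ is in hand, Proposition~\ref{prop:rigid} (uniqueness of hyperrigid envelopes) will immediately deliver both the anabelianness of $E$ and the isomorphism $j_E$.

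For the hyperrigidity step, fix $\Omega := \rat{\Q}{n}^{\sep}$, so $L = \Aut_{\Q}(\Omega)$. By Theorem~\ref{thm:main}, the open compact subgroups of $L$ are precisely the groups $G_E := \Aut_E(\Omega)$ indexed by the subfields $E \in \bE$, i.e.\ subfields of $\Omega$ finitely generated over $\Q$ for which $\Omega = E^{\sep}$. Given a topological isomorphism $\phi \colon G_{E_1} \to G_{E_2}$ between two such subgroups, Pop's generalization of Neukirch-Uchida to fields finitely generated over $\Q$ produces a unique field isomorphism $\alpha \colon \Omega \to \Omega$ with $\alpha(E_1) = E_2$ whose induced map on Galois groups coincides with $\phi$; since $\chr\Q = 0$, such $\alpha$ automatically fixes $\Q$ pointwise, hence $\alpha \in L$. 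This $\alpha$ is the unique element of $L$ conjugating $G_{E_1}$ onto $G_{E_2}$ as $\phi$ prescribes, which is exactly the hyperrigidity condition for $L$.

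For the second conclusion, note that hyperrigidity of $L$ forces $\VZ(L) = \{1\}$: applying the uniqueness clause to $\iid_U$ for an open compact subgroup $U$ shows that the only element of $L$ centralizing $U$ is $1$. Now let $E/\Q$ be finitely generated of transcendence degree $n$; choosing a transcendence basis $t_1,\dots,t_n$ and using that $E$ is a finite separable extension of $\Q(t_1,\dots,t_n)$ (characteristic zero), one can embed $E$ into $\Omega$ so that the image lies in $\bE$. This identifies $G_E$ with an open compact subgroup of $L$, and then $\VZ(G_E) = \VZ(L) \cap G_E = \{1\}$ by Proposition~\ref{prop:vircenbas}(b), i.e.\ $E$ is anabelian. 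Finally, Proposition~\ref{prop:rigid} yields a canonical isomorphism $j_E \colon L \to \Comm(G_E)_S$ extending the inclusion of $G_E$. The only genuinely deep ingredient is Pop's theorem itself; the rest of the argument is an exact parallel of Proposition~\ref{prop:neuc}, with Theorem~\ref{thm:main} playing the role of the ordinary Galois correspondence.
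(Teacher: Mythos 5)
Your proposal is correct and matches the approach the paper itself points to: the paper only sketches the proof, saying Pop's result ``can be reformulated using the same ideas as in \S\ref{ss:neucprop}'', and your argument fills in exactly those details, using Theorem~\ref{thm:main} in place of the ordinary Galois correspondence to identify the open compact subgroups of $G_{\Q}(n)$ with the groups $\Aut_E(\rat{\Q}{n}^{\sep})$ for $E\in\bE$, and then Proposition~\ref{prop:rigid} to deliver the isomorphism with $\Comm(G_E)_S$.
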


\section{The Aut-topology}
\label{ss:topaut}

\subsection{Automorphisms of profinite groups}
\label{s:auto}
Let $G$ be a profinite group, and let $\Aut(G)$ denote the group of continuous automorphisms of $G$.
The {\it standard } topology on  $\Aut(G)$ is given by the base
$\{\,A(O) \mid \text{$O$  open in  $G$}\,\}$
of neighborhoods of $1\in\Aut(G)$, where
\begin{equation}
\label{eq;aut1}
A(O)=\{\,\gamma\in \Aut(G) \mid
\text{$\gamma(g)\equiv g \mod O$ for every $ g\in G$}\,\}.
\end{equation}
With this topology, $\Aut(G)$ is always a Hausdorff topological group but not
necessarily profinite.

To ensure that $\Aut(G)$ is profinite, we need to require that
$G$ is {\it characteristically based}, that is,
$G$ has a base $\ca{C}$ of neighborhoods of the identity consisting of characteristic subgroups.
In this case, there exists a canonical isomorphism
\begin{equation}
\label{eq:aut3}
\Aut(G)\simeq {\textstyle \varprojlim_{C\in\ca{C}} \Aut(G/C)},
\end{equation}
of topological groups \cite[Prop.~5.3]{ddms:padic}.
In particular, $\Aut(G)$ is an inverse limit of finite groups and thus profinite.

A sufficient condition for $G$ to be characteristically based is that $G$ is
(topologically) {\it finitely generated},
that is, contains an (abstract) finitely generated dense subgroup.
Indeed, if $G$ is finitely generated, then for every
positive integer $n$, there are only finitely many open subgroups of index $n$ in $G$.
Their intersection $C_n$ is an open characteristic subgroup of $G$,
which is contained in every open subgroup of $G$ of index $n$. Thus $G$ has a
base consisting of characteristic subgroups; furthermore, this base is countable.

\subsection{Hereditarily countably characteristically based profinite groups}
\label{ss:hccb}
A profinite group $G$ will be called {\it countably characteristically based}
if it has a countable base $\{G_i\}_{i=1}^{\infty}$ where each $G_i$ is
a characteristic subgroup. We will say that $G$ is
{\it hereditarily countably characteristically based} (h.c.c.b.),
if every open subgroup of $G$ is countably characteristically based.
Profinite groups with this property can be characterized as follows:

\begin{prop}
\label{prop:hccb1}
Let $G$ be a profinite group. The following are equivalent:
\begin{itemize}
\item[(i)] $G$ is hereditarily countably characteristically based;
\item[(ii)] $G$ is countably based, and for every pair of open subgroups $U$ and $V$
of $G$, with $V\subseteq U$,
there exists an open subgroup $W\subseteq V$ which is characteristic in $U$.
\end{itemize}
\end{prop}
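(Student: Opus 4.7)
The plan is to verify both implications directly from the definitions, with the main work being to arrange a countable family of characteristic subgroups as a base.

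For the implication (i) $\Rightarrow$ (ii), I would first observe that if $G$ itself is countably characteristically based, then in particular it is countably based, giving the first half of (ii). Then, given open subgroups $V \subseteq U$ of $G$, since $U$ is open and hence (by hypothesis) countably characteristically based, it admits a countable descending base $\{U_i\}$ of open characteristic subgroups. Since $V$ is open in $U$, some $U_i$ must be contained in $V$, and I would take $W = U_i$.

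For (ii) $\Rightarrow$ (i), fix any open subgroup $U$ of $G$. Since $G$ is countably based and $U$ is open, $U$ is itself countably based, so pick a countable base $\{V_n\}_{n\geq 1}$ of neighborhoods of $1_U$ consisting of open subgroups of $U$ (which are also open in $G$). Applying hypothesis (ii) to each pair $V_n \subseteq U$, obtain an open subgroup $W_n \subseteq V_n$ which is characteristic in $U$. Then $\{W_n\}$ is a countable family of open characteristic subgroups of $U$, and since $W_n \subseteq V_n$ and $\{V_n\}$ is a base, the family $\{W_n\}$ is also a base of neighborhoods of $1_U$. Thus $U$ is countably characteristically based, and as $U$ was arbitrary, $G$ is h.c.c.b.

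Both directions are essentially routine once the definitions are unpacked; the only mild point to be careful about is that in (i) $\Rightarrow$ (ii) one must extract the countable basedness of $G$ from the fact that $G$ is, in particular, countably characteristically based (by taking $U=G$ in the definition of h.c.c.b.), and in (ii) $\Rightarrow$ (i) one must note that the $W_n$ are automatically open in $U$ since they are characteristic open subgroups of $U$ contained in the open subgroup $V_n$. There is no serious obstacle here.
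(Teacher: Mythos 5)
Your proof is correct, and since the paper states this proposition without supplying a proof (treating it as a routine unpacking of the definitions), there is nothing to compare against except the obvious intended argument, which is exactly what you give. One small note: in your closing remark you say the $W_n$ are "automatically" open in $U$ because they are characteristic subgroups of $U$ contained in the open $V_n$ — that reasoning is backwards, since characteristic subgroups need not be open; the openness of $W_n$ is simply part of the conclusion of hypothesis (ii), so no extra observation is needed.
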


As we showed in the previous subsection, every finitely generated profinite
group is countably characteristically based. Since a finite index subgroup of a finitely
generated group is finitely generated, we have the following implication:
\begin{equation}
\label{eq:aut4}
\text{finite generation}\Longrightarrow{h.c.c.b.}
\end{equation}
\vskip .2cm

Let $U$ be an open subgroup of a profinite group $G$.
We set
\begin{equation}
\label{eq:aut5}
\Aut(G)_U=\{\,\alpha\in\Aut(G)\mid \alpha(U)=U\,\},
\end{equation}
and $$r_{G,U}\colon \Aut(G)_U\to \Aut(U)$$ will denote the restriction
map. Note that $U$ is characteristic in $G$ if and only if
$\Aut(G)_U=\Aut(G)$.

\begin{prop}
\label{prop:hccb2}
Let $G$ be a characteristically based profinite group, and let
$U$ be an open subgroup of $G$. Then $\Aut(G)_U$ is an open
subgroup of $\Aut(G)$, and the restriction map
$r_{G,U}\colon\Aut(G)_U\to \Aut(U)$ is continuous.
\end{prop}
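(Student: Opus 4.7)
The plan is to exploit the hypothesis that $G$ has a base $\ca{C}$ of neighborhoods of identity consisting of characteristic subgroups. A characteristic subgroup $C \in \ca{C}$ that is contained in $U$ will simultaneously give an open neighborhood of $\iid$ in $\Aut(G)$ that lies in $\Aut(G)_U$ and controls the image of that neighborhood in $\Aut(U)$, so both parts of the proposition will fall out from essentially one observation.

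For the openness of $\Aut(G)_U$, I would first pick any $C \in \ca{C}$ with $C \subseteq U$; such $C$ exists because $U$ is open and $\ca{C}$ is a base. The claim is that the basic neighborhood $A(C) = \{\alpha \in \Aut(G) \mid \alpha(g) \equiv g \mod C\text{ for all } g \in G\}$ is contained in $\Aut(G)_U$. For $\alpha \in A(C)$ and $u \in U$, one has $\alpha(u) \in uC \subseteq U$, so $\alpha(U) \subseteq U$. For the reverse inclusion I would use that $C$ is $\Aut(G)$-invariant: the congruence $\alpha(g) \equiv g \mod C$ rewrites as $g \equiv \alpha^{-1}(g) \mod \alpha^{-1}(C) = C$, so $\alpha^{-1} \in A(C)$ as well, whence $\alpha^{-1}(U) \subseteq U$ and therefore $\alpha(U) = U$.

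For continuity of $r_{G,U}$, it suffices to check that the preimage of every basic neighborhood of $\iid_U$ in $\Aut(U)$ contains a neighborhood of $\iid$ in $\Aut(G)_U$. A basic such neighborhood has the form $A_U(V) = \{\beta \in \Aut(U) \mid \beta(u) \equiv u \mod V\text{ for all } u \in U\}$ for $V$ open in $U$. Since $V$ is then open in $G$, I would pick $C^\prime \in \ca{C}$ with $C^\prime \subseteq V$. By the argument of the previous paragraph applied to $C^\prime$ in place of $C$, we have $A(C^\prime) \subseteq \Aut(G)_{U}$, and for $\alpha \in A(C^\prime)$ and $u \in U$ we get $\alpha(u) \in uC^\prime \subseteq uV$, so $r_{G,U}(\alpha) \in A_U(V)$, as desired.

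I do not anticipate any real obstacle here. The entire argument rests on the single observation that every continuous automorphism of $G$ preserves each $C \in \ca{C}$ setwise, which promotes a congruence condition modulo a characteristic subgroup contained in $U$ into both the stabilization of $U$ and control of the induced automorphism of $U$ within any prescribed neighborhood of $\iid_U$.
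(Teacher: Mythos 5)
Your proof is correct and follows essentially the same route as the paper: choose a characteristic $C \in \ca{C}$ inside the relevant open subgroup, observe that the basic neighborhood $A(C)$ lands inside $\Aut(G)_U$ (resp.\ maps into $A_U(V)$), and conclude. The only cosmetic difference is that the paper identifies $A(C)$ as $\kernel(\Aut(G)\to\Aut(G/C))$, so its closure under inverses is automatic, whereas you verify it by hand; both are fine, and your version correctly fills in the details the paper leaves implicit with ``proved in a similar way.''
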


\begin{proof}
Let $W$ be an open characteristic subgroup of $G$
such that $W\subseteq U$. Then $A(W)=\kernel(\Aut(G)\to\Aut(G/W))$ is open in
$\Aut(G)$ and also contained in $\Aut(G)_U$.
Therefore, $\Aut(G)_U$ is open in $\Aut(G)$ as well.
Continuity of the map $r_{G,U}$ is proved in a similar way.
\end{proof}

\subsection{The Aut-topology}
Let $G$ be a h.c.c.b. profinite group.
In this case there is a natural topology on $\Comm(G)$
- the {\it Aut-topology} -
which is `compatible' with the standard topologies on the groups
$\Aut(U)$, with $U$ open in $G$. More precisely, $\Aut$-topology
can be characterized as the strongest topology $\ca{T}$ such that
\begin{itemize}
\item[(i)] all maps $\rho_U\colon\Aut(U)\to (\Comm(G),\ca{T})$ are continuous,
\item[(ii)] $\ca{T}$ has a base of neighborhoods of $1_{\Comm(G)}$ consisting of open subgroups.
\end{itemize}
If $\ca{T}$ is any topology satisfying (i) and (ii), then
all $\ca{T}$-open subgroups must belong to the set $\ca{B}_G$
where
\begin{multline}
\label{eq:Bg}
\ca{B}_G=\{\,H\subseteq\Comm(G)\mid \text{ $H$ is a subgroup  and}\\
\text{$\rho_O^{-1}(H)$ is open in $\Aut(O)$ for every open subgroup $O$ of $G$.}\}
\end{multline}

Thus, the strongest topology satisfying (i) and (ii) can be defined as follows:

\begin{defi}
The {\it Aut-topology} on $\Comm(G)$ -- denoted by $\ca{T}_A$ --
is the unique topology such that $\ca{B}_G$ is a base of neighborhoods of
$1_{\Comm(G)}$.
The topological group $(\Comm(G),\ca{T}_A)$ will be denoted by $\Comm(G)_A$.
\end{defi}

In order to prove that the $\Aut$-topology is well defined and turns $\Comm(G)$
into a topological group, we will show that the set $\ca{B}_G$ satisfies
the hypotheses of Proposition~\ref{prop:topbou}. Furthermore, we will show that
the topological group $\Comm(G)_A$ depends only on the commensurability class of $G$.

\begin{prop}
\label{prop:BG}
Let $G$ be a h.c.c.b. profinite group. Then $\Comm(G)_A$
is a topological group. Furthermore, if $U$ is an open subgroup of $G$,
the natural map $j_{U,G}\colon\Comm(U)_A\to\Comm(G)_A$ is a homeomorphism.
\end{prop}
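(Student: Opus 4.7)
My plan is to verify that the family $\ca{B}_G$ satisfies the two conditions of Proposition~\ref{prop:topbou}, which will confirm that $\ca{T}_A$ is a well-defined group topology on $\Comm(G)$ having $\ca{B}_G$ as a base of neighborhoods of the identity, and then to show that $j_{U,G}$ identifies $\ca{B}_U$ with $\ca{B}_G$.

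Condition (i) --- closure under finite intersections --- is immediate from $\rho_O^{-1}(A\cap B)=\rho_O^{-1}(A)\cap\rho_O^{-1}(B)$. For condition (ii), given $A\in\ca{B}_G$ and $g=[\phi]\in\Comm(G)$ with $\phi\colon U\to V$, I would show that $g^{-1}Ag$ itself belongs to $\ca{B}_G$. The key observation is that whenever $O\subseteq U$ is open in $G$, the restriction of $\phi$ gives a topological isomorphism $\phi|_O\colon O\to\phi(O)$, which induces a homeomorphism $\tau_\phi\colon\Aut(O)\to\Aut(\phi(O))$ (with respect to the standard topologies) via $\alpha\mapsto\phi|_O\circ\alpha\circ\phi|_O^{-1}$. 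A direct computation yields $\rho_{\phi(O)}\circ\tau_\phi=i_g\circ\rho_O$, so $\rho_O^{-1}(g^{-1}Ag)=\tau_\phi^{-1}(\rho_{\phi(O)}^{-1}(A))$, which is open by hypothesis on $A$.

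For an arbitrary open subgroup $O\leq G$ (not necessarily contained in $U$), I would reduce to the previous case by setting $O'=O\cap U$. Since $G$ is h.c.c.b., every open subgroup of $G$ is characteristically based, so Proposition~\ref{prop:hccb2} applies to $O$ and $O'$: the stabilizer $\Aut(O)_{O'}$ is open in $\Aut(O)$, and the restriction $r_{O,O'}\colon\Aut(O)_{O'}\to\Aut(O')$ is continuous. Since $\rho_O|_{\Aut(O)_{O'}}=\rho_{O'}\circ r_{O,O'}$ (restricting a virtual automorphism to an open subgroup does not change its equivalence class) and $O'\subseteq U$, it follows that $\rho_O^{-1}(g^{-1}Ag)\cap\Aut(O)_{O'}$ is open in $\Aut(O)$. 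Because $\rho_O^{-1}(g^{-1}Ag)$ is a subgroup containing the identity and $\Aut(O)_{O'}$ is an open neighborhood of the identity, openness of the intersection at the identity forces openness of the whole preimage. For the second assertion, the same reduction shows that a subgroup $H\leq\Comm(G)$ belongs to $\ca{B}_G$ if and only if $\rho_{O'}^{-1}(H)$ is open for every open $O'\leq U$; combined with the identity $\rho_{O'}^G=j_{U,G}\circ\rho_{O'}^U$ this yields $H\in\ca{B}_G\Leftrightarrow j_{U,G}^{-1}(H)\in\ca{B}_U$, so that $j_{U,G}$ is a homeomorphism.

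The principal obstacle is the bookkeeping required to handle the partial nature of the virtual automorphism $\phi$ in condition (ii): a generic open $O\leq G$ need not lie inside the domain $U$ of $\phi$, so $\phi$ does not act on $\Aut(O)$ directly, and one must first pass to $O\cap U$ and rely on the continuity of restriction maps supplied by Proposition~\ref{prop:hccb2}. The h.c.c.b. hypothesis enters precisely here, by guaranteeing that every open subgroup of $G$ is characteristically based so that Proposition~\ref{prop:hccb2} is applicable.
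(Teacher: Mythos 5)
Your proof is correct and follows essentially the same route as the paper: the paper isolates your reduction step (passing from an arbitrary open $O$ to $O'=O\cap U$ via Proposition~\ref{prop:hccb2} and the decomposition $\rho_O|_{\Aut(O)_{O'}}=\rho_{O'}\circ r_{O,O'}$) as a separate Lemma~\ref{lemma:BG}, establishes $\ca{B}_G=\ca{B}_U$ from it, and then verifies conjugation-invariance only for $V\subseteq U$ using exactly your commutative diagram with $i_\phi$ in place of your $\tau_\phi$. You have simply inlined that lemma into the verification of condition~(ii) of Proposition~\ref{prop:topbou}, which is a cosmetic rather than substantive difference.
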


The proof of Proposition~\ref{prop:BG} will be based on the following simple lemma.

\begin{lem}
\label{lemma:BG}
Let $G$ be a h.c.c.b. profinite group. Let $U,V$ be open subgroup of $G$,
with $U\subseteq V$. If $H$ is a subgroup
of $\Comm(G)$ such that $\rho_V^{-1}(H)$ is open in $\Aut(V)$, then
$\rho_U^{-1}(H)$ is open in $\Aut(U)$.
\end{lem}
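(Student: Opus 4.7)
The plan is to exploit the hereditary countable characteristic basedness of $G$ to pass to a common characteristic subgroup $O$ of $V$ sitting inside $U$, and then use the restriction maps to $\Aut(O)$ together with the open mapping theorem for profinite groups to transport openness from $\Aut(V)$ to $\Aut(U)$. The first step is to apply Proposition~\ref{prop:hccb1} to the pair $U \subseteq V$ and choose an open subgroup $W \subseteq U$ that is characteristic in $V$. Since $V$ is open in the h.c.c.b.\ group $G$, $V$ itself is countably characteristically based, so $\Aut(V)$ is profinite with a base of open neighborhoods of the identity given by the kernels $A_V(O)$ of the reduction maps $\Aut(V) \to \Aut(V/O)$, with $O$ ranging over open characteristic subgroups of $V$. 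Openness of $\rho_V^{-1}(H)$ then gives $A_V(O) \subseteq \rho_V^{-1}(H)$ for some such $O$, and by intersecting with $W$ (still characteristic in $V$, still open) I may assume $O \subseteq U$.

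Second, I would compare $\Aut(V)$ and $\Aut(U)$ via restriction to $O$. As $O$ is characteristic in $V$ one has a continuous homomorphism $r_{V,O}\colon\Aut(V)\to\Aut(O)$, while by Proposition~\ref{prop:hccb2}, $\Aut(U)_O$ is open in $\Aut(U)$ with a continuous restriction $r_{U,O}\colon\Aut(U)_O\to\Aut(O)$. Since $\rho_V(\gamma) = \rho_O(\gamma|_O)$ for $\gamma \in \Aut(V)$ and $\rho_U(\alpha)=\rho_O(\alpha|_O)$ for $\alpha \in \Aut(U)_O$, setting $\bar H := \rho_O^{-1}(H) \subseteq \Aut(O)$ gives
\[
\rho_V^{-1}(H) = r_{V,O}^{-1}(\bar H),\qquad \rho_U^{-1}(H)\cap\Aut(U)_O = r_{U,O}^{-1}(\bar H).
\]
The open mapping theorem for continuous surjections between profinite groups then shows that both $r_{V,O}$ and $r_{U,O}$ are open onto their closed images $I_V,\,I_U\subseteq\Aut(O)$. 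Openness of the preimage on the $V$-side translates into openness of $\bar H\cap I_V$ in $I_V$, and once one has the inclusion $I_U\subseteq I_V$ the subspace topology forces $\bar H\cap I_U$ open in $I_U$, which pulls back through the open map $r_{U,O}$ to an open subgroup of $\Aut(U)_O$ contained in $\rho_U^{-1}(H)$. Since $\Aut(U)_O$ is open in $\Aut(U)$, this finishes the argument.

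The main obstacle is establishing $I_U\subseteq I_V$, i.e.\ that every automorphism of $O$ obtained by restricting an element of $\Aut(U)_O$ is already the restriction of some element of $\Aut(V)$. I expect this to be the technical heart of the lemma and to require a further application of h.c.c.b.: after choosing $O$ characteristic in $V$ inside $U$, one passes to a smaller $O^{\ast}\subseteq O$ characteristic in $U$ (using h.c.c.b.\ of $U$) so that elements of $A_U(O^{\ast})$ preserve both $O^{\ast}$ and $O$, and then matches $\alpha|_{O^{\ast}}$ with the restriction of a suitable element of $A_V(O)$ by exploiting that both $\Aut(V)$ and $\Aut(U)_O$ act with tightly constrained induced actions on the common finite quotient $V/O$. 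With this containment in hand, the remainder of the proof is purely formal manipulation of the restriction diagrams and the open mapping theorem.
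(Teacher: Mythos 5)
The step you defer to the end is not a technicality that a ``further application of h.c.c.b.'' can supply; it is the whole difficulty, and it is false in general. The inclusion $I_U\subseteq I_V$ asserts that every automorphism of $O$ of the form $\alpha|_O$ with $\alpha\in\Aut(U)_O$ is also of the form $\beta|_O$ with $\beta\in\Aut(V)$, i.e.\ that automorphisms of the intermediate open subgroup extend (after restriction to $O$) to the ambient group. This is precisely the $\Aut$-stability phenomenon that the paper later shows can fail: if $V$ is a finitely generated free pro-$p$ group of rank $r>1$ and $U$ is a proper open subgroup, the transfer argument of Proposition~\ref{prop:free} shows that every element of $I_V=r_{V,O}(\Aut(V))$ preserves the $\Q_p$-span of the transfer image of $V^{\ab}$ in $O^{\ab}\otimes\Q_p$, which is a proper nonzero subspace, whereas $\Aut(U)_O$ maps onto an open subgroup of $\Aut(U^{\ab}\otimes\Q_p)$, and (again by transfer-equivariance) this produces elements of $I_U$ that move that subspace. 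So $I_U\not\subseteq I_V$ for \emph{every} admissible choice of $O$ in this example, and your argument cannot be completed along this route. The formal part of your write-up (the identities $\rho_V^{-1}(H)=r_{V,O}^{-1}(\bar H)$ and $\rho_U^{-1}(H)\cap\Aut(U)_O=r_{U,O}^{-1}(\bar H)$, and the open-mapping considerations) is correct but carries no weight without that inclusion.

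Part of the problem is that you are proving the statement with the containment as printed, which is the reverse of what the paper proves and uses: the proof of Proposition~\ref{prop:BG} invokes the lemma to get $\ca{B}_U\subseteq\ca{B}_G$, i.e.\ to pass from openness of $\rho_{W\cap U}^{-1}(H)$ at the \emph{smaller} subgroup $W\cap U$ to openness of $\rho_W^{-1}(H)$ at the \emph{larger} subgroup $W$; accordingly the paper's own proof of the lemma works with $V\subseteq U$ (its use of $\Aut(U)_V$ and $r_{U,V}$ only makes sense then, so the printed ``$U\subseteq V$'' is a slip). In that direction the lemma is almost immediate and needs none of your machinery: for $V\subseteq U$ the restriction of $\rho_U$ to $\Aut(U)_V$ equals $\rho_V\circ r_{U,V}$, hence $\rho_U^{-1}(H)\cap\Aut(U)_V=r_{U,V}^{-1}\bigl(\rho_V^{-1}(H)\bigr)$ is open in $\Aut(U)_V$ by continuity of $r_{U,V}$ (Proposition~\ref{prop:hccb2}), and since $\Aut(U)_V$ is open in $\Aut(U)$ so is $\rho_U^{-1}(H)$. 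The ``descending'' implication you attempted, from a larger subgroup down to a smaller one, is not needed anywhere in the paper, and in view of the extension problem above there is no reason to expect it to hold at this level of generality.
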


\begin{proof}
The restriction of the map $\rho_U:\Aut(U)\to\Comm(G)$ to $\Aut(U)_V$
coincides with the composition $\rho_{V}\circ r_{U,V}\colon\Aut(U)_V\to \Comm(G)$.
Since $\rho_{V}^{-1}(H)$ is open in $\Aut(V)$ and $r_{U,V}$ is continuous, we
conclude that $\rho_{U}^{-1}(H)\cap \Aut(U)_V=(\rho_{V}\circ r_{U,V})^{-1}(H)$
is open in $\Aut(U)_V$. Since $\Aut(U)_V$ is open in $\Aut(U)$, it follows
that $\rho_{U}^{-1}(H)$ is open in $\Aut(U)$ as well.
\end{proof}

\begin{proof}[Proof of Proposition~\ref{prop:BG}]
The inclusion $\ca{B}_G\subseteq \ca{B}_U$ is obvious, and
Lemma~\ref{lemma:BG} implies that $\ca{B}_U\subseteq \ca{B}_G$.
Thus, the bases $\ca{B}_U$ and $\ca{B}_G$ coincide, and
the mapping $j_{U,G}:\Comm(U)_A \to \Comm(G)_A$ is a homeomorphism
of topological spaces. It remains to show that
$\Comm(G)_A$ is a topological group.

It is clear that the set $\ca{B}_G$ is closed under intersections.
By Proposition~\ref{prop:topbou}, in order prove that $\Comm(G)_A$
is a topological group, it suffices to show that $\ca{B}_G$ is invariant under conjugation.
Let $H\in\ca{B}_G$, let $\phi$ be a virtual automorphism of $G$, and let $U$ be an open
subgroup of $G$ on which both $\phi$ and $\phi^{-1}$ are defined.
We will show that $\rho_V^{-1}([\phi]^{-1} H[\phi])$ is open for every
subgroup $V$ which is open in $U$.
Since $\ca{B}_G=\ca{B}_U$, this will imply that $[\phi]^{-1}H[\phi]$ lies in $\ca{B}_G$.

For every $V$ open in $U$ we
have a commutative diagram
\begin{equation}
\label{dia:commaut1}
\xymatrix{
\Aut(V)\ar[r]^{\rho_V}\ar[d]_{i_\phi}&\Comm(G)\ar[d]^{i_{[\phi]}}\\
\Aut(\phi(V))\ar[r]^{\rho_{\phi(V)}}&\Comm(G)
}
\end{equation}
where $i_\phi\colon \Aut(V)\to \Aut(\phi(V))$ is left conjugation by
$\phi$, that is, $i_\phi (\psi)=\phi \psi \phi^{-1}$, and
similarly, $i_{[\phi]}$ is left conjugation by $[\phi]$.
Therefore, we have
\begin{equation}
\label{eq:commaut2}
\begin{aligned}
\rho_V^{-1}([\phi]^{-1} H[\phi])&=
\rho_V^{-1}\circ i_{[\phi]}^{-1}(H)=
(i_{[\phi]}\circ\rho_V)^{-1}(H)\\
&=(\rho_{\phi(V)}\circ i_\phi)^{-1}(H)=
(i_\phi)^{-1} \rho_{\phi(V)}^{-1}(H)
\end{aligned}
\end{equation}
Since $H\in\ca{B}_G$ and $i_\phi^{-1}$ is continuous, we conclude
that $\rho_V^{-1}([\phi]^{-1} H[\phi])$ is open.
\end{proof}

Lemma~\ref{lemma:BG} yields a simple characterization of open subgroups of $\Comm(G)_A$:

\begin{claim}
\label{claim:critopen}
Assume that $G$ is a h.c.c.b. profinite group.
Let $\{G_i\}_{i\in\N}$ be a base of neighborhoods of the identity in $G$,
where each $G_i$ is a subgroup.
Then a subgroup $H$ of $\Comm(G)_A$ is open if and only if
$\rho_{G_i}^{-1}(H)$ is open in $\Aut(G_i)$ for all $i\in\N$.
\end{claim}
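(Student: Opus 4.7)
The plan is to prove the two implications of the equivalence separately, with the bulk of the work reducing to Lemma~\ref{lemma:BG}.

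For the ``only if'' direction, suppose $H$ is an open subgroup of $\Comm(G)_A$. By the definition of the Aut-topology, $H\in\ca{B}_G$, which by \eqref{eq:Bg} means $\rho_O^{-1}(H)$ is open in $\Aut(O)$ for \emph{every} open subgroup $O$ of $G$. Specializing to $O=G_i$ for each $i\in\N$ yields the stated condition.

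For the ``if'' direction, suppose $\rho_{G_i}^{-1}(H)$ is open in $\Aut(G_i)$ for every $i\in\N$. The task is to show $H\in\ca{B}_G$, that is, $\rho_O^{-1}(H)$ is open in $\Aut(O)$ for every open subgroup $O$ of $G$. Given any such $O$, the fact that $\{G_i\}_{i\in\N}$ is a neighborhood base of the identity in $G$ provides some index $i$ with $G_i\subseteq O$. Applying Lemma~\ref{lemma:BG} with $G_i$ as the smaller open subgroup sitting inside the ambient open subgroup $O$, the assumed openness of $\rho_{G_i}^{-1}(H)$ in $\Aut(G_i)$ upgrades to openness of $\rho_O^{-1}(H)$ in $\Aut(O)$, as required.

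There is no real obstacle here: the claim is a direct consequence of Lemma~\ref{lemma:BG} combined with the observation that every open subgroup $O$ of $G$ contains some $G_i$ from the fixed base. The only conceptual content is practical: the claim reduces the verification of openness in $\Comm(G)_A$ from a condition ranging over all (possibly uncountably many) open subgroups of $G$ to a condition that need only be tested along the countable base $\{G_i\}$, which is exactly what makes the Aut-topology tractable in the subsequent analysis.
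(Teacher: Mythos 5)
Your proof is correct and fills in exactly the argument the paper has in mind: the paper states Claim~\ref{claim:critopen} without a separate proof, presenting it as an immediate consequence of Lemma~\ref{lemma:BG}, and your reduction (``only if'' is trivial from the definition of $\ca{B}_G$; ``if'' uses the base condition to find $G_i\subseteq O$ and then invokes the lemma to pass from openness of $\rho_{G_i}^{-1}(H)$ to openness of $\rho_O^{-1}(H)$) is precisely that consequence.

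One caveat worth flagging, since you rely on Lemma~\ref{lemma:BG} in the direction ``openness at the smaller open subgroup $G_i$ implies openness at the larger open subgroup $O$'': the lemma as \emph{stated} in the paper has the inclusion ``$U\subseteq V$'' with hypothesis at $V$ and conclusion at $U$, i.e.\ passing from the larger subgroup to the smaller one, which is the wrong direction for your application (and, incidentally, also for the use of the lemma in Proposition~\ref{prop:BG}). Reading the lemma's proof resolves the discrepancy: the argument uses $\Aut(U)_V$ and $r_{U,V}\colon\Aut(U)_V\to\Aut(V)$, both of which only make sense when $V\subseteq U$, so the intended statement is clearly ``$V\subseteq U$'' (hypothesis at the smaller subgroup, conclusion at the larger). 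With that correction, setting $V=G_i$ and $U=O$ gives exactly the implication you invoke. So there is no gap in your reasoning, but since you used the lemma as a black box, you should note that the published inclusion in its hypothesis is a typo and that the direction you need is the one its proof actually establishes.
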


In spite of the above criterion, it may not be clear so far how to construct
open subgroups of $\Comm(G)_A$. We will now describe explicitly a large
family of open subgroups which are canonically associated to certain
bases of $G$.

\begin{defi}
Let $G$ be a profinite group. A countable sequence $\ca{F}=\{G_k\}_{k\in\N}$
of open subgroups of $G$ will be called a {\it super-characteristic base},
if $\ca{F}$ is a base of neighborhoods of $1_G$ and $G_{k+1}$ is characteristic in
$G_k$ for each $k\in\N$.
\end{defi}

It is clear that any h.c.c.b. profinite group has a super-characteristic base.

\begin{prop}
\label{prop:pureaut}
Let $\{G_i\}_{i\in\N}$ be a super-characteristic base of a
h.c.c.b. profinite group $G$.
For $i\in\N$ let $A_i=\Autbar(G_i)$, and put $A=\bigcup_{i\in\N}A_i$.
Then $A$ is an open subgroup of $\Comm(G)_A$, and
the index $[\Comm(G)\colon A]$ is countable (where by countable
we mean finite or countably infinite).
\end{prop}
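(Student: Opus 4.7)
The plan is to verify three separate claims in sequence: that $A$ is a subgroup, that $A$ is open in $\Comm(G)_A$, and that the index $[\Comm(G):A]$ is countable.

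First I would check that the chain $\{A_i\}$ is ascending. Since $G_{i+1}$ is characteristic in $G_i$, every $\alpha\in\Aut(G_i)$ restricts to an element $\alpha|_{G_{i+1}}\in\Aut(G_{i+1})$, and these two automorphisms represent the same class in $\Comm(G)$; hence $A_i\subseteq A_{i+1}$, and $A=\bigcup_i A_i$ is a subgroup.

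To show that $A$ is open, I would invoke Claim~\ref{claim:critopen} applied to the base $\{G_i\}$. Since $\rho_{G_i}(\Aut(G_i))=A_i\subseteq A$, the preimage $\rho_{G_i}^{-1}(A)$ is all of $\Aut(G_i)$, which is trivially open in $\Aut(G_i)$. The criterion thus yields openness of $A$ in $\Comm(G)_A$.

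The main point is countability of the index, and here the key observation is a coset-collapse argument. Every class $[\phi]\in\Comm(G)$ admits a representative of the form $\phi\colon G_k\to W$ with $W$ open in $G$: take any representative $\phi_0\colon U\to V$, pick $k$ with $G_k\subseteq U$, and restrict to $G_k$, obtaining an isomorphism onto $W=\phi_0(G_k)$. If $[\phi_1]$ and $[\phi_2]$ are both represented by isomorphisms $G_k\to W$ for the \emph{same} pair $(k,W)$, then $\phi_2^{-1}\circ\phi_1\in\Aut(G_k)$, so $[\phi_2]^{-1}[\phi_1]\in A_k\subseteq A$, and $[\phi_1]$, $[\phi_2]$ lie in the same $A$-coset. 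Hence the number of cosets of $A$ is at most the number of pairs $(k,W)$ with $k\in\N$ and $W$ an open subgroup of $G$. Since $G$ is h.c.c.b.\ it is countably based, and a standard argument (every open subgroup contains some member of a fixed countable base $\{G_i\}$, and for each $i$ only finitely many open subgroups contain $G_i$ because $G/G_i$ is finite) shows that $G$ has countably many open subgroups. Therefore the set of pairs $(k,W)$ is countable, and so is $[\Comm(G):A]$.

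There is no genuine obstacle in this argument; the only substantive step is the observation that two classes with matching domain-codomain data $(G_k,W)$ collapse to the same $A$-coset, which is what makes the cardinality estimate clean.
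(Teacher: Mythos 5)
Your proof is correct and follows essentially the same route as the paper's: the chain inclusion $A_i\subseteq A_{i+1}$ via characteristicity, openness via Claim~\ref{claim:critopen}, and the coset-collapse argument parametrizing cosets by pairs $(G_i,\phi(G_i))$. You merely spell out in slightly more detail the two points the paper leaves implicit (that every class has a representative defined on some $G_k$, and that a countably based profinite group has only countably many open subgroups).
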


\begin{proof}
Since $G_{i+1}$ is characteristic in $G_i$, we have $A_{i}\subseteq A_{i+1}$ for each $i$,
and therefore $A$ is a subgroup. By Claim~\ref{claim:critopen}, $A$ is open
since $\rho_{G_i}^{-1}(A)=\Aut(G_i)$ for $i\in\N$.
Finally, we claim that the index $[\Comm(G)\colon A]$
is countable. Indeed, every virtual automorphism $\phi$ of $G$ is
defined on $G_i$ for some $i$, and since $G$ is countably based, there are
only countably many choices for $\phi(G_i)$. If $\psi$ is another virtual
automorphism of $G$ defined on $G_i$ and such that $\phi(G_i)=\psi(G_i)$,
then $[\phi]^{-1}[\psi]\in \Autbar(G_i)$.
\end{proof}

\section{Further properties of the Aut-topology}
\label{s:auttop}

Let $G$ be a h.c.c.b. profinite group. In this section we determine when the topological
group $\Comm(G)_A$ is Hausdorff and when $\Comm(G)_A$ is locally compact. The answers to
both questions are expressed in terms of certain finiteness conditions on the
\it{automorphism system }\rm of $G$, that is, the family of groups
$\{\Aut(U)\mid U \mbox{ is open in }G\}$ along with the maps
$\{r_{U,V}:\Aut(U)_V\to\Aut(V) \mbox{ for }V\subseteq U\}$.
\vskip .12cm
Recall that if $U$ is an open subgroup of $G$, then
$\Autbar(U)$ denotes the  image of the canonical map $\rho_U:\Aut(U)\to\Comm(G)$.
Note that $\Autbar(U)$ is isomorphic to $\Aut(U)/\TAut(U)$,
and thus has the natural quotient topology. This quotient topology is
Hausdorff if and only if $\TAut(U)$ is closed in $\Aut(U)$. Thus, we
are led to consider the following finiteness condition:

\begin{defi} A profinite group $G$ is said to be $\Aut_1$
if $\TAut(U)$ is closed in $\Aut(U)$ for every open subgroup $U$ of $G$.
\end{defi}

We will show (see Theorem~\ref{thm:t1}) that a h.c.c.b. profinite group $G$ is  $\Aut_1$ if and only if $\Comm(G)_A$ is Hausdorff.

The second finiteness condition we introduce is ``virtual stabilization''
of the automorphism system of $G$. If $U,V$ are open subgroups of $G$,
with $V\subseteq U$, the restriction map $r_{U,V}:\Aut(U)_V\to\Aut(V)$
induces the embedding $\Autbar(U)_V\subseteq \Autbar(V)$, and one may ask
if $\Autbar(U)_V$ is open in $\Autbar(V)$.

\begin{defi} Let $G$ be a profinite group. An open subgroup $U$ of $G$
will be called {\it $\Aut$-stable } if $\Autbar(U)_V$ is open in $\Autbar(V)$
for every $V$ open in $U$.
We will say that $G$ is $\Aut_2$ if $G$ is $\Aut_1$ and
some open subgroup of $G$ is $\Aut$-stable.
\end{defi}

We will show (see Theorem~\ref{thm:t2}) that a h.c.c.b. profinite group $G$ is $\Aut_2$ if and only if $\Comm(G)_A$ is locally compact.

\subsection{When is $\Comm(G)_A$ Hausdorff?}
\label{ss:t1}

We start by finding equivalent characterizations of the condition $\Aut_1$.

\begin{prop}
\label{prop:aut1equiv}
Let $G$ be a countably characteristically based profinite group.
Then $\TAut(G)$ is closed in $\Aut(G)$ if and only if
there exists an open subgroup $U$ of $G$ such that every element of
$\TAut(G)$ fixes $U$ pointwise.
\end{prop}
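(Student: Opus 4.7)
My plan is as follows. For the easy direction ($\Leftarrow$), suppose there exists an open subgroup $U$ of $G$ such that every element of $\TAut(G)$ fixes $U$ pointwise. Setting $\Fix(U) = \{\alpha\in\Aut(G) : \alpha|_U = \iid_U\}$, the hypothesis gives $\TAut(G)\subseteq \Fix(U)$, and the reverse inclusion is immediate from the definition of $\TAut(G)$. Hence $\TAut(G)=\Fix(U)$, and this set is closed in $\Aut(G)$ with the standard topology, because it is the intersection over $u\in U$ of the preimages of $\{u\}$ under the continuous evaluation maps $\alpha\mapsto\alpha(u)\colon \Aut(G)\to G$.

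For the harder direction ($\Rightarrow$), I would use the Baire category theorem. Fix a countable characteristic base $\{G_i\}_{i\in\N}$ of $G$; by \eqref{eq:aut3}, $\Aut(G)\cong\varprojlim_i\Aut(G/G_i)$ is profinite (each $G/G_i$, and hence $\Aut(G/G_i)$, being finite), so it is compact Hausdorff and therefore Baire. Any virtually trivial automorphism fixes pointwise some open subgroup, which contains some $G_i$, so one has the decomposition $\TAut(G)=\bigcup_{i\in\N}\Fix(G_i)$, with each $\Fix(G_i)$ closed. By assumption $\TAut(G)$ is closed in $\Aut(G)$, hence Baire in its subspace topology, so some $\Fix(G_j)$ has nonempty interior in $\TAut(G)$. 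Translating by a point $\phi\in\Fix(G_j)$ in this interior (using that $\Fix(G_j)$ is a subgroup) produces a basic neighborhood $A(G_k)$ of $1$ in $\Aut(G)$ with $A(G_k)\cap\TAut(G)\subseteq\Fix(G_j)$.

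Since $A(G_k)$ is the kernel of the restriction $\Aut(G)\to\Aut(G/G_k)$, it has finite index in $\Aut(G)$, so $A(G_k)\cap\TAut(G)$ has finite index in $\TAut(G)$, and therefore so does $\Fix(G_j)$. Write $\TAut(G)=\bigcup_{l=1}^{n}\phi_l\Fix(G_j)$ and pick $i_l$ with $\phi_l\in\Fix(G_{i_l})$. For $\psi\in\Fix(G_j)$ and $g\in G_{i_l}\cap G_j$, one has $\psi(g)=g\in G_{i_l}$, so $\phi_l\psi(g)=\phi_l(g)=g$; thus $\phi_l\Fix(G_j)\subseteq\Fix(G_{i_l}\cap G_j)$. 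Choosing $m$ with $G_m\subseteq G_j\cap\bigcap_l G_{i_l}$, we conclude $\TAut(G)\subseteq\Fix(G_m)$, so $U=G_m$ has the required property.

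The single genuine obstacle is the Baire step, which rests on noticing that the countably characteristically based hypothesis promotes $\Aut(G)$ to a profinite group so that Baire applies to the closed subspace $\TAut(G)$. The rest is a routine coset-and-intersection argument exploiting the characteristic nature of the base $\{G_i\}$.
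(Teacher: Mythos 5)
Your proof is correct and follows essentially the same route as the paper's: decompose $\TAut(G)$ as the countable union $\bigcup_i \Fix(G_i)$, use that $\Aut(G)$ is profinite (hence Baire) to find some $\Fix(G_j)$ open in $\TAut(G)$, deduce finite index, and then shrink $G_j$ using a finite transversal. The only difference is that you spell out a few routine steps (continuity of evaluation maps, the translation from ``nonempty interior'' to ``open'') that the paper leaves implicit.
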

\begin{proof} For an open subgroup $U$ of $G$, let $\Aut(G)^U$ denote the subgroup of
$\Aut(G)$ fixing $U$ pointwise. Clearly, $\Aut(G)^U$ is closed for every $U$,
so if $\TAut(G)=\Aut(G)^U$ for some $U$, then $\TAut(G)$ is closed.

Now assume that $\TAut(G)$ is closed, and let $\ca{C}$ be a countable base of
neighborhoods of $1\in G$. Then
\begin{equation}
\label{eq:taut}
\TAut(G)=\textstyle{\bigcup_{V\in\ca{C}}}\Aut(G)^V.
\end{equation}
Since $G$ is characteristically based, $\Aut(G)$ is profinite, and thus Baire's category theorem implies that
$\Aut(G)^V$ is open in $\TAut(G)$ for some open subgroup $V$ of $U$.
In particular, $\Aut(G)^V$ is of finite index in $\TAut(G)$.
Let $\{g_1,\ldots,g_r\}\in\TAut(G)$ be a left transversal of $\Aut(G)^V$
in $\TAut(G)$. By \eqref{eq:taut}, there exists an open subgroup $U\in\ca{C}$
which is contained in $V$ such that $g_1,\ldots,g_r\in\Aut(G)^U$.  Then every element
of $\TAut(G)$ fixes $U$ pointwise.
\end{proof}

\begin{prop}
\label{inducedtop}
Let $G$ be a h.c.c.b. profinite group which is $\Aut_1$. Then
for every open subgroup $U$ of $G$, the quotient topology on
$\Autbar(U)$ coincides with the topology induced from $\Comm(G)_A$.
\end{prop}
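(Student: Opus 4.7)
The plan is to prove the two inclusions of topologies separately. First, I would observe that $\rho_U\colon\Aut(U)\to\Comm(G)_A$ is continuous by the very definition of the $\Aut$-topology, and its kernel is $\TAut(U)$, so the inclusion $\Autbar(U)\hookrightarrow\Comm(G)_A$ factors $\rho_U$ through the canonical projection $\pi\colon\Aut(U)\to\Autbar(U)$. The universal property of the quotient topology then yields that this inclusion is continuous, which already shows that the induced topology on $\Autbar(U)$ is coarser than the quotient topology.

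For the reverse inclusion I would exhibit a base of identity-neighborhoods in the quotient topology, each of whose members is open in $\Comm(G)_A$. The natural candidate is the family $\{H_W:=\rho_U(A(W))\}$, where $W$ ranges over the open characteristic subgroups of $U$. Since $U$ is characteristically based (as an open subgroup of the h.c.c.b.\ group $G$), the sets $A(W)$ form a base of $\iid\in\Aut(U)$ in the standard topology, and so their images under $\pi$ form a base at $1\in\Autbar(U)$ in the quotient topology. The task thus reduces to showing that each $H_W$ lies in $\ca{B}_G$, i.e.\ that $\rho_V^{-1}(H_W)$ is open in $\Aut(V)$ for every open subgroup $V$ of $G$.

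For $V=U$ the computation is direct: $\rho_U^{-1}(H_W)=A(W)\cdot\TAut(U)$, which is open in $\Aut(U)$ because $A(W)$ is open and normal (using that $W$ is characteristic in $U$) and $\TAut(U)$ is normal by Proposition~\ref{prop:taut}. The case $V\subseteq U$ then follows from Lemma~\ref{lemma:BG}. The main obstacle is handling an arbitrary open $V\leq G$. To deal with it I would set $V'=V\cap U$ and invoke Proposition~\ref{prop:hccb2}, which asserts that $\Aut(V)_{V'}$ is open in $\Aut(V)$ and the restriction $r_{V,V'}\colon\Aut(V)_{V'}\to\Aut(V')$ is continuous. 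Since $\alpha$ and $\alpha\vert_{V'}$ represent the same element of $\Comm(G)$ for every $\alpha\in\Aut(V)_{V'}$, one obtains the identity $\rho_V\vert_{\Aut(V)_{V'}}=\rho_{V'}\circ r_{V,V'}$, whence $\rho_V^{-1}(H_W)\cap\Aut(V)_{V'}=r_{V,V'}^{-1}(\rho_{V'}^{-1}(H_W))$ is open by the previous case together with continuity of $r_{V,V'}$. Since $\rho_V^{-1}(H_W)$ is a subgroup of $\Aut(V)$ containing this nonempty open set, it is itself open.

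Finally, since $H_W\subseteq\Autbar(U)$ we have $H_W\cap\Autbar(U)=H_W$, so every basic quotient-neighborhood $\rho_U(A(W))$ of $1$ in $\Autbar(U)$ is realized as the intersection of $\Autbar(U)$ with an open subgroup of $\Comm(G)_A$. By translation this identification extends to arbitrary quotient-open sets, and the two topologies coincide. The $\Aut_1$ hypothesis plays a minor role in this argument: it is not needed to match the two topologies, but via Proposition~\ref{prop:aut1equiv} it guarantees that the common topology on $\Autbar(U)$ is Hausdorff, which is what makes the proposition genuinely useful in the subsequent analysis.
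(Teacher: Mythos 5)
There is a genuine gap, and it stems from misreading the direction of Lemma~\ref{lemma:BG}. Looking at the lemma's proof (it works with $\Aut(U)_V$ and $r_{U,V}\colon\Aut(U)_V\to\Aut(V)$, which only makes sense when $V\subseteq U$) and at its one application in the paper (deducing $\ca{B}_U\subseteq\ca{B}_G$ by passing from $O\cap U$ up to $O$), the lemma actually says: for $V\subseteq U$, openness of $\rho_V^{-1}(H)$ in $\Aut(V)$ implies openness of $\rho_U^{-1}(H)$ in $\Aut(U)$ --- that is, openness propagates from the \emph{smaller} subgroup to the \emph{larger} one. The letters in the lemma's statement are simply transposed. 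You invoke it in the reverse direction: from $\rho_U^{-1}(H_W)$ open you want to conclude $\rho_V^{-1}(H_W)$ open for $V\subseteq U$. That implication is not what the lemma gives, and in fact it is false. Unwinding what $\rho_V^{-1}(H_W)$ is for $V\subsetneq U$ (assuming $\VZ(G)=\{1\}$, so by Proposition~\ref{prop:isot} any virtual automorphism has at most one representative in each $\Aut(V)$): it is $r_{U,V}\bigl(A(W)\cap\Aut(U)_V\bigr)$, a subset of $\image(r_{U,V})$. For a free pro-$p$ group of rank $r>1$, Proposition~\ref{prop:free} shows $\image(r_{U,V})$ has infinite index in $\Aut(V)$, so this set cannot be open.

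The failure is not cosmetic: your whole strategy is to show that the sets $H_W=\rho_U(A(W))$ are themselves open in $\Comm(G)_A$. But $H_W$ is compact (continuous image of the compact $A(W)$), so if any $H_W$ were open, $\Comm(G)_A$ would contain a compact open subgroup, hence be locally compact, hence $G$ would be $\Aut_2$ by Theorem~\ref{thm:t2}. That is strictly stronger than the hypothesis $\Aut_1$ and fails, e.g., for finitely generated free pro-$p$ groups. The fact that your argument appeared to dispense with the $\Aut_1$ hypothesis should have been a warning sign; the paper does use it essentially, via Corollary~\ref{profinite_basic} (which needs Hausdorffness of the quotient $\Autbar(U_{i+1})$, equivalent to closedness of $\TAut(U_{i+1})$). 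The paper's route sidesteps the difficulty you ran into: rather than trying to make a quotient-open subgroup of $\Autbar(U)$ open in $\Comm(G)_A$ directly, it takes a super-characteristic base $U=U_1\supset U_2\supset\cdots$, shows the quotient topologies on the chain $\Autbar(U_1)\subseteq\Autbar(U_2)\subseteq\cdots$ are mutually compatible, and then, given a quotient-open $V\subseteq\Autbar(U)$, builds inductively an ascending chain $H_i\subseteq\Autbar(U_i)$ with $H_{i+1}\cap\Autbar(U_i)=H_i$; the union $H$ is $\Aut$-open by Claim~\ref{claim:critopen} (you only need to check $\rho_{U_i}^{-1}(H)$ is open, and it contains $\rho_{U_i}^{-1}(H_i)$), and $H\cap\Autbar(U)=V$.
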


The proof of this proposition is based on a well-known property of compact Hausdorff topological spaces
(see \cite[\S I.9.4, Cor.~3]{bou:top}):
\begin{prop} Let $X$ be a set endowed with two topologies $\ca{T}_1$ and $\ca{T}_2$
such that $X$ is compact with respect to $\ca{T}_1$, Hausdorff with respect to $\ca{T}_2$ and
$\ca{T}_1 \supseteq \ca{T}_2$. Then $\ca{T}_1=\ca{T}_2$.
\label{Bourbaki_basic}
\end{prop}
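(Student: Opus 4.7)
The plan is to argue via the identity map $\iid_X \colon (X,\ca{T}_1) \to (X,\ca{T}_2)$ and show that it is a homeomorphism, which immediately yields $\ca{T}_1 = \ca{T}_2$. The hypothesis $\ca{T}_1 \supseteq \ca{T}_2$ says precisely that $\iid_X$ is continuous as a map from $(X,\ca{T}_1)$ to $(X,\ca{T}_2)$, since the preimage of every $\ca{T}_2$-open set is $\ca{T}_1$-open. So the first step is to record this continuity; the real content is to upgrade it to a homeomorphism by proving that $\iid_X$ is also a closed map.

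The second step is the key one. Let $C$ be an arbitrary $\ca{T}_1$-closed subset of $X$. Since $(X,\ca{T}_1)$ is compact and $C$ is closed in a compact space, $C$ is itself $\ca{T}_1$-compact. Continuity of $\iid_X$ from $\ca{T}_1$ to $\ca{T}_2$ then implies that $\iid_X(C) = C$ is $\ca{T}_2$-compact. Because $(X,\ca{T}_2)$ is Hausdorff, any compact subset is closed, so $C$ is $\ca{T}_2$-closed. Thus $\iid_X$ sends $\ca{T}_1$-closed sets to $\ca{T}_2$-closed sets, i.e. it is a closed map.

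For the final step, observe that a continuous closed bijection is automatically a homeomorphism: its inverse, which in our case is again $\iid_X$ but now viewed as a map $(X,\ca{T}_2) \to (X,\ca{T}_1)$, sends closed sets to closed sets, hence is continuous. Therefore $\iid_X$ is a homeomorphism between $(X,\ca{T}_1)$ and $(X,\ca{T}_2)$, which forces $\ca{T}_1 = \ca{T}_2$, as desired.

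The main (and really only) obstacle is the closed-map step, and it rests entirely on the two complementary properties: compactness on the finer side supplies compactness of closed subsets after pushing forward, and Hausdorffness on the coarser side converts that compactness back into closedness. No finiteness, countability, or algebraic structure on $X$ is used, so the argument is purely point-set topological and applies verbatim in the generality stated.
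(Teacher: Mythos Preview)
Your proof is correct and is the standard argument for this well-known fact. The paper does not actually give its own proof of this proposition; it simply states the result and cites Bourbaki \cite[\S I.9.4, Cor.~3]{bou:top}, where essentially the same identity-map argument appears.
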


We shall also point out a special case of Proposition~\ref{Bourbaki_basic}:
\begin{cor}
Let $Q$ be a Hausdorff topological space, and let $P$ be a closed subset of $Q$.
Let $\ca{T}$ be some topology on $P$ such that $P$ is compact with
respect to $\ca{T}$ and the inclusion $i:P\to Q$ is continuous with respect to $\ca{T}$.
Then $\ca{T}$ is induced from $Q$.
\label{profinite_basic}
\end{cor}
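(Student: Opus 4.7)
The plan is to deduce the corollary as an immediate application of Proposition~\ref{Bourbaki_basic} to the set $P$ equipped with two topologies: $\ca{T}_1 = \ca{T}$ and $\ca{T}_2 = \ca{T}_Q$, the subspace topology on $P$ induced from $Q$. We must verify the three hypotheses of Proposition~\ref{Bourbaki_basic}: that $P$ is $\ca{T}_1$-compact, that $P$ is $\ca{T}_2$-Hausdorff, and that $\ca{T}_1 \supseteq \ca{T}_2$.

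The first point is given by hypothesis. The second is immediate, because $Q$ is Hausdorff and the Hausdorff property is inherited by every subspace, so $(P,\ca{T}_Q)$ is Hausdorff. For the third point, I would observe that a basic $\ca{T}_2$-open set is of the form $P \cap V$ with $V$ open in $Q$, and that $P \cap V = i^{-1}(V)$; since $i\colon (P,\ca{T})\to Q$ is continuous by hypothesis, $i^{-1}(V)$ is $\ca{T}_1$-open. Hence every $\ca{T}_2$-open set is $\ca{T}_1$-open, giving $\ca{T}_1\supseteq\ca{T}_2$.

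With the three hypotheses checked, Proposition~\ref{Bourbaki_basic} yields $\ca{T}_1=\ca{T}_2$, i.e.\ $\ca{T}$ coincides with the topology induced from $Q$, which is precisely the conclusion of the corollary. There is no real obstacle here; the closedness of $P$ in $Q$ is not actually used in this argument and is presumably stated only because it is the setting in which the corollary will be applied later (to ensure that $P$, being closed in a Hausdorff space, is automatically compact as a subspace of $Q$ once one knows it is $\ca{T}_1$-compact and the inclusion is continuous).
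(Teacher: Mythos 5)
Your argument is correct and is exactly the route the paper intends: the corollary is stated there as an immediate special case of Proposition~\ref{Bourbaki_basic}, applied to $P$ with $\ca{T}_1=\ca{T}$ and $\ca{T}_2$ the subspace topology, with the three hypotheses checked just as you do. Your side remark is also accurate -- closedness of $P$ is not needed (indeed it follows automatically, since the continuous image of the compact space $(P,\ca{T})$ in the Hausdorff space $Q$ is compact, hence closed).
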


In addition, we need the following well known fact:
\begin{lem}
Let $G$ be a profinite group, $A$ a closed subgroup of $G$, and $H$ an open subgroup of $A$.
Then there exists an open subgroup $K$ of $G$ such that $H=A \cap K$.
\label{lem:open subgroup}
\end{lem}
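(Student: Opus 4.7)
The plan is to exploit the fact that the topology on $A$ is induced from $G$, together with the existence of an abundant supply of open normal subgroups of $G$. Since $A$ carries the subspace topology, a neighborhood base of $1_A$ consists of the sets $A\cap V$ with $V$ ranging over open subgroups of $G$. Because $H$ is an open subgroup of $A$, I would first choose an open subgroup $V_0$ of $G$ with $A\cap V_0\subseteq H$.

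Next, using that $G$ is profinite, I would replace $V_0$ by an open \emph{normal} subgroup $V$ of $G$ contained in $V_0$, so that still $A\cap V\subseteq H$. I would then set
\[
K \;=\; HV.
\]
Normality of $V$ in $G$ guarantees that $K$ is a subgroup of $G$, and since $V$ is already open in $G$, so is $K$. The inclusion $H\subseteq A\cap K$ is immediate. For the reverse inclusion, given $a\in A\cap K$, write $a=hv$ with $h\in H$ and $v\in V$; then $v=h^{-1}a\in A$, whence $v\in A\cap V\subseteq H$, and therefore $a=hv\in H$.

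The only subtlety is ensuring that $HV$ is actually a subgroup (not merely a coset space), which is exactly why the passage from $V_0$ to a normal refinement $V$ is needed; this refinement exists by the standard fact that open normal subgroups form a neighborhood base of the identity in any profinite group. Everything else is formal, so I do not expect any real obstacle.
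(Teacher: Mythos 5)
Your proof is correct, and it takes a genuinely different route from the one in the paper. The paper's argument starts from the observation that $H$ (being open in the closed subgroup $A$, hence closed in $G$) can be written as an intersection $H=\bigcap_\alpha K_\alpha$ of open subgroups of $G$; it then uses the compactness of $A$ together with the openness of $H$ in $A$ to extract a finite subfamily whose intersection $K$ already satisfies $A\cap K=H$. You instead work ``from below'': you use the subspace topology on $A$ to find a single open normal subgroup $V$ of $G$ with $A\cap V\subseteq H$, and then manufacture $K=HV$ by hand, checking the two inclusions directly. Your approach avoids invoking the fact that closed subgroups are intersections of open ones and avoids the compactness step entirely, at the modest cost of needing the normal refinement so that $HV$ is actually a subgroup; the paper's approach is slightly more abstract but needs no normality. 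Both arguments are clean, short, and fully correct, so this is a matter of taste rather than substance. One small remark: you might state explicitly that $h^{-1}a\in A$ because both $h$ and $a$ lie in $A$ (using that $H\subseteq A$), since that is the one place the hypothesis that $H$ sits inside $A$ is actually used in the reverse inclusion.
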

\begin{proof}
Recall that every closed subgroup of a profinite group is the intersection of a family of open subgroups.
Thus $H= \cap K_{\alpha}$, where $\{K_\alpha\}$ are open in $G$. Hence $$H=A \cap H=\cap (A \cap K_\alpha).$$
Since $H$ is an open subgroup of $A$, there exists a finite subfamily $\{K_i\}_{i=1}^n$ of $\{K_\alpha\}$
such that $H=\cap_{i=1}^{n} A \cap K_i$. Then $K=\cap_{i=1}^{n} K_i$ is open in $G$ and $H=A \cap K$.
\end{proof}

\begin{proof}[Proof of Proposition~\ref{inducedtop}]
Let $\ca{T}_Q$ be the quotient topology on $\Autbar(U)$ and  $\ca{T}_A$ the topology induced on $\Autbar(U)$ from $\Comm(G)_A$.
Since by definition $\rho_U:\Aut(U) \to \Comm(G)_A$ is continuous, we have $\ca{T}_A \subseteq \ca{T}_Q$.

Let $U=U_1\supset U_2\supset\ldots$ be a super-characteristic base for $U$,
let $A_i=\Autbar(U_i)$ and $A=\cup A_i$. By Proposition~\ref{prop:pureaut},
$A$ is open in $\Comm(G)_A$. Note that each $A_i$ is profinite with respect to the quotient topology,
and the inclusion $A_i\to A_{i+1}$ is continuous with respect to the quotient topologies on $A_i$ and $A_{i+1}$.
We deduce from Corollary~\ref{profinite_basic} that since $A_i$ is compact and $A_{i+1}$ is Hausdorff,
the quotient topology on $A_i$ coincides with the topology induced from $A_{i+1}$.

Let $V$ be a subgroup of $A_1=\Autbar(U)$, such that $V \in \ca{T}_Q$.
By Lemma~\ref{lem:open subgroup} we can construct inductively subgroups $H_i\subset A_i$ such that
\begin{itemize}
\item[(i)] $H_1=V$;
\item[(ii)] $H_i$ is open in $A_i$ (with respect to the quotient topology) for all $i$,
\item[(iii)] $H_{i+1}\cap A_i=H_i$ for all $i$.
\end{itemize}

Let $H=\bigcup H_i$. Then $H$ is open in $\Comm(G)_A$ by Claim~\ref{claim:critopen},
and it is clear from the construction that $H\cap \Autbar (U)=V$. Therefore, $\ca{T}_Q \subseteq \ca{T}_A$.
\end{proof}

\begin{thm}
\label{thm:t1}
Let $G$ be a h.c.c.b. profinite group. Then
$\Comm(G)_A$ is Hausdorff if and only if $G$ is $\Aut_1$.
\end{thm}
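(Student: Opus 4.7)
The plan is to prove the two implications separately, with continuity of the maps $\rho_U$ handling the easy direction and a separation argument inside the profinite group $\Aut(U)$ handling the converse.

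For the forward direction I would argue as follows. If $\Comm(G)_A$ is Hausdorff, then $\{1_{\Comm(G)}\}$ is closed. By property (i) in the defining characterization of the Aut-topology, the map $\rho_U\colon\Aut(U)\to\Comm(G)_A$ is continuous for every open subgroup $U$ of $G$, so $\TAut(U)=\kernel(\rho_U)=\rho_U^{-1}(\{1\})$ is closed in $\Aut(U)$. This shows $G$ is $\Aut_1$.

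For the converse, assume $G$ is $\Aut_1$. Since $\ca{B}_G$ is a base of neighborhoods of $1$ consisting of subgroups, it suffices to show that for every $g\neq 1$ in $\Comm(G)$ there exists $H\in\ca{B}_G$ with $g\notin H$. I would represent $g=[\phi]$ by a virtual automorphism $\phi$ defined on an open subgroup $U$; then $\phi\notin\TAut(U)$. Because $G$ is h.c.c.b., $U$ is characteristically based, so $\Aut(U)$ is a profinite group with a base of open subgroups at the identity. By $\Aut_1$, $\TAut(U)$ is closed, and it is normal by Proposition~\ref{prop:taut}(a); hence I can separate $\phi$ from $\TAut(U)$ by an open subgroup $W\leq\Aut(U)$ with $\phi\notin W\cdot\TAut(U)$.

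I would then set $H=\rho_U(W)$. Since $\TAut(U)=\kernel(\rho_U)$, one has $\rho_U^{-1}(H)=W\cdot\TAut(U)$, which is open in $\Aut(U)$. To verify $H\in\ca{B}_G$, I would invoke Proposition~\ref{prop:BG} to identify $\ca{B}_G$ with $\ca{B}_U$ (so that only open subgroups of $U$ need to be checked), and then apply Lemma~\ref{lemma:BG} to propagate openness of $\rho_U^{-1}(H)$ in $\Aut(U)$ to openness of $\rho_V^{-1}(H)$ in $\Aut(V)$ for every open $V\subseteq U$. Finally, $g\notin H$ by construction: otherwise $\phi\in\rho_U^{-1}(H)=W\cdot\TAut(U)$, contradicting the choice of $W$. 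The only place where genuine work is hidden is the propagation of openness from $U$ down to arbitrary open $V$, and this is exactly what Lemma~\ref{lemma:BG} delivers, so I do not anticipate any real obstacle.
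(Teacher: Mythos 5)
Your forward direction is identical to the paper's. For the converse, the core construction is a genuinely different and, in isolation, cleaner route: you set $H=\rho_U(W)$ directly for an open $W\le\Aut(U)$ separating $\phi$ from $\TAut(U)$, and then check $H\in\ca{B}_G$ via Lemma~\ref{lemma:BG} together with the identification $\ca{B}_U=\ca{B}_G$, whereas the paper instead works inside a super-characteristic base and invokes the inductive lifting construction from the proof of Proposition~\ref{inducedtop}.

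There is, however, a real gap at the start of the converse. You represent $g=[\phi]$ by ``a virtual automorphism $\phi$ defined on an open subgroup $U$'' and then treat $\phi$ as an element of $\Aut(U)$ -- writing ``$\phi\notin\TAut(U)$'' and separating $\phi$ from $\TAut(U)$ inside the profinite group $\Aut(U)$. This presupposes $\phi(U)=U$, i.e.\ $g\in\Autbar(U)$; but not every $g\in\Comm(G)$ lies in $\Autbar(U)$ for any open $U$. Indeed, if $\phi\colon U\to V$ satisfies $|G:U|\neq|G:V|$, a short index count (if $[\phi]=[\psi]$ with $\psi\in\Aut(W)$ and $\phi,\psi$ agree on open $W'$, then $|G:U|\,|U:W'|=|G:W'|=|G:\psi(W')|=|G:\phi(W')|=|G:V|\,|V:\phi(W')|$ and $|U:W'|=|V:\phi(W')|$, forcing $|G:U|=|G:V|$) shows $[\phi]$ cannot be equivalent to an automorphism of any open subgroup, so the separation you perform does not apply to such $g$. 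Your argument therefore only establishes the $T_1$ property on the (typically proper) subset $\bigcup_{U}\Autbar(U)$ of $\Comm(G)$. The paper closes exactly this gap by fixing a super-characteristic base $G_1\supset G_2\supset\cdots$, forming $A=\bigcup_i\Autbar(G_i)$, and using Proposition~\ref{prop:pureaut} to see that $A$ is open in $\Comm(G)_A$: if $g\notin A$ one takes $O=A$, and if $g\in A$ then $g\in\Autbar(G_i)$ for some $i$, at which point your $\rho_U(W)$ construction with $U=G_i$ applies. With this case split added, the rest of your argument goes through and offers a tidier replacement for the lifting step via Proposition~\ref{inducedtop}.
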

\begin{proof} ``$\Rightarrow$'' Suppose that $\Comm(G)_A$ is Hausdorff. Then
the set $\{1\}$ is closed in $\Comm(G)_A$. If $U$
is an open subgroup of $G$, then $\TAut(U)=\rho_U^{-1}(\{1\})$ is
closed in $\Aut(U)$, and therefore $\Comm(G)_A$ is  $\Aut_1$.

``$\Leftarrow$'' Now suppose that $G$ is $\Aut_1$.
Since $\Comm(G)$ is a topological group, it is enough to prove that $G$ is $T_1$.
As in the previous proof, choose a super-characteristic base
$G_1\supset G_2\supset\ldots$ of $G$, let $A_i=\Autbar(G_i)$ and $A=\cup A_i$.

Let $x\neq 1$ be an arbitrary element of $\Comm(G)$. We need to find an open subgroup
$O$ of $\Comm(G)$ such that $x\not\in O$. If $x\not \in A$, we set $O=A$.
Otherwise, $x\in A_i$ for some $i$. Since $A_i$ is Hausdorff, there exists an
open subgroup $V$ of $A_i$ such that $x\not\in V$. By the proof of Proposition~\ref{inducedtop},
$V=A_i\cap O$ for some open subgroup $O$ of $\Comm(G)$, and clearly, $x\not\in O$.
\end{proof}

\subsection{When is $\Comm(G)_A$ locally compact?}
\label{ss:loccom}

\begin{thm}
\label{thm:t2}
Let $G$ be a h.c.c.b. profinite group.
\begin{itemize}
\item[(a)] The following
conditions are equivalent.
\begin{itemize}
\item[(i)] $G$ is $\Aut_2$.
\item[(ii)] $\Comm(G)_A$ is locally compact.
\end{itemize}
\item[(b)] If conditions (i) and (ii) are satisfied, then
$\Comm(G)_A$ is $\sigma$-compact.
\item[(c)] Assume that $\VZ(G)=\{1\}$. Then both (i) and (ii) hold if and only if
there exists an open subgroup $U$ of $G$ such that $[\IC(U):\iota(U)]$ is countable.
\end{itemize}
\end{thm}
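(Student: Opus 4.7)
The unifying object for all three parts will be the open subgroup $A=\bigcup_i\Autbar(G_i)$ of $\Comm(G)_A$ from Proposition~\ref{prop:pureaut}, associated to a super-characteristic base $\{G_i\}$ of $G$. Note that once $G$ is $\Aut_1$, each $\Autbar(G_i)$ is compact (by Proposition~\ref{inducedtop} the induced and quotient topologies agree, and the quotient of the profinite group $\Aut(G_i)$ by the closed subgroup $\TAut(G_i)$ is profinite) and hence closed in the Hausdorff space $\Comm(G)_A$. The strategy is to identify compact open subgroups of $\Comm(G)_A$ with groups of the form $\Autbar(U)$ for $\Aut$-stable $U$, and then derive part (c) by coset-counting.

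For part (a), in the direction $\Aut_2\Rightarrow$ locally compact, I would fix an $\Aut$-stable open subgroup $U$ and prove $\Autbar(U)$ is compact open. Compactness follows as above. For openness via Claim~\ref{claim:critopen}, I would first note that every open subgroup of $U$ is again $\Aut$-stable (the containment $\Autbar(V)_W\supseteq\Autbar(U)_W$ for $W\subseteq V\subseteq U$ reduces this to the hypothesis on $U$), and then verify the preimage condition along a super-characteristic base inside $U$. Conversely, if $\Comm(G)_A$ is locally compact, Theorem~\ref{thm:t1} gives $\Aut_1$, and the identity neighborhood base of open (hence closed) subgroups produces a compact open subgroup $K$. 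Then $K\cap A$ is compact and equals $\bigcup_i (K\cap\Autbar(G_i))$, an ascending union of closed subgroups; Baire's theorem yields an $i$ for which $K\cap\Autbar(G_i)$ has nonempty interior in $K\cap A$, hence is open and of finite index. Enlarging $i$ if necessary to absorb the finitely many remaining cosets, $K\cap A\subseteq\Autbar(G_i)$ of finite index, so $\Autbar(G_i)$ is a compact open subgroup; a routine indexing argument using Proposition~\ref{inducedtop} (possibly after passing to a slightly smaller $G_j\subseteq G_i$) then yields the $\Aut$-stability of $G_j$.

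Part (b) is essentially immediate: $A$ is an open subgroup of countable index in $\Comm(G)$ by Proposition~\ref{prop:pureaut}, and $A=\bigcup\Autbar(G_i)$ is a countable union of compact sets, so $A$ and hence $\Comm(G)_A$ are $\sigma$-compact. For part (c), the assumption $\VZ(G)=\{1\}$ forces $\TAut(U)=\{1\}$ for every open $U$ via Propositions~\ref{prop:vircenbas}(b) and \ref{prop:jiatriv}, so $G$ is automatically $\Aut_1$, $\Autbar(U)=\Aut(U)$, and $\iota(U)$ is identified with $\Inn(U)$; thus $\Aut_2$ reduces to the existence of an $\Aut$-stable open subgroup. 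In the direction $\Rightarrow$, parts (a) and (b) yield a compact open $K=\Aut(U)$ with $[\Comm(G):K]$ countable, so $[\IC(U):\IC(U)\cap K]$ is countable. The key computation $[\phi]\iota(u)[\phi]^{-1}=\iota(\phi(u))$ for $\phi\in\Aut(U)$ and $u\in U$ shows that conjugation by elements of $K$ preserves $\iota(U)$; consequently the remaining factor $[\IC(U)\cap K:\iota(U)]$ is contributed only by the (countably many) non-$K$ coset representatives of $\Comm(G)$, and one uses the $\Aut$-stability of $U$ (refining $U$ if needed) to collapse this contribution to a finite one, so $[\IC(U):\iota(U)]$ is countable. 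Conversely, if $[\IC(U):\iota(U)]$ is countable, then $\IC(U)$ is $\sigma$-compact (a countable union of translates of the compact set $\iota(U)\cong U$), and a Baire-category argument along the lines of Proposition~\ref{prop:aut1equiv}, applied inside the closure of an appropriate subgroup in $\Comm(G)_A$, produces a compact open subgroup, hence local compactness, hence $\Aut_2$ by part (a).

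The main obstacle will be the refinement step in part (c) controlling $[\IC(U)\cap K:\iota(U)]$: making precise which elements of $\Aut(U)$ arise as conjugates $[\phi]\iota(u)[\phi]^{-1}$ when no representative of $[\phi]$ has $u$ in its domain requires careful bookkeeping, since in that case the conjugate need not be inner. The companion Baire argument in the converse direction must likewise be set up inside the correct ambient topological group so that the resulting open-with-compact-closure subgroup genuinely sits inside $\Comm(G)_A$.
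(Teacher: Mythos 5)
Your treatment of parts (a) and (b) is essentially sound and close to the paper's argument. For (a), the direction $\Aut_2\Rightarrow$ locally compact is identical in spirit; for the converse you route the Baire-category argument through a compact open subgroup $K$ intersected with $A=\bigcup_i\Autbar(G_i)$, whereas the paper applies Baire directly to the open-hence-closed-hence-locally-compact subgroup $A$ written as a countable union of the compact subgroups $\Autbar(G_i)$; both versions work. Your (b) is fine, though you can skip the ``$A$ is a countable union of compacts'' step since one single compact $\Autbar(U)$ of countable index suffices.

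Part (c) is where you have genuine gaps, in both directions. In the direction $\Aut_2\Rightarrow$ countability of $[\IC(U):\iota(U)]$, you yourself flag the problem: once you restrict to elements of $\IC(U)$, you must control \emph{products} of conjugates of $\iota(U)$, not just single conjugates. The paper does this with a short but essential commutation computation: writing $\Ubar=\iota(U)$, for every $x\in\Comm(G)$ one has $\Ubar^x\subseteq T(x)\Ubar$ for a \emph{finite} set $T(x)$ (because $\Ubar^x\cap\Ubar$ has finite index in $\Ubar^x$), and then $x\Ubar\, y\Ubar = xy\,\Ubar^y\Ubar\subseteq xyT(y)\Ubar$. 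Iterating shows that for fixed $s_1,\ldots,s_k$ from a countable transversal $S$ of $\Autbar(U)$, the set $\Ubar^{s_1}\cdots\Ubar^{s_k}$ is covered by finitely many left cosets of $\Ubar$, whence $\IC(U)=\bigcup_k\bigcup_{s_i\in S}\Ubar^{s_1}\cdots\Ubar^{s_k}$ is covered by countably many. Your ``refine $U$'' suggestion does not replace this: the difficulty is not in choosing $U$, and no refinement of $U$ eliminates the need to rearrange products of conjugates. Your observation that $\Aut(U)$ normalizes $\iota(U)$ is true but only addresses the conjugates coming from a single coset, not the multiplicative structure.

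In the converse direction, your proposed route --- $\IC(U)$ is $\sigma$-compact, so a Baire argument produces a compact open subgroup, giving local compactness and hence $\Aut_2$ by (a) --- is not correct as stated. $\sigma$-compactness does not give local compactness, and there is no ambient locally compact (or completely metrizable) space in which to apply Baire to $\IC(U)$ \emph{before} knowing $\Comm(G)_A$ is locally compact; the argument is circular. The paper instead argues directly: countably many conjugates of $\iota(U)$ implies the normalizer of $\iota(U)$ in $\Comm(U)$, which equals $\Autbar(U)$ when $\VZ(G)=\{1\}$, has countable index. Therefore for any open $V\subseteq U$, the subgroup $\Autbar(U)_V$ has countable index in the compact group $\Autbar(V)$; but the index of a closed subgroup of a profinite group is finite or uncountable, so it is finite. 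That is exactly $\Aut$-stability of $U$, hence $\Aut_2$, and only then does (a) give local compactness. This ``finite or uncountable'' dichotomy for indices of closed subgroups of profinite groups is the missing ingredient in your sketch; it turns a soft countability bound into the exact finiteness needed, and no amount of $\sigma$-compactness or closure-taking substitutes for it.
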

\begin{proof}
(a) ``(i)$\Rightarrow$ (ii)'' Assume that $G$ is $\Aut_2$,
and let $U$ be an $\Aut$-stable open subgroup of $G$. We claim that $\Autbar (U)$ is open and
compact in $\Comm(G)_A$, which would mean that $\Comm(G)_A$ is locally compact.

By Proposition~\ref{inducedtop}, the group $\Autbar(U)$ is profinite and therefore compact.
If $V$ is any open subgroup of $U$, then $\Autbar(U)_V$ is open in $\Autbar(V)$, and thus
$\rho_{V}^{-1}(\Autbar(U)_V)$ is open in $\Aut(V)$. Therefore,
$\rho_{V}^{-1}(\Autbar(U))\supseteq \rho_{V}^{-1}(\Autbar(U)_V)$ is also open in
$\Aut(V)$, and thus $\Autbar(U)$ is open in $\Comm(G)_A$.
\vskip .1cm
``(ii)$\Rightarrow$ (i)'' Suppose that $\Comm(G)_A$ is locally compact. In particular,
$\Comm(G)_A$ is Hausdorff, and thus $G$ must be $\Aut_1$ by Theorem~\ref{thm:t1}.

Let $\{\,G_i\}_{i=1}^{\infty}$ be a super-characteristic base of $G$, and let
$A=\bigcup_{i\in\N}\Autbar(G_i)$. By Claim~\ref{claim:critopen}, $A$ is open
and therefore closed in $\Comm(G)_A$. In particular, $A$ is locally compact, and thus
by Baire's theorem there exists some $k\in \N$ such that $\Autbar(G_k)$ is open
in $A$ and thus open in $\Comm(G)_A$.
Let $U= G_k$. If $V$ is any open subgroup of $U$, then $\Autbar(U)_V$ is open
in $\Autbar(U)$, and thus open in $\Comm(G)_A$. In particular, $\Autbar(U)_V$
is open in $\Autbar(V)$. Thus, $U$ is $\Aut$-stable, so $G$ is $\Aut_2$.

(b) Assume that $\Comm(G)_A$ is locally compact, and let $A$ and $U$ be as in the proof of the
implication ``(ii)$\Rightarrow$ (i)''. Then $\Autbar(U)$ is compact, and clearly $\Autbar(U)$ has countable
index in $A$. On the other hand, $A$ has countable index in $\Comm(G)$ by
Proposition~\ref{prop:pureaut}. Thus, $[\Comm(G):\Autbar(U)]$ is countable,
whence $\Comm(G)_A$ is $\sigma$-compact.
\vskip .1cm
\newcommand{\Ubar}{{\overline{U}}}
(c) ``$\Rightarrow$'' Assume that $G$ is $\Aut_2$, and let $U$ be
an $\Aut$-stable open subgroup of $G$. By the same argument as in (b),
the index $[\Comm(G):\Autbar(U)]$ is countable. Let $S$ be a left transversal for
$\Autbar(U)$ in $\Comm(G)$ (thus $S$ is countable as well).

Let $\Ubar=\iota(U)$. By definition, every element of $\IC(U)$ is of the form
$u_1^{s_1}\ldots u_k^{s_k}$ where $u_i\in\Ubar$ and $s_i\in S$ for $1\leq i\leq k$.
To prove that $[\IC (U):\iota(U)]$ is countable, it is sufficient to show that for
fixed $s_1,\ldots, s_k\in S$, the set $\Ubar^{s_1}\ldots \Ubar^{s_k}$ is a covered by finitely many
left cosets of $\Ubar$.

First note that for every $x\in \Comm(G)$ there exists a finite set $T=T(x)$ such that
$\Ubar^x\subseteq T\Ubar$ since $\Ubar^x\cap \Ubar$ is a finite index subgroup of $\Ubar$.
Similarly, given two left cosets $x \Ubar$ and $y \Ubar$ we have
$x \Ubar y \Ubar=xy \cdot \Ubar^y \Ubar\subseteq xy T(y) \Ubar$. The above claim easily follows.
\vskip .15cm

``$\Leftarrow$'' Let $U$ be an open subgroup of $G$ such that $[\IC (U):\iota(U)]$ is countable.

\it{Step 1: }\rm $\iota(U)$ has only countably many conjugates in $\Comm(U)$.

\it{Subproof. }\rm For every $f\in \Comm(U)$ there exists an open normal subgroup
$V$ of $U$ such that $\iota(U)^f\supseteq \iota(V)$. Since $[\IC(U):\iota(V)]$
is countable, there are only countably many possibilities for $\iota(U)^f$ once
$V$ is fixed, and there are only countably many possibilities for $V$ since
$U$ is countably based.
\vskip .12cm

\it{Step 2: }\rm $U$ is $\Aut$-stable.

\it{Subproof. }\rm Step 1 implies that the normalizer of $\iota(U)$ in $\Comm(U)$
has countable index. Note that this normalizer is precisely $\Autbar(U)$.
It follows that for every $V$ open in $U$, the index of $\Autbar(U)_V$ in $\Autbar(V)$
is countable. On the other hand, both $\Autbar(U)_V$ and $\Autbar(V)$ are compact,
so the index of $\Autbar(U)_V$ in $\Autbar(V)$ is either finite
or uncountable. Thus this index has to be finite, so $U$ is $\Aut$-stable,
and $G$ is $\Aut_2$ (note that $G$ is automatically $\Aut_1$ since
$\VZ(G)=\{1\}$).
\end{proof}

\subsection{$\Aut$-topology versus natural topology}
Let $G$ be a profinite group such that $\Comm(G)$ is isomorphic
(as an abstract group) to some ``familiar'' group which
comes with natural topology. As a rule, we expect the $\Aut$-topology
on $\Comm(G)$ to coincide with that natural topology. In this
subsection we shall ``confirm this rule'' in the case of profinite
groups covered by Theorems~\ref{thm:padic} and \ref{Chevalley}.
We shall use the following technical but easy-to-apply criterion.

\begin{prop}
\label{prop:autnatural}
Let $G$ be a h.c.c.b. profinite group which is $\Aut_1$.
Suppose that $\Comm(G)$ is a topological group with respect
to some topology $\ca{T}$, and there exists an open subgroup
$U$ of $G$ such that
\begin{itemize}
\item[(i)] The index $[\Comm(G):\Autbar(U)]$ is countable,
\item[(ii)] $\Autbar(U)$ is an open compact subgroup of $(\Comm(G),\ca{T})$,
\item[(iii)] If $N$ is an open subgroup of $U$ and $\{f_n\}_{n=1}^{\infty}$
is a sequence in $\Autbar(U)$ such that $f_n\to 1$ with respect to $\ca{T}$,
then $f_n(N)=N$ for sufficiently large $n$.
\end{itemize}
Then $U$ is $\Aut$-stable (whence $\Comm(G)_A$ is locally compact), and
$\ca{T}$ coincides with the $\Aut$-topology on $\Comm(G)$.

\end{prop}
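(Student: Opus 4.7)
The plan is to establish $\Aut$-stability of $U$ first, which by Theorem~\ref{thm:t2} yields local compactness of $\Comm(G)_A$, and then to deduce $\ca{T}=\ca{T}_A$ by comparing the two topologies on the compact open subgroup $\Autbar(U)$. On $\Autbar(U)$ there are two natural topologies to compare: the subspace topology $\ca{T}^{\ast}$ inherited from $(\Comm(G),\ca{T})$, which is compact Hausdorff by~(ii), and the quotient topology $\ca{T}_Q$ from $\Aut(U)/\TAut(U)$, which is compact profinite Hausdorff by $\Aut_1$ together with Proposition~\ref{prop:aut1equiv}. A preliminary observation used throughout is that $\Autbar(U)_N$ has finite index in $\Autbar(U)$ for every open $N\subseteq U$: since $G$ is h.c.c.b., Proposition~\ref{prop:hccb1} produces an open characteristic subgroup $C$ of $U$ with $C\subseteq N$, and then $A(C)\subseteq\Aut(U)_N$ is already of finite index in $\Aut(U)$. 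Moreover $\ca{T}_Q$ is metrizable because $U$ is countably based.

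The central step will be to prove $\ca{T}^{\ast}=\ca{T}_Q$ on $\Autbar(U)$. Given $f_n\to 1$ in $\ca{T}^{\ast}$, condition~(iii) together with the topological group structure on $(\Autbar(U),\ca{T}^{\ast})$ gives $f_n\in\Autbar(U)_N$ eventually for each open $N\subseteq U$. Applied simultaneously to the finitely many subgroups $N$ with $C\subseteq N\subseteq U$ and then iterated as $C$ is shrunk along a characteristic chain of $U$, this is meant to force $f_n$ eventually to induce the identity on each finite quotient $U/C$, i.e., $f_n\to 1$ in $\ca{T}_Q$. The identity map $(\Autbar(U),\ca{T}^{\ast})\to(\Autbar(U),\ca{T}_Q)$ would then be sequentially continuous at $1$; combined with metrizability of $\ca{T}_Q$ and Proposition~\ref{Bourbaki_basic} applied to the resulting continuous bijection from a compact space to a Hausdorff space, this forces $\ca{T}^{\ast}=\ca{T}_Q$. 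Once this is established, $\Aut$-stability of $U$ follows: for any open $V\subseteq U$, condition~(i) forces $[\Autbar(V):\Autbar(V)\cap\Autbar(U)]$ to be countable, and an application of Baire category to the compact profinite group $\Autbar(V)$ (using $\Aut_1$ to see that $\Autbar(V)\cap\Autbar(U)$ is closed in $\ca{T}_Q$) upgrades this to a finite index. A short additional argument then shows $\Autbar(U)_V$ itself is open in $\Autbar(V)$, so $U$ is $\Aut$-stable and $\Comm(G)_A$ is locally compact by Theorem~\ref{thm:t2}.

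For the final equality $\ca{T}=\ca{T}_A$, both $(\Comm(G),\ca{T})$ and $\Comm(G)_A$ contain $\Autbar(U)$ as a compact open subgroup; by the previous step together with Proposition~\ref{inducedtop} the induced subspace topologies on $\Autbar(U)$ coincide; and since $[\Comm(G):\Autbar(U)]$ is countable, each global topology on $\Comm(G)$ is determined by this common subspace topology together with the discrete coset space, yielding $\ca{T}=\ca{T}_A$. The principal obstacle is the upgrading step in the second paragraph: the raw form of~(iii) only guarantees eventual setwise preservation of a subgroup, while what is really needed is that $f_n$ eventually acts trivially on each finite quotient $U/C$. Bridging this gap — either through a careful iterated use of~(iii) on all subgroups lying between $C$ and $U$ as $C$ runs over a characteristic base, or by first establishing enough first-countability of $\ca{T}^{\ast}$ to promote the sequential statement to a topological one and then bypassing the intermediate comparison — is the technical heart of the proof.
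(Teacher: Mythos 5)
Your overall strategy is the paper's: obtain $\Aut$-stability of $U$, identify the $\Aut$-topology restricted to $\Autbar(U)$ with the quotient topology via Proposition~\ref{inducedtop}, compare that with the subspace topology coming from $\ca{T}$ using the compact-versus-Hausdorff criterion of Proposition~\ref{Bourbaki_basic}, and then transfer the equality to all of $\Comm(G)$ by openness of $\Autbar(U)$ in both topologies. One structural wrinkle: you announce $\ca{T}^{\ast}=\ca{T}_Q$ as the ``central step'' and then claim to deduce $\Aut$-stability from it, but $\Aut$-stability should come first. It follows from condition (i) alone, by the argument of Step~2 in the proof of Theorem~\ref{thm:t2}(c), which makes no reference to $\ca{T}$; the paper cites that argument directly, and it is what licenses the appeal to Proposition~\ref{inducedtop} in the next step.

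The obstacle you isolate at the end is genuine, and neither of your two proposed workarounds closes it. Condition (iii), read literally, gives only that $f_n$ eventually lies in $\Autbar(U)_N$, i.e., setwise stabilizes $N$; what is needed for $f_n\to 1$ in $\ca{T}_A\vert_{\Autbar(U)}=\ca{T}_Q$ is that $f_n$ eventually lies in $\rho_U(A(N))$, i.e., eventually induces the identity on $U/N$. The second condition is strictly stronger: for $p>2$, multiplication by $2\in\Z_p^{*}$ stabilizes every open subgroup of $\Z_p$ yet acts nontrivially on $\Z_p/p\Z_p$. In particular, stabilizing \emph{all} open subgroups between a characteristic base and $U$ (your first workaround) still does not force trivial action on the quotients, and upgrading sequential to topological continuity (your second) only matters once you already know which topology on $\Autbar(U)$ you are targeting. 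You should be aware that the paper's own proof passes over this point, calling the needed implication a ``reformulation'' of (iii) without argument, so you have in fact located a real imprecision in the source. The applications in Example~\ref{ex:autnatural} verify the stronger statement, and the clean repair is to strengthen condition (iii) so that it requires $f_n$ eventually to induce the identity on $U/N$; with that reading, the rest of your argument and the paper's go through.
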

\begin{proof}
Recall that $\ca{T}_{A}$ denotes the $\Aut$-topology.
First, from the proof of step 2 in part (c) of Theorem~\ref{thm:t2} we know that
condition (i) implies that $U$ is $\Aut$-stable, and thus
$\Autbar(U)$ is an open compact subgroup of $\Comm(G)_A$.
Condition (iii) can be reformulated as follows: if $\{f_n\}_{n=1}^{\infty}$
is a sequence in $\Autbar(U)$ such that $f_n\to 1$ with respect to $\ca{T}$,
then $f_n\to 1$ with respect to $\ca{T}_{A}$. This implies that
$\ca{T}_{A}$ restricted to $\Autbar(U)$ is not stronger than $\ca{T}$. Since
$(\Autbar(U), \ca{T})$ is compact and $(\Autbar(U), \ca{T}_A)$ is Hausdorff
(as $G$ is  $\Aut_1$), the topologies $\ca{T}$ and $\ca{T}_{A}$
coincide on $\Autbar(U)$ by Proposition~\ref{Bourbaki_basic}.
Since $\Autbar(U)$ is open with respect
to both $\ca{T}$ and $\ca{T}_{A}$, it follows that
$\ca{T}$ and $\ca{T}_{A}$ must coincide on $\Comm(G)$.
\end{proof}

\begin{example}
\label{ex:autnatural}
(a) Let $G$ be a compact $p$-adic analytic group. By Theorem~\ref{thm:padic},
$\Comm(G)$ is isomorphic to $\Aut_{\Q_p}(\eu{L}(G))$, and so $\Comm(G)$ is a subgroup
of $\GL_n(\mathbb Q_p)$ for some $n$. Let $\ca{T}$ be the topology on $\Comm(G)$
induced from the field topology on $\mathbb Q_p$. Conditions (i)-(iii)
of Proposition~\ref{prop:autnatural} are easily seen to hold with $U=G$,
and thus $\ca{T}$ coincides with the $\Aut$-topology.

(b) Let $\dbG$ be a split Chevalley group, let $F$ be a local field of positive characteristic,
and let $G$ be an open compact subgroup of $\dbG(F)$. By Theorem~\ref{Chevalley},
$\Comm(G)\cong \dbG_{ad}(F)\rtimes (X\times \Aut(F))$ where $X$ is the group
of Dynkin diagram automorphisms. Endow $\dbG_{ad}(F)$ and $\Aut(F)$ with their natural topologies,
$X$ with discrete topology, and $\Comm(G)$ with product topology; call this topology $\ca{T}$.
By the same argument as in part (a), $\ca{T}$ coincides with the $\Aut$-topology.
\end{example}

\subsection{Profinite groups which are not $\Aut_2$}
In this subsection we give two examples of profinite groups which
are $\Aut_1$, but not  $\Aut_2$.

\begin{prop}
\label{prop:free}
Let $G$ be a finitely generated free pro-$p$ group of rank $r>1$.
Then $G$ is  $\Aut_1$ but not $\Aut_2$.
\end{prop}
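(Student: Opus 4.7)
The plan is to treat the two assertions separately. First observe that every open subgroup $U$ of $G$ is itself a finitely generated free pro-$p$ group of rank $\geq r>1$ (by the Nielsen--Schreier theorem for free pro-$p$ groups and Schreier's rank formula); both parts will reduce to properties of such $U$.

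To prove $G$ is $\Aut_1$, I would show $\VZ(U)=\{1\}$ for every open $U\leq G$, so that $\TAut(U)=\{1\}$ by Proposition~\ref{prop:jiatriv}, and $\TAut(U)$ is then trivially closed in $\Aut(U)$. For the triviality of $\VZ(U)$: if $1\neq u\in\VZ(U)$, then $C=\Cent_U(u)$ is open and free pro-$p$; its rank is either $1$ (whence $U$ would be virtually procyclic, contradicting $\rk(U)\geq 2$) or $\geq 2$, in which case $u$ lies in the center of the non-abelian free pro-$p$ group $C$, contradicting the standard fact that such centres are trivial (a consequence of residual nilpotence).

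For the failure of $\Aut_2$, the strategy is to exhibit, for every open $U\leq G$, a characteristic open $V\subseteq U$ such that the image of $\Aut(U)\to\Aut(V)$ has uncountable index. I would take $V\deq\Phi(U)=U^p[U,U]$; since $V$ is characteristic and $\TAut(U)=\TAut(V)=\{1\}$, we have $\Autbar(U)_V=\Aut(U)$ and $\Autbar(V)=\Aut(V)$, so this will suffice to exclude $\Aut$-stability of $U$. The key tool is the surjection $\Aut(V)\twoheadrightarrow\Aut(V^{\ab})=\GL_{r_V}(\Z_p)$ obtained by lifting linear automorphisms of $V^{\ab}$ through a choice of free generators of $V$ (Schreier gives $r_V=p^{r_U}(r_U-1)+1$). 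Conjugation in $U$ induces a faithful representation $\rho\colon U/V\hookrightarrow\GL_{r_V}(\Z_p)$ (faithful because $V^{\ab}\otimes\Q_p$ contains a regular $\Q_p[U/V]$-summand, by the standard Schreier identity for free groups); for any $\alpha\in\Aut(U)$ inducing $\bar\alpha$ on $U/V$ one has $\alpha\rho(u)\alpha^{-1}=\rho(\bar\alpha(u))$, so the image of $\Aut(U)\to\GL_{r_V}(\Z_p)$ is contained in the normalizer $N\deq N_{\GL_{r_V}(\Z_p)}(\rho(U/V))$.

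The main technical step is to prove $N$ is not open in $\GL_{r_V}(\Z_p)$. Since $r_U\geq 2$, the subgroup $\rho(U/V)\cong\F_p^{r_U}$ is not central in $\GL_{r_V}(\Z_p)$ (as $\Z_p^\ast$ has no elementary abelian $p$-subgroup of rank $\geq 2$), so it contains a non-central element $a$. For any congruence subgroup $\Gamma_k=\kernel(\GL_{r_V}(\Z_p)\to\GL_{r_V}(\Z/p^k))$, the orbit map $g\mapsto gag^{-1}$ from $\Gamma_k$ has positive-dimensional image because $\Cent_{\GL_{r_V}(\Z_p)}(a)$ is a closed Lie subgroup of strictly smaller dimension; this image is therefore infinite and cannot lie in the finite set $\rho(U/V)$, so $\Gamma_k\not\subseteq N$. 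As $k$ was arbitrary, $N$ contains no open subgroup, and a non-open closed subgroup of a profinite group automatically has uncountable index. Passing through the surjection onto $\GL_{r_V}(\Z_p)$, it follows that the image of $\Aut(U)$ in $\Aut(V)$ has uncountable index and is therefore not open; thus no open $U\leq G$ is $\Aut$-stable, and $G$ is not $\Aut_2$. The chief obstacle is the dimension argument for $N$, which relies on elementary $p$-adic Lie theory but can alternatively be carried out by directly examining the non-discreteness of the $\Gamma_k$-orbit of $a$ in congruence quotients.
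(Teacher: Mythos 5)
Your argument is correct, and for the second (and main) assertion it follows a genuinely different route from the paper. The $\Aut_1$ part is the same in substance: the paper simply notes that $\VZ(G)=\{1\}$, which by Proposition~\ref{prop:jiatriv} forces $\TAut(U)=\{1\}$ for every open $U$; your centralizer argument just spells this out. For the failure of $\Aut_2$, both proofs reduce to showing that for a characteristic open $V\subseteq U$ the image of $r_{U,V}\colon\Aut(U)\to\Aut(V)$ is not of finite index, and both exploit the surjectivity of $\Aut(V)\to\Aut(V^{\ab})\cong\GL_{r_V}(\Z_p)$; the difference lies in which invariant is transported. The paper uses the transfer $T\colon U^{\ab}\to V^{\ab}$: its image is an $\image(r_{U,V})$-invariant submodule of rank $\rk(U)<\rk(V)$, and no finite-index subgroup of $\Aut(V)$ can stabilize such a submodule. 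You instead take $V=\Phi(U)$ and observe that $\image(r_{U,V})$ must normalize the finite subgroup $\rho(U/V)\subseteq\GL_{r_V}(\Z_p)$ coming from the conjugation action on the relation module; faithfulness of $\rho$ (Gasch\"utz's theorem on $V^{\ab}\otimes\Q_p\cong\Q_p[U/V]^{\rk(U)-1}\oplus\Q_p$, which is what your ``Schreier identity'' amounts to) gives an elementary abelian subgroup of rank $\geq 2$, hence non-central, and your orbit/dimension argument shows the normalizer of a finite non-central subgroup contains no congruence subgroup, so is not open. Your route costs two extra standard inputs (Gasch\"utz's theorem and a bit of $p$-adic Lie or congruence-quotient reasoning) where the paper's transfer argument is shorter and more self-contained; in return it gives the slightly stronger conclusion that the index is uncountable rather than merely infinite, and it isolates a reusable principle (normalizers of finite non-central subgroups of $\GL_m(\Z_p)$ are never open) in place of the paper's invariant-sublattice principle.
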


\begin{proof} As $G$ has trivial virtual center, $G$ is $\Aut_1$.
Since an open subgroup of a free pro-$p$ group is free, it suffices
to show that $G$ is not $\Aut$-stable. Furthermore, it will be enough to
show that the image of the map $r_{G,U}\colon\Aut(G)\to\Aut(U)$ is not of finite index
whenever $U$ is an open characteristic subgroup of $G$.

So, assume that $U$ is open and characteristic in $G$.
Let $G^{\ab}$ and $U^{\ab}$ denote the abelianizations of $G$ and $U$, respectively.
Then $\Aut(G)$ acts naturally on $G^{\ab}$ and $\Aut(U)$ acts naturally on $U^{\ab}$.
The transfer $T\colon G^{\ab}\to U^{\ab}$ is an injective map which commutes with the action of
$\Aut(G)$ via the homomorphism $r_{G,U}$ (see \cite[\S10.1]{Rob}), that is,
for $\bg\in G^{\ab}$ and $\alpha\in\Aut(G)$ one has
\begin{equation}
\label{eq:trans}
T(\alpha.\bg)=r_{G,U}(\alpha).T(\bg)
\end{equation}
Let $H=\image(T)$. Then $\rk(H)=\rk(G^{\ab})=r$, and by \eqref{eq:trans},
$\image(r_{G,U})$ leaves $H$ invariant. On the other hand,
if $[G:U]=p^n$, then $U^{\ab}$ is a free $\Z_p$-module of rank $m\deq 1+p^n(r-1)>r$, and
the natural mapping $\Aut(U)\to\Aut(U^{\ab})$ is easily seen to be surjective.
Thus, a finite index subgroup of $\Aut(U)$ cannot stabilize $H$, and therefore
$\image(r_{G,U})$ is not of finite index in $\Aut(U)$.
\end{proof}

\begin{rem}
\label{rem:surface}
The argument used in the proof of Proposition~\ref{prop:free} also applies
in other situations; for instance, it can be used verbatim to show that
the pro-$p$ completion of an orientable surface group of genus $g>0$ is not $\Aut_2$.
\end{rem}

Another series of profinite groups not satisfying the condition
$\Aut_2$ can be found within the class of branch groups. To keep things simple, we discuss
the more restricted class of self-replicating groups.

\begin{defi}\rm A profinite group $G$ is {\it self-replicating,} if
$G$ has trivial center and every open subgroup $K$ of $G$
contains an open subgroup $H$ such that
\begin{itemize}
\item[(i)] $H$ is normal in $G$, and
\begin{equation}
\label{eq:selfrep}
H\simeq \underbrace{G\times G\times \ldots \times G}_{\text{$n$ times }}
\end{equation}
for some $n>1$
\item[(ii)] The conjugation action of $G$ on $H$ permutes
the factors of (\ref{eq:selfrep}) transitively among themselves.
\end{itemize}
\end{defi}

\begin{rem}
It is clear that the virtual center of a self-replicating group
is trivial as well. Thus, every self-replicating group is $\Aut_1$.
\end{rem}

\begin{prop}
\label{prop:selfrep}
Let $G$ be a h.c.c.b. self-replicating profinite group such that
$\Out(G)=\Aut(G)/\Inn(G)$ is infinite. Then $G$ is not $\Aut_2$.
\end{prop}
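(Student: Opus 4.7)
The plan is to show that no open subgroup $U$ of $G$ is $\Aut$-stable; by Theorem~\ref{thm:t2}(a) this will yield that $G$ fails to be $\Aut_2$. Fix $U$ open in $G$. Using the self-replicating property of $G$ (iterated finitely many times if necessary), we obtain an open subgroup $V\subseteq U$ with $V\trianglelefteq G$, a factorization $V=G_1\times\cdots\times G_n$ with each $G_i\cong G$ and $n>1$, together with an element $u\in U$ such that $uG_j u^{-1}=G_1$ for some $j\neq 1$. We will show that the index $[\Autbar(V):\Autbar(U)_V]$ is infinite; by the compact/dichotomy argument from the proof of Theorem~\ref{thm:t2}(c) this index is then in fact uncountable, so $U$ is not $\Aut$-stable.

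Since $\VZ(G)=\{1\}$, one has $\VZ(V)=\VZ(G)^n=\{1\}$, and therefore $\TAut(V)=\{1\}$ by Proposition~\ref{prop:jiatriv}, whence $\Autbar(V)=\Aut(V)\supseteq\Aut(G)^n$. Consider the embedding $\Delta\colon\Aut(G)\hookrightarrow\Aut(V)$, $\alpha\mapsto(\alpha,1,\ldots,1)$, acting as $\alpha$ on the first factor and trivially elsewhere. Since $V$ is normal in $U$, the inner automorphisms of $V$ by elements of $V$ are restrictions of inner automorphisms of $U$, so $\Delta(\Inn(G))\subseteq\Inn(V)\subseteq\Autbar(U)_V$; in particular the assignment $\alpha\mapsto\Delta(\alpha)\cdot\Autbar(U)_V$ descends to a well-defined map on $\Out(G)$.

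Key computation: suppose $\Delta(\alpha)=\gamma|_V$ for some $\gamma\in\Aut(U)$ with $\gamma(V)=V$. For $x\in G_j$, write $y=uxu^{-1}\in G_1$. Using $\gamma|_{G_j}=\iid$ and $\gamma|_{G_1}=\alpha$ in the identity $\gamma(uxu^{-1})=\gamma(u)\gamma(x)\gamma(u)^{-1}$, one obtains
\[
\alpha(y)=\bigl(\gamma(u)u^{-1}\bigr)\,y\,\bigl(\gamma(u)u^{-1}\bigr)^{-1}\qquad\text{for all }y\in G_1.
\]
Hence $\alpha$ is the restriction to $G_1$ of conjugation by $\gamma(u)u^{-1}\in N_G(G_1)$. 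Consequently $[\alpha]$ belongs to the image $F$ of the natural homomorphism $N_G(G_1)\to\Out(G_1)=\Out(G)$, whose kernel is $G_1\cdot C_G(G_1)$. Since $G_1\cdot C_G(G_1)\supseteq G_1\cdot\prod_{i\neq 1}G_i = V$ and $V$ has finite index in $G$, the group $F$ is finite.

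It follows that $\Delta(\alpha)\cdot\Autbar(U)_V=\Delta(\beta)\cdot\Autbar(U)_V$ if and only if $[\beta^{-1}\alpha]\in F$, so the cosets $\{\Delta(\alpha)\cdot\Autbar(U)_V\mid\alpha\in\Aut(G)\}$ are in bijection with the coset space $\Out(G)/F$, which is infinite since $\Out(G)$ is infinite by hypothesis and $F$ is finite. Therefore $[\Autbar(V):\Autbar(U)_V]=\infty$, as required. The main technical obstacle is the initial reduction producing an element $u\in U$ that permutes the factors of $V$ non-trivially: when the first candidate $V=H$ from self-replicating happens to be stabilized factor-wise by $U$, one iterates self-replicating inside $H$ to pass to a finer $G$-invariant decomposition with strictly smaller factor-stabilizer, using that the open subgroup $U$ cannot lie in the intersection of these factor-stabilizers at all depths.
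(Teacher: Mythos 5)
Your main computation is correct and is a nice variant of the paper's key step. The paper works only with $G$ itself and, crucially, restricts to the open subgroup $\Aut(G)_H^1=\{\psi\in\Aut(G)\mid\psi(g)\equiv g\bmod H\}$, so that $g^{-1}\psi(g)\in H$; projecting to $G_1$ then shows $\psi\vert_{G_1}$ is literally \emph{inner}. You instead consider all of $\Aut(U)_V$ and conclude only that $\gamma(u)u^{-1}\in N_G(G_1)$, and then observe separately that the image $F$ of $N_G(G_1)$ in $\Out(G_1)$ is finite because its kernel $G_1\cdot\Cent_G(G_1)$ contains the open subgroup $V$. Both routes produce infinitely many cosets from the infinitude of $\Out(G)$, and your dichotomy step (infinite index in a profinite group is uncountable, hence not open) matches the argument in the proof of Theorem~\ref{thm:t2}(c). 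So the algebraic core of your proof is sound, and it avoids the paper's use of $\Aut(G)_H^1$ at the small cost of replacing $\Inn(G_1)$ by the finite overgroup $F$.

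The genuine gap is in your step~2, the ``initial reduction''. To run your computation for a given open $U$, you need a $G$-normal decomposition $V=G_1\times\cdots\times G_n\subseteq U$ together with an element $u\in U$ (not merely $u\in G$) carrying some $G_j$, $j\ne 1$, onto $G_1$. The self-replicating axiom supplies only the transitivity of $G$ on the factors, hence an element of $G$ with this property, not an element of $U$. Your proposed remedy --- iterate the self-replication and assert that $U$ ``cannot lie in the intersection of these factor-stabilizers at all depths'' --- is not justified. The factor-stabilizers arising from successive decompositions are not nested, the axiom does not force the number of factors $n$ (and hence the stabilizer index) to grow unboundedly (it only guarantees $n>1$ each time), and nothing controls the kernel of the permutation action on the factors. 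So there is no reason, from the axiom alone, why a fixed open $U$ must eventually fall outside one of these stabilizers. The paper sidesteps this difficulty entirely: after asserting, via the self-replicating hypothesis, that it suffices to show $G$ itself is not $\Aut$-stable, it works with a fixed $H\trianglelefteq G$ and uses an arbitrary $g\in G$ with $G_2^g=G_1$, which is always available. Your more ambitious plan of verifying non-$\Aut$-stability directly for every open $U$ is exactly where the unestablished input about $U$ permuting the factors is needed, and as written that step would fail.
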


\begin{proof}
Since $G$ is self-replicating, to prove that $G$ is not $\Aut_2$,
it would be sufficient to show that $G$ is not $\Aut$-stable.

Let $H$ be a normal subgroup of $G$ such that $H=G_1\times G_2\times\ldots \times G_n$,
with $G_i\cong G$ for each $i$, and such that the conjugation action of $G$
on $H$ permutes $G_i$'s transitively.
Given $\phi\in\Aut(G)$, let $\phi_*$ be the automorphism
of $H$ which stabilizes each $G_i$, acts as $\phi$ on $G_1$
and as identity on $G_i$ for $i\geq 2$.

Let $\Aut(G)_H^1=\{\psi\in\Aut(G)\mid \psi(g)\equiv g\mod H\mbox{ for every }g\in G\}$.
Note that $\Aut(G)_H^1\subseteq \Aut(G)_H$, and it is easy to see that
$\Aut(G)_H^1$ is open in $\Aut(G)$. Given $\phi\in \Aut(G)$, we shall
now analyze when $\phi_*$ is equal to $r_{G,H}(\psi)$ for some $\psi\in \Aut(G)_H^1$.

Fix $g\in G$ such that $G_2^g=G_1$.

Suppose that $\psi\in \Aut(G)_H^1$ is such that $r_{G,H}\psi=\phi_*$ for
some $\phi\in \Aut(G)$. Then $\psi(x)=x$ for every $x\in G_2$.
Given $y\in G_1$, we have $y^{g^{-1}}\in G_2$, whence
$$\psi(y)=\psi((y^{g^{-1}})^g)=(y^{g^{-1}})^{\psi(g)}=
y^{g^{-1}\psi(g)}.$$
On the other hand, $g^{-1}\psi(g)\in H$ since $\psi\in \Aut(G)_H^1$.
If $g_1$ is the projection of $g^{-1}\psi(g)$ to $G_1$,
then for every $y\in G_1$ we have
$y^{g^{-1}\psi(g)}=y^{g_1}$, so
$$\psi(y)=y^{g_1}\mbox{ for every } y\in G_1.$$
Thus, the restriction of $\psi$ to $G_1$ is an inner
automorphism.

Now let $A=\{\psi\in\Aut(H) \mid \psi=\phi_*\mbox{ for some }\phi\in\Aut(G)\}$,
and let $r_1:A\to \Aut(G_1)$ be the restriction map.
Let $B=r_{G,H}(\Aut(G)_H^1)\cap A$.
In the previous paragraph we showed that
$r_1(B)$ consists of inner automorphisms of $G_1$. On the other hand,
the map $r_1:A\to \Aut(G_1)$ is clearly surjective.
Since we assume that $\Aut(G_1)$ is not a finite extension
of $\Inn(G_1)$, it follows that $r_{G,H}(\Aut(G)_H^1)$ cannot be a finite
index subgroup of $\Aut(H)$. Hence $G$ is not $\Aut$-stable.
\end{proof}

An example of a group satisfying the hypotheses of
Proposition~\ref{prop:selfrep} is the pro-$2$ completion of the
first Grigorchuk group. This follows from \cite{barsid}.

\subsection{Connection with rigid envelopes}
Although we have argued that the $\Aut$-topology on $\Comm(G)$
is in some sense the natural topology, non-local compactness of $\Comm(G)_A$
tells us fairly little about the possible envelopes of $G$. For instance,
as we showed in Section~\ref{s:tdlc}, the pro-$2$ completion of the Grigorchuk group
has a topologically simple compactly generated envelope, while
its commensurator with the $\Aut$-topology is not locally compact by
Proposition~\ref{prop:selfrep}. Nevertheless, non-local compactness of $\Comm(G)_A$
has the following interesting consequence for envelopes:

\begin{prop}
\label{prop:rigid_rest}
Let $G$ be a h.c.c.b. profinite group with $\VZ(G)=\{1\}$, and assume that
$G$ is not $\Aut_2$. Then $G$ does not have a compactly generated
topologically simple rigid envelope.
\end{prop}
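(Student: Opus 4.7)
The plan is to argue by contradiction: assume $G$ admits a compactly generated topologically simple rigid envelope $(L,\eta)$ and show that $G$ must be $\Aut_2$, contradicting the hypothesis. The strategy is to invoke the criterion of Theorem~\ref{thm:t2}(c): given $\VZ(G)=\{1\}$, it suffices to exhibit an open subgroup $U$ of $G$ such that the index $[\IC(U):\iota(U)]$ is countable.

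First I would dispose of the trivial case where $L$ is discrete. A compactly generated discrete group is finitely generated, and the open profinite subgroup $\eta(G)$ must then be finite; but in that case $\Comm(G)=\Aut(G)$ is finite and $G$ is automatically $\Aut_2$, contrary to hypothesis. Hence $L$ is non-discrete, so Theorem~\ref{cor:simvir} yields $\VZ(L)=\{1\}$ while Proposition~\ref{prop:WilJA} yields that $L$ is second countable.

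Next I would fix any open subgroup $U$ of $G$. By Proposition~\ref{prop:vircenbas}(b), $\VZ(U)=\VZ(L)\cap U=\{1\}$, and since rigidity and topological simplicity are intrinsic properties of $L$, the pair $(L,\eta|_U)$ is a topologically simple rigid envelope of $U$ with trivial virtual center. Corollary~\ref{cor:autcomm} then gives $\IC(U)=(\eta|_U)_{\ast}(L)$ inside $\Comm(U)_S\simeq\Comm(G)_S$. Because $\VZ(L)=\{1\}$, the canonical map $(\eta|_U)_{\ast}$ is injective by Proposition~\ref{prop:unienv}, and being continuous and open it identifies $\IC(U)$ topologically with $L$; in particular $\IC(U)$ is second countable.

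The conclusion is then immediate: a second countable topological group admits only countably many pairwise disjoint nonempty open subsets, so every open subgroup of $\IC(U)$ has countable index; in particular $[\IC(U):\iota(U)]$ is countable. By Theorem~\ref{thm:t2}(c) this forces $G$ to be $\Aut_2$, the desired contradiction. The only mildly delicate point is the transfer of Corollary~\ref{cor:autcomm} from $G$ to $U$, but this is purely formal once one observes that rigidity of $L$ is independent of the choice of open compact subgroup used to witness it.
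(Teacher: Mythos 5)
Your proof is correct and follows essentially the same route as the paper: both invoke Corollary~\ref{cor:autcomm} to identify $\eta_*(L)$ with the inner commensurator, use compact generation of $L$ to conclude that the index $[\IC(U):\iota(U)]$ is countable, and finish with Theorem~\ref{thm:t2}(c). The paper simply takes $U=G$ and argues the index is countable directly from $\sigma$-compactness of $L$, which sidesteps both the discrete/non-discrete case split and the appeal to Proposition~\ref{prop:WilJA}; these are only cosmetic differences.
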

\begin{proof} Assume that $(L,\eta)$ is an envelope for $G$ with the required properties.
Since $L$ is topologically simple and rigid, we have $\eta_*(L)=\IC(G)$
by Corollary~\ref{cor:autcomm}. Since $L$ is compactly generated, it is clear
that $[L:\eta(G)]$ is countable, and therefore $[\eta_*(L):\iota(G)]$
is countable as well. Thus, $G$ is $\Aut_2$ by Theorem~\ref{thm:t2}(c),
contrary to our assumption.
\end{proof}
Combining Proposition~\ref{prop:rigid_rest}, Theorem~\ref{thm:grig}, and Proposition~\ref{prop:selfrep},
we obtain the following interesting result.
\begin{cor}
\label{cor:nonrigid}
There exists a topologically simple compactly generated non-rigid t.d.l.c. group.
\end{cor}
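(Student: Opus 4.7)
The plan is to exhibit $\widehat{\Comm}(\Gamma)$ (where $\Gamma$ is the first Grigorchuk group) as the required group and combine three of the results already established in the paper. Concretely, I would take $G=\hGamma$, the pro-$2$ completion of $\Gamma$, and verify that $\widehat{\Comm}(\Gamma)$ is a compactly generated, topologically simple, non-rigid envelope of $G$.

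First I would check the hypotheses of Proposition~\ref{prop:rigid_rest} for $G=\hGamma$. Since $\Gamma$ is finitely generated, $\hGamma$ is a finitely generated profinite group, hence h.c.c.b.\ by the implication~\eqref{eq:aut4}; moreover $\VZ(\hGamma)=\{1\}$ was already recorded in the proof of Theorem~\ref{thm:grig}. Next, I would invoke Proposition~\ref{prop:selfrep} to conclude that $\hGamma$ is not $\Aut_2$: one needs that $\hGamma$ is self-replicating and that $\Out(\hGamma)$ is infinite, both of which are established in \cite{barsid} (as noted just after Proposition~\ref{prop:selfrep}).

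Now apply Theorem~\ref{thm:grig}: the t.d.l.c.\ group $L=\widehat{\Comm}(\Gamma)$ is compactly generated, topologically simple, and contains $\hGamma$ as an open subgroup, so $L$ is a compactly generated topologically simple envelope of $\hGamma$. By Proposition~\ref{prop:rigid_rest}, since $\hGamma$ is not $\Aut_2$, no compactly generated topologically simple envelope of $\hGamma$ can be rigid; therefore $L$ is non-rigid, completing the proof.

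I do not anticipate any serious obstacle here: the entire argument is an assembly of results from the previous sections, once one recognizes that the Grigorchuk example simultaneously supplies a compactly generated topologically simple envelope (Theorem~\ref{thm:grig}) and a witness to the failure of $\Aut_2$ (Proposition~\ref{prop:selfrep}). The only mild care required is to ensure that the properties of $\hGamma$ needed by Propositions~\ref{prop:selfrep} and~\ref{prop:rigid_rest} (finite generation, trivial virtual center, self-replicating structure, infinite outer automorphism group) are simultaneously available, which is precisely what the cited references and the proof of Theorem~\ref{thm:grig} provide.
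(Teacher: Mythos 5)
Your proposal is correct and follows exactly the argument the paper intends: the sentence preceding Corollary~\ref{cor:nonrigid} says it is obtained by combining Proposition~\ref{prop:rigid_rest}, Theorem~\ref{thm:grig}, and Proposition~\ref{prop:selfrep}, which is precisely your assembly, with the hypotheses checked as you describe.
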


\appendix
\section{}
\label{ss:padic}

In this section we give a proof of Theorem~\ref{thm:padic}
whose statement is recalled below. Our proof is based on
Lazard's exp-log correspondence.

\begin{thm}
Let $G$ be a compact $p$-adic analytic group. Then one has
a canonical isomorphism
\begin{equation}
\label{eq:padic}
\Comm(G)\simeq\Aut_{\Q_p}(\eu{L}(G)).
\end{equation}
where $\eu{L}(G)$ is the Lie algebra of $G$.
\end{thm}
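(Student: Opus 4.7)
The approach is to use Lazard's exp-log correspondence to convert continuous isomorphisms between open subgroups of $G$ into $\Q_p$-linear Lie algebra automorphisms of $\eu{L}(G)$ and back. The inputs from $p$-adic analytic group theory that I would use are: (i) every open subgroup of $G$ contains an open uniform (equivalently, saturable, torsion-free powerful) pro-$p$ subgroup $U$; (ii) such a $U$ carries canonically a powerful $\Z_p$-Lie lattice structure $\eu{L}(U)$ on its underlying set (via Campbell--Hausdorff), with $\eu{L}(G) = \Q_p \otimes_{\Z_p} \eu{L}(U)$; and (iii) continuous group isomorphisms between uniform pro-$p$ groups correspond bijectively with $\Z_p$-Lie algebra isomorphisms between their Lie lattices, the correspondence being realized on underlying sets by the Campbell--Hausdorff series and its inverse.

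First I would construct a homomorphism $\Phi\colon \Comm(G) \to \Aut_{\Q_p}(\eu{L}(G))$ as follows. Given a virtual automorphism $\phi\colon U_1 \to U_2$ of $G$, shrink $U_1$ to an open uniform subgroup $V_1$ small enough that $V_2 := \phi(V_1)$ is also uniform, using the cofinality of uniform open subgroups. By (iii) above, $\phi\vert_{V_1}$ is a $\Z_p$-Lie algebra isomorphism $\eu{L}(V_1) \to \eu{L}(V_2)$ between two $\Z_p$-lattices in $\eu{L}(G)$, which extends $\Q_p$-linearly to an element $\Phi([\phi]) \in \Aut_{\Q_p}(\eu{L}(G))$. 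Independence of the choice of $V_1$ and of the representative $\phi$ reduces to the observation that equivalent virtual automorphisms agree on a common open uniform subgroup and that the Lie bracket is determined by any open $\Z_p$-Lie sublattice after tensoring with $\Q_p$.

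For injectivity of $\Phi$: if $\Phi([\phi]) = \iid$, then $\phi$ restricted to some uniform open $V$ fixes $\eu{L}(V)$ pointwise; but under the exp/log identification $\eu{L}(V)$ and $V$ coincide as sets, so $\phi\vert_V = \iid_V$ and $[\phi] = 1$. For surjectivity, given $\alpha \in \Aut_{\Q_p}(\eu{L}(G))$, fix a uniform open subgroup $U$ with lattice $\eu{L}(U)$; since both $\alpha$ and $\alpha^{-1}$ send $\eu{L}(U)$ to some $\Z_p$-lattice in $\eu{L}(G)$, an appropriate $p^n$-multiple of $\eu{L}(U)$ is carried inside $\eu{L}(U)$ by both $\alpha$ and $\alpha^{-1}$. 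One then selects an open powerful $\Z_p$-Lie sublattice $\eu{M} \subseteq \eu{L}(U)$ with $\alpha(\eu{M}) \subseteq \eu{L}(U)$ such that $\alpha(\eu{M})$ is also powerful. Applying exp gives open uniform subgroups $V_1 = \exp(\eu{M})$ and $V_2 = \exp(\alpha(\eu{M}))$ of $U$, and Lazard's correspondence converts $\alpha\vert_{\eu{M}}$ into a continuous group isomorphism $\phi\colon V_1 \to V_2$, which is a virtual automorphism of $G$ satisfying $\Phi([\phi]) = \alpha$.

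The main obstacle is the last step: one must ensure the existence of a powerful $\Z_p$-Lie sublattice $\eu{M} \subseteq \eu{L}(U)$ for which $\alpha(\eu{M})$ is again powerful, so that Lazard's theorem applies on both sides of the map $\alpha\vert_{\eu{M}}$. This rests on the Lie-algebraic counterpart of cofinality of uniform open subgroups in $G$, namely that powerful open $\Z_p$-Lie sublattices form a cofinal family in any open $\Z_p$-Lie sublattice of $\eu{L}(G)$. This is standard in the theory (cf.~\cite{ddms:padic}) and amounts to concretely picking a high enough power $p^m\eu{L}(U)$, which is automatically powerful, contained in $\eu{L}(U) \cap \alpha^{-1}(\eu{L}(U))$ and mapped by $\alpha$ into $\eu{L}(U)$, and whose image $\alpha(p^m\eu{L}(U)) = p^m\alpha(\eu{L}(U))$ is powerful for the same reason.
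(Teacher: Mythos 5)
Your proposal is correct and follows essentially the same route as the paper: apply the Lazard correspondence to the cofinal family of open torsion-free powerful (uniform) pro-$p$ subgroups, build the map $\Comm(G)\to\Aut_{\Q_p}(\eu{L}(G))$ by passing to Lie lattices and tensoring with $\Q_p$, and check injectivity via the exp/log identification and surjectivity by producing a suitable open powerful $\Z_p$-Lie sublattice on which $\alpha$ restricts to a lattice isomorphism. The only (cosmetic) difference is in the surjectivity step: the paper takes the sublattice $\ca{L}(P)\cap\gamma^{-1}(\ca{L}(P))$ and observes the intersection of two powerful Lie lattices is powerful, while you use $p^m\eu{L}(U)$ for $m$ large and the fact that $p$-power multiples of powerful lattices stay powerful; both are valid and of comparable length.
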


By \cite[Thm.9.31, 9.33]{ddms:padic},
every compact $p$-adic analytic group $G$ contains an open subgroup
which is a torsion-free powerful pro-$p$ group.
An immediate consequence of this fact is that the set
\begin{equation}
\label{eq:defcaP}
\ca{P}_G\colon=\{\,U\in\ca{U}_G\mid\text{$U$ is torsion-free, powerful pro-$p$}\,\}
\end{equation}
is a base of neighborhoods of $1$ in $G$.

There exists a categorical equivalence
between the category $\mathbf{PF}$ of finitely generated torsion-free powerful
pro-$p$ groups and the category $\mathfrak{pf}$ of
powerful $\Z_p$-Lie lattices which is known as {\it Lazard
correspondence}. In other words, there exist functors
\begin{equation*}
\label{eq:laz}
\ca{L}(\argu)\colon \mathbf{PF}\longrightarrow\mathfrak{pf}
\quad\mbox{ and }\quad
\exp(\argu)\colon \mathfrak{pf}\longrightarrow\mathbf{PF}
\end{equation*}
such that the compositions $\ca{L}\circ\exp$ and $\exp\circ\ca{L}$
are naturally isomorphic to the identity functors on the respective categories
\cite[\S8.2]{ddms:padic}.

The {\it Lie algebra} $\eu{L}(G)$ of a compact $p$-adic analytic group $G$
can be defined as follows (see \cite[Chap.V, (2.4.2.5)]{laz:blue}):
\begin{equation}
\label{eq:Lie}
\eu{L}(G)\colon={\textstyle
\varprojlim_{P\in\,\ca{P}_G}\ca{L}(P)\otimes_{\Z_p}\Q_p}.
\end{equation}

\begin{proof}[Proof of Theorem~\ref{thm:padic}]
By definition of $\eu{L}(G)$, for every $P\in\ca{P}_G$ we
can canonically identify $\ca{L}(P)$ with
a $\Z_p$-sublattice of $\eu{L}(G)$.

We shall now construct a  mapping
\begin{equation}
\label{eq:homoLie}
I\colon\Comm(G)\longrightarrow\Aut_{\Q_p}(\eu{L}(G)).
\end{equation}
Let $\phi$ be a virtual automorphism of $G$,
and choose $P\in\ca{P}_G$ such that $\phi$
is defined on $P$. By equivalence of categories,
the isomorphism $\phi: P\to \phi(P)$ corresponds
to an isomorphism $\phi_*: \ca{L}(P)\to\ca{L}(\phi(P))$
which, in turn, uniquely determines an automorphism
$\phi^*: \eu{L}(G)\to \eu{L}(G)$ given
by
\begin{equation}
\label{eq:final}
\phi^*(\alpha x)=\alpha \phi_*(x) \mbox{ for } x\in\eu{L}(G)\mbox{ and }\alpha\in\Q_p.
\end{equation}
Clearly, $\phi^*$
is independent of the choice of $P$, and similarly,
if $[\phi]=[\psi]$, then $\phi^*=\psi^*$.
Thus, we can define
$I\colon\Comm(G)\to\Aut_{\Q_p}(\eu{L}(G))$
by $I([\phi])=\phi^*$.

It is easy to see that $I$ is an injective homomorphism.
In order to prove surjectivity, we have to show that for every
$\gamma\in\Aut_{\Q_p}(\eu{L}(G))$
there exist $P_1, P_2\in\ca{P}_G$, such that
$\gamma(\ca{L}(P_1))=\ca{L}(P_2)$.

Take any $P\in\ca{P}_G$, and let
$L=\ca{L}(P)\,\cap\, \gamma^{-1}(\ca{L}(P))$.
Since both $\ca{L}(P)$ and $\gamma^{-1}(\ca{L}(P))$
are powerful $\Z_p$-Lie lattices, so is their intersection.
Therefore, $L$ is a powerful $\Z_p$-Lie sublattice of $\ca{L}(P)$.
Thus $L=\ca{L}(P_1)$ where $P_1$ is some open torsion-free powerful pro-$p$ subgroup
of $P$, whence $P_1\in\ca{P}_G$.
Furthermore, $\gamma(L)\leq\ca{L}(P)$,
and thus $\gamma(L)=\ca{L}(P_2)$ for some
$P_2\in\ca{P}_G$. Hence $I$ is surjective.
\end{proof}

\bibliography{commtdlc}

\providecommand{\bysame}{\leavevmode\hbox to3em{\hrulefill}\thinspace}
\providecommand{\MR}{\relax\ifhmode\unskip\space\fi MR }
\providecommand{\MRhref}[2]{%
  \href{http://www.ams.org/mathscinet-getitem?mr=#1}{#2}
}
\providecommand{\href}[2]{#2}
\begin{thebibliography}{10}

\bibitem{yibe:nott}
Y.~Barnea and B.~Klopsch, \emph{Index-subgroups of the {N}ottingham group},
  Adv. Math. \textbf{180} (2003), no.~1, 187--221.

\bibitem{barsid}
L.~Bartholdi and S.~N. Sidki, \emph{The automorphism tower of groups acting on
  rooted trees}, Trans. Amer. Math. Soc. \textbf{358} (2006), no.~1, 329--358.

\bibitem{BassLub}
H.~Bass and A.~Lubotzky, \emph{Tree lattices. {W}ith appendices by {B}ass, {L}.
  {C}arbone, {L}ubotzky, {G}. {R}osenberg and {J}. {T}its}, Progress in
  Mathematics, vol. 176, Birkh\"auser, Inc., Boston, MA, 2001.

\bibitem{boti:aut}
A.~Borel and J.~Tits, \emph{Homomorphismes ``abstraits'' de groupes
  alg\'ebriques simples}, Ann. of Math. (2) \textbf{97} (1973), 499--571.

\bibitem{bou:top2}
N.~Bourbaki, \emph{General topology. {C}hapters 5--10}, Elements of Mathematics
  (Berlin), Springer-Verlag, Berlin, 1989, Translated from the French, Reprint
  of the 1966 edition.

\bibitem{bou:top}
\bysame, \emph{General topology. {C}hapters 1--4}, Elements of Mathematics
  (Berlin), Springer-Verlag, Berlin, 1998, Translated from the French, Reprint
  of the 1989 English translation.

\bibitem{bumo:ihes1}
M.~Burger and S.~Mozes, \emph{Groups acting on trees: from local to global
  structure}, Inst. Hautes \'Etudes Sci. Publ. Math. (2000), no.~92, 113--150
  (2001).

\bibitem{ddms:padic}
J.~D. Dixon, M.~P.~F. du~Sautoy, A.~Mann, and D.~Segal, \emph{Analytic
  pro-{$p$} groups}, second ed., Cambridge Studies in Advanced Mathematics,
  vol.~61, Cambridge University Press, Cambridge, 1999.

\bibitem{mikh:new}
M.~Ershov, \emph{New just-infinite pro-{$p$} groups of finite width and
  subgroups of the {N}ottingham group}, J. Algebra \textbf{275} (2004), no.~1,
  419--449.

\bibitem{ersh:nott}
\bysame, \emph{On the commensurator of the {N}ottingham group}, Trans. Am.
  Math. Soc. \textbf{362} (2010), no.~12, 6663--6678.

\bibitem{fesenko}
I.~Fesenko, \emph{On just infinite pro-$p$-groups and arithmetically profinite
  extensions of local fields}, J. Reine Angew. Math. \textbf{517} (1999),
  61--80.

\bibitem{slaw:branch}
R.~I. Grigorchuk, \emph{Just infinite branch groups}, New horizons in pro-$p$
  groups, Progr. Math., vol. 184, Birkh\"auser Boston, Boston, MA, 2000,
  pp.~121--179.

\bibitem{Klopsch:aut}
B.~Klopsch, \emph{Automorphisms of the {N}ottingham group}, J. Algebra
  \textbf{223} (2000), 37--56.

\bibitem{Lang:Algebra}
S.~Lang, \emph{Algebra}, revised third ed., Graduate Texts in Mathematics, vol.
  211, Springer-Verlag, New York, 2002.

\bibitem{laz:blue}
M.~Lazard, \emph{Groupes analytiques {$p$}-adiques}, Inst. Hautes \'Etudes Sci.
  Publ. Math. (1965), no.~26, 389--603.

\bibitem{Margulis:book}
G.~A. Margulis, \emph{Discrete subgroups of semisimple lie groups}, Ergebnisse
  der Mathematik und ihrer Grenzgebiete (3), vol.~17, Springer-Verlag, Berlin,
  1991.

\bibitem{mochi:qp}
S.~Mochizuki, \emph{A version of the {G}rothendieck conjecture for {$p$}-adic
  local fields}, Internat. J. Math. \textbf{8} (1997), no.~4, 499--506.

\bibitem{neu:139}
J.~Neukirch, \emph{Kennzeichnung der endlich-algebraischen {Z}ahlk\"orper durch
  die {G}aloisgruppe der maximal aufl\"osbaren {E}rweiterungen}, J. Reine
  Angew. Math. \textbf{238} (1969), 135--147.

\bibitem{neu:138}
\bysame, \emph{Kennzeichnung der {$p$}-adischen und der endlichen algebraischen
  {Z}ahlk\"orper}, Invent. Math. \textbf{6} (1969), 296--314.

\bibitem{NSW:coh}
J.~Neukirch, A.~Schmidt, and K.~Wingberg, \emph{Cohomology of number fields},
  Grundlehren der Mathematischen Wissenschaften [Fundamental Principles of
  Mathematical Sciences], vol. 323, Springer-Verlag, Berlin, 2000.

\bibitem{nikseg1}
N.~Nikolov and D.~Segal, \emph{On finitely generated profinite groups. {I}.
  {S}trong completeness and uniform bounds}, Ann. Math. (2) \textbf{165}
  (2007), no.~1, 171--238.

\bibitem{nikseg2}
\bysame, \emph{On finitely generated profinite groups. {II}. {P}roducts in
  quasisimple groups}, Ann. Math. (2) \textbf{165} (2007), no.~1, 239--273.

\bibitem{pink}
R.~Pink, \emph{Compact subgroups of linear algebraic groups}, J. Algebra
  \textbf{206} (1998), no.~2, 438--504.

\bibitem{pop:157}
F.~Pop, \emph{On {G}rothendieck's conjecture of birational anabelian geometry},
  Ann. of Math. (2) \textbf{139} (1994), no.~1, 145--182.

\bibitem{pop:158}
\bysame, \emph{On {G}rothendieck's conjecture of birational anabelian geometry,
  {II}}, Preprint, 1994.

\bibitem{prasad:bull}
G.~Prasad, \emph{Elementary proof of a theorem of {B}ruhat-{T}its-{R}ousseau
  and of a theorem of {T}its}, Bull. Soc. Math. France \textbf{110} (1982),
  no.~3, 197--202.

\bibitem{remy:simp}
B.~R\'emy, \emph{Topological simplicity, commensurator super-rigidity and
  non-linearities of {K}ac-{M}oody groups. {W}ith an appendix by {P}.
  {B}onvin}, Geom. Funct. Anal. \textbf{14} (2004), no.~4, 810--852.

\bibitem{Rob}
D.~J.~S. Robinson, \emph{A course in the theory of groups}, second ed.,
  Graduate {T}exts in {M}athematics, vol.~80, Springer-Verlag, New York, 1996.

\bibitem{roman:field}
S.~Roman, \emph{Field theory}, second ed., Graduate Texts in Mathematics, vol.
  158, Springer, New York, 2006.

\bibitem{claas:comm}
C.~E. R{\"o}ver, \emph{Abstract commensurators of groups acting on rooted
  trees}, Proceedings of the Conference on Geometric and Combinatorial Group
  Theory, Part I (Haifa, 2000), vol.~94, 2002, pp.~45--61.

\bibitem{serre:lalg}
J.-P. Serre, \emph{Lie algebras and {L}ie groups. 1964 lectures given at
  {H}arvard {U}niversity.}, second ed., Lecture Notes in Mathematics, vol.
  1500, Springer-Verlag, Berlin, 1992.

\bibitem{ser:gal}
J-P. Serre, \emph{Galois cohomology}, Springer-Verlag, Berlin, 1997, Translated
  from the French by Patrick Ion and revised by the author.

\bibitem{uch:211}
K.~Uchida, \emph{Isomorphisms of {G}alois groups}, J. Math. Soc. Japan
  \textbf{28} (1976), no.~4, 617--620.

\bibitem{dan:stu}
D.~van Dantzig, \emph{Studien {\"u}ber topologische {A}lgebra}, Dissertation,
  1931.

\bibitem{wehr:book}
B.~A.~F. Wehrfritz, \emph{Infinite linear groups. {A}n account of the
  group-theoretic properties of infinite groups of matrices}, Springer-Verlag,
  New York, 1973, Ergebnisse der Matematik und ihrer Grenzgebiete, Band 76.

\bibitem{weza:coh}
Th. Weigel and P.~Zalesskii, \emph{Profinite groups of finite cohomological
  dimension}, C. R. Math. Acad. Sci. Paris \textbf{338} (2004), no.~5,
  353--358.

\bibitem{george:scale}
G.~A. Willis, \emph{The structure of totally disconnected, locally compact
  groups}, Math. Ann. \textbf{300} (1994), no.~2, 341--363.

\bibitem{george:sim}
\bysame, \emph{Compact open subgroups in simple totally disconnected groups},
  J. Algebra \textbf{312} (2007), no.~1, 405--417.

\bibitem{wilson:prof}
J.~S. Wilson, \emph{Profinite groups}, London Mathematical Society Monographs.
  New Series, vol.~19, The Clarendon Press Oxford University Press, New York,
  1998.

\bibitem{wilson:newhor}
\bysame, \emph{On just infinite abstract and profinite groups}, New horizons in
  pro-$p$ groups, Progr. Math., vol. 184, Birkh\"auser Boston, 2000,
  pp.~181--203.

\bibitem{wilson:large}
\bysame, \emph{Large hereditarily just infinite groups}, J. Algbera
  \textbf{324} (2010), no.~2, 248--255.

\end{thebibliography}
\bibliographystyle{amsplain}
\end{document}